\documentclass{amsart}

\usepackage{comment}
\usepackage[noadjust]{cite}
\usepackage{hyperref}
\usepackage[capitalise,noabbrev]{cleveref}
\usepackage{amsmath}
\usepackage{enumitem}
\usepackage{lmodern}

\usepackage{todonotes}
\usepackage[margin=1.115in]{geometry}

\usepackage[labelformat=simple]{subcaption}
\usepackage{tikz}
\usepackage{tkz-graph}
\tikzstyle{VertexStyle} = [shape = circle, draw, fill=white,very thick]
\tikzset{pre/.style={-}}    
\tikzstyle{every node}=[circle, inner sep=0pt, minimum width=4pt,very thick]
\makeatletter
\newcommand{\myitem}[1]{%
\item[#1]\protected@edef\@currentlabel{#1}%
}
\makeatother

\newenvironment{reptheorem}[1]
  {\renewcommand\thetheorem{\ref*{#1}}\theorem}
  {\endtheorem}

\newcommand{\ba}{\backslash}
\DeclareMathOperator{\Ev}{Ev}
\newcommand{\symdiff}{\mathrel{\triangle}}
\newcommand{\hajos}{\mathbin{\triangle}}

\newtheorem{theorem}{Theorem}[section]
\newtheorem{lemma}[theorem]{Lemma}
\newtheorem{proposition}[theorem]{Proposition}

\newtheorem{corollary}[theorem]{Corollary}
\newtheorem{claim}{Claim}[theorem]
\theoremstyle{definition}
\newtheorem{definition}[theorem]{Definition}
\newenvironment{subproof}{\begin{proof}[Subproof.]}{\end{proof}}

\setenumerate{label=\rm(\roman*),midpenalty=2}

\crefrangeformat{claim}{Claims~#3#1#4--#5#2#6}
\crefmultiformat{claim}{Claims~#2#1#3}{ and~#2#1#3}{, #2#1#3}{ and~#2#1#3}

\begin{document}

\title[$k$-choosability of graphs with maximum local edge-connectivity $k$]{A Brooks-type theorem for the $k$-choosability of graphs with maximum local edge-connectivity $k$}
\author{Sam Bastida} 
\author{Nick Brettell} 
\address{School of Mathematics and Statistics\\
  Victoria University of Wellington\\
  New Zealand}
\email{samuel.a.bastida@gmail.com}
\email{nick.brettell@vuw.ac.nz}

\maketitle

\begin{abstract}
    For a graph~$G$ with at least two vertices, the \emph{maximum local edge-connectivity} of $G$ is the maximum number of edge-disjoint $(u,v)$-paths over all distinct pairs of vertices $(u,v)$ in $G$.
    Stiebitz and Toft (2018) proved a Brooks-type theorem for graphs with maximum local edge-connectivity $k$, showing that a graph with maximum local edge-connectivity~$k$ is not $k$-colourable if and only if it has a block in $\mathcal{H}_k$, which is the class of graphs that can be obtained by taking Haj\'os joins of copies of $K_{k+1}$ and, when $k=3$, odd wheels.
    We prove that a $2$-connected graph with maximum local edge-connectivity~$k$ is $k$-choosable if and only if it is not in $\mathcal{H}_k$.
    On the other hand, deciding $k$-choosability when restricted to graphs with maximum local edge-connectivity $k$ (that might not be $2$-connected) is $\Pi_2$-complete.
    To prove the former result, we first prove several generalisations of a well-known characterisation of degree-choosability; these may be of independent interest.
\end{abstract}

\section{Introduction}

Brooks' Theorem states that a connected graph with maximum degree $k$ is $k$-colourable\footnote{Here, by ``$k$-colourable'', we mean that the graph has a proper vertex $k$-colouring.  For any other undefined terminology, see \cref{prelims}.} unless it is an odd cycle or a complete graph, in which case $k+1$ colours are necessary~\cite{Brooks1941}. 
There is an extension of Brooks' Theorem for list colouring, due to Vizing~\cite{Vizing1976} and, independently, Erd\H os, Rubin, and Taylor~\cite{ERT1979}. 
Let $G$ be a graph.
A \emph{list assignment} of $G$ is a function $L$ from $V(G)$ to subsets of the positive integers.
For a positive integer~$k$, a \emph{$k$-list assignment} of $G$ is a list assignment of $G$ such that $|L(v)| = k$ for all $v \in V(G)$.
For a list assignment $L$ of $G$, we say that $G$ is \emph{$L$-colourable} if there is a proper $k$-colouring~$\phi$ of $G$ such that $\phi(v) \in L(v)$ for each $v \in V(G)$.
A graph~$G$ is \emph{$k$-choosable} if it is $L$-colourable for every $k$-list assignment~$L$ of $G$.
Using a greedy colouring strategy, it is easily seen that a graph with maximum degree $k$ is $(k+1)$-choosable.
However, as with Brooks' theorem, we can say more when the graph is not an odd cycle or a complete graph.

\begin{theorem}[Vizing~\cite{Vizing1976}; Erd\H os, Rubin, and Taylor~\cite{ERT1979}]
    \label{ertthm}
    Let $G$ be a connected graph with maximum degree $k$.
The graph~$G$ is $k$-choosable if and only if it is not an odd cycle or a complete graph.
\end{theorem}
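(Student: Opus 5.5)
The ``only if'' direction is immediate. If $G$ is an odd cycle ($k=2$), then $G$ is not $2$-colourable, so the constant list assignment $L(v)=\{1,2\}$ admits no $L$-colouring. If $G=K_{k+1}$, the constant list $\{1,\dots,k\}$ admits no $L$-colouring either. For the converse we fix a connected graph $G$ with maximum degree $k$ that is neither an odd cycle nor complete, and a $k$-list assignment $L$; we must find an $L$-colouring.

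The small cases are handled separately. If $k\le 1$, then $G$ is $K_1$ or $K_2$, both complete, so there is nothing to prove. If $k=2$, then $G$ is a path or an even cycle. A path is coloured greedily from one end. For an even cycle we argue on the lists: if all lists are equal, we $2$-colour properly; otherwise some adjacent pair $u,v$ has $L(u)\neq L(v)$. We then choose a colour $c\in L(u)\setminus L(v)$ for $u$ and colour greedily around the cycle away from $v$, ending at $v$. The vertex $v$ still has a free colour, since $c\notin L(v)$ and $v$ has only one other neighbour. So from now on we assume $k\ge 3$.

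The main tool is greedy colouring along a spanning-tree order, in which every vertex except the root has an uncoloured neighbour when it is coloured. We use three sufficient conditions.
\begin{enumerate}
\item If some vertex $r$ has $\deg(r)<k$, order $V(G)$ by non-increasing distance from $r$ in a BFS tree. Each vertex other than $r$ has a later neighbour (its parent), so at most $k-1$ of its colours are blocked; $r$ itself has at most $\deg(r)<k$ blocked colours.
\item If $G$ has a cut vertex $x$, split $G$ at $x$ into pieces $G_1,\dots,G_m$, each containing $x$. In each piece $x$ has degree less than $k$, so by (i) we can $L$-colour each $G_i$ with $x$ last. The difficulty is that the pieces must agree on $x$. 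To arrange this we $L$-colour $G-x$ component by component; in each component $C$ some vertex is adjacent to $x$ and so has degree less than $k$ inside $C$, which lets (i) apply to $C$. This works if $x$ then has a free colour, which it does provided that two neighbours of $x$ receive the same colour, or that $x$ has fewer than $k$ neighbours. Both can be forced: we precolour two neighbours $a,b$ of $x$ lying in different components with a common colour when their lists intersect, and otherwise one of them with a colour not in $L(x)$, and then finish each component by (i) rooted at the precoloured vertex. The details need some care.
\item If $G$ is $2$-connected, we want a vertex $x$ with two neighbours $a,b$ such that $ab\notin E(G)$, $G-\{a,b\}$ is connected, and a pair of colours can be used on $a$ and $b$ without spending two colours of $L(x)$. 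We then colour $a,b$ first, then $G-\{a,b\}$ greedily by BFS towards the root $x$; at the end $x$ sees at most $k-1$ distinct colours.
\end{enumerate}

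Case (iii) is where the obstacle lies. The list version differs from Brooks' proof because we cannot simply give $a$ and $b$ the same colour, as $L(a)\cap L(b)$ may be empty. If $L(a)\cap L(b)\neq\emptyset$, we use a common colour. Otherwise we want $a$ to take a colour outside $L(x)$, which is possible unless $L(a)=L(x)$. So the argument splits on the lists. If some edge $uv$ has $L(u)\ne L(v)$, we give $u$ a colour outside $L(v)$ and run (i)-style greedy colouring on $G-u$ rooted at $v$; this requires $G-u$ to be connected, which holds by $2$-connectivity. If all lists are equal, the problem is ordinary $k$-colouring, and Brooks' theorem with $k\ge 3$ and $G\neq K_{k+1}$ (so in particular $G$ is not an odd cycle) gives the colouring. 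By this dichotomy, the Brooks-structure search for the pair $a,b$ is needed only in the equal-lists case, where we can invoke Brooks' theorem directly.

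A further simplification is possible. The unequal-lists argument does not actually need $2$-connectivity, only that $G-u$ is connected, so we pick $u$ not a cut vertex (for instance a leaf of a spanning tree) with a neighbour whose list differs. If no such edge exists at a non-cut vertex, the lists still propagate, and the trickier regime is when unequal lists occur only across cut vertices. This is best handled by the block decomposition in (ii): we induct on the number of blocks and colour an end block last via (i), its attachment vertex having smaller degree within it. I expect this bookkeeping around cut vertices to be the most delicate point.
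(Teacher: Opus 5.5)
Most of your argument is correct: the cases $k\le 2$, the non-regular case via (i), the constant-list case via Brooks' theorem, and an edge $uv$ with $L(u)\neq L(v)$ where $u$ is not a cut vertex (colour $u$ from $L(u)\setminus L(v)$, then greedily colour the connected graph $G-u$ towards $v$). The paper itself only cites \cref{ertthm}; the usual derivation is from \cref{dcthm}, since a $k$-regular Gallai tree has a single block and so is $K_{k+1}$ or an odd cycle.

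The gap is the case you defer: $G$ is $k$-regular with a cut vertex, and every edge with unequal lists joins two cut vertices. Neither mechanism you describe proves it.

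\begin{itemize}
\item In (ii), after precolouring a neighbour $a$ of $x$, you ``finish each component by (i) rooted at the precoloured vertex''. But (i) needs an \emph{uncoloured} root with a spare colour. For example, take two copies of $K_4$ with one edge subdivided, join the two subdivision vertices by an edge, and let $x,a$ be these two vertices. Any pair of neighbours of $x$ in different components of $G-x$ includes $a$, so $a$ is precoloured. The component $D$ of $C_a-a$ is $K_4$ minus an edge and contains no neighbour of $x$. After the colour of $a$ is deleted, $|L'(w)|=\deg_D(w)$ for every $w\in D$. Here (i) gives no guarantee, and a greedy order really can get stuck. Showing $D$ is colourable is a degree-choosability statement that (i) does not supply.
\item ``Colour an end block last via (i)'' does not say how the block $B$ and the rest agree at the attachment vertex $x$. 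If $x$ is coloured first, $B-x$ has no guaranteed slack. If $B$ is coloured by (i) rooted at $x$, the colour of $x$ is dictated by $B$, and the rest then has no slack at $x$.
\end{itemize}

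The missing idea is short. With $G$ $k$-regular and having a cut vertex, take a leaf block $B$ with cut vertex $x$. Every edge of $B$ has an endpoint in $V(B)\setminus\{x\}$, and these are non-cut vertices of $G$. So there are two cases.
\begin{itemize}
\item Some edge of $B$ has unequal lists. Then your $G-u$ argument applies, with $u$ that non-cut endpoint.
\item All vertices of $B$, including $x$, have a common list $L_B$. Colour $G'=G-(V(B)\setminus\{x\})$ by (i), which applies since $\deg_{G'}(x)<k$. Colour $B$ from $L_B$ by (i), which applies since $\deg_B(x)<k$. Then permute the colours of $L_B$ on $B$ so that $x$ gets the colour it received in $G'$; this colour lies in $L(x)=L_B$.
\end{itemize}
This handles every graph with a cut vertex without any induction on blocks. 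Brooks' theorem is then needed only for $2$-connected graphs with constant lists.
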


The \emph{local edge-connectivity} between distinct vertices $u$ and $v$ in a graph is the maximum number of edge-disjoint paths from $u$ to $v$.
For a graph~$G$ with at least two vertices, the \emph{maximum local edge-connectivity} of $G$ is the maximum local edge-connectivity taken over all distinct pairs of vertices of $G$; for a graph with at most one vertex, we define the maximum local edge-connectivity to be zero.
It follows from a result of Mader~\cite{Mader1973} that a graph with maximum local edge-connectivity~$k$ has a vertex of degree~$k$.
Moreover, the maximum local edge-connectivity of a graph~$G$ can only decrease in a subgraph of $G$.
So, for a graph~$G$ with maximum local edge-connectivity $k$, every subgraph of $G$ has a vertex of degree at most $k$; that is, $G$ is $k$-degenerate.
By choosing an appropriate vertex ordering and then using a greedy colouring strategy, it is easily seen that such a graph is $(k+1)$-colourable and, in fact, $(k+1)$-choosable.

Stiebitz and Toft proved a Brooks-type theorem for graphs with maximum local edge-connectivity~$k$~\cite{ST2018}.
To state this result, we require an operation known as a Haj\'os join.
Let $G_1$ and $G_2$ be two graphs on disjoint edge sets, with $V(G_1) \cap V(G_2) = \{v\}$, where $uv$ is an edge of $G_1$, and $vw$ is an edge of $G_2$.
We obtain a \emph{Haj\'os join of $G_1$ and $G_2$ on $(v,u)$ and $(v,w)$}, denoted $(G_1,v,u) \hajos (G_2,v,w)$, by deleting the edge $uv$ from $G_1$, deleting the edge $vw$ from $G_2$, and then obtaining our new graph by identifying $v$ in $G_1$ and $G_2$, and adding an edge $uw$ (see \cref{fighajos})%
\footnote{We allow Haj\'os joins on graphs with parallel edges.  We use the notation $(G_1,v,u) \hajos (G_2,v,w)$ even when there are parallel edges between $u$ and $v$ and/or $v$ and $w$; as we are only interested in graphs up to isomorphism, we avoid any potential ambiguity.}.
Now, for $k \ge 4$, let $\mathcal{H}_k$ be the class of graphs obtained by closing the class of graphs isomorphic to $K_{k+1}$ under Haj\'os joins.  We also let $\mathcal{H}_3$ be the class of graphs obtained by closing the class of odd wheels under Haj\'os joins.

\begin{figure}[t]
    \includegraphics[width=11cm]{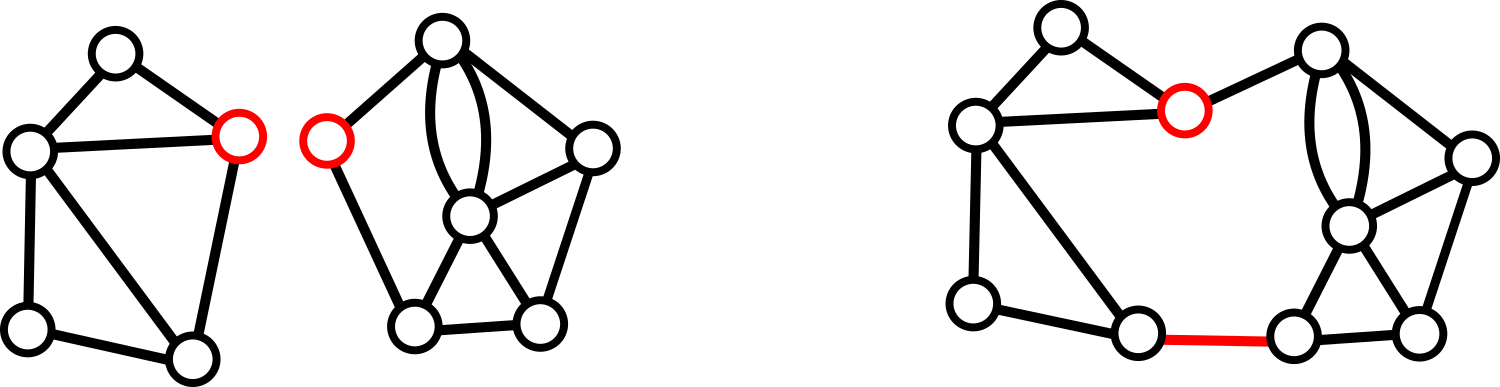}
    \caption{A Haj\'os join}
    \label{fighajos}
\end{figure}

\begin{theorem}[Stiebitz and Toft~\cite{ST2018}]
    \label{stthm}
    Let $G$ be a graph with maximum local edge-connectivity~$k$, for $k \ge 3$.
    The graph~$G$ is $k$-colourable if and only if no block of $G$ is isomorphic to a member of $\mathcal{H}_k$.
\end{theorem}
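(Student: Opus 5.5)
The plan is to prove the two directions separately and to reduce the substantial direction to $2$-connected critical graphs. We then split on whether such a graph has a $2$-vertex-cut: if it does, we decompose it as a Haj\'os join; if it does not, we pin it down with Brooks' Theorem plus a local-edge-connectivity argument.

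\emph{Necessity.} We show that no member of $\mathcal{H}_k$ is $k$-colourable. $K_{k+1}$ is not $k$-colourable, and an odd wheel is not $3$-colourable. It then suffices to check that a Haj\'os join $(G_1,v,u)\hajos(G_2,v,w)$ of two non-$k$-colourable graphs is not $k$-colourable. Suppose $\phi$ is a $k$-colouring of the join. Since $\phi(u)\neq\phi(w)$, either $\phi(u)\ne\phi(v)$ or $\phi(w)\neq\phi(v)$. In the first case, restricting $\phi$ to $V(G_1)$ gives a $k$-colouring of $G_1$, since the deleted edge $uv$ is respected; the second case symmetrically colours $G_2$. Either way we have a contradiction. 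Finally, a graph is $k$-colourable if and only if each of its blocks is, because colourings of blocks can be permuted to agree on cut vertices. So a block in $\mathcal{H}_k$ prevents $k$-colourability.

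\emph{Sufficiency.} Take a counterexample $G$ with $|V(G)|+|E(G)|$ minimum. Restricting to blocks and then to minimal non-$k$-colourable subgraphs (these have maximum local edge-connectivity at most $k$), we may assume $G$ is $(k+1)$-critical. Hence $G$ is $2$-connected and $\delta(G)\ge k$. We first show that if $G$ has a $2$-vertex-cut $\{x,y\}$, then $G$ is a Haj\'os join of two smaller graphs. This uses Dirac's classical analysis of separating pairs in critical graphs: $xy\notin E(G)$, the cut has exactly two sides, and adding $xy$ to one side while identifying $x$ and $y$ in the other yields $(k+1)$-critical graphs $G_1,G_2$ with $G=(G_1,x',\cdot)\hajos(G_2,\cdot,\cdot)$. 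Next we check that $G_1$ and $G_2$ still have maximum local edge-connectivity at most $k$. A family of edge-disjoint paths in $G_i$ that uses the new edge, or the identified vertex, can be rerouted through a path in the other side, which exists because the other side is connected between $x$ and $y$. Induction then places $G_1,G_2\in\mathcal{H}_k$, and so $G\in\mathcal{H}_k$.

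It remains to treat $G$ that is $3$-connected and $(k+1)$-critical with maximum local edge-connectivity $k$. If $\Delta(G)=k$, Brooks' Theorem gives $G\cong K_{k+1}$, since odd cycles are excluded for $k\ge3$. So suppose some vertex $z$ has $\deg(z)>k$. The key observation is that any two vertices of degree $>k$ are joined by at least $k+1$ edge-disjoint paths. To see this, take a minimum edge cut separating them; its size is at least $\delta$ unless one side is a single vertex, and criticality with $\delta\ge k$ forces small cuts to be trivial. Hence the high-degree vertices are pairwise ``weakly'' linked. I then expect to show that $z$ is the unique vertex of degree $>k$ and that $G-z$ has all degrees at most $k-1$. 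Since $G-z$ is $k$-colourable with the extra constraint at $z$, Gallai's theorem on low-vertex subgraphs of critical graphs says each block of $G-z$ is complete or an odd cycle. Combined with $3$-connectivity and the local edge-connectivity bound between $z$ and vertices of $G-z$, this should force $k=3$, $G-z$ an odd cycle, and $G$ an odd wheel.

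I expect this last step, the structure around a vertex of degree $>k$ in the $3$-connected case, to be the main obstacle. To handle it I would first invoke Mader's result quoted above on degree-$k$ vertices, which via the cut argument shows the high-degree vertices form an independent set with few members. I would then apply Gallai's theorem to the subgraph induced by the degree-$k$ vertices.
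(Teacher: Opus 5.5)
The paper does not prove this theorem; it is quoted from Stiebitz and Toft. Your necessity direction is correct. The sufficiency argument, however, has a genuine gap exactly at the step you flagged.

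Your ``key observation'' is false. It claims that in a $3$-connected $(k+1)$-critical graph any two vertices of degree $>k$ are joined by $k+1$ edge-disjoint paths, because criticality makes small edge cuts trivial. Here is a counterexample for $k=3$. Take the fan obtained from the odd wheel with rim a $5$-cycle by deleting one rim vertex, so it has hub $h$ and rim path $c_2c_3c_4c_5$. Take also the graph on $\{a,b,y_1,y_2,y_3\}$ with edges $ab$, $ay_i$, $by_i$. Join the two by the edges $hy_1$, $c_2y_2$, $c_5y_3$.
\begin{itemize}
\item Every $3$-colouring of the fan gives $h,c_2,c_5$ three distinct colours, while $y_1,y_2,y_3$ are forced to share a colour. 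So the graph is not $3$-colourable.
\item One checks that it is $4$-critical and $3$-connected.
\item Yet $d(h)=5$ and $d(a)=4$, and the three joining edges form a $3$-edge cut separating $h$ from $a$.
\end{itemize}
So nothing in your sketch excludes a second vertex of degree $>k$. Mader's theorem only supplies a vertex of degree $k$; it says nothing about high-degree vertices being few or independent. The uniqueness of $z$, which is the real content of the theorem, is therefore unproved.

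The missing idea is to apply the hypothesis to an edge cut. For two vertices $h_x,h_y$ of degree $>k$, Menger gives a minimum edge cut $S$ with $|S|\le k$ separating $X\ni h_x$ from $Y\ni h_y$. With the sides labelled appropriately, one shows two things. First, contracting $Y$ to a single vertex, and replacing $X$ by a clique on $|S|$ new vertices matched to the ends of $S$ in $Y$, both yield smaller critical graphs of maximum local edge-connectivity at most $k$; these lie in $\mathcal{H}_k$ by induction. Second, $G$ is then a Haj\'os join of members of $\mathcal{H}_k$. This is Stiebitz and Toft's route. The paper's list-colouring analogue follows it (\cref{the hajos decomp lemma} and the proof of \cref{final classify theorem}), and it makes a separate $2$-vertex-cut case unnecessary.

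Two further steps also need repair.
\begin{itemize}
\item \emph{The $2$-cut step.} Dirac's decomposition at a $2$-cut $\{x,y\}$ adds $xy$ on one side $G_1$ and identifies $x$ and $y$ on the other side $G_2$. This is not in general a Haj\'os join of those two graphs, since that would require $x$ or $y$ to send a single edge into $G_2$. You need the hypothesis here too. Since $G_1+xy$ is $(k+1)$-critical, it is $k$-edge-connected, so $\lambda_{G_1}(x,y)\ge k-1$. This forces $\lambda_{G_2}(x,y)=1$. The resulting bridge of $G_2$ separating $x$ from $y$ gives a genuine Haj\'os join, possibly along a different separation.
\item \emph{The reduction to a critical subgraph.} Passing from a $2$-connected block $B$ to a $(k+1)$-critical subgraph $H$ loses information. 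After showing $H\in\mathcal{H}_k$ you still need $H=B$. This uses that members of $\mathcal{H}_k$ are $k$-edge-connected, so that a proper $2$-connected supergraph would have maximum local edge-connectivity greater than $k$ (compare \cref{can't have both corollary}).
\end{itemize}
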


As a graph with maximum degree $k$ has maximum local edge-connectivity at most~$k$, this result generalises Brooks' theorem.
We note that Aboulker et al.~\cite{ABHMT2017} first proved the case where $k=3$, as well as the special case where the graphs are also $k$-connected for $k \ge 4$. 
Furthermore, we note that it is easy to extend \cref{stthm} to $k \le 2$ (as also observed in \cite{ST2018}).
This is because when $G$ is a connected graph, it is not difficult to see that $G$ has maximum local edge-connectivity at most~$2$ if and only if it is a cactus graph (where any two distinct cycles have no edges in common), and has local edge-connectivity at most~$1$ if and only if it is a tree.

\begin{figure}[b]
  \includegraphics[width=9.5cm]{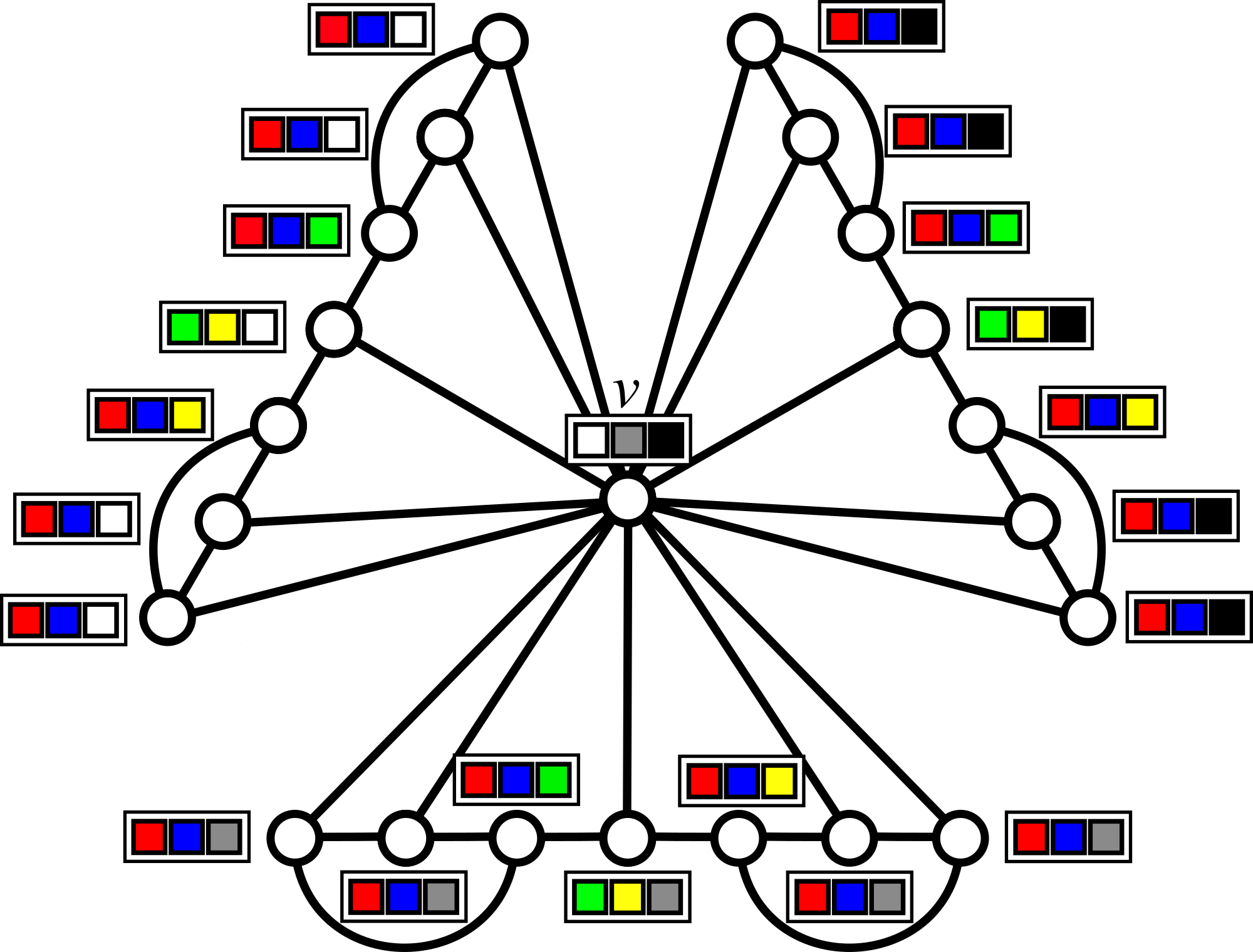}
  \caption{A graph $H$ with maximum local edge-connectivity $3$ that is not $3$-choosable (illustrated by the particular $3$-list assignment), even though $H$ is $3$-colourable, and each block of $H$ is $3$-choosable.}
  \label{figblockcounter}
\end{figure}

In this paper, we consider a Brooks-type theorem for the $k$-choosability of graphs with maximum local edge-connectivity~$k$, in an attempt to generalise both \cref{ertthm} and \cref{stthm}.
Optimistically, one might hope, analogous to \cref{ertthm}, that the only graphs that are not $k$-choosable are those that are not $k$-colourable.
It turns out that this is not the case: in \cref{figblockcounter}, we illustrate a graph with maximum local edge-connectivity $3$ that is $3$-colourable but not $3$-choosable.
Note, however, that this graph is not $2$-connected.
We prove the following:

\begin{theorem}
    \label{mainthm}
    Let $G$ be a $2$-connected graph with maximum local edge-connectivity~$k$, for $k \ge 3$.
    The graph~$G$ is $k$-choosable if and only if $G \notin \mathcal{H}_k$.
\end{theorem}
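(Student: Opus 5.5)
The forward direction is immediate from \cref{stthm}. If $G \in \mathcal{H}_k$, then $G$ is its own unique block, so $G$ is not $k$-colourable. Hence it is not $k$-choosable.

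For the converse, let $G$ be a $2$-connected graph with maximum local edge-connectivity~$k$ and $G \notin \mathcal{H}_k$. I would take a minimal counterexample: a graph $G$ and a $k$-list assignment $L$ such that $G$ is not $L$-colourable, with $|V(G)|+|E(G)|$ minimal. The engine is a generalisation of degree-choosability (the Borodin / Erd\H{o}s--Rubin--Taylor characterisation). A connected graph is not degree-choosable iff it is a Gallai tree, meaning every block is complete or an odd cycle. I would prove variants in which some vertices have lists larger than their degree, or where the lists on a designated subset are prescribed, and use them as follows.

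First, a vertex $v$ with $\deg(v) < k$ can be deleted, since we can colour $G-v$ and then extend to $v$. After deletion the graph may fail to be $2$-connected, so I would apply the lemma to $G-v$ with lists $L$. Every vertex of $G-v$ adjacent to $v$ has effectively one spare colour. By the degree-choosability variant, either we obtain a colouring, or the relevant structure is a Gallai tree with very restricted lists.

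Second, I would use Mader-type edge-cut structure. If the maximum local edge-connectivity is $k$, there is a vertex of degree $k$, and more strongly there is a nontrivial edge cut of size at most $k$ separating pairs of high-degree vertices. Suppose $G$ is not $k$-regular and $k$-edge-connected in a strong sense. Then there is a minimal edge cut $\delta(X)$ of size at most $k$ with both sides nontrivial. Using $2$-connectivity, I would reduce along this cut, Haj\'os-style. I would colour $G[X]$ and $G[\bar X]$ separately with modified lists, or contract one side to a single vertex. By minimality each smaller graph is $k$-choosable unless it lies in $\mathcal{H}_k$. The key case analysis is an edge cut of size exactly $2$ whose sides come from members of $\mathcal{H}_k$: I must show this forces $G$ itself to be a Haj\'os join, hence in $\mathcal{H}_k$, or else a colouring exists. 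This mirrors the Stiebitz--Toft proof of \cref{stthm}, which shows that critical graphs are either $K_{k+1}$ (or odd wheels when $k=3$) or decompose as Haj\'os joins along $2$-edge cuts.

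The main obstacle is the list version of this Haj\'os decomposition. In the colouring setting one can permute colours freely to match the two sides. With lists this fails, which is exactly why the non-$2$-connected example of \cref{figblockcounter} exists. So I would prove a lemma: if $H \in \mathcal{H}_k$ and $L$ is a list assignment with $|L(x)| \ge \deg(x)$ for all $x$, and $H$ is not $L$-colourable, then $L$ assigns the same list to all vertices of $H$. This should hold because members of $\mathcal{H}_k$ are $2$-connected and are not Gallai trees except $K_{k+1}$ itself, so the lemma can be proved by induction on Haj\'os joins using the degree-choosability generalisation. With this rigidity, the lists across a $2$-edge cut are forced to be uniform, and a Haj\'os-joined graph with uniform lists is uncolourable only if it is in $\mathcal{H}_k$. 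Any non-uniformity lets us recolour one side and finish. The base case, where $G$ is highly connected with no small cut, reduces to \cref{ertthm}, because there the max-degree-$k$ structure is essentially forced.
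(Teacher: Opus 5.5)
Your forward direction is fine, but the converse has a genuine gap: the statement you induct on does not survive your reductions.

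Deleting a vertex of degree less than $k$ can destroy $2$-connectivity, and then minimality gives you nothing. The graph in \cref{figblockcounter} is exactly such a case: it is not $2$-connected, has maximum local edge-connectivity $3$, and is not $3$-choosable, although no block lies in $\mathcal{H}_3$. Moreover, vertices of degree greater than $k$ remain, so $L$ is not a degree-list assignment on $G-v$ and no degree-choosability result applies there.

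The same problem arises when you split along a minimum edge cut $S$ between $X$ and $Y$. Knowing that the smaller graphs are $k$-choosable does not help. What you need is that a \emph{prescribed} colouring of one side extends across $S$, i.e.\ that the vertex replacing the other side is not $k$-restricted. This interface also cannot be recorded by ordinary lists on $G'_X$: for a fixed colour of the new vertex $w$, the edge $x_ew$ must forbid the colour $\phi(y_e)$ at $x_e$, and that colour varies with $e$.

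The paper resolves both problems in three ways:
\begin{itemize}
\item It proves the stronger \cref{final classify theorem}: a $2$-connected polar graph with maximum local edge-connectivity at most $k$ and a $k$-restricted vertex has underlying graph in $\mathcal{T}_k$. This class is strictly larger than $\mathcal{H}_k$ and contains $k$-choosable graphs, such as the blocks of \cref{figblockcounter}.
\item It uses polarised edges in $Q'_X$ to encode the constraints coming from $Y$.
\item It exploits the fact that restrictedness, unlike choosability, behaves well under $1$-joins (\cref{1-joins unrestricted,1joinscu}).
\end{itemize}

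Several of your structural claims are also wrong.
\begin{itemize}
\item The relevant cuts have size exactly $k$, not $2$. Members of $\mathcal{T}_k \supseteq \mathcal{H}_k$ are $k$-edge-connected (\cref{connectivity properties corollary}), and a Haj\'os join is a separation by one vertex and one edge, not a $2$-edge cut.
\item Your rigidity lemma for degree-list assignments is vacuous. Every member of $\mathcal{H}_k$ other than $K_{k+1}$ is $2$-connected and is neither complete nor an odd cycle, so it is degree-choosable by \cref{dcthm} and never fails under a degree-list assignment. What you actually need is rigidity for $k$-list assignments, where the identified vertex of a Haj\'os join (of degree at least $2k-2>k$) has a list smaller than its degree. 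That is \cref{tkes}, whose proof requires the $k$-joinability and mimic machinery.
\item Your base case (no cut of size at most $k$ separating two vertices of degree greater than $k$) still allows one vertex of degree greater than $k$. Examples are the Haj\'os join of two copies of $K_{k+1}$, or any odd wheel other than $K_4$ when $k=3$. So this case does not reduce to \cref{ertthm}. The paper needs the one-high-degree-vertex generalisation \cref{dcpolarhdthm} and its consequence \cref{krtcorollary}. These show that such $k$-restricted graphs are almost $k$-regular extensions of Gallai trees, which lie in $\mathcal{T}_k$ but mostly not in $\mathcal{H}_k$.
\end{itemize}
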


Moreover, we prove that such a graph in $\mathcal{H}_k$ is not $L$-colourable for some $k$-list assignment $L$ only if $L$ maps every vertex to the same set of $k$ colours (see \cref{mainthmstrong}).

As there is a polynomial-time algorithm for deciding if a graph with maximum local edge-connectivity at most $k$ is a member of $\mathcal{H}_k$ (see \cite{ST2018}), we obtain as a corollary that \textsc{$k$-choosability}, the problem of deciding if a graph is $k$-choosable for some fixed $k$, is polynomial-time solvable when restricted to $2$-connected graphs with maximum local edge-connectivity~$k$.

It is worth commenting further on the different connectivity requirements in \cref{stthm,mainthm}.
Recall that a \emph{block} of a graph is a maximal connected subgraph $H$ such that $|V(H)| \ge 2$ and $H$ has no cut vertices.
A graph~$G$ is $k$-colourable if and only if each block of $G$ is $k$-colourable, since the colours given to a block can be permuted, if necessary, in order to obtain a colouring of $G$.
\Cref{stthm} can be obtained by combining this fact with a characterisation of the $2$-connected graphs with maximum local connectivity $k$ that are not $k$-colourable.
However, it is not always the case that a graph is $k$-choosable when each of its blocks is $k$-choosable; the graph in \cref{figblockcounter} demonstrates this.

Nonetheless, one might still hope for a version of \cref{mainthm} where we do not require that $G$ is $2$-connected, but instead have a larger class of exceptional graphs, including the graph in \cref{figblockcounter} (for example).
Unfortunately, it appears unlikely that such a characterisation, in any reasonable form, is possible, due to the following theorem.  The proof of this result is a straightforward extension of a hardness proof of Erd\H os, Rubin, and Taylor~\cite{ERT1979}.

\begin{theorem}
    \label{hardnessthm}
    For each integer $k \ge 3$, the \textsc{$k$-choosability} problem, when restricted to graphs with maximum local edge-connectivity~$k$, is $\Pi_2$-complete.
\end{theorem}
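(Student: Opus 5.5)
Membership in $\Pi_2$ is immediate: $G$ is $k$-choosable if and only if for every $k$-list assignment $L$ there is an $L$-colouring. Without loss of generality the lists use colours from $\{1,\dots,k|V(G)|\}$, so both the universally quantified $L$ and the existentially quantified colouring are polynomial-size certificates, and checking that a colouring is a proper $L$-colouring takes polynomial time. The substance is hardness. Following the paper's hint, I would reduce from the $\Pi_2$-complete problem that Erd\H{o}s, Rubin, and Taylor use for $2$-choosability of bipartite graphs. Concretely I would take $\forall\exists$-3SAT, or more conveniently their ``$\forall\exists$-graph colouring'' variant: given a graph $H$ with a designated set of degree-$1$ vertices, decide whether every $2$-colouring of those vertices extends to a proper $3$-colouring of $H$. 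I would start from an instance where $H$ has maximum degree $3$, which is still $\Pi_2$-complete after standard degree-reduction gadgets.

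The construction attaches gadgets to $H$ so that an adversarial choice of lists simulates the universal quantifier. Each gadget hangs off a vertex through a cut vertex. The graph in \cref{figblockcounter} is the prototype: its pendant blocks are each $k$-choosable, but by choosing their lists the adversary can force the attachment vertex's colour to lie in a prescribed small set. The plan has three steps.

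First, build a \emph{forcing gadget} $F$. This is a $2$-connected graph with maximum local edge-connectivity at most $k$, attached at a single vertex $x$. For every colour $c$ in the list of $x$ there is a $k$-list assignment of $F$ under which every $L$-colouring of $F$ gives $x$ a colour other than $c$. On the other hand, for any $k$-list assignment, $F$ can be coloured under at least $k-1$ choices of colour at $x$. A natural candidate is $K_{k+1}$ minus an edge, or an $\mathcal{H}_k$ graph minus an edge, which is only colourable if its ends receive suitable colours.

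Second, attach gadgets to each vertex of $H$. For $k \ge 4$, pad each vertex with $k-3$ forcing gadgets so that its effective list drops to $3$ colours; this mimics $3$-colouring of $H$ with equal lists. Variable vertices get one extra gadget, which lets the adversary forbid one of their two colours and so encodes the universal assignment.

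Third, verify local edge-connectivity. The attachment points are cut vertices, so any two edge-disjoint paths between vertices stay within a single block. Hence the maximum local edge-connectivity of the whole graph is the maximum over its blocks. Each gadget block has connectivity at most $k$ by construction, and the block $H$ has maximum degree $3 \le k$. Since $k \ge 3$, some gadget attains exactly $k$.

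The main obstacle is the correctness argument in both directions. If the formula is true, I must show that every $k$-list assignment of the constructed graph is colourable. The difficulty is that the adversary need not give $H$'s vertices equal lists, nor use the gadgets ``as intended''. I would handle this by a counting lemma: after the gadgets remove at most one colour each, every vertex of $H$ retains a list of size at least $3$. Equal lists are then the worst case, via an Erd\H{o}s--Rubin--Taylor-style reduction argument, which I expect is exactly the delicate part that the original hardness proof handles and that needs adapting here.
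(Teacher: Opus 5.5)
You have a genuine gap: the reduction as designed cannot be correct, and it fails already at the choice of source problem.

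Take $H$ of maximum degree $3$, with gadgets that each forbid at most one colour. Every $k$-list assignment of the constructed graph then leaves at least $3\ge d_H(v)$ colours on each vertex $v$ of $H$. At variable vertices it leaves at least $2>d_H(v)=1$. By \cref{dcthm,baddegreeassign}, such a degree-list assignment is always colourable unless some component of $H$ is $K_4$. So your constructed graphs are essentially always $k$-choosable. Equivalently, extending a precolouring of degree-one vertices to a $3$-colouring of a subcubic graph is a degree-list colouring problem, solvable in polynomial time, so your source problem lies in coNP.

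Separately, the step ``equal lists are the worst case'' is false in general: the graph of \cref{figblockcounter} is $3$-colourable but not $3$-choosable. No counting lemma can bridge a colouring-extension source and a choosability target.

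The missing idea is to reduce from a problem that already quantifies over lists and allows lists shorter than degrees. That problem is $f$-choosability with $f\le k$. The Erd\H{o}s--Rubin--Taylor reduction shows it is $\Pi_2$-complete on graphs of maximum local edge-connectivity at most $3$ (\cref{ertcorollary}). Bounding local edge-connectivity, unlike bounding degree, leaves room for such vertices. Now attach $k-f(v)$ copies of a gadget with restriction index exactly one at each $v$; this gives an exact equivalence (\cref{topuplemma}). Each copy forbids at most one colour, leaving at least $f(v)$ colours for the $f$-choosable graph to use. Conversely, a bad $f$-assignment is realised by letting the copies forbid pairwise disjoint colours (\cref{switching}).

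Your candidate gadgets also fail, because they have no $k$-restricted vertex at all. In $K_{k+1}$ minus an edge, colour any vertex $x$. What remains is $K_k$, or $K_k$ minus an edge, with a degree-list assignment in which the non-neighbour of $x$ (respectively, an end of the missing edge) has more colours than its degree, so it is always colourable. More generally, deleting any edge from a graph in $\mathcal{T}_k\supseteq\mathcal{H}_k$ gives a $k$-unrestricted graph (\cref{the joinability corollary}). The intuition that ``its ends must receive suitable colours'' is a two-terminal constraint, and it does nothing when the gadget hangs off a single vertex.

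The right gadget is the block of \cref{figblockcounter}, generalised. Let $T_k$ be obtained from $I_k$ on $\{u,v\}$ by $k-1$ Haj\'os joins with copies of $K_{k+1}$. Then $T_k$ lies in $\mathcal{T}_k$ and has $m_{T_k,k}(u)=1$. It is also $k$-edge-connected, so the top-up has maximum local edge-connectivity exactly $k$.

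Your $\Pi_2$-membership argument and the block-wise bound on local edge-connectivity are fine.
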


Despite this, we can say more about the $2$-connected graphs with maximum local edge-connectivity~$k$ that can be problematic for $k$-choosability when appearing as a block in a larger connected graph.
Consider the graph in \cref{figblockcounter} again.  It consists of three blocks that are pairwise isomorphic.  Call one such block $B$.
We say that a vertex~$u$ in a graph~$G$ is \emph{$k$-restricted} if there exists a $k$-list assignment~$L$ of $G$ with $c \in L(u)$ such that every $L$-colouring~$\phi$ of $G$ has $\phi(u) \neq c$.
In the example, although $B$ is $3$-choosable, the vertex labelled $v$ is a $3$-restricted vertex of $B$.
Given $k$ graphs, where each is $k$-choosable and has a $k$-restricted vertex, one can identify the $k$-restricted vertex in each in order to obtain a new graph that is not $k$-choosable.

We prove a necessary condition for a $2$-connected graph with maximum local edge-connectivity~$k$ to have a $k$-restricted vertex, as follows.
To describe this, we first require some definitions.
Note that we allow graphs to have parallel edges in what follows.
For a graph $G$ with maximum degree at most $k$ and minimum degree strictly less than $k$, let $G^+_k$ be the graph obtained from $G$ by adding a vertex $h$ and, for each vertex $v \in V(G)$, adding $k-d_G(v)$ edge(s) in parallel between $h$ and $v$.
Note that except for possibly $h$, each vertex of $G^+_k$ has degree $k$.
We call $G^+_k$ the \emph{almost $k$-regular extension} of $G$.
A graph~$G$ is a \emph{Gallai tree} if it is non-empty, connected, and each block of $G$ is isomorphic to a complete graph or an odd cycle.
For each $k \ge 3$, let $\mathcal{T}_k$ be the class of graphs containing the almost $k$-regular extensions of any Gallai tree with maximum degree at most $k$ and minimum degree strictly less than $k$, closed under Haj\'os joins (and isomorphism).
Observe that $\mathcal{H}_k \subseteq \mathcal{T}_k$ for any $k \ge 3$.

\begin{theorem}
    \label{restrictedthm}
    Let $G$ be a $2$-connected graph with maximum local edge-connectivity~$k$, for $k \ge 3$.
    If $G$ has a $k$-restricted vertex, then $G \in \mathcal{T}_k$.
\end{theorem}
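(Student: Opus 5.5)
We argue by induction on $|V(G)|+|E(G)|$, following the structure of the Stiebitz--Toft proof of \cref{stthm}. Along the way we use a degree-choosability lemma of the kind mentioned in the abstract. Fix a $k$-list assignment $L$ and a vertex $u$ with $c \in L(u)$ such that no $L$-colouring $\phi$ has $\phi(u)=c$.

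The first step is a reformulation. Restrict $L(u)$ to $\{c\}$ and, for each neighbour $x$ of $u$, remove $c$ from $L(x)$; then delete $u$. The resulting graph $G-u$ is not colourable from these lists. In $G-u$, each vertex $x$ has list size $k$ minus the number of edges from $x$ to $u$ carrying $c$, which is at least $k$ minus its multiplicity to $u$. Equivalently, $G-u$ fails to be colourable from a list assignment $L'$ with $|L'(x)| \ge d_{G-u}(x) - (d_G(x)-k) - \ldots$. It is cleaner to phrase this as a generalised degree-choosability statement. Let $G'$ be $G$ with $u$ replaced by a set of pendant ``half-edges''. Then $G'$ is not colourable from lists with $|L'(x)| \ge k - \mu(x,u)$, where $\mu(x,u)$ is the number of edges between $x$ and $u$.

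The second step handles the case where $G$ has maximum degree $k$. Here $G-u$ has $|L'(x)| \ge d_{G-u}(x)$ for every vertex, with strict inequality whenever $d_G(x)<k$. By the Erd\H{o}s--Rubin--Taylor characterisation of degree-choosability, each block of $G-u$ is complete or an odd cycle, so $G-u$ is a Gallai tree. The lists are then forced: they come from a colouring pattern in which every vertex has exactly its degree many available colours. It follows that $G$ is recovered from the Gallai tree $G-u$ by adding $u$ joined to each vertex $x$ with multiplicity $k-d_{G-u}(x)$. That is, $G=(G-u)^+_k$ with $h=u$, so $G \in \mathcal{T}_k$. We must check that the minimum degree of $G-u$ is less than $k$; this holds because $u$ has a neighbour.

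The third step handles a vertex of degree greater than $k$. Since the maximum local edge-connectivity is $k$, Mader-type arguments (as in \cite{ST2018}) give a $2$-edge-cut or a structure allowing $G$ to be written as a Haj\'os join $(G_1,v,a)\hajos(G_2,v,b)$ of smaller $2$-connected graphs, each with maximum local edge-connectivity at most $k$. We show that a restricted vertex of $G$ yields a restricted vertex, or a non-$L$-colourability, in $G_1$ or $G_2$; non-colourability is handled by \cref{mainthm} and the $\mathcal{H}_k$ structure. The transfer works as follows. Take $L$-colourings of each side. In $G$, the edge $ab$ replaces $av$ and $vb$, so a colouring of $G$ is obtained from colourings $\phi_1$ of $G_1-av$ and $\phi_2$ of $G_2-vb$ that agree at $v$ and satisfy $\phi_1(a)\ne\phi_2(b)$. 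If both $G_1$ and $G_2$ were flexible (had no restricted vertex), we could choose colourings avoiding any conflict and colour $u$ with $c$. Hence some $G_i$ has a restricted vertex, and induction gives $G_i\in\mathcal{T}_k$. We then need the other side in $\mathcal{T}_k$ as well. For this we argue that $G_j$, with lists modified at $v$, must be non-colourable or restricted at $v$. In the degenerate case of a single-vertex-degree issue, $G_j$ is in $\mathcal{H}_k\subseteq\mathcal{T}_k$ by \cref{mainthm}.

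The main obstacle is the third step. We must find the Haj\'os decomposition of $G$ when $G$ is not $k$-regular-like, and then carry the restricted property through the join in both directions, including the case where $u=v$ is the join vertex. I would first try to reuse the Stiebitz--Toft structural lemma: a $2$-connected graph with maximum local edge-connectivity $k$ in which some vertex has degree greater than $k$ has a $2$-edge-separation corresponding to a Haj\'os split. I would then do a careful case analysis on which side contains $u$.
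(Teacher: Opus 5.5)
Your second step (maximum degree $k$) is sound; it is essentially the first case of \cref{zero or one higher degree}. The third step, however, has two genuine gaps.

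\textbf{The decomposition claim.} Nothing in \cite{ST2018} justifies your claim that a vertex of degree $>k$ lets you write $G$ as a Haj\'os join of smaller $2$-connected graphs. Their decompositions come from the theory of critical graphs, and a graph with a $k$-restricted vertex has no such structure; as the paper's introduction notes, list-critical analogues lack the needed properties. As stated, the claim is false. The odd wheel with a $5$-cycle rim is $2$-connected, has maximum local edge-connectivity $3$ and a vertex of degree $5$, and every vertex is $3$-restricted. Yet $G-v$ is bridgeless for every $v$, so $G$ is not a Haj\'os join.

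More importantly, your plan has no argument for the case of exactly one vertex $h$ of degree $>k$, and this is the heart of the paper. \Cref{dcpolarhdthm} (degree-choosability when one non-cut vertex may have a short list) drives \cref{k restricted theorem} and \cref{krtcorollary}. These show that $G-h$ is a Gallai tree and that every other vertex has degree $k$; only then does \cref{Hajos joins give k2} produce Haj\'os joins.

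With two vertices $h_x,h_y$ of degree $>k$, the paper does not find a Haj\'os join either. It takes a minimum edge cut $S$ separating them, with $|S|\le k$ by Menger. It identifies $Y$ to a single vertex $w$ to get $G'_X$, and replaces $X$ by a clique on $|S|$ vertices to get $G_Y$. It then shows both lie in $\mathcal{T}_k$, using \cref{tkminusone} to force $G'_X$ to be an odd wheel with $k=3$, and reassembles them with \cref{the hajos decomp lemma}.

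\textbf{Transferring restrictedness.} Restrictedness cannot be carried across such a split using ordinary list assignments. Fixing a colouring on one side imposes constraints of the form ``if $v$ gets $c$ then $b$ must avoid $d$'' with $d\neq c$. Likewise, when $Y$ is collapsed to a vertex $w$ of colour $d$, \emph{different} colours $\phi(y_e)$ must be removed from the lists $L(x_e)$. No ordinary edge together with a $k$-list assignment expresses this, since you cannot rename colours at one vertex without disturbing its other constraints. Your sentence about ``$G_j$, with lists modified at $v$'' is exactly where this fails.

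This is why the paper introduces polar graphs and proves the result for them (\cref{final classify theorem}). Then $Q'_X$, whose $k$ edges at $w$ are all polarised, falls under the induction hypothesis. In the Haj\'os case, the paper also needs every member of $\mathcal{T}_k$ to be $k$-joinable (\cref{the joinability corollary}, via the $k$-mimic property). That is what allows one side to be replaced by a single, possibly polarised, edge (\cref{joinabilitysimple,joinable plus unrestricted}).

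Finally, invoking \cref{mainthm} is circular, since the paper deduces \cref{mainthm} from this theorem. Within an induction you would instead use that a non-$k$-choosable graph is $k$-restricted and apply the induction hypothesis.
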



Consider again the problem of determining if a graph $G$ with maximum local connectivity $k$ is $k$-choosable, when $G$ is not $2$-connected.  If no block of $G$ is in $\mathcal{T}_k$, then no block of $G$ has a $k$-restricted vertex, and it follows that $G$ is $k$-choosable.  However, if $G$ does have such a block, it may or may not be $k$-choosable, and determining this seems hopeless, due to \cref{hardnessthm}.

\subsection{Our strategy}

In an attempt to prove \cref{mainthm}, one might hope to generalise the strategy used by Stiebitz and Toft \cite{ST2018} to prove \cref{stthm}.
Their approach relies heavily on the theory of \emph{critical} graphs; that is, graphs for which the chromatic number decreases in any proper subgraph.
Although list-critical graphs~\cite{STV2009} and choosable-critical graphs~\cite{Voigt1998} have been studied, they appear to lack many of the useful properties of critical graphs used in \cite{ST2018}.

A different approach was taken by Aboulker et al.~\cite{ABHMT2017}.
They first considered the special case where the graphs are also $k$-connected, proving a weaker version of \cref{stthm} for this class.
They use a structural property of these graphs (a $k$-connected graph with maximum local edge-connectivity~$k$ can be decomposed into parts, each containing at most one vertex of degree more than $k$, where the parts are separated by disjoint $k$-edge cuts \cite[Lemma~3.2]{ABHMT2017}) together with a version of Brooks' Theorem where a single high degree vertex is allowed~\cite[Lemma~3.1]{ABHMT2017}.
Then, in the case that $k=3$, the more general result can be obtained by a case analysis after decomposing along $2$-separations.

It is not too difficult to obtain a weaker version of \cref{mainthm} for graphs that are also $k$-connected, using a similar approach together with a known characterisation of degree-choosability (see \cref{dcthm} in \cref{prelims}).  However, dealing with $2$-separations is vastly more complicated for $k$-choosability.
To give some intuition for the difficulty, consider a graph~$G$ with maximum local connectivity $3$, where $G$ is the Haj\'os join $(G_0,v,u) \hajos (G_1,v,w)$ and $G_0$ and $G_1$ are $3$-connected graphs.
If $G_0$ and $G_1$ are both $3$-colourable, then, by permuting colours, it is easy to obtain a $3$-colouring of $G$.
Suppose $G_0$ and $G_1$ are both $3$-choosable.
Let $L$ be a $3$-list assignment for $G$ and let $L_0 = L|_{V(G_0)}$ and $L_1 = L|_{V(G_1)}$.
It is possible that, for any $L_1$-colouring~$\phi_1$ of $G_1$, if $\phi_1(v)=1$ then $\phi_1(w) = 2$.
In this case, to say that $\phi_1$ extends to an $L$-colouring of $G$, it is not enough to know that $G_0$ is $3$-choosable: we need an $L_0$-colouring~$\phi_0$ of $G_0$ where if $\phi_0(v)=1$, then $\phi_0(u) \neq 2$.

To address this, we introduce the notion of polar list colourings (see \cref{secpolar}).
We consider graphs with a subset of edges that are ``polarised'', where for a polarised edge, say for example $e=uv$, rather than the usual constraint that $u$ and $v$ are coloured different colours, we instead have a constraint that we cannot have both $u$ coloured some colour $c_u$ and $v$ coloured some colour $c_v$ (that is, if $u$ is coloured $c_u$, then $v$ is not coloured $c_v$, and vice versa).
We then prove some generalisations of a well-known characterisation of degree-choosability (see \cref{dcthm}); these results may be of independent interest.
Firstly, we prove a result characterising when graphs with polarised edges are degree-choosable (see \cref{dcpolarthm} in \cref{dcpolarsec}).
We then prove a further generalisation where a single higher-degree vertex is allowed (see \cref{dcpolarhdthm} in \cref{dcpolarhdsec}), and give a necessary property for graphs with a single higher-degree vertex to have a $k$-restricted vertex (see \cref{k restricted theorem} in \cref{k restricted sec}).
Using these results, our strategy to prove \cref{mainthm} is to first prove \cref{restrictedthm}, then consider the graphs in $\mathcal{T}_k$ that are not $k$-choosable.

We note that polar list colourings are related to DP-colourings (also known as correspondence colourings).  These were introduced by Dvorak and Postle \cite {DP2018} in order to prove that planar graphs with no cycles of length 4 to 8 are $3$-choosable, for similar reasons to those described in the previous two paragraphs.  A polar list colouring can be viewed as a DP-colouring, but DP-colourings are more general.  Bernshteyn et al.~\cite{BKP2017} proved a characterisation of degree-choosability for DP-colouring graphs (allowing for parallel edges), and the exceptional cases were described fully by Kim and Ozeki~\cite{KO2019}; these results imply our \cref{dcpolarthm}, though some analysis is required, so we provide an independent proof for completeness.
Nonetheless, we believe polar graphs are a useful abstraction, and may be of independent interest for settings, like here, where the full generality of DP-colouring is not required.

\subsection{Related classes, and previous work}

The class of graphs with maximum local edge-connectivity~$k$, for some positive integer~$k$, have been studied previously, along with several related classes.  We now describe these classes, and briefly summarise previous work relating to them.

There is an analogue of maximum local edge-connectivity for vertex connectivity.
Let $G$ be a graph, and let $u$ and $v$ be distinct vertices in $G$.
The \emph{local connectivity} between $u$ and $v$, denoted $\kappa(u,v)$, is the maximum number of internally disjoint paths from $u$ to $v$.
For a graph~$G$ with at least two vertices, the \emph{maximum local connectivity} of $G$ is the maximum of $\kappa(u,v)$ taken over all distinct vertices $u,v$ of $G$; for a graph with at most one vertex, we define the maximum local connectivity to be zero.
We let $\lambda(u,v)$ denote the local edge-connectivity between $u$ and $v$.
Clearly $\kappa(u,v) \le \lambda(u,v) \le \min\{d(u),d(v)\}$, so a graph with maximum degree at most $k$ has maximum local edge-connectivity at most $k$, and, in turn, such a graph has maximum local connectivity at most $k$.
As the class of cubic graphs has unbounded treewidth, it follows that the class of graphs with maximum local (edge-)\allowbreak connectivity $k$, for $k \ge 3$, also has unbounded treewidth.
The problem of determining the maximum number of edges in a graph with maximum local (edge-)\allowbreak connectivity $k$, and the extremal graphs, dates back to Bollob\'as and Erd\H os in the 1960s, and is well studied \cite{Bollobas1978,Leonard1973,Sorensen1974,Mader1973}.

For any integer $k \ge 2$, a graph~$G$ is $k$-connected if it has at least $k+1$ vertices and $\kappa(u,v) \ge k$ for all distinct vertices $u,v \in V(G)$ (this definition is equivalent to a more standard definition of $k$-connectedness, by Menger's theorem).
A graph~$G$ is \emph{minimally $k$-connected} if it is $k$-connected and the deletion of any edge results in a graph that is not $k$-connected.
A $k$-connected graph is minimally $k$-connected if and only if $\kappa(u,v) = k$ for every pair of adjacent vertices $u$ and $v$ \cite[Lemma~4.2]{Bollobas1978}.
So the class of minimally $k$-connected graphs is a superclass of $k$-connected graphs with maximum local connectivity $k$.

A graph $G$ is \emph{uniformly $k$-connected} if $\kappa(u,v)=k$ for every pair of distinct vertices $u$ and $v$ in $G$, whereas a graph is \emph{uniformly $k$-edge-connected} if $\lambda(u,v)=k$ for every pair of distinct vertices $u$ and $v$ in $G$.
Observe that a graph with maximum local (edge-)connectivity $k$ is uniformly $k$-connected (or uniformly $k$-edge-connected) if and only if it is $k$-connected (or $k$-edge-connected, respectively).
Moreover, a $k$-connected graph with maximum local edge-connectivity~$k$ is uniformly $k$-edge-connected.
Uniformly $k$-connected graphs were studied in \cite{BOP2002,GHS2022,GH2024,Xu2024}.
It was shown by Aboulker et al.~\cite{ABHMT2017} and, independently, G\"oring et al.~\cite{GHS2022}, that
a uniformly $k$-connected graph is uniformly $k$-edge-connected when $k=3$, but this does not hold for $k\ge 4$.
Thus, using \cref{stthm}, a polynomial-time algorithm for deciding $3$-colourability (and finding a $3$-colouring) when restricted to uniformly $3$-connected graphs follows easily (as first observed in \cite{ABHMT2017}).
However, deciding if a graph with maximum local connectivity~$k$ is $k$-colourable is NP-complete when $k = 3$ \cite[Proposition 4.2]{ABHMT2017}, and, using a variant of this approach, one can show it is also NP-complete when $k\ge 4$ (see \cref{appendix}).
%
This suggests there is no prospect of extending Brooks' Theorem to graphs with maximum local connectivity $k$, 
for $k \ge 3$.
To the best of our knowledge, the complexity of deciding if a uniformly $k$-connected graph is $k$-colourable for $k \ge 4$ remains open (see \cite[Question~1.7]{ABHMT2017}).


Finally, we note that \cref{stthm} has also been generalised to hypergraphs~\cite{SST2022}, and digraphs~\cite{AAC2023}.

\section{Preliminaries}
\label{prelims}

Our terminology follows \cite{BM2008} unless otherwise specified.
We view a graph as a pair $(V,E)$, together with an implicit incidence function that describes, for each $e \in E$, a distinct pair of vertices in $V$, which we also call the \emph{ends} of $e$.
Throughout, graphs are loopless, but may have parallel edges\footnote{Although it is atypical to allow parallel edges when considering graph colouring problems, it is convenient for our notion of polar list colourings, and for defining certain graph classes that are closed under Haj\'os join.}; as we are usually interested in graphs up to isomorphism, we still refer to an edge $e$ with ends $u$ and $v$ as ``$uv$'', without ambiguity.
A graph is \emph{null} if it has no edges, and \emph{empty} if it has no vertices.
A graph is \emph{simple} if it has no parallel edges.
A \emph{parallel class} is the set of all edges between a pair of adjacent vertices.

Let $G$ be a graph.
We write $V(G)$ and $E(G)$ to denote the vertex set and edge set of $G$, respectively.
For a graph $H$, we write $G \cong H$ to denote that $G$ is isomorphic to $H$.
For $V' \subseteq V(G)$, we let $G[V']$ denote the subgraph of $G$ on vertex set $V'$ and edge set consisting of each edge in $E(G)$ having both ends in $V'$.
We say $G'$ is an \emph{induced subgraph} of $G$ if $G' \cong G[V']$ for some $V' \subseteq V(G)$.
For $v \in V(G)$, we write $G-v$ to denote $G[V(G) \setminus \{v\}]$. 
Likewise, for $X \subseteq V(G)$, we write $G-X$ to denote $G[V(G) \setminus X]$.
For $e \in E(G)$ (or $X \subseteq E(G)$), we write $G \ba e$ (or $G \ba X$ respectively) to denote the graph with vertex set $V(G)$ and edge set $E(G) \setminus \{e\}$ (or $E(G) \setminus X$, respectively).
The \emph{degree} of a vertex $v \in V(G)$ is the number of edges incident with $v$.
For a vertex~$v \in V(G)$, we write $d_G(v)$ to denote the degree of $v$, or just $d(v)$ when $G$ is obvious from context.
The \emph{maximum degree} (or \emph{minimum degree}) of $G$ is the maximum degree (or minimum degree, respectively) of a vertex, taken over all vertices of $G$.
A graph is \emph{$k$-regular} if every vertex has degree $k$.
The \emph{neighbourhood} of a vertex $v$ in $G$, denoted $N_G(v)$ or just $N(v)$, is the set of vertices that are adjacent to $v$.
A vertex $v$ in $G$ is \emph{dominating} if $N(v) \cup \{v\} = V(G)$.
For $X,Y \subseteq V(G)$, an \emph{$(X,Y)$-path} is a path from some $u\in X$ to some $v \in Y$.
We refer to an $(x,y)$-, $(X,y)$-, and \emph{$(x,Y)$-path} rather than an $(\{x\},\{y\})$-, $(X,\{y\})$-, and $(\{x\},Y)$-path, respectively.

When convenient, we treat a path or cycle as a subgraph.
In particular, for a path $P$ we write $V(P)$ and $E(P)$ to refer to the vertices and edges of the path respectively.
The \emph{length} of a path $P$ is $|E(P)|$.
Two $(u,v)$-paths $P$ and $P'$ are \emph{internally disjoint} if $E(P) \cap E(P') = \emptyset$ and $V(P) \cap V(P') = \{u,v\}$.
The \emph{length} of a cycle or path is the number of edges in the cycle or path, and a cycle (or path) is \emph{odd} if it has odd length.
An \emph{odd wheel} is a simple graph that can be obtained from an odd cycle (of length at least three) by adding a vertex, called the \emph{hub}, that is adjacent to every vertex of the cycle.
A \emph{complete graph}, denoted $K_n$,
is a simple graph on $n$ vertices where every vertex has degree $n-1$, for some $n \ge 1$.
A \emph{clique} in a graph $G$ is a set $X \subseteq V(G)$ such that $G[X]$ is isomorphic to a complete graph.
We let $I_k$ denote the graph consisting of two vertices with $k$ edges between them.

For $v \in V(G)$, we say $v$ is a \emph{cut vertex} if $G-v$ has more components than $G$; otherwise, $v$ is an \emph{internal} vertex.
For $X \subseteq V(G)$, we say $X$ is a \emph{vertex cut} if $G-X$ has more components than $G$.  For disjoint sets $A,X,B \subseteq V(G)$, we say that $X$ separates $A$ from $B$ if every $(A,B)$-path in $G$ contains a vertex in $X$.
For $e \in E(G)$, we say $e$ is a \emph{bridge} if $G \ba e$ has more components than $G$.

For graphs $G$ and $G'$ with disjoint edge sets, we define the {\em union} of $G$ and $G'$, denoted $G \cup G'$, to be the graph with vertex set $V(G) \cup V(G')$ and edge set $E(G) \cup E(G')$.
When $G$ and $G'$ are graphs with disjoint edge sets but $|V(G) \cap V(G')| = 1$, then we say $G \cup G'$ is the $1$-join of $G$ and $G'$.

A \emph{(vertex) colouring} of a graph $G$ is a function $\phi$ from $V(G)$ to a set of colours~$Z$.
We say $\phi$ is a \emph{$k$-colouring} when $|Z|=k$.  Usually it is convenient to assume $Z = \{1,2,\dotsc,k\}$.  
A colouring of $G$ is \emph{proper} if, for each edge $uv$ of $G$, we have $\phi(u) \neq \phi(v)$.
A graph is \emph{$k$-colourable} if it has a proper $k$-colouring.
The \emph{chromatic number} of a graph $G$ is the smallest positive integer $\chi$ such that $G$ is $\chi$-colourable.

We write $[t]$ to denote the set $\{1,2,\dotsc,t\}$.  For sets $X$ and $Y$, we write $X \symdiff Y$ to denote set difference, that is, $X \symdiff Y = (X \setminus Y) \cup (Y \setminus X)$.
When $\phi$ is a function with domain $X$, and $X' \subseteq X$, we write $\phi|_{X'}$ to denote the restriction of $\phi$ to $X'$.
When $\phi$ is a function with codomain $\mathbb{N}$, we say that it is \emph{bounded by $k$}, for a positive integer~$k$, if each image is less than or equal to $k$.
We write $\emptyset$ to denote both the empty set, and a function with empty domain.


\subsection{Connectivity and blocks}

For any integer $k \ge 2$, a graph~$G$ is \emph{$k$-connected} if it has at least $k+1$ vertices and, for all distinct vertices $u,v \in V(G)$, there are at least $k$ internally disjoint paths from $u$ to $v$.
Note that, by Menger's theorem, a connected graph with at least $k+1$ vertices is $k$-connected if and only if it has no vertex cuts of size less than $k$.
A graph~$G$ is \emph{$k$-edge-connected} if it has at least two vertices and, for all distinct vertices $u,v \in V(G)$, there are at least $k$ edge-disjoint paths from $u$ to $v$.
Recall also that a \emph{block} of a graph is a maximal connected subgraph $H$ such that $|V(H)| \ge 2$ and $H$ has no cut vertices.

Let $G$ be a graph, let $\mathcal{B}_G$ denote the set of blocks of $G$, and let $X$ be the set of cut vertices of $G$.
We let $B(G)$ denote the \emph{block-cut graph} of $G$; that is, $B(G)$ is the bipartite graph on vertex set $X \cup \mathcal{B}_G$ where there is an edge between $x \in X$ and $B \in \mathcal{B}_G$ if and only if $x \in V(B)$.
The following facts are well known (see \cite{BM2008} for example).

\begin{theorem}
\label{block cut graph theorem}
    Let $G$ be a graph. The following hold:
    \begin{enumerate}
        \item $B(G)$ is a forest, and the components of $B(G)$ are the block-cut graphs of the components of $G$.\label{block cut graph is forest}
        \item For distinct blocks $B, B'$ in $G$, we have $|V(B) \cap V(B')| \le 1$ and $E(B) \cap E(B') = \emptyset$.\label{no sharing}
        \item There are distinct blocks $B$ and $B'$ of $G$ that both contain a vertex $v$ if and only if $v$ is a cut vertex of $G$.\label{only share cut}
    \end{enumerate}
\end{theorem}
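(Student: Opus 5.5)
The three items are standard facts, so I would prove them in the order (ii), (iii), (i), with each step using the previous ones. The basic tool is the following: if $H_1$ and $H_2$ are connected subgraphs with no cut vertices and $|V(H_1)\cap V(H_2)|\ge 2$, then $H_1\cup H_2$ is connected and has no cut vertex. Indeed, deleting any single vertex $x$ leaves each $H_i - x$ connected. Since the two subgraphs share at least two vertices, $H_1-x$ and $H_2-x$ still share a vertex, so their union is connected.

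\textbf{Item (ii).} Let $B$ and $B'$ be distinct blocks. If they shared two vertices, the lemma above would show that $B\cup B'$ is connected and has no cut vertex. This contradicts maximality, because $B\cup B'$ strictly contains one of them. If they shared an edge $e=uv$, then they would share both of its ends, which is again impossible. So $|V(B)\cap V(B')|\le 1$ and $E(B)\cap E(B')=\emptyset$. The edge clause also needs every edge to lie in some block. This holds because an edge together with its ends is a connected subgraph on two vertices with no cut vertex, so it extends to a maximal such subgraph.

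\textbf{Item (iii).} First suppose $v$ is a cut vertex of $G$. Then there are neighbours $a$ and $b$ of $v$ that lie in different components of $G-v$. The edges $va$ and $vb$ lie in blocks $B_a$ and $B_b$. These blocks are distinct: a block containing both $a$ and $b$ would give an $(a,b)$-path avoiding $v$ inside that block minus $v$. Conversely, suppose $v\in V(B)\cap V(B')$ with $B\ne B'$ but $v$ is not a cut vertex. Pick $a\in V(B)\setminus\{v\}$ and $b\in V(B')\setminus\{v\}$, and join them by a path $P$ in $G-v$. Then $B\cup B'\cup P$ together with $v$ is connected. I would then argue that it has no cut vertex, or at least that a suitable cycle through $v$ meets both blocks. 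That would yield a larger cut-vertex-free subgraph sharing two vertices with $B$, contradicting maximality. The cleanest route is to take $a$ and $b$ so that a cycle $C=v\,a\,P\,b\,v$ exists, choosing $a$ and $b$ as neighbours of $v$. A cycle has no cut vertex. It shares $\{v,a\}$ with $B$, so by the lemma $B\cup C$ is cut-vertex-free, and hence $C\subseteq B$. By the same argument $C\subseteq B'$. Then $B$ and $B'$ share at least two vertices, contradicting (ii).

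\textbf{Item (i).} The block-cut graph is bipartite by definition. It remains to show it has no cycles and that its components correspond to the components of $G$. For the components: each block is connected and so lies in one component of $G$. The same holds for each cut vertex, and the blocks meeting a given component, together with their cut vertices, form a connected part of $B(G)$, since a walk in $G$ passes from block to block only through shared vertices, which are cut vertices by (iii). For acyclicity, suppose $B(G)$ contains a cycle $B_1x_1B_2x_2\cdots B_tx_tB_1$ with $t\ge2$. Paths inside each $B_i$ from $x_{i-1}$ to $x_i$ link up into a cycle $C$ of $G$ passing through at least two blocks. By the lemma, $C\cup B_1$ is cut-vertex-free, which forces $C\subseteq B_1$ and similarly $C\subseteq B_2$. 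This contradicts (ii).

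The main obstacle is the bookkeeping in (i). In particular, the linked paths should be chosen so that they form a genuine cycle rather than merely a closed walk. I would handle this by taking a shortest cycle in $B(G)$ and extracting a cycle from the closed walk. Since the text says these facts are well known, citing \cite{BM2008} would also suffice.
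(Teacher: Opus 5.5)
The paper does not prove this statement; it records the three facts as well known and cites \cite{BM2008}. Your proposal instead derives everything from one union lemma: two connected, cut-vertex-free subgraphs sharing at least two vertices have a connected, cut-vertex-free union. The argument is correct.
\begin{itemize}
\item Item (ii) follows at once from maximality.
\item In (iii), the cycle $v\,a\,P\,b\,v$ is genuine because (ii) gives $V(B)\cap V(B')=\{v\}$, so $a\neq b$. The cycle is then absorbed into both $B$ and $B'$, contradicting (ii).
\end{itemize}
The citation buys brevity. Your route buys a check that the facts hold verbatim in the paper's setting: parallel edges allowed, cut vertices defined by counting components, and blocks required to have at least two vertices.

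The one step to tighten is acyclicity in (i). ``Extracting a cycle from the closed walk'' is not enough by itself, because an extracted cycle may lie inside a single block and then gives no contradiction. What the shortest-cycle choice actually provides is that the concatenated walk is already a cycle. On a shortest cycle of $B(G)$, non-consecutive blocks cannot share a vertex, since such a vertex would be a cut vertex by (iii) and would create a chord or a shorter cycle.

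You can also avoid building cycles in $G$ altogether. If $B_1x_1B_2\cdots B_tx_tB_1$ is a cycle in $B(G)$, set $H=B_1\cup\cdots\cup B_t$. Deleting any vertex $y$ leaves each $B_i-y$ connected, and because the $x_i$ are distinct it breaks at most one link of the cyclic chain. So $H$ has no cut vertex, and it strictly contains $B_1$, contradicting maximality.

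For the statement about components, say explicitly that the blocks and cut vertices of a component $C$ are exactly those of $G$ lying in $C$. Connectivity of each piece of $B(G)$ then follows from your walk argument, using the fact that every edge lies in some block. You proved that fact under (ii), but it is actually needed in (iii) and (i).
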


We say that a block $B$ of $G$ is a \emph{$K_2$-block} if $B \cong K_2$.
If $B$ and $B'$ are distinct blocks of $G$ that share a vertex, we say they are \emph{adjacent}.
If $B$ is a block of $G$ such that $B$ is a leaf of $B(G)$, then $B$ is a {\em leaf block}; otherwise, we say it is a \emph{non-leaf block}.
A leaf block contains precisely one cut vertex of $G$.
It is easily seen that if a connected graph $G$ has at least two blocks, then it has at least two leaf blocks.
We require the following elementary result.

\begin{lemma}
\label{leaf block lemma}
    Let $G$ be a connected graph with at least two blocks. Then $G$ contains a leaf block that is adjacent to at most one non-leaf block.
\end{lemma}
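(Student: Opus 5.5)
We work entirely in the block-cut graph $B(G)$, which by \cref{block cut graph theorem}\ref{block cut graph is forest} is a forest. Since $G$ is connected, $B(G)$ is connected (its components correspond to components of $G$), so $B(G)$ is a tree. Its leaves are blocks: a cut vertex lies in at least two blocks by \cref{block cut graph theorem}\ref{only share cut}, so it has degree at least $2$ in $B(G)$. Because $G$ has at least two blocks and every block of a connected graph with at least two blocks contains a cut vertex, $B(G)$ has at least one cut-vertex node and at least two leaves. The leaves are exactly the leaf blocks.

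The key step is a longest-path argument in the tree $B(G)$. Let $P = B_0 x_1 B_1 x_2 \cdots$ be a longest path in $B(G)$. Its endpoints are leaves, hence leaf blocks, and $B_0$ is adjacent (as a block) only to blocks containing its unique cut vertex $x_1$. These blocks are the neighbours of $x_1$ in $B(G)$ other than $B_0$. We show that at most one of them is a non-leaf block.

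Suppose some block $B' \ne B_0$ containing $x_1$ is a non-leaf block. Then $B'$ contains a cut vertex $y \neq x_1$, and $y$ lies in another block $B''\neq B'$, so $B(G)$ contains the path $x_1 B' y B''$. If $B' \neq B_1$ (or if $P$ has length $2$, so that $B_1$ is itself an endpoint), we can replace the initial segment of $P$ to obtain a longer path, giving a contradiction. We do this by rooting at $x_1$: the branch through $B'$ has depth at least $2$ from $x_1$, while the branch through $B_0$ has depth only $1$. So we take $P$ to be a longest path and use maximality in the standard way. Any neighbour $B'$ of $x_1$ not on $P$ with a further neighbour $y$ yields the path $B''\, y\, B'\, x_1 B_1 \cdots$, which is longer than $P$ because it replaces the single node $B_0$ by three nodes. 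Hence every block at $x_1$ other than $B_1$ is a leaf block, and $B_0$ is adjacent to at most one non-leaf block, namely possibly $B_1$.

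The only mild obstacle is the degenerate case where $P$ has length $2$, i.e.\ $P = B_0 x_1 B_1$. In this case $B(G)$ is a star centred at $x_1$, all blocks are leaf blocks, and the claim holds trivially with zero non-leaf neighbours. Otherwise the argument above applies directly.
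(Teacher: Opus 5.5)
Your proof is correct. The paper gives no proof of this lemma; it simply calls it an elementary result. So there is nothing to compare against, and a longest path in the tree $B(G)$ is a natural way to fill the gap.

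Two facts that you use implicitly should be stated, since the length comparison depends on them.

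\begin{enumerate}
\item A block $B'\neq B_0,B_1$ containing $x_1$ cannot lie on $P$. Otherwise $x_1$ and $B'$ would be adjacent in $B(G)$ but not consecutive on $P$, which would create a cycle in a tree.
\item The nodes $y$ and $B''$ cannot lie on $P$ either. The unique tree path from $x_1$ to either of them passes through $B'$, and $B'$ is off $P$.
\end{enumerate}

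Only with these is $B''\,y\,B'\,x_1\,B_1\cdots$ genuinely a path, two edges longer than $P$.

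The sentence beginning ``If $B'\neq B_1$ (or if $P$ has length $2$ \dots)'' is muddled, and the separate treatment of the length-$2$ case is unnecessary. The same extension argument applies there. In that case $B_1$ is an endpoint of a longest path and hence a leaf, so $B_0$ has no non-leaf neighbours at all.

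An equally short alternative: delete all leaves of $B(G)$ and take a leaf $x$ of the remaining tree, or its single node if only one remains. Such an $x$ is a cut vertex, because every non-leaf block keeps at least two cut-vertex neighbours. All but at most one of the blocks at $x$ are leaf blocks, so any of them will do.
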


We also use the following well-known lemma, which follows from Menger's Theorem.
\begin{lemma}
    \label{2fanlemma}
    Let $G$ be a $2$-connected graph with distinct vertices $u$, $v$, and $v'$.
    Then there exists a $(u,v)$-path $P$ and a $(u,v')$-path $P'$ such that $P$ and $P'$ are internally disjoint.
\end{lemma}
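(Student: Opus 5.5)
The statement to prove is \cref{2fanlemma}: in a $2$-connected graph $G$ with distinct vertices $u$, $v$, $v'$, there are a $(u,v)$-path and a $(u,v')$-path that are internally disjoint. I would deduce this from Menger's theorem in its set version, the fan lemma with two targets. This is done by adding an auxiliary vertex.

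First, construct $G'$ from $G$ by adding a new vertex $z$ adjacent to exactly $v$ and $v'$. I claim that $G'$ is $2$-connected. It has at least four vertices, so it suffices to show that $G' - x$ is connected for every vertex $x$.
\begin{itemize}
\item If $x = z$, then $G'-x = G$, which is connected.
\item If $x \ne z$, then $G - x$ is connected because $G$ is $2$-connected. Moreover $z$ still has a neighbour in $G' - x$, since $x$ cannot equal both $v$ and $v'$. So $G' - x$ is connected.
\end{itemize}

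By Menger's theorem, which is encoded in the definition of $2$-connected used in the paper, there are two internally disjoint $(u,z)$-paths $Q$ and $Q'$ in $G'$. The neighbours of $z$ are exactly $v$ and $v'$, and internally disjoint paths use distinct last edges into $z$. So, after relabelling, $Q$ enters $z$ through $v$ and $Q'$ enters $z$ through $v'$.

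Now delete $z$ from each path. Let $P = Q - z$, which is a $(u,v)$-path, and $P' = Q' - z$, which is a $(u,v')$-path. Both lie in $G$, because $z$ was the only added vertex and it has been removed. Since $Q$ and $Q'$ meet only in $u$ and $z$, the paths $P$ and $P'$ meet only in $u$ and share no edges. Hence $P$ and $P'$ are internally disjoint in the sense defined in the paper.

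One degenerate case needs checking: $u$ might be adjacent to $v$ or $v'$, so that a path is a single edge. This causes no problem. The argument never needs the paths to have interior vertices, and the definition of internally disjoint only asks that the paths share no edges and meet only in $u$.

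The only step requiring any care is verifying that $G'$ is $2$-connected, and the two-case check above settles it. Everything else is routine.
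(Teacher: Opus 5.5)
Your proof is correct, and it is the standard derivation from Menger's theorem that the paper alludes to; the paper gives no proof of its own. Adding $z$ adjacent to $v$ and $v'$, checking that $G'$ has no cut vertex, and stripping $z$ from the two internally disjoint $(u,z)$-paths yields $V(P)\cap V(P')=\{u\}$. That is exactly the strong form the paper later relies on, in \cref{can't have both corollary}, to get distinct first vertices $w_0\neq w_1$.
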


\subsection{Degree-choosability}
\label{dcsec}

A list assignment $L$ of $G$ is a \emph{degree-list assignment} if $|L(v)| \ge d(v)$ for all $v \in V(G)$.
A graph~$G$ is \emph{degree-choosable} if it is $L$-colourable for every degree-list assignment $L$.
Recall that a graph~$G$ is a \emph{Gallai tree} if it is connected and each block of $G$ is isomorphic to a complete graph or an odd cycle.

\begin{theorem}[Vizing~\cite{Vizing1976}; Erd\H os, Rubin, and Taylor~\cite{ERT1979}]
    \label{dcthm}
    Let $G$ be a connected graph.  Then $G$ is degree-choosable if and only if $G$ is not a Gallai tree.
\end{theorem}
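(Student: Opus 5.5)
The plan is to prove the two directions separately. Necessity uses an explicit bad list assignment built block by block. Sufficiency finds a small degree-choosable induced subgraph and colours everything else greedily towards it. Throughout, degrees count parallel edges with multiplicity. The definition of a Gallai tree only allows simple blocks, so any graph with a parallel class is not a Gallai tree.

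\emph{Gallai trees are not degree-choosable.} Induct on the number of blocks. Give each block $B$ its own palette $P_B$ of fresh colours, pairwise disjoint across blocks:
\begin{itemize}
\item if $B\cong K_n$, take $|P_B|=n-1$;
\item if $B$ is an odd cycle, take $|P_B|=2$.
\end{itemize}
Set $L(v)=\bigcup_{B\ni v}P_B$, so $|L(v)|=\sum_{B\ni v}(d_B(v))=d(v)$ and $L$ is a degree-list assignment. No block $B$ can be properly coloured using only $P_B$. For a single block this is the whole claim. Otherwise take a leaf block $B$ with cut vertex $x$. The non-cut vertices of $B$ only see $P_B$, so in any $L$-colouring $x$ must receive a colour outside $P_B$. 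Then $G-(V(B)\setminus\{x\})$, with $L(x)$ replaced by $L(x)\setminus P_B$, is again a Gallai tree carrying exactly the assignment of the same form. Induction finishes this direction.

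\emph{Non-Gallai trees are degree-choosable.} First a greedy reduction.
\begin{itemize}
\item Suppose $H$ is a connected induced subgraph of $G$ that is degree-choosable, and $L$ is a degree-list assignment of $G$.
\item Colour $V(G)\setminus V(H)$ greedily in order of non-increasing distance to $H$. Each such vertex still has an uncoloured neighbour (one step closer to $H$) when it is coloured, so it has a free colour.
\item Each $v\in V(H)$ loses at most $d_G(v)-d_H(v)$ colours, so the residual lists form a degree-list assignment of $H$.
\item Colouring $H$ from these residual lists completes the $L$-colouring of $G$.
\end{itemize}
So it suffices to find such an $H$. Since $G$ is not a Gallai tree, some block $B$ is neither complete nor an odd cycle.

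If $B$ has a parallel class between $u$ and $v$, take $H=G[\{u,v\}]$. Both vertices have degree at least $2$ in $H$, so $H$ is trivially degree-choosable.

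Otherwise $B$ is simple, $2$-connected, and neither complete nor a cycle of odd length. By the classical lemma of Rubin, $B$, and hence $G$, contains an induced subgraph $H$ that is an even cycle, an even cycle with exactly one chord, or a theta graph. Then check degree-choosability of these three families directly.

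For an even cycle, if all lists are equal, colour alternately. Otherwise pick adjacent $u,v$ with some $c\in L(u)\setminus L(v)$. Colour $u$ with $c$, proceed around the cycle away from $u$, and finish at $v$, which has lost nothing to $u$.

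Thetas and one-chord even cycles are handled the same way. When some list differs along an edge, use that edge as above and apply the greedy-ordering argument. When all lists are equal, exhibit an explicit colouring from the common list: $3$ colours at the branch vertices suffice, and $2$ colours elsewhere by parity.

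The main obstacle is the structural step: proving that every $2$-connected simple graph that is neither complete nor an odd cycle contains one of these induced configurations. I would try this by taking a shortest induced cycle, or an induced cycle together with a vertex attached to it by an ear via $2$-connectivity (cf.\ \cref{2fanlemma}), and reducing case by case. Non-completeness supplies an induced path on three vertices, and $2$-connectivity closes it into a cycle; the parity of the cycles formed then yields one of the three configurations.
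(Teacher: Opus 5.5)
Your proposal is correct and is essentially the Erd\H{o}s--Rubin--Taylor argument that the paper follows for its polar generalisation \cref{dcpolarthm} (the paper itself only cites \cref{dcthm}): disjoint block-wise palettes for necessity, and for sufficiency the parallel-edge case as in \cref{non simple}, Rubin's lemma (which the paper likewise only cites, as \cref{theorem R}), a direct check of even cycles and thetas as in \cref{even theta}, and greedy extension from an induced degree-choosable subgraph as in \cref{restriction subgraphs}, where your ordering by decreasing distance to $H$ is exactly what guarantees that each outside vertex still has an uncoloured neighbour when it is coloured. One simplification: the equal-lists theta case needs no parity bookkeeping, because the common list then has at least three colours, so every degree-$2$ vertex has a surplus and your greedy argument finishes directly.
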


Let $G_1$ and $G_2$ be graphs with disjoint edge sets, having list assignments $L_1$ and $L_2$ respectively.
We obtain a list assignment $L$ for $G_1 \cup G_2$, which we call the \emph{list assignment union} and denote $L_1 \cup L_2$, by defining
$$L(u) = \begin{cases}
    L_1(u) & \mbox{if $u \in V(G_1) \setminus V(G_2)$}\\
    L_2(u) & \mbox{if $u \in V(G_2) \setminus V(G_1)$}\\
    L_1(u) \cup L_2(u) & \mbox{if $u \in V(G_1) \cap V(G_2)$.}
\end{cases}$$
It is easily seen that the list assignment union is commutative and associative, so we can consider the union of a set of list assignments without ambiguity.

For graphs $G_1$ and $G_2$ with list assignments $L_1$ and $L_2$ respectively, we say that $L_1$ and $L_2$ are \emph{non-conflicting} if, for each $v \in V(G_1) \cap V(G_2)$, we have $L_1(v) \cap L_2(v) = \emptyset$; otherwise, they are \emph{conflicting}.
A $k$-list assignment $L$ for a graph $G$ is \emph{uniform} if $L(u) = L(v)$ for all $u,v \in V(G)$.

Let $G$ be a Gallai tree.  Note that each vertex in a block $B$ of $G$ has the same degree in $B$, which we denote $d(B)$.
A degree-list assignment $L$ of a Gallai tree~$G$ is \emph{bad} if $L$ can be obtained as the union, taken over each block $B$ of $G$, of uniform $d(B)$-list assignments that are pairwise non-conflicting.
Note, in particular, that for a bad degree-list assignment~$L$ of $G$, we have that $|L(v)|= d_G(v)$ for all $v \in V(G)$.

The following is well known (see \cite[Problem~9.12(b)]{Lovasz1979}, for example).

\begin{proposition}
    \label{baddegreeassign}
    Let $G$ be a Gallai tree with a degree-list assignment $L$.  Then $G$ is $L$-colourable if and only if $L$ is not a bad degree-list assignment.
\end{proposition}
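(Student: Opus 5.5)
We prove \cref{baddegreeassign} in two directions. For the ``only if'' direction, we assume $L$ is bad, so $L = \bigcup_B L_B$ where each $L_B$ is a uniform $d(B)$-list assignment on block $B$, say with common list $C_B$, and the $C_B$ are pairwise disjoint at shared (cut) vertices. Suppose $\phi$ is an $L$-colouring. Each vertex $v$ has $\phi(v) \in C_B$ for exactly one block $B \ni v$; say $v$ is \emph{assigned} to that block. We count: a block $B$ that is $K_{n}$ (with $d(B)=n-1$) can have at most $n-1$ of its vertices assigned to it, since all such vertices get distinct colours from a set of size $n-1$. A block that is an odd cycle (with $d(B)=2$) cannot have all its vertices assigned to it, since an odd cycle is not $2$-colourable. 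So every block has at least one vertex not assigned to it. Consider the block-cut tree $B(G)$ and orient, for each block, an edge towards a vertex not assigned to it. That vertex is assigned to a different block, so it is a cut vertex. This gives a map from blocks to incident cut vertices. Every cut vertex is assigned to exactly one of its blocks, so each cut vertex is hit by at most (number of its blocks) $-1$ blocks. This yields an injection-type count: the number of blocks is at most $\sum_x (\deg_{B(G)}(x)-1)$. For a tree (we may assume $G$ is connected) this sum equals the number of blocks minus $1$, a contradiction.

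For the ``if'' direction, we assume $L$ is not bad and induct on the number of blocks. If some vertex has $|L(v)| > d(v)$, we colour greedily in an order ending at $v$ (using connectivity, e.g.\ reverse BFS order from $v$). So we may assume $|L(v)| = d(v)$ for all $v$.

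\emph{Single block.} For $K_n$ with non-identical lists of size $n-1$, pick adjacent $u,w$ and a colour $c \in L(u)\setminus L(w)$. Colour $u$ with $c$ and delete $c$ from the other lists; then $w$ has surplus, and we finish greedily. For an odd cycle the analogous argument works.

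\emph{Several blocks.} Take a leaf block $B$ with cut vertex $x$, and let $G' = G - (V(B)\setminus\{x\})$. If $L|_B$ restricted to $V(B)\setminus\{x\}$ is not uniform, then choosing a differing pair lets us colour $G'$ first (where $x$ has surplus, $|L(x)|>d_{G'}(x)$, so it is colourable by greedy) and then extend to $B$. The extension is possible because the vertices of $B-x$ have lists of size $d(B)$, and a non-uniformity gives slack; I would handle this via the same single-block trick, colouring $B-x$ from a vertex adjacent to a distinguished one. If the lists on $B-x$ are uniform with set $C$, we define $L'(x) = L(x)\setminus C$. If $|C \cap L(x)| < d(B)$ then $|L'(x)| > d_{G'}(x)$, so $G'$ is greedily $L'$-colourable and $B$ extends, since $\phi(x)\notin C$. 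Otherwise $L$ on $G'$ with $L'$ is a degree-list assignment that is not bad (else $L$ would be bad), so induction colours $G'$ avoiding $C$ at $x$, and $B - x$ is then coloured from $C$ (for $K_n$ trivially; for an odd cycle the remaining path is $2$-colourable with the lists).

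The main obstacle is the careful case handling in the leaf-block step, ensuring extension is possible in every non-uniform configuration.
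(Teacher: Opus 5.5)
The gap is in your non-uniform leaf-block case. The rest holds up: the block--cut tree count for ``bad $\Rightarrow$ not colourable'' is correct, and your uniform leaf-block case is also correct. The paper only cites this proposition; for the polar analogue (\cref{dcponedirection}) it instead glues uncolourable blocks using \cref{1-joins bad}, and your count is a valid alternative to that.

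In the non-uniform case you colour $G'$ first, accepting whatever colour the greedy procedure gives $x$. You then assert that non-uniformity of the lists on $B-x$ lets you extend into $B$. This holds when $B$ is complete: after deleting $\phi(x)$, the lists on $B-x\cong K_{n-1}$ are either still non-uniform or one of them keeps a surplus. It fails when $B$ is an odd cycle of length at least $5$. For example, let $B$ be the cycle $x v_1 v_2 v_3 v_4 x$ with $L(v_1)=\{0,1\}$, $L(v_2)=\{1,2\}$, $L(v_3)=\{2,3\}$, $L(v_4)=\{3,0\}$. Let $G'$ be a pendant edge $xy$ with $L(x)=\{0,5,6\}$ and $L(y)=\{5\}$. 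The lists on $B-x$ are not uniform. Yet if the colouring of $G'$ gives $x$ the colour $0$, then $v_1,v_2,v_3$ are forced to $1,2,3$ and $v_4$ has no colour left. Deleting $\phi(x)$ from the two ends has turned the path $B-x$ into a bad degree-list assignment, so no single-block trick can rescue the extension.

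The repair is to avoid committing to $\phi(x)$ first. Since $B-x$ is connected and its lists all have size $d(B)$ but are not all equal, there are adjacent $u,w\in V(B)\setminus\{x\}$ and a colour $c\in L(u)\setminus L(w)$. Colour $u$ with $c$. Because $u$ is not a cut vertex of $G$, the graph $G-u$ is connected. After deleting $c$ from the lists of the neighbours of $u$, every vertex of $G-u$ still has a list at least as large as its degree in $G-u$, and $w$ has a surplus. A greedy colouring of $G-u$ ending at $w$ then completes an $L$-colouring of $G$; this is just your single-block trick applied in $G$ rather than in $B-x$ after $x$ is fixed.

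Alternatively, keep your order and use that $x$ has at least $d_B(x)=2$ free colours at the last greedy step. You would then check that, when the lists on $B-x$ are not uniform, at most one colour at $x$ blocks the extension around an odd cycle: two blocking colours $a\neq a'$ would force every list on $B-x$ to be $\{a,a'\}$. With either repair the proof goes through.
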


\section{Polar list colourings}
\label{secpolar}

In this section, we introduce polar list colourings. 

A {\em polar graph} is a pair $(G,F)$ where $G$ is a graph and $F \subseteq E(G)$.
Let $Q=(G,F)$ be a polar graph.
We call $G$ the {\em underlying graph} and $F$ the {\em polarised edges of $Q$}, and denote these $G(Q)$ and $F(Q)$ respectively.
We often refer to $Q$, rather than $G(Q)$, when there is no ambiguity; in particular, we write $V(Q)$ and $E(Q)$, rather than $V(G(Q))$ and $E(G(Q))$, respectively.
We say two polar graphs $Q_0$ and $Q_1$ are {\em isomorphic}, and write $Q_0 \cong Q_1$, if there is a pair of bijections $f_V:V(Q_0) \rightarrow V(Q_1)$ and $f_E:E(Q_0) \rightarrow E(Q_1)$ that preserve incidence and $f_E(e)$ is polarised if and only if $e$ is polarised.

A {\em polarisation} of $Q$ is a function $R$ on the polarised edges of 
$Q$ that assigns an (ordered) pair of colours to the vertices incident with the edge.
When $e=uv$ is a polarised edge, we write $R(e,u,v)=(c,d)$ to denote that $R$ assigns $u$ the colour~$c$, and $v$ the colour~$d$ (where, for a colouring to respect this polarisation, we will require that either the colour for $u$ is not $c$, or the colour for $v$ is not $d$).
Note that this assignment is associated with the edge $e$, and other polarised edges could assign other colours to $u$ and $v$. 
To simplify notation, we require that the outputs of $R$ are independent of the ordering of the ends of an edge given as input. %
Formally, for an edge $e=uv$, let $Z(e) = \{(e,u,v),(e,v,u)\}$, and for $E' \subseteq E(Q)$, let $Z(E') = \bigcup_{e \in E'}Z(e)$. %
Then $R : Z(F) \rightarrow \mathbb{N} \times \mathbb{N}$ with the property that if $R(e,u,v) = (c_u,c_v)$, then $R(e,v,u) = (c_v,c_u)$.

A {\em polar assignment} $\Gamma$ of 
$Q$ is a pair $(L,R)$, where $L$ is a list assignment of $G(Q)$ and $R$ is a polarisation of $Q$.
A \emph{$\Gamma$-colouring} $\phi$ of 
$Q$ is a colouring $\phi$ of $G(Q)$ such that
\begin{enumerate}[label=\textbf{(\textup{P\arabic*})}]
        \item $\phi(v) \in L(v)$ for all $v \in V(Q)$, and\label{Gamma 1}
        \item for each $e \in E(Q)$, if $e=uv$ is not polarised then $\phi(u) \neq \phi(v)$, otherwise, when $e=uv$ is polarised, we have $R(e,u,v) \neq (\phi(u),\phi(v))$.\label{Gamma 2}
\end{enumerate}
Note that $\phi$ need not be a proper colouring.
Throughout, we illustrate list assignments using colours (instead of integers), and illustrate a polarisation $R(e,u,v) = (c_u,c_v)$ by colouring the half of the polarised edge nearest to $u$ with the colour $c_u$, and the half nearest to $v$ with the colour $c_v$; see \cref{ncp1fig} for example.

Let $\Gamma=(L,R)$ be a polar assignment.
We say that $Q$ is {\em $\Gamma$-colourable} if there exists a $\Gamma$-colouring of $Q$.
If $L$ is a $k$-list assignment for some integer $k$, then we say that $\Gamma$ is a \emph{$k$-polar assignment}.
If, for some integer $k$, the polar graph $Q$ is $\Gamma$-colourable for each $k$-polar assignment~$\Gamma$, then we say that $Q$ is {\em $k$-choosable}.
If $L$ is uniform, then we say that $\Gamma$ is {\em uniform}.
Let $f:V(Q)\rightarrow \mathbb{N}$ be a function.
If $|L(v)|=f(v)$ for all $v \in V(Q)$, then we say that $\Gamma$ is an \emph{$f$-polar assignment}.
If $Q$ is $\Gamma$-colourable for each $f$-polar assignment~$\Gamma$, then we say that $Q$ is {\em $f$-choosable}. 

Occasionally, it is useful to consider a polar graph $Q$ together with a polar assignment $\Gamma$ for $Q$, as a pair $(Q,\Gamma)$.
In this case, we say that ``$(Q,\Gamma)$ is colourable'' as a shorthand for ``$Q$ is $\Gamma$-colourable''.

Notice that if $F(Q)=\emptyset$, then $Q$ is $\Gamma$-colourable if and only if $G(Q)$ is $L$-colourable.
So polar list colouring is a generalisation of list colouring.
It is natural to ask if there exist any polar graphs that are not $k$-choosable, but whose underlying graphs are $k$-choosable.
As it turns out, such graphs do exist.
\Cref{ncp1fig} illustrates a polar graph that is not $f$-choosable, but the underlying graph is $f$-choosable, when $f$ maps $v_3$ and $v_4$ to $3$, and maps every other vertex to $2$.
\Cref{ncp2fig} illustrates a polar graph that is not $3$-choosable, but its underlying graph is $3$-choosable.
The fact that the latter polar graph is not $3$-choosable will be evident by the end of this section (see \cref{topuplemma}), when we will describe the construction of a polar graph with a $k$-polar assignment that simulates a polar graph with an $f$-polar assignment.


\begin{figure}[hb]
    \centering
    \begin{subfigure}{0.45\textwidth}
        \centering
        \includegraphics[scale=0.9]{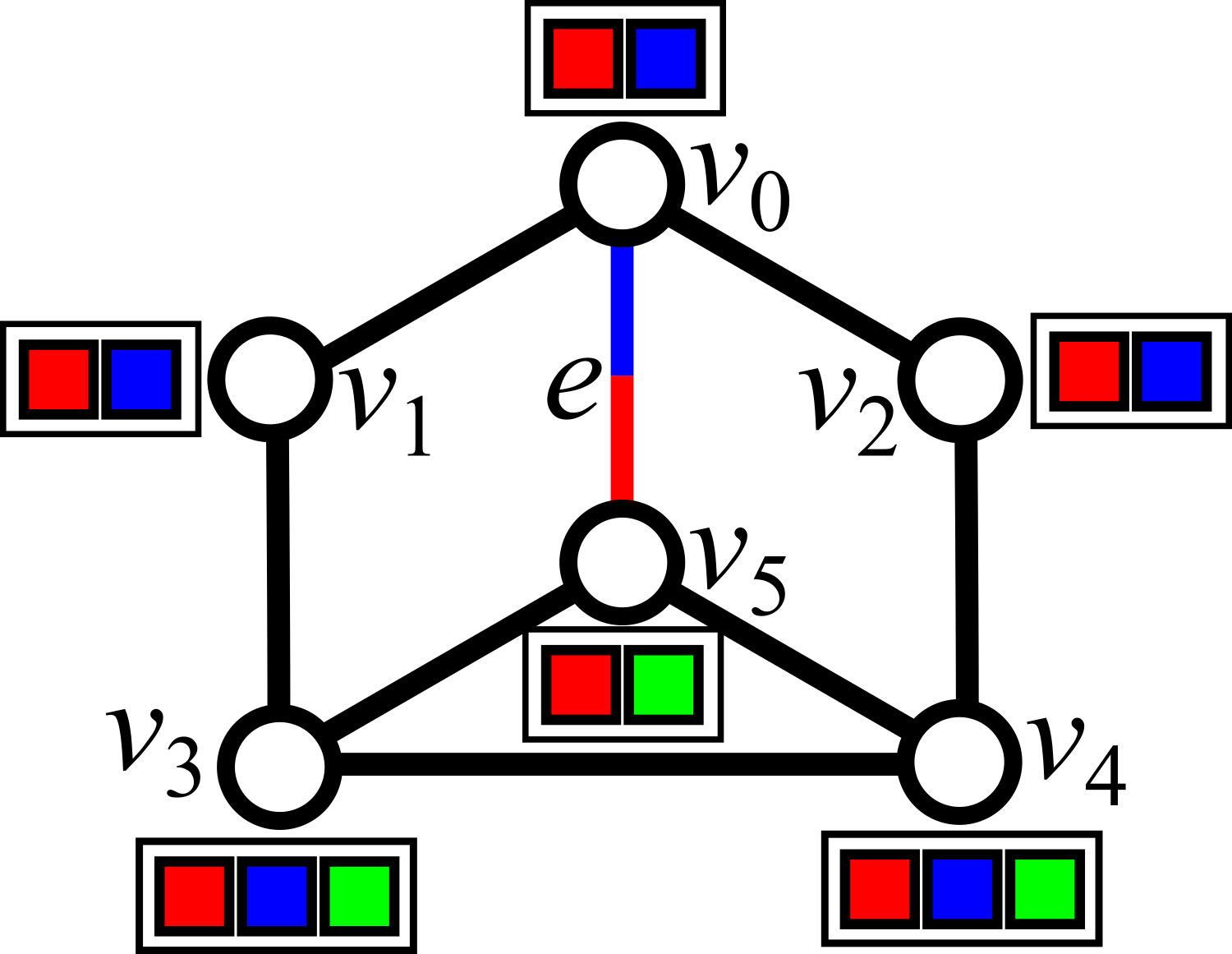}
        \subcaption{A polar graph $Q_6$, where $e$ is the only polarised edge. %
            This polar graph is not $f$-choosable, but the underlying graph is $f$-choosable (where $f(v_3)=f(v_4)=3$ and $f(v_0)=f(v_1)=f(v_2)=f(v_5)=2$). %
            An $f$-polar assignment that certifies this is illustrated.}
        \label{ncp1fig}
    \end{subfigure}\quad
    \begin{subfigure}{0.45\textwidth}
        \centering
        \includegraphics[scale=0.45]{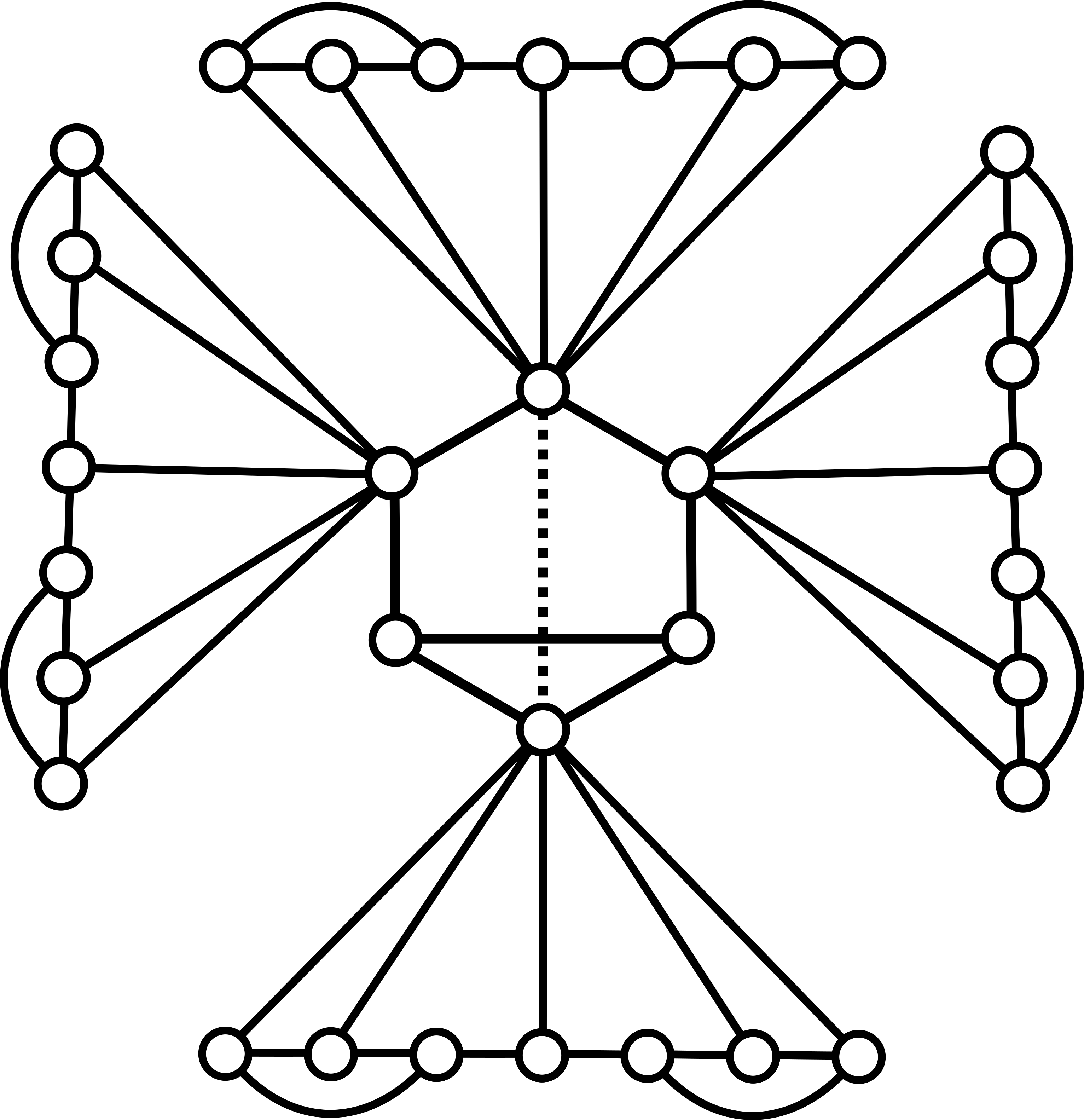}
        \subcaption{A polar graph that is not $3$-choosable, but the underlying graph is $3$-choosable.  The single polarised edge is illustrated with a dotted line.}
        \label{ncp2fig}
    \end{subfigure}
    \caption{Polar graphs that are not choosable, but the underlying graph is choosable.}
\end{figure}

\subsection{Substructures}

Let $Q=(G,F)$ be a polar graph.
We say that $Q'=(G',F')$ is a \emph{polar subgraph} of $Q$ whenever $G'$ is a subgraph of $G$ and 
$F' = F \cap E(G')$.
We say a polar subgraph $Q'$ of $Q$ is \emph{proper} if $Q' \neq Q$.
For a subgraph $G'$ of $G$, we say that $(G', F \cap E(G'))$ is the \emph{polar subgraph induced by $G'$}.
In particular, for $e \in E(Q)$, we write $Q \ba e$ to denote the polar subgraph induced by $G \ba e$.
The \emph{connected components} of $Q$ are the polar subgraphs of $Q$ whose underlying graphs are the connected components of $G$,
and the \emph{blocks} of $Q$ are the polar subgraphs of $Q$ whose underlying graphs are the blocks of $G$.
We write $\mathcal{B}_Q$ to denote the set of all blocks of $Q$.

Let $R$ be a polarisation of $Q$, and let $Q'$ be a polar subgraph of $Q$.
The \emph{restriction} of $R$ to $Q'$, denoted $R|Q'$, is the restriction of $R$ to $Z(F(Q'))$, that is, triples whose first component is in $F(Q')$.
Now let $\Gamma=(L,R)$ be a polar assignment for $Q$.
The \emph{restriction} of $\Gamma$ to $Q'$, denoted $\Gamma|Q'$, is the polar assignment $(L|_{V(Q')}, R|Q')$.
Note that if $Q$ is $\Gamma$-colourable, then $Q'$ is $\Gamma|Q'$-colourable.
For simplicity, we refer to a $\Gamma|Q'$-colouring of $Q'$ as a $\Gamma$-colouring of $Q'$, and say $Q'$ is $\Gamma$-colourable, rather than $\Gamma|Q'$-colourable.

We say that $Q'$ is an \emph{induced polar subgraph} of $Q$ if there exists some $V' \subseteq V(Q)$ such that $Q'=(G[V'], F \cap E(G[V']))$, in which case we denote $Q'$ by $Q[V']$.
For $V'' \subseteq V(Q)$, we also write $Q-V''$ to denote the induced subgraph $Q[V(Q) \setminus V'']$.

\subsection{Colour deletion and restricted vertices}

Sometimes we would like to colour a small set of vertices and consider if this partial colouring can be extended to the rest of the graph.
We now formalise these notions in the context of polar list colourings.


Let $Q'$ and $Q$ be polar graphs with $V(Q') \subseteq V(Q)$, and let $\Gamma'$ and $\Gamma$ be polar assignments of $Q'$ and $Q$ respectively.
Let $\phi'$ be a $\Gamma'$-colouring of $Q'$.
For a $\Gamma$-colouring $\phi$ of $Q$, we say that \emph{$\phi'$ extends to $\phi$} if $\phi|_{V[Q']}=\phi'$.
We say that \emph{$\phi'$ extends to a $\Gamma$-colouring of $Q$} if there exists a $\Gamma$-colouring $\phi$ of $Q$ such that $\phi'$ extends to $\phi$.
When $\phi'$ is a $\Gamma$-colouring of $Q'$ (that is, $\Gamma'=\Gamma|Q'$) and $\phi'$ extends to a $\Gamma$-colouring of $Q$, we also say simply that \emph{$\phi'$ extends to $Q$}.


\begin{figure}[b]
    \centering
    \includegraphics[scale=.7]{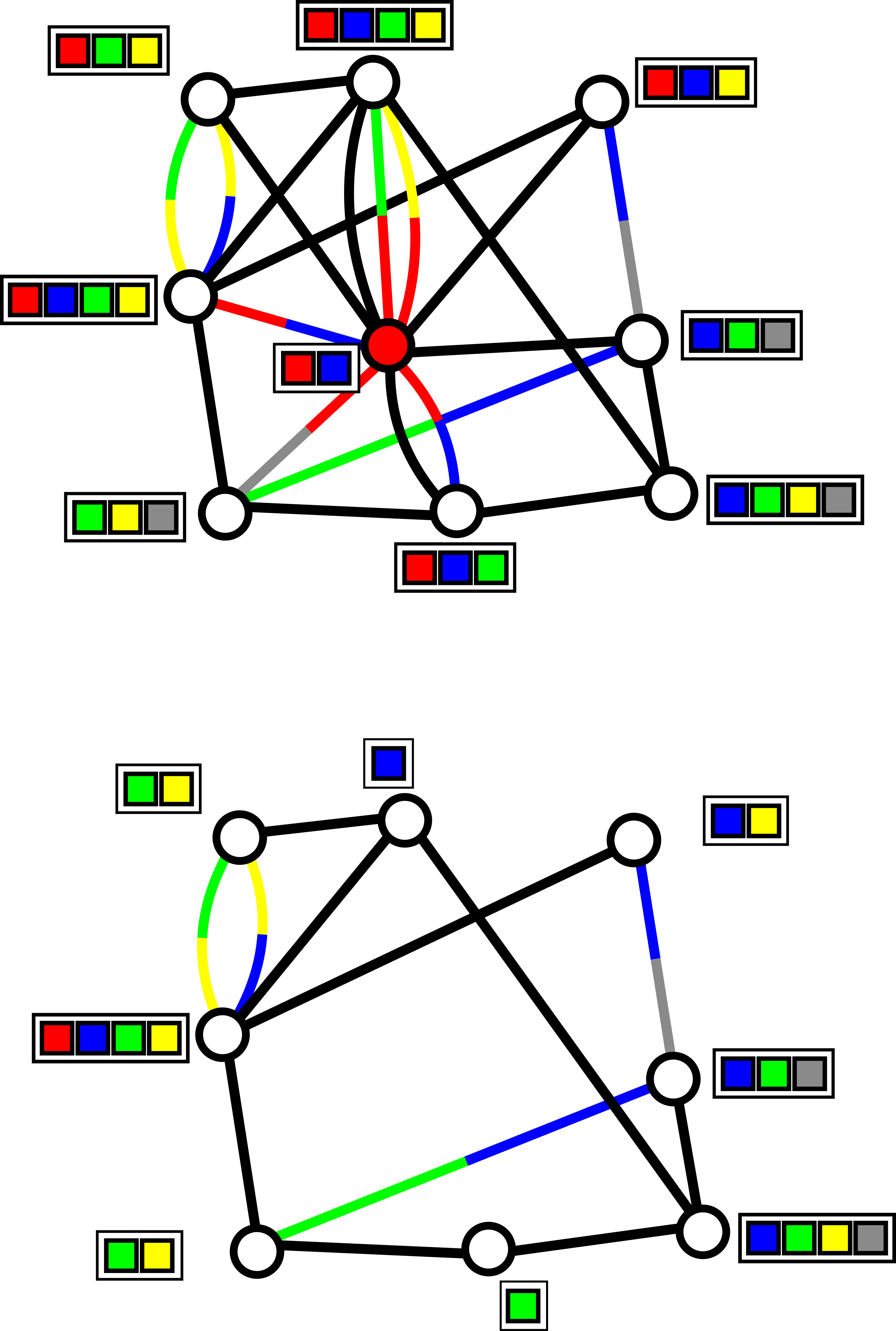}
    \caption{An example of a colour deletion. At the top is a polar graph with a polar assignment, and we consider a colouring $\phi$ where the central vertex $v$ is coloured red. At the bottom is the resulting $(\{v\},\phi)$-colour deletion.}
    \label{cdeg}
\end{figure}

\begin{definition}[colour deletion]
\label{full colour deletion def}
Let $Q$ be a polar graph with polar assignment~$\Gamma$, and let $\phi$ be a function such that $\phi|_{V'}$ is a $\Gamma$-colouring of $Q[V']$ for some $V' \subseteq V(Q)$. The {\em $(V',\phi)$-colour deletion} from $(Q,\Gamma)$ is the pair $(Q',\Gamma')$ such that
    $Q'=Q-V'$ and $\Gamma' = (L',R|Q')$ where $L'$ is the list assignment on $V(Q')$ such that, for each $u \in V(Q')$, we have $L'(u) = L(u) \setminus C_u$ with
    \begin{multline*}
        C_u = \{c \in L(u) : \textrm{there exists } e=uv  \in E(Q) \textrm{ with } v \in V' \textrm{ such that either } \\
            (e \notin F(Q) \textrm{ and } \phi(v)=c) \textrm { or } (e \in F(Q) \textrm{ and }R(e,u,v)=(c,\phi(v)))\}).
    \end{multline*}
\end{definition}

An example of a colour deletion is given in \cref{cdeg}.

If $e=uv \in E(Q)$ for some $u \in V(Q')$ and $v \in V'$ such that either $(e \notin F(Q)$ and $\phi(v)=c)$ or $(e \in F(Q)$ and $R(e,u,v)=(c,\phi(v)))$,
then we say that $e$ {\em removes} $c$ from $L(u)$. %
Note that each edge $uv$ with $u \in V(Q')$ and $v \in V'$ can remove at most one colour from $L(u)$. %
Therefore, after a colour deletion, the number of colours that are removed from the list of a vertex $u \in V(Q')$ is at most the number of edges between $u$ and vertices in $V'$. %

The next lemma demonstrates why colour deletions are useful;
it is used freely without reference.  

\begin{lemma}
    Let $Q$ be a polar graph with polar assignment $\Gamma$.
    Let $(V_1,V_2)$ be a partition of $V(Q)$, and let $\phi : V(Q) \rightarrow \mathbb{N}$ be a function such that $\phi|_{V_1}$ is a $\Gamma$-colouring of $Q[V_1]$.
    Let $(Q_2,\Gamma_2)$ be the $(V_1,\phi|_{V_1})$-colour deletion from $(Q,\Gamma)$.
    Then $\phi$ is a $\Gamma$-colouring of $Q$ if and only if $\phi|_{V_2}$ is a $\Gamma_2$-colouring of $Q_2$.
\end{lemma}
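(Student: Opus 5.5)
This lemma is a direct unpacking of the definitions of $\Gamma$-colouring and of the $(V_1,\phi|_{V_1})$-colour deletion. We take $\Gamma=(L,R)$ and $\Gamma_2=(L_2,R|Q_2)$ with $Q_2=Q-V_1$. The strategy is to sort the constraints \ref{Gamma 1} and \ref{Gamma 2} for $\phi$ on $Q$ into three groups:
\begin{enumerate}
\item constraints involving only vertices and edges of $Q[V_1]$;
\item constraints involving only vertices and edges of $Q[V_2]=Q_2$;
\item constraints coming from edges $e=uv$ with $u\in V_2$ and $v\in V_1$.
\end{enumerate}
The first group holds by hypothesis, since $\phi|_{V_1}$ is a $\Gamma$-colouring of $Q[V_1]$. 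It remains to show that groups (ii) and (iii) together hold exactly when $\phi|_{V_2}$ is a $\Gamma_2$-colouring of $Q_2$.

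The edge constraints of (ii) are literally the same in $Q$ and in $Q_2$. This is because $Q_2$ is the induced polar subgraph on $V_2$: it has the same edges, the same polarised edges $F\cap E(G[V_2])$, and the restricted polarisation $R|Q_2$. The vertex constraints differ only in the lists, since $L_2(u)=L(u)\setminus C_u$. So the whole task is to prove, for each $u\in V_2$,
\[
\phi(u)\in L_2(u)\iff \phi(u)\in L(u)\text{ and every cross edge } e=uv \text{ with } v\in V_1 \text{ satisfies \ref{Gamma 2}.}
\]
Every cross edge has exactly one end in $V_2$, so these per-vertex statements together cover all of group (iii).

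Fix $u\in V_2$ with $\phi(u)\in L(u)$. By definition, $\phi(u)\in C_u$ exactly when some edge $e=uv$ with $v\in V_1$ satisfies one of two conditions:
\begin{itemize}
\item $e$ is unpolarised and $\phi(v)=\phi(u)$, which is exactly a violation of \ref{Gamma 2} for $e$; or
\item $e$ is polarised and $R(e,u,v)=(\phi(u),\phi(v))$, which is again exactly a violation of \ref{Gamma 2}.
\end{itemize}
Hence $\phi(u)\notin C_u$ if and only if all cross edges at $u$ satisfy \ref{Gamma 2}, which proves the displayed equivalence.

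The only mildly delicate point is the symmetry convention for $R$. Condition \ref{Gamma 2} may be stated with the ends in either order, and we need the orientation used in $C_u$ to match it. The convention $R(e,v,u)=(c_v,c_u)$ whenever $R(e,u,v)=(c_u,c_v)$ gives exactly this, so there is no genuine obstacle. Putting the three groups together gives both directions of the lemma.
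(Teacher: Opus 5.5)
Your proof is correct and is essentially the paper's argument: both just unpack the definitions. The key observation in each is that, for $u\in V_2$ with $\phi(u)\in L(u)$, having $\phi(u)\in C_u$ is exactly a violation of \ref{Gamma 2} on some edge from $u$ to $V_1$. The only difference is organisational: you combine both directions into one per-vertex biconditional, whereas the paper proves the two implications separately.
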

\begin{proof}
    Let $\Gamma = (L,R)$ and $\Gamma_2 = (L_2,R_2)$.
    Assume first that $\phi|_{V_2}$ is a $\Gamma_2$-colouring of $Q_2$.
    For any $v \in V_1$, we have that $\phi(v) \in L(v)$, and for any $v \in V_2$ we have $\phi(v) \in L_2(v) \subseteq L(v)$.
    Therefore $\phi$ satisfies \ref{Gamma 1}.
    If $e$ is an edge with both ends in $V_1$ or both ends in $V_2$, then since $\phi|_{V_1}$ is a $\Gamma$-colouring of $Q[V_1]$, and $\phi|_{V_2}$ is a $\Gamma_2$-colouring, \ref{Gamma 2} will be satisfied for the edge $e$.
    Suppose $e=uv$ where $v \in V_1$ and $u \in V_2$.
    First assume that $e$ is not polarised.
    In this case $\phi(v) \notin L_2(u)$ by \cref{full colour deletion def}, but $\phi(u) \in L_2(u)$, so $\phi(u) \neq \phi(v)$.
    So \ref{Gamma 2} is satisfied for such a non-polarised edge.
    Now assume that $e$ is polarised and let $R(e,u,v)=(c,d)$.
    Clearly \ref{Gamma 2} is satisfied for $e$ if $\phi(v) \neq d$.
    On the other hand, if $\phi(v)=d$, then $c \notin L_2(u)$ by \cref{full colour deletion def}, so $\phi(u) \neq c$, and \ref{Gamma 2} is satisfied for $e$.
    Therefore $\phi$ is a $\Gamma$-colouring of $Q$.

    Next assume that $\phi$ is a $\Gamma$-colouring of $Q$.
    Clearly $\phi|_{V_2}$ will satisfy \ref{Gamma 2} on every edge of $Q_2$.
    Hence we need only check that $\phi|_{V_2}$ satisfies \ref{Gamma 1}, that is $\phi(u) \in L_2(u)$ for all $u \in V(Q_2)$.
    Suppose that $\phi(u) \notin L_2(u)$ for some $u \in V(Q_2)$.
    Then, by \cref{full colour deletion def}, $\phi(u) \in C_u$, that is, there exists $e=uv \in E(Q)$ with $v \in V_1$ such that either $e \notin F(Q)$ and $\phi(v)=\phi(u)$, or $e\in F(Q)$ and $R(e,u,v)=(\phi(u),\phi(v))$. %
    In either case, this violates \ref{Gamma 2}, contradicting that $\phi$ is a $\Gamma$-colouring of $Q$.
    We deduce that $\phi(u) \in L_2(u)$ for all $u \in V(Q_2)$, and it follows that $\phi|_{V_2}$ is a $\Gamma_2$-colouring of $Q_2$, as required.
\end{proof}

When performing a colour deletion for a single vertex, we write ``the $(v,c)$-colour deletion from $(Q,\Gamma)$'' to mean ``the $(\{v\},\phi)$-colour deletion from $(Q,\Gamma)$ where $\phi(v)=c$''.
It is easy to see that a $(V',\phi)$-colour deletion is equivalent to a sequence of $(v,\phi(v))$-colour deletions for each $v\in V'$.

Recall, from the introduction, the notion of a $k$-restricted vertex in a graph.  We now define $k$-restricted vertices in polar graphs.

\begin{definition}[restricted vertex]
\label{restricted vertex def}
Let $Q$ be a polar graph with polar assignment~$\Gamma=(L,R)$. Let $v \in V(Q)$ and let $c \in L(v)$ such that the $(v,c)$-colour deletion from $(Q,\Gamma)$ is not colourable. Then we say that $v$ is {\em $\Gamma$-restricted on $c$ in $Q$}.
We also say that $v$ is a \emph{$\Gamma$-restricted vertex} of $Q$.
\end{definition}

Let $Q$ be a polar graph.
If there exists a $k$-polar (or $f$-polar) assignment~$\Gamma$ such that $Q$ has a $\Gamma$-restricted vertex $v$, then we say that $Q$ is \emph{$k$-restricted} (or \emph{$f$-restricted}, respectively), and that $v$ is a \emph{$k$-restricted vertex} in $Q$ (or an \emph{$f$-restricted vertex} in $Q$, respectively).
For a polar assignment~$\Gamma$, we say that $Q$ is {\em $\Gamma$-restricted} if $Q$ has a $\Gamma$-restricted vertex. %
We say that $Q$ is {\em $k$-unrestricted} (respectively, {\em $f$-unrestricted}) if, for every $k$-polar assignment $\Gamma$ (respectively, $f$-polar assignment $\Gamma$), we have that $Q$ is not $\Gamma$-restricted. %
If $G$ is a graph such that every polar graph $Q$ with $G(Q)=G$ is $k$-unrestricted (respectively, $f$-unrestricted), then we say that $G$ is {\em $k$-polar-unrestricted} (respectively {\em $f$-polar-unrestricted}).

Let $Q$ be a polar graph with polar assignment $\Gamma=(L,R)$, and let $v$ be a vertex in $V(Q)$. We define
$$M_{Q,\Gamma}(v) = \{c \in L(v) : v \textrm{ is $\Gamma$-restricted on $c$ in $Q$}\}.$$
We define the {\em $k$-restriction index} of a vertex $v$, denoted $m_{Q,k}(v)$, to be the maximum $|M_{Q,\Gamma}(v)|$ attains over all $k$-polar assignments $\Gamma$ for $Q$. We define the {\em $f$-restriction index} of a vertex $v$, denoted $m_{Q,f}(v)$, to be the maximum $|M_{Q,f}(v)|$ attains over all $f$-polar assignments $\Gamma$ for $Q$.

It will be useful to manipulate the colours that certain vertices are restricted on.  This is possible with the aid of the next \lcnamecref{switching}.
We first introduce some terminology.
Let $Q$ be a polar graph, let $f : V(Q) \rightarrow \mathbb{N}$, and let $\Gamma = (L,R)$ be an $f$-polar assignment for $Q$.
We say that $c \in \mathbb{N}$ is \emph{new for $\Gamma$} if $c \notin L(v)$ for all $v \in V(Q)$, and $c \notin R(e,u,v)$ for all $e=uv \in F(Q)$.
Suppose $c,c' \in \mathbb{N}$ such that $c'$ is new for $\Gamma$, but $c$ is not new for $\Gamma$.
We say that the $f$-polar assignment $\Gamma'=(L',R')$ is obtained from the \emph{colour swap of $c$ and $c'$ in $\Gamma$} when
    \[
    L_c(v) = \begin{cases}
        L(v) \setminus \{c\} \cup \{c'\} & \mbox{if $c \in L(v)$,}\\
        L(v)& \mbox{otherwise, and}
    \end{cases}
    \]
    \[
    R_c(e,u,v) = \begin{cases}
        (c',c') & \mbox{if $R(e,u,v)=(c,c)$,}\\
        (c',c_v)& \mbox{if $R(e, u,v)=(c,c_v)$ where $c_v \neq c$,}\\
        (c_u,c')& \mbox{if $R(e,u,v)=(c_u,c)$ where $c_u \neq c$,}\\
        R(e,u,v) & \mbox{otherwise.}
    \end{cases}
    \]

\begin{lemma}
\label{switching}
Let $Q$ be a polar graph, let $f : V(Q) \rightarrow \mathbb{N}$, and let $r \in V(Q)$. Let $C \subseteq \mathbb{N}$ with $|C|=m_{Q,f}(r)$. Then there exists an $f$-polar assignment $\Gamma_C$ for $Q$ such that $M_{Q,\Gamma_C}(r)=C$.
\end{lemma}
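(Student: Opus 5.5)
Start from an $f$-polar assignment $\Gamma$ attaining the restriction index, so $|M_{Q,\Gamma}(r)| = m_{Q,f}(r) = |C|$. Then rename colours, one at a time, by colour swaps until the set of restricted colours at $r$ is exactly $C$. The key observation is that a colour swap is just a relabelling of one colour by another. So for any polar assignment $\Gamma'$ obtained from $\Gamma$ by the colour swap of $c$ and a new colour $c'$, the map sending a colouring $\phi$ to $\sigma\circ\phi$ is a bijection between $\Gamma$-colourings and $\Gamma'$-colourings, where $\sigma$ swaps $c$ with $c'$. The same map is a bijection between colourings of the corresponding colour deletions. Hence
\[
M_{Q,\Gamma'}(r) = \sigma\bigl(M_{Q,\Gamma}(r)\bigr).
\]

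First I will verify this correspondence directly. Let $\sigma$ be the transposition of $c$ and $c'$ on $\mathbb{N}$. Then $L'(v) = \sigma(L(v))$ for all $v$, because $c'$ is in no list. Also $R'(e,u,v) = (\sigma\times\sigma)(R(e,u,v))$ for every polarised edge, because $c'$ appears in no polarisation. Conditions \ref{Gamma 1} and \ref{Gamma 2} are preserved under applying $\sigma$ to everything. For the colour deletion, the $(r,d)$-colour deletion from $(Q,\Gamma)$ is carried by $\sigma$ onto the $(r,\sigma(d))$-colour deletion from $(Q,\Gamma')$. So $r$ is $\Gamma$-restricted on $d$ if and only if it is $\Gamma'$-restricted on $\sigma(d)$. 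Swaps preserve list sizes, so $\Gamma'$ is still an $f$-polar assignment.

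Next, the iteration. Write $M = M_{Q,\Gamma}(r)$ and fix a bijection $\beta : M\setminus C \to C\setminus M$; these sets have equal size since $|M| = |C|$. The difficulty is that a target colour $d \in C\setminus M$ may already be used by $\Gamma$, so it is not new and we cannot swap it in directly. To handle this, I first free up all colours of $C$. For each colour $d \in C$ that is used by $\Gamma$, swap $d$ with a fresh colour never used by $\Gamma$ and outside $C$; such a colour exists because only finitely many colours are used and $\mathbb{N}$ is infinite. After these swaps, no colour of $C$ is used, and the restricted set $M$ has been replaced by its image $M^*$, which still has size $|C|$ and is disjoint from $C$.

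Then, for each $m \in M^*$, perform the colour swap of $m$ with a distinct colour of $C$, following a bijection $M^*\to C$. Each target colour is new at the moment it is swapped in, since the colours of $C$ are used only once they have been placed and each is placed only once. Composing the swaps, the restricted set becomes exactly $C$, giving the required $\Gamma_C$.

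If $C = \emptyset$, then $m_{Q,f}(r) = 0$ and any $f$-polar assignment works. The only real obstacle is the bookkeeping above: ensuring every swap target is new at the moment of swapping. The preliminary step of first moving all of $C$ off the used colours resolves this.
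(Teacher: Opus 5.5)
Your proof is correct and follows the paper's route. You start from an assignment attaining $m_{Q,f}(r)$ and treat a colour swap with a new colour as a relabelling that carries $M_{Q,\Gamma}(r)$ to its image; you then first move every used colour of $C$ onto fresh colours outside $C$, and finally swap the restricted colours into $C$ one at a time. Two remarks are cosmetic only: your bijection $\beta$ is never actually used, and in the first phase each fresh colour must be new for the current assignment, not merely for the original $\Gamma$.
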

\begin{proof}
    Let $\Gamma=(L,R)$ be an $f$-polar assignment such that $|M_{Q,\Gamma}(r)|=m_{Q,f}(r)$.
    Let $\Gamma_c=(L_c,R_c)$ be obtained from the colour swap of $c$ and $c'$ in $\Gamma$.
    Observe that since $c' \notin L(v)$ for any $v \in V(Q)$, we have that $\Gamma_c$ is an $f$-polar assignment for $Q$, and $c$ is new for $\Gamma_c$. 
    Note also that there is a bijection $\sigma$ between $\Gamma$-colourings of $Q$, and $\Gamma_c$-colourings of $Q$, that exchanges any $c$ for $c'$. 
    It follows that $c' \in M_{Q,\Gamma_c}(v)$ if and only if $c \in M_{Q,\Gamma}(v)$. 

    We now describe how to construct $\Gamma_C$.
    First, for each $c \in C$ such that $c$ is not new for $\Gamma$, choose some new $c' \notin C$, and perform the above operation.
    Having done this, we obtain a polar assignment $\Gamma'$ for $Q$ such that every $c \in C$ is new for $\Gamma'$. 
    This allows us to perform the above operation $|C|$ more times, each time exchanging a colour in $C$ with a colour in $M_{Q,\Gamma'}(r) \setminus C$, until we obtain $\Gamma_C$ where $M_{Q,\Gamma_C}(r)=C$, as required.
\end{proof}

The next two lemmas are simple, but are useful throughout for dealing with polarised edges or parallel edges.
For vertices $u$ and $v$ in a polar graph $Q$, we write $e_Q(u,v)$ to denote the number of edges that join $u$ and $v$, that is, $e_Q(u,v) = |\{e \in E(Q) : e=uv\}|$.  When $Q$ is clear from context, we write $e(u,v)$ rather than $e_Q(u,v)$.

\begin{lemma}
\label{cdlemma1}
Let $Q$ be a polar graph with polar assignment $\Gamma=(L,R)$, let $v \in V(Q)$, and let $e=uv \in F(Q)$.
For distinct $a,b \in L(v)$, let $(Q-v,\Gamma_a)$ and $(Q-v,\Gamma_b)$ be the $(v,a)$- and $(v,b)$-colour deletions from $(Q,\Gamma)$ respectively, where $\Gamma_a = (L_a,R_a)$ and $\Gamma_b = (L_b,R_b)$.
Then either $|L_a(u)| > |L(u)|-e(u,v)$ or $|L_b(u)| > |L(u)|-e(u,v)$.
\end{lemma}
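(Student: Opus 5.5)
The plan is a direct count of the colours removed from $L(u)$ in the two colour deletions, using the fact that a polarised edge can remove a colour under at most one of the two choices of colour for $v$.

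First, for a single colour deletion, each edge joining $u$ and $v$ removes at most one colour from $L(u)$, and no other edge affects $L(u)$, since only $v$ is being coloured. Hence
\[
|L_a(u)| \ge |L(u)| - e(u,v) \quad\text{and}\quad |L_b(u)| \ge |L(u)| - e(u,v).
\]
Equality in the first inequality requires that every one of the $e(u,v)$ edges between $u$ and $v$ removes a colour from $L(u)$ when $v$ is coloured $a$, and that these removed colours are pairwise distinct. Equality in the second requires the same when $v$ is coloured $b$.

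Now look at the given polarised edge $e=uv$, and write $R(e,u,v)=(c,d)$. By \cref{full colour deletion def}, $e$ removes a colour from $L(u)$ in the $(v,\phi(v))$-colour deletion only if $\phi(v)=d$ and $c\in L(u)$. Since $a\ne b$, at most one of $a,b$ equals $d$. Say $a\ne d$ (the other case is symmetric). Then in the $(v,a)$-colour deletion the edge $e$ removes no colour from $L(u)$. So at most $e(u,v)-1$ edges remove colours, giving
\[
|L_a(u)|\ge |L(u)|-(e(u,v)-1)>|L(u)|-e(u,v).
\]
If instead $b\ne d$, the same argument gives the strict inequality for $|L_b(u)|$. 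This proves the lemma.

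There is no real obstacle here. The only point needing care is the bookkeeping that each edge removes at most one colour, and that removed colours coinciding can only make the lists larger, which strengthens the inequality.
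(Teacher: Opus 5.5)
Your proposal is correct and follows essentially the same argument as the paper: each of the $e(u,v)$ edges between $u$ and $v$ removes at most one colour from $L(u)$, and the polarised edge $e$ with $R(e,u,v)=(c_u,c_v)$ removes a colour only in the deletion where $v$ receives $c_v$. Since $a\neq b$, at least one of the two deletions loses strictly fewer than $e(u,v)$ colours from $L(u)$.
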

\begin{proof}
    Let $R(e,u,v)=(c_u,c_v)$.
    By \cref{full colour deletion def}, $|L_a(u)| \geq |L(u)|-e(u,v)$ and $|L_b(u)| \geq |L(u)|-e(u,v)$, as each edge between $u$ and $v$ removes at most one colour from $L(u)$.
    If $c_v \neq a$, then $e$ does not remove any colours from $L(u)$ in the $(v,a)$-colour deletion. Similarly, if $c_v \neq b$, then $e$ does not remove any colours from $L(u)$ in the $(v,b)$-colour deletion. Since $a$ and $b$ are distinct, 
    either $|L_a(u)| > |L(u)|-e(u,v)$ or $|L_b(u)| > |L(u)|-e(u,v)$, as required.
\end{proof}

\begin{corollary}
\label{cdlemma2}
Let $Q$ be a polar graph with polar assignment $\Gamma=(L,R)$.
Let $u,v \in V(Q)$ such that $e(u,v) \ge 2$.
For distinct $a,b \in L(v)$, let $(Q-v,\Gamma_a)$ and $(Q-v,\Gamma_b)$ be the $(v,a)$- and $(v,b)$-colour deletions from $(Q,\Gamma)$ respectively, where $\Gamma_a = (L_a,R_a)$ and $\Gamma_b = (L_b,R_b)$.
Then either $|L_a(u)|>|L(u)|-e(u,v)$ or $|L_b(u)|>|L(u)|-e(u,v)$.
\end{corollary}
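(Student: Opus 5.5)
We reduce \cref{cdlemma2} to \cref{cdlemma1} by splitting into two cases, according to whether one of the edges joining $u$ and $v$ is polarised.

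\emph{Case 1: some edge $e=uv$ is in $F(Q)$.} Then the hypotheses of \cref{cdlemma1} hold for $v$, the edge $e$ and the distinct colours $a,b \in L(v)$, and it gives exactly the required conclusion.

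\emph{Case 2: no edge between $u$ and $v$ is polarised.} Here the assumption $e(u,v)\ge 2$ is what makes the argument work. By \cref{full colour deletion def}, in the $(v,a)$-colour deletion a non-polarised edge $uv$ removes a colour $c$ from $L(u)$ only when $c=\phi(v)=a$. So every one of the $e(u,v)$ parallel edges removes at most the single colour $a$. The set $C_u$ is therefore contained in $\{a\}$ together with the colours removed by edges from $u$ to vertices of $V'$ other than $v$. Since $V'=\{v\}$, there are no such other vertices, so $|C_u|\le 1$ and
\[
|L_a(u)|\ge |L(u)|-1>|L(u)|-e(u,v),
\]
where the strict inequality uses $e(u,v)\ge 2$. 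The $(v,b)$-colour deletion satisfies the same bound, so in this case both alternatives hold.

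The only point needing care is the counting in Case 2. Removed colours form a set, so several parallel non-polarised edges that each remove $a$ together remove only one colour. The rest is a direct appeal to \cref{cdlemma1}.
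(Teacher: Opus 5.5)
Your proof is correct and follows the paper's argument exactly. The polarised case is handed to \cref{cdlemma1}, and in the non-polarised case the deletion gives $L_a(u)=L(u)\setminus\{a\}$, so $|L_a(u)|\ge |L(u)|-1>|L(u)|-e(u,v)$ because $e(u,v)\ge 2$.
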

\begin{proof}
    If any edge between $u$ and $v$ is polar, then we obtain the result by \cref{cdlemma1}. If no edges between $u$ and $v$ are polar, then $L_a(u)=L(u) \setminus \{a\}$, but since $e(u,v)>1$, we have that $|L_a(u)|>|L(u)|-e(u,v)$, as required.
\end{proof}

\subsection{\texorpdfstring{$1$}{1}-joins and polar assignment unions}

We now extend the notions of graph union and list assignment union to polar graphs and polar assignments.
We also describe how to construct a polar graph that is not $k$-choosable from polar graphs that are $k$-restricted, and how to construct a polar graph with a $k$-polar assignment that simulates a polar graph with an $f$-assignment, for a function $f$ bounded by $k$.

We first define the union of two polar graphs.  For our purposes, it is sufficient to define this operation for polar graphs with no edges in common.
Let $Q_0$ and $Q_1$ be two polar graphs with $E(Q_0) \cap E(Q_1) = \emptyset$.
The {\em union} of $Q_0$ and $Q_1$, denoted $Q_0 \cup Q_1$, is the polar graph with $V(Q_0 \cup Q_1)=V(Q_0) \cup V(Q_1)$, $E(Q_0 \cup Q_1)=E(Q_0) \cup E(Q_1)$ and $F(Q_0 \cup Q_1)=F(Q_0) \cup F(Q_1)$.
In particular, when $Q_0$ and $Q_1$ are polar graphs such that $E(Q_0) \cap E(Q_1) = \emptyset$ and $|V(Q_0) \cap V(Q_1)| = 1$, then we say that $Q_0 \cup Q_1$ is the \emph{$1$-join} of $Q_0$ and $Q_1$.

We omit the straightforward proof of the next lemma.
\begin{lemma}
\label{1-joins unrestricted}
    Let $Q_0$ and $Q_1$ be $k$-unrestricted polar graphs with $E(Q_0) \cap E(Q_1) = \emptyset$ and $|V(Q_0) \cap V(Q_1)| = 1$.
    Then the $1$-join of $Q_0$ and $Q_1$ is $k$-unrestricted.
\end{lemma}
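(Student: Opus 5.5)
The plan is to argue by contradiction. Write $Q = Q_0 \cup Q_1$ and $V(Q_0) \cap V(Q_1) = \{x\}$. Suppose $\Gamma=(L,R)$ is a $k$-polar assignment of $Q$, $v \in V(Q)$, and $c \in L(v)$ is such that $v$ is $\Gamma$-restricted on $c$ in $Q$. The aim is to produce a $\Gamma$-colouring $\phi$ of $Q$ with $\phi(v)=c$, which is a contradiction.

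Every edge of $Q$ lies in exactly one of $Q_0$ or $Q_1$, and these polar subgraphs share only $x$. So a colouring $\phi$ of $V(Q)$ is a $\Gamma$-colouring of $Q$ if and only if $\phi|_{V(Q_0)}$ is a $\Gamma|Q_0$-colouring of $Q_0$ and $\phi|_{V(Q_1)}$ is a $\Gamma|Q_1$-colouring of $Q_1$. This holds because conditions \ref{Gamma 1} and \ref{Gamma 2} are checked vertex-by-vertex and edge-by-edge. Moreover, $\Gamma|Q_i$ is a $k$-polar assignment of $Q_i$, since each list still has size $k$. Hence, by hypothesis, no vertex of $Q_i$ is $\Gamma|Q_i$-restricted. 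In other words, for every $u \in V(Q_i)$ and every $d \in L(u)$, there is a $\Gamma|Q_i$-colouring of $Q_i$ giving $u$ the colour $d$. It helps to note that $L(u) \neq \emptyset$ because $k \ge 1$. (If $k=0$ the statement is vacuous, since there are no colours to be restricted on.)

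By symmetry we may assume $v \in V(Q_0)$. Since $Q_0$ is $k$-unrestricted, there is a $\Gamma|Q_0$-colouring $\phi_0$ of $Q_0$ with $\phi_0(v)=c$. Let $d=\phi_0(x) \in L(x)$. Since $Q_1$ is $k$-unrestricted, there is a $\Gamma|Q_1$-colouring $\phi_1$ of $Q_1$ with $\phi_1(x)=d$. The two colourings agree on the single common vertex $x$, so their union $\phi$ is well defined. By the observation above, $\phi$ is a $\Gamma$-colouring of $Q$ with $\phi(v)=c$. This contradicts the assumption that the $(v,c)$-colour deletion from $(Q,\Gamma)$ is not colourable, using the colour-deletion lemma.

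There is no real obstacle. The only care needed is the bookkeeping that the restrictions of the polarisation to $Q_0$ and $Q_1$ together cover all polarised edges, which follows from $E(Q_0)\cap E(Q_1)=\emptyset$. One should also not forget the case $v = x$, but the argument above handles it without change.
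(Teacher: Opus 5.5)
Your proof is correct: $\Gamma|Q_0$ and $\Gamma|Q_1$ are $k$-polar assignments, so you can colour the side containing $v$ with $v$ coloured $c$, and then use that the other side is $k$-unrestricted at the shared vertex $x$ to extend the colouring. The paper omits this proof as straightforward, and your argument is the natural one it has in mind; the same colour-one-side-then-extend pattern appears in its proof of \cref{hajos joining unrestricted}.
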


\begin{lemma}
\label{1joinscu}
    Let $Q$ be a $k$-restricted polar graph with $|V(Q)| \ge 3$. Then $Q$ contains a $2$-connected polar subgraph that is $k$-restricted.
\end{lemma}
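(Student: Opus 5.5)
The plan is to decompose $Q$ along its block structure and push the restriction into a single block, using \cref{1-joins unrestricted} in contrapositive form. Fix a $k$-polar assignment $\Gamma$ and a vertex $v$ that is $\Gamma$-restricted on some colour $c$. I argue by induction on the number of blocks of $Q$.

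First I reduce to the connected case. If $Q$ is disconnected, the $(v,c)$-colour deletion from $(Q,\Gamma)$ is a disjoint union of colour-deletion instances, one per component. So it is uncolourable if and only if one of these instances is uncolourable.
\begin{itemize}
\item If the uncolourable instance is the one for the component $Q_v$ containing $v$, then $v$ is $\Gamma$-restricted in $Q_v$.
\item Otherwise some other component $Q'$ is not $\Gamma$-colourable at all. Then every vertex of $Q'$ is $\Gamma$-restricted on every colour in its list, so $Q'$ is $k$-restricted.
\end{itemize}
In either case a component is $k$-restricted, and I pass to it. The same reasoning shows that disjoint unions of $k$-unrestricted polar graphs are $k$-unrestricted.

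Now suppose $Q$ is connected. If $Q$ is itself $2$-connected we are done, since $|V(Q)|\ge 3$. Otherwise $Q$ has at least two blocks. Take a leaf block $B$ with cut vertex $x$ (using \cref{leaf block lemma} if convenient), and write $Q = B \cup Q'$ as a $1$-join at $x$, where $Q'$ is the union of the remaining blocks. By \cref{1-joins unrestricted}, if both $B$ and $Q'$ were $k$-unrestricted then $Q$ would be too. Hence at least one of them is $k$-restricted, and Q' has strictly fewer blocks than Q. Repeating this, or inducting on the number of blocks, yields a single block of $Q$ that is $k$-restricted. A block with at least three vertices is $2$-connected, and this gives the required polar subgraph.

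The main obstacle is a $k$-restricted block with only two vertices, that is, a parallel class $I_m$. Such a block can genuinely be $k$-restricted when $m\ge k$: make all edges polarised with pairs $(i,c)$ for $i\in L(u)$. I would first check carefully whether this case can arise under the hypotheses. Where the lemma is later applied, $Q$ has maximum local edge-connectivity $k$, so $m\le k$; $m=k$ forces the block to be an isolated $I_k$-type piece. I would then either
\begin{itemize}
\item show that such a block cannot be the only restricted block in a graph with at least three vertices, by tracking which side of each $1$-join carries the restriction and preferring the side with at least three vertices; or
\item interpret ``$2$-connected'' so that it includes $I_k$.
\end{itemize}
I expect resolving this two-vertex case to be the delicate step. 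The rest is routine bookkeeping with colour deletions across a cut vertex.
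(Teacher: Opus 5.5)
Your route is the paper's route. The paper takes a vertex-minimal counterexample and splits it as a disjoint union or a $1$-join $Q_0\cup Q_1$. It then applies minimality to a $k$-restricted side, or else contradicts \cref{1-joins unrestricted}. Your leaf-block induction is the same argument. The case you isolate is exactly where both arguments break. The paper's proof passes over it silently: it applies minimality to a component, or to $Q_i$, without checking that this piece has at least three vertices.

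The gap cannot be closed, because the lemma as stated is false. Take $Q$ with vertices $u,v,w$, polarised edges $e_1,\dots,e_k$ joining $u$ and $v$, and one further edge $uw$. Choose a $k$-polar assignment with $L(u)=[k]$ and $c\in L(v)$, and set $R(e_i,u,v)=(i,c)$ for each $i$. The $(v,c)$-colour deletion leaves $u$ with an empty list, so $v$ is $\Gamma$-restricted on $c$, and $|V(Q)|=3$. Yet $Q$ has no $2$-connected subgraph: such a subgraph would need all three vertices, and $w$ has degree $1$. So your first escape route, showing that a restricted two-vertex block is never the only restricted piece, is impossible. Your second escape route changes the statement.

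The right repair is an extra hypothesis: no two vertices are joined by $k$ or more edges. Then every polar graph on at most two vertices is $k$-unrestricted, because a $(v,c)$-colour deletion removes at most $e(u,v)\le k-1$ colours from $L(u)$. Your induction therefore ends at a $k$-restricted block with at least three vertices, which is $2$-connected. The hypothesis $|V(Q)|\ge 3$ also becomes automatic.

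This repair is harmless for the paper. The lemma is used only to show that $Q[Y]$ is $k$-unrestricted in the proof of \cref{final classify theorem}. There $Q[Y]$ lies inside a $2$-connected $Q$ of maximum local edge-connectivity at most $k$. Since $Q$ has no cut vertex and at least three vertices, any two vertices $u,v$ are joined by a $(u,v)$-path avoiding the $uv$-edges. So $k$ parallel $uv$-edges would give $\lambda(u,v)\ge k+1$, which is impossible.
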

\begin{proof}
    Assume the result is false, and let $Q$ have the minimum number of vertices among all counterexamples.
    Then $Q$ is not $2$-connected, so is either disconnected or a $1$-join of two non-null connected graphs.
    If $Q$ is disconnected, then some component of $Q$ is $k$-restricted, but not $2$-connected, contradicting that $Q$ is minimum-sized.
    So $Q$ is a $1$-join of two non-null connected graphs, $Q_1$ and $Q_2$, each with strictly fewer vertices than $Q$.
    If $Q_i$ is $k$-restricted for some $i \in \{0,1\}$, then it is $2$-connected, since $Q$ is minimum-sized, contradicting that $Q$ is a counterexample.
    Therefore both $Q_0$ and $Q_1$ are $k$-unrestricted, so $Q$ is $k$-unrestricted by \cref{1-joins unrestricted}, a contradiction.
\end{proof}

Usually, we will be dealing with polar graphs with associated polar assignments.  When taking the union of two such polar graphs, there is a natural polar assignment for their union, as we now describe.

\begin{definition}[polar assignment union]
Let $Q_0$ and $Q_1$ be polar graphs with polar assignments $\Gamma_0=(L_0,R_0)$ and $\Gamma_1=(L_1,R_1)$ respectively, such that $E(Q_0) \cap E(Q_1) = \emptyset$.
Let $Q = Q_0 \cup Q_1$.
We obtain a polar assignment $\Gamma=(L,R)$ for $Q$, which we call the \emph{polar assignment union} and denote $\Gamma_0 \cup \Gamma_1$, by defining $L$ to be the list assignment union of $L_0$ and $L_1$,
and $R(e,u,v)=\begin{cases}
    R_0(e,u,v) & \mbox{if $e \in E(Q_0)$}\\
    R_1(e,u,v) & \mbox{if $e \in E(Q_1)$}\\
\end{cases}.$
\end{definition}

It is easily seen that the union of polar graphs, and the union of polar assignments, are both commutative and associative.
Thus, for a set of polar graphs $\mathcal{Q}$ (with pairwise disjoint edge sets), or a set of polar assignments $\mathcal{L}$, we can unambiguously write $\bigcup_{Q \in \mathcal{Q}} Q$ or $\bigcup_{\Gamma \in \mathcal{L}} \Gamma$ respectively.

Note that if $\mathcal{Q}$ consists of the components or blocks of a polar graph $Q$, then the polar graphs in $\mathcal{Q}$ have pairwise disjoint edge sets, and $Q$ is the union of $\mathcal{Q}$.
Thus, one way to obtain a polar assignment is to start with a polar assignment for the connected components or blocks, and taking the union (for example, the list assignment for the graph in \cref{figblockcounter} can be obtained 
in this way, see \cref{figassignunion}).

\begin{figure}
    \centering
    \includegraphics[scale=0.7]{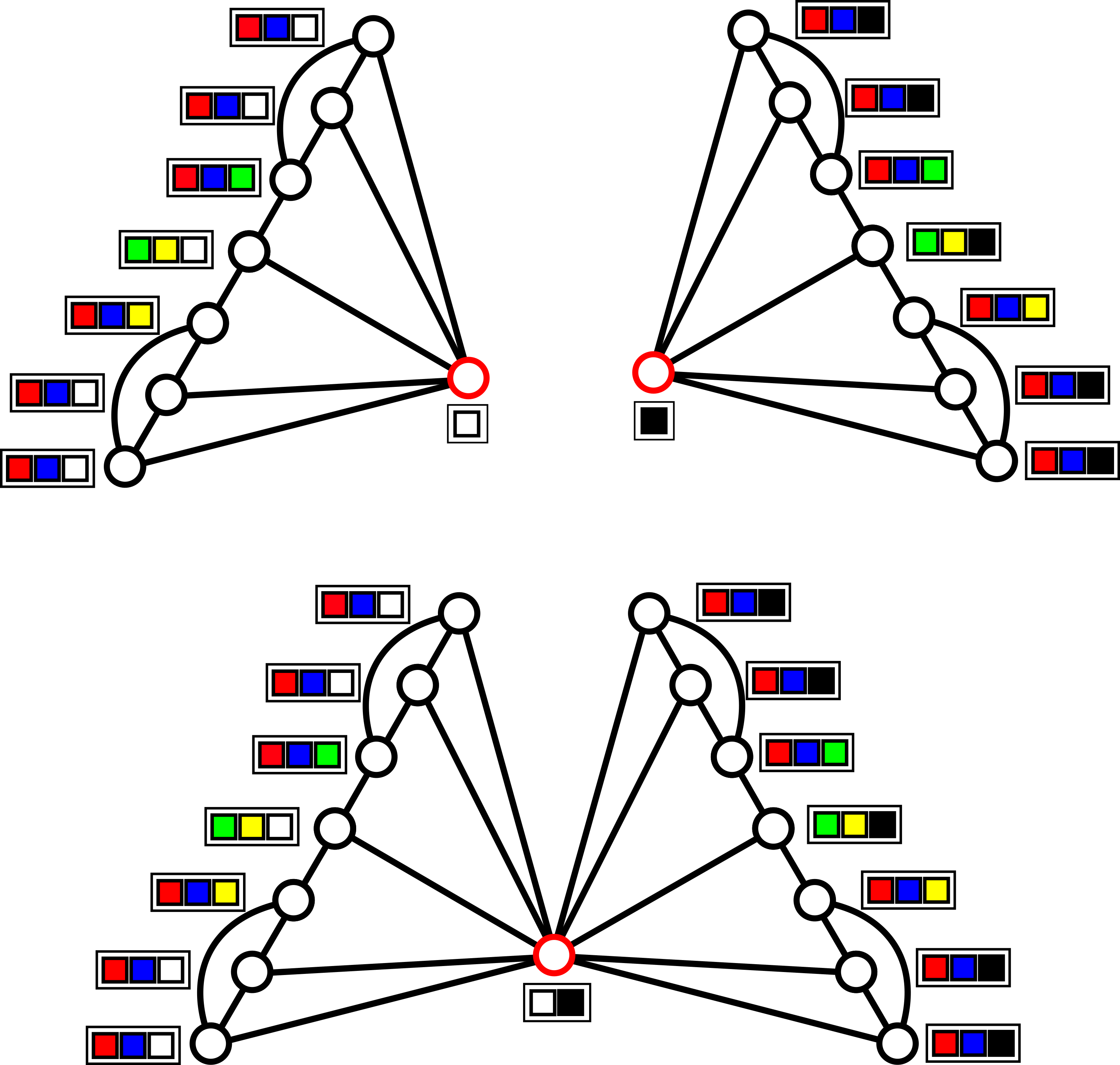}
    \caption{An example of a polar assignment union of two copies of a polar graph we call $T_3$ (with no polarised edges), that have a single vertex in common, shown in red.} 
    \label{figassignunion}
\end{figure}

We again omit the straightforward proof of the following lemma.

\begin{lemma}
\label{1-joins bad}
Let $Q$ be the 1-join of polar graphs $Q_0$ and $Q_1$. Let $\Gamma_0$ be a polar assignment for $Q_0$ and let $\Gamma_1$ be a polar assignment for $Q_1$ such that both $(Q_0,\Gamma_0)$ and $(Q_1,\Gamma_1)$ are not colourable. Then $Q$ is not $(\Gamma_0 \cup \Gamma_1)$-colourable.
\end{lemma}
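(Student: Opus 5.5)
We argue by contradiction: suppose $\phi$ is a $(\Gamma_0 \cup \Gamma_1)$-colouring of $Q$, and write $\Gamma = (L,R) = \Gamma_0 \cup \Gamma_1$, $\Gamma_0 = (L_0,R_0)$, $\Gamma_1 = (L_1,R_1)$. Let $x$ be the unique vertex in $V(Q_0) \cap V(Q_1)$. The plan is to show that one of the restrictions $\phi|_{V(Q_0)}$ or $\phi|_{V(Q_1)}$ is a $\Gamma_i$-colouring of $Q_i$, contradicting the hypothesis.

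First, condition \ref{Gamma 2} transfers directly. Every edge $e$ of $Q_i$ is an edge of $Q$, it is polarised in $Q_i$ exactly when it is polarised in $Q$, and $R(e,u,v) = R_i(e,u,v)$ by the definition of the polar assignment union. So for each $i \in \{0,1\}$, the restriction $\phi|_{V(Q_i)}$ satisfies \ref{Gamma 2} with respect to $R_i$.

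Second, we check \ref{Gamma 1}. For $v \in V(Q_i) \setminus \{x\}$ we have $L(v) = L_i(v)$, so $\phi(v) \in L_i(v)$. At the shared vertex, $\phi(x) \in L(x) = L_0(x) \cup L_1(x)$, so $\phi(x) \in L_i(x)$ for some $i \in \{0,1\}$. For that $i$, the restriction $\phi|_{V(Q_i)}$ satisfies \ref{Gamma 1} at every vertex, and so it is a $\Gamma_i$-colouring of $Q_i$. This contradicts that $(Q_i,\Gamma_i)$ is not colourable.

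There is no real obstacle here. The only point needing care is that the union of list assignments at $x$ is a set union, so $\phi(x)$ lies in at least one of the two original lists. Note also that the lemma does not require $L_0$ and $L_1$ to be non-conflicting.
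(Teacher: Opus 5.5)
Your proof is correct. The paper omits this proof as straightforward, and your argument is the natural one: $\phi(x)\in L_0(x)\cup L_1(x)$ lies in some $L_i(x)$, and edge-disjointness of $Q_0$ and $Q_1$ ensures that both polarisation status and $R$ agree on $E(Q_i)$, so $\phi|_{V(Q_i)}$ is a $\Gamma_i$-colouring of $Q_i$, contradicting the hypothesis.
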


\Cref{1-joins bad}, combined with \cref{switching}, allows us to construct polar graphs that are not $k$-choosable from $k$-restricted polar graphs that are $k$-choosable, using $1$-joins.
We now describe this construction.
Let $Q$ be a $k$-restricted polar graph, so there exists $v \in V(Q)$ with $m_{Q,k}(v) > 0$.
For simplicity, we assume $m_{Q,k}(v)=1$.
Let $\mathcal{Q}=\{Q_1,Q_2,\dotsc,Q_k\}$ be a set consisting of $k$ polar graphs isomorphic to $Q$, such that for distinct $i,j \in [k]$, we have $V(Q_i) \cap V(Q_j)=\{v\}$ and $E(Q_i) \cap E(Q_j) = \emptyset$.
%
For each $i \in [k]$, there exists a $k$-polar assignment~$\Gamma_i^+$ for $Q_i$ such that $m_{Q_i,\Gamma_i^+}(v)=1$.
By \cref{switching}, we may also assume that for distinct $i,j \in [k]$, we have $M_{Q_i,\Gamma_i^+}(v) \cap M_{Q_j,\Gamma_j^+}(v)=\emptyset$.
For each $i \in [k]$, let $\Gamma_i^+=(L_i^+,R_i)$, and let $\Gamma_i$ be the polar assignment $(L_i,R_i)$ where $L_i(u) = \begin{cases}
    L_i^+(u) & \mbox{if $u \in V(Q_i) \setminus \{v\}$}\\
        M_{Q_i,\Gamma_i^+}(u) & \mbox{if $u=v$.}
    \end{cases}$.
    Now, it is easily seen that, for each $i \in [k]$, the polar graph $Q_i$ is not $\Gamma_i$-colourable, and $|L_i(v)|=1$.
    Let $Q' = \bigcup_{i \in [k]} Q_i$, 
    and let $\Gamma'=(L',R')$ be the polar assignment union $\bigcup_{i \in [k]}\Gamma_t$.
    It follows from \cref{1-joins bad} that $Q'$ is not $\Gamma'$-colourable.
    Moreover, $L'$ is a $k$-list assignment, so $\Gamma'$ certifies that $Q'$ is not $k$-choosable.

    A similar technique can be employed to construct polar graphs with $k$-polar assignments that simulate polar graphs with $f$-polar assignments, when $f$ is bounded by $k$.  We describe this construction next, before giving an example.

\begin{definition}[$(Q',f,Q,v,k)$-top-up]
    \label{topupdef}
    Let $Q$ and $Q'$ be polar graphs, let $v \in V(Q)$, let $k \in \mathbb{N}$, and let $f:V(Q') \rightarrow \mathbb{N}$ be a function such that $f(u) \leq k$ for all $u \in V(Q')$.
    For each $u \in V(Q')$, let $\mathcal{Q}_u$ be a set of $k-f(u)$ polar graphs isomorphic to $Q$, each of which has the vertex $v$ labelled by $u$, but is otherwise vertex disjoint with each other graph in $\mathcal{Q}_u$, and with $Q'$. 
    Moreover, whenever $u$ and $u'$ are distinct vertices in $V(Q')$, then $Q_u$ and $Q_{u'}$ are disjoint for any $Q_u \in \mathcal{Q}_u$ and $Q_{u'} \in \mathcal{Q}_{u'}$.
    Let
    \[
    \mathcal{U}=\{Q'\} \cup \bigcup_{u \in V(Q')} \mathcal{Q}_u.
    \]
    Then the {\em $(Q',f,Q,v,k)$-top-up} is $\bigcup_{U \in \mathcal{U}} U$.
\end{definition}

    Note that, although we have defined this operation for polar graphs $Q$ and $Q'$, if neither $Q$ nor $Q'$ has any polarised edges, then the $(Q',f,Q,v,k)$-top-up has no polarised edges.  Thus, we can also consider, as a special case, a top-up operation for graphs.

As an example, the polar graph in \cref{ncp2fig} is a $(Q_6,f,T_3,u,3)$-top-up, where $Q_6$ and $f$ are given in \cref{ncp1fig}, and $T_3$ is given in \cref{figassignunion} (where the red vertex is $u$). The next lemma shows that the polar graph in \cref{ncp2fig} is not $3$-choosable.


\begin{lemma}
\label{topuplemma}
Let $k \in \mathbb{N}$, let $Q$ and $Q'$ be polar graphs where $Q$ has $v \in V(Q)$ such that $m_{Q,k}(v)=1$, and let $f:V(Q')\rightarrow \mathbb{N}$ be a function such that $f(u)\leq k$ for all $u \in V(Q')$.
Then the $(Q',f,Q,v,k)$-top-up is $k$-choosable if and only if $Q'$ is $f$-choosable.
\end{lemma}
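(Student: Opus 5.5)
Write $T$ for the $(Q',f,Q,v,k)$-top-up. For each $u \in V(Q')$ let $Q_u^1,\dots,Q_u^{k-f(u)}$ be the copies of $Q$ attached at $u$. The plan is to prove both directions by comparing colourings of $T$ with colourings of $Q'$, using that each attached copy can forbid exactly one colour at $u$ and no more.

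We first record one fact about a single copy of $Q$. Since $m_{Q,k}(v)=1$, for every $k$-polar assignment $\Gamma$ of $Q$ at most one colour of $L(v)$ fails to extend. That is, $v$ is $\Gamma$-restricted on at most one colour. Moreover, some $k$-polar assignment attains exactly one restricted colour, and by \cref{switching} we may prescribe which colour it is.

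\emph{($\Leftarrow$)} Suppose $Q'$ is $f$-choosable, and let $\Gamma=(L,R)$ be a $k$-polar assignment of $T$. For each $u$ and each copy $Q_u^i$, let $c_u^i$ be the colour on which $u$ is $(\Gamma|Q_u^i)$-restricted, if there is one. Let $L'(u)=L(u)\setminus\{c_u^1,\dots,c_u^{k-f(u)}\}$, so $|L'(u)|\ge f(u)$.
\begin{enumerate}
\item Shrink $L'$ arbitrarily to an $f$-list assignment. Since $Q'$ is $f$-choosable, $Q'$ has a $(L',R|Q')$-colouring $\phi'$.
\item For each copy $Q_u^i$, the colour $\phi'(u)$ is not a restricted colour. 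Hence the $(u,\phi'(u))$-colour deletion on that copy is colourable, and we colour the rest of $Q_u^i$ accordingly.
\item Distinct copies meet $Q'$ only at their own attachment vertex, and they share no other vertices or edges. So these partial colourings combine to a $\Gamma$-colouring of $T$.
\end{enumerate}

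\emph{($\Rightarrow$)} Suppose $Q'$ is not $f$-choosable, witnessed by an $f$-polar assignment $\Gamma'=(L',R')$ with no $\Gamma'$-colouring. For each $u$ choose $k-f(u)$ colours $d_u^1,\dots,d_u^{k-f(u)}$, distinct and not in $L'(u)$. For each copy $Q_u^i$, \cref{switching} gives a $k$-polar assignment $\Gamma_u^i$ with $M_{Q_u^i,\Gamma_u^i}(u)=\{d_u^i\}$. The remaining lists at the non-attachment vertices do not interact with anything outside their copy.

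We want the union of these assignments to be a $k$-list assignment at $u$, so we need $L_u^i(u)=L'(u)\cup\{d_u^i\}$. Apply a colour renaming (a permutation of colours, which preserves restriction exactly as in the proof of \cref{switching}) so that $L_u^i(u)=\{d_u^i\}\cup A$, where $A$ is any fixed $(k-1)$-set. We need $L'(u)\cup\{d_u^1,\dots,d_u^{k-f(u)}\}\subseteq L_u^i(u)$ for every $i$, which this choice of $A$ does not give directly.

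The fix is to use only the restricted colour at $u$ from each copy, in the style of the construction preceding \cref{topupdef}. We define the final list at $u$ to be $L'(u)\cup\{d_u^1,\dots\}$, which has size $k$. Each copy $Q_u^i$ uses $\Gamma_u^i$ on its other vertices. The copies have no edges at $u$ beyond those in $\Gamma_u^i$, so what matters is only which colours at $u$ extend into $Q_u^i$.

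To make this work, every colour in $L(u)$ other than $d_u^i$ must either extend into $Q_u^i$ or be irrelevant. If $\Gamma_u^i$'s list at $u$ does not contain some colour $c$, then $c$ is also forbidden at $u$ through $Q_u^i$ only if the copy has no colouring with $u$ coloured $c$. Note that $c$ being outside the copy's list at $u$ does not by itself constrain the other vertices of the copy. So I would construct the combined assignment by applying colour renamings to each $\Gamma_u^i$ so that its list at $u$ is exactly $L(u)=L'(u)\cup\{d_u^j\}_j$, with restricted colour $d_u^i$. This is possible since any $k$-set can be mapped by a permutation onto $L(u)$ sending the restricted colour to $d_u^i$.

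Now in any $\Gamma$-colouring of $T$, the colour $\phi(u)$ must extend into each $Q_u^i$, so $\phi(u)\ne d_u^i$ for all $i$, giving $\phi(u)\in L'(u)$. Restricting $\phi$ to $Q'$ would then be a $\Gamma'$-colouring, a contradiction.

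The main obstacle is ensuring that the renamings in different copies do not clash, including colours appearing in polarisations. Since the copies are disjoint apart from $u$ and the renamings act within each copy separately, this should be routine.
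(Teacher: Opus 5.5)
Your proof is correct. The direction ``$Q'$ $f$-choosable implies the top-up is $k$-choosable'' is exactly the paper's argument.

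For the converse you reach the same obstruction as the paper: copy $Q_u^i$ forbids precisely $d_u^i$ at $u$, so any colouring of the top-up would restrict to a $\Gamma'$-colouring of $Q'$. The gluing is different, though.

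\emph{The paper's gluing.} The paper never aligns the lists at $u$.
\begin{enumerate}
\item It uses \cref{switching} to make the restricted colours of the copies, together with $L_f(u)$, pairwise disjoint.
\item It shrinks the list of $u$ inside each copy $Q_{u,i}$ to the singleton $\{d_u^i\}$, so $Q_{u,i}$ itself becomes uncolourable.
\item It takes the polar assignment union, whose list at $u$ is automatically $L_f(u)\cup\{d_u^1,\dots,d_u^{k-f(u)}\}$.
\item Non-colourability then follows from \cref{1-joins bad}: $\phi(u)$ must lie in one of the constituent lists, and that part is uncolourable.
\end{enumerate}

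\emph{Your gluing.} You give every copy the full list $L(u)$ at $u$ and argue directly. This mirrors your other direction and avoids the union semantics. The price is a routine check that colour permutations preserve restriction.

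That permutation is in fact superfluous. Whether $u$ is restricted on $d$ is a statement about the $(u,d)$-colour deletion, which depends only on $d$ and on the assignment away from $u$. So after applying \cref{switching} you may simply overwrite the list of $u$ in each copy by $L(u)$.

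In a write-up, drop the exploratory middle paragraphs. The claim that you need $L_u^i(u)=L'(u)\cup\{d_u^i\}$ gives a list of the wrong size unless $f(u)=k-1$, and your final construction does not use it. The sentence about using only the restricted colour describes the paper's route, which you then abandon.
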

\begin{proof}
    Let $Q^+$ be the $(Q',f,Q,v,k)$-top-up.
    First assume that $Q'$ is $f$-choosable and let $\Gamma=(L,R)$ be a $k$-polar assignment for $Q^+$.
    We seek a $\Gamma$-colouring of $Q'$ that will extend to a $\Gamma$-colouring of $Q^+$.
    Let $u \in V(Q')$.
    Since $m_{Q_u,k}(u)=1$ for each copy $Q_u$ of $Q$ containing $u$, there is at most one $c \in L(u)$ such that a $\Gamma$-colouring $\phi$ of $Q'$ where $\phi(u)=c$ does not extend to a $\Gamma$-colouring of $Q^+[V(Q') \cup V(Q_u)]$.
    When such a $c$ exists, if we remove $c$ from $L(u)$, then any colouring with the remaining polar assignment will extend to $Q^+[V(Q') \cup V(Q_u)]$.
    Therefore, we can remove at most $k-f(u)$ colours from $L(u)$, for each $u \in V(Q')$, and obtain a polar assignment $\Gamma_f=(L_f,R_f)$ such that any $\Gamma_f$-colouring of $Q'$ extends to a $\Gamma$-colouring of $Q^+$.
    Now $|L_f(u)| \ge k - (k-f(u)) = f(u)$ for each $u$,
    and therefore, since $Q'$ is $f$-choosable, $Q'$ is $\Gamma_f$-colourable, and so $Q^+$ is $\Gamma$-colourable.

    Next assume that $Q'$ is not $f$-choosable, and let $\Gamma_f=(L_f,R_f)$ be an $f$-polar assignment for $Q'$ such that $Q'$ is not $\Gamma_f$-colourable.
    We will construct a $k$-polar assignment~$\Gamma^+$ for $Q^+$ such that $Q^+$ is not $\Gamma^+$-colourable.
    First, for each copy $Q_{u,i}$ of $Q$ containing $u \in V(Q')$ for $i \in [k-f(u)]$, let $\Gamma_{u,i}'=(L_{u,i}',R_{u,i})$ be a $k$-polar assignment for $Q_{u,i}$ such that $|M_{Q_{u,i},\Gamma_{u,i}'}(u)|=1$ and the sets in $$\{M_{Q_{u,i},\Gamma_{u,i}'}(u): u \in V(Q') \textrm{ and } i \in [k-f(u)]\} \cup \{L_f(u)\}$$ are pairwise disjoint; this is possible by \cref{switching}.
    Then, let $\Gamma_{u,i}=(L_{u,i},R_{u,i})$ be the polar assignment for $Q_{u,i}$ where $L_{u,i}$ is obtained from $L_{u,i}'$ by removing all colours from $L_{u,i}'(u)$ except those in $M_{Q_{u,i},\Gamma_{u,i}'}(u)$.
    So, for each $u \in V(Q')$ and $i \in [k-f(u)]$, we have $L_{u,i}(u)=M_{Q_{u,i},\Gamma_{u,i}'}(u)$, which is a singleton, since $m_{Q_{u,i},k}(u)=1$.
    However, since $u$ is $\Gamma_{u,i}'$-restricted on the colour in $L_{u,i}(u)$, we see that $Q_{u,i}$ is not $\Gamma_{u,i}$-colourable.
    Now $Q^+$ can be obtained by a sequence of $1$-joins on $Q'$ and the $Q_{u,i}$'s. %
    Let $\Gamma^+=(L^+,R^+)$ be the polar assignment union of $\Gamma_f$ and each member of $\{\Gamma_{u,i} : u \in V(Q') \textrm{ and } i \in [k-f(u)]\}$. %
    Then $\Gamma^+$ is a $k$-polar assignment, and it follows from \cref{1-joins bad} that $Q^+$ is not $\Gamma^+$-colourable.
    So $Q^+$ is not $k$-choosable.
\end{proof}

Finally, for polar graphs $Q_0$ and $Q_1$ with polar assignments $\Gamma_0=(L_0,R_0)$ and $\Gamma_1=(L_1,R_1)$ respectively, we say that $\Gamma_0$ and $\Gamma_1$ are \emph{conflicting} (or \emph{non-conflicting}) if 
$L_0$ and $L_1$ are conflicting (or non-conflicting, respectively).

\section{Degree choosability for polar graphs}
\label{dcpolarsec}

Recall the notion of degree-choosability, as seen in \cref{dcsec}.
A polar assignment~$(L,R)$ for a polar graph $Q$ is a {\em degree-polar assignment} if $|L(u)| \geq d(u)$ for all $u \in V(Q)$.
We say a polar graph $Q$ is {\em degree-choosable} if it is $\Gamma$-colourable for every degree-polar assignment $\Gamma$.
\Cref{dcthm,baddegreeassign} characterise the graphs that are not degree-choosable and the degree-list assignments for which they are not colourable.
In this section we generalise these results to polar graphs.

Let $Q$ be a polar graph.
If the underlying graph of $Q$ is a Gallai tree, and every polarised edge of $Q$ belongs to a $K_2$-block, then we call $Q$ a {\em polar Gallai tree}.
We say that a $K_2$-block of $Q$ is a \emph{polar $K_2$-block} if the unique edge in the block is polarised.
For a polar $K_2$-block $B$, we say that a $1$-polar assignment $\Gamma_B=(L_B,R_B)$ for $B$ is \emph{polar conforming} if $R_B(e,u,v)=(c_u,c_v)$ where $L_B(u)=\{c_u\}$ and $L_B(v)=\{c_v\}$.

A degree-polar assignment $\Gamma=(L,R)$ for $Q$ is \emph{bad} if $Q$ is a polar Gallai tree and $\Gamma$ can be obtained as the union, taken over each block $B$ of $Q$, of uniform $d(B)$-polar assignments whenever the block is not a polar $K_2$-block, and of polar-conforming $1$-polar assignments otherwise, that are pairwise non-conflicting.
Intuitively, a bad degree-polar assignment is a bad degree-list assignment for a polar Gallai tree except that, for each polarised edge (which must appear in a $K_2$-block), the colours are given by $R$ (rather than some uniform colouring for the $K_2$-block).
For an example, see \cref{baddpa}.
It is easily seen that every polar Gallai tree has a bad degree-polar assignment.

\begin{figure}
  \centering
  \includegraphics[scale=0.7]{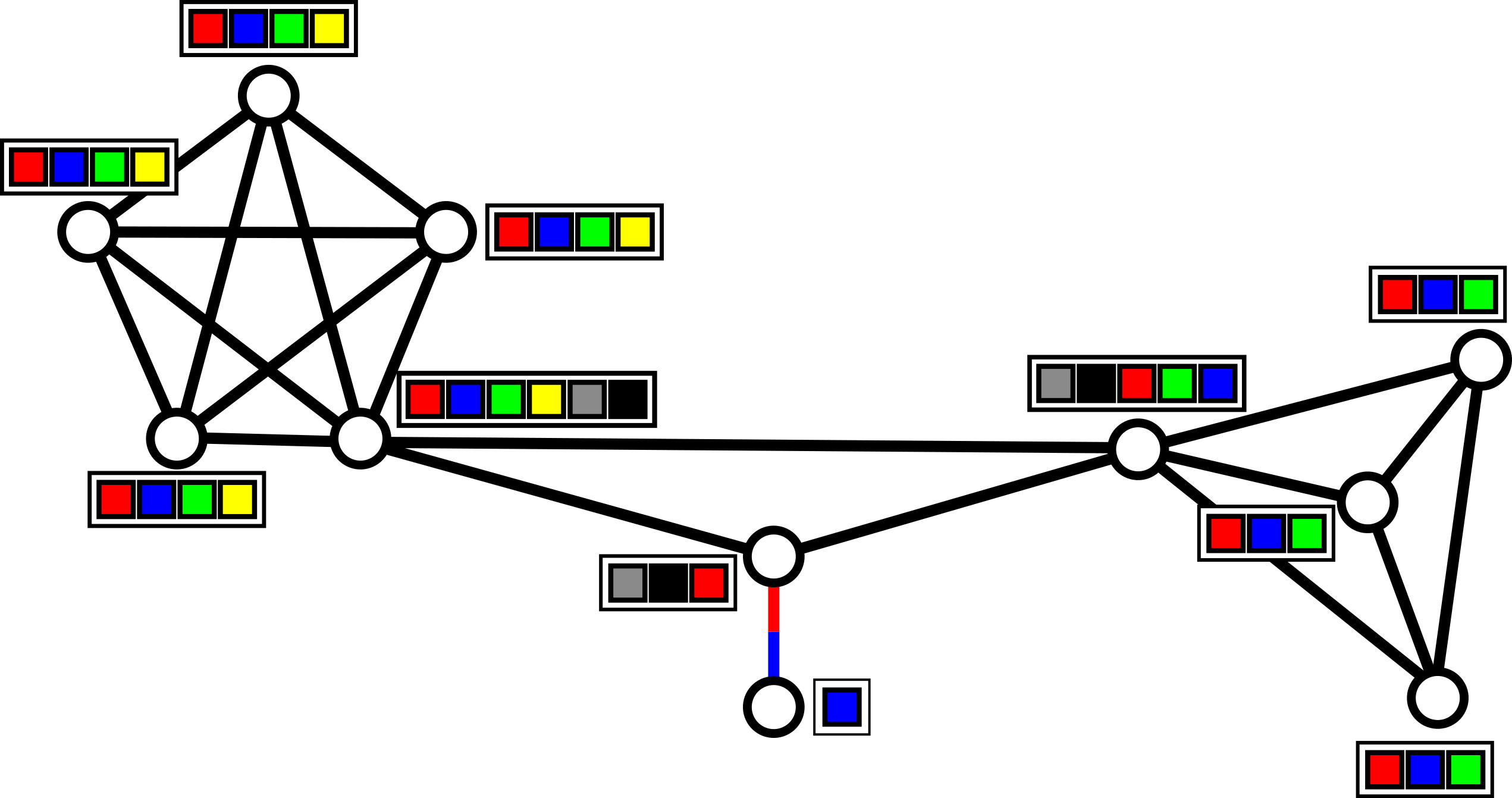}
  \caption{A polar Gallai tree with a bad degree-polar assignment.} 
  \label{baddpa}
\end{figure}

Our main result of this section is the following:

\begin{theorem}
\label{dcpolarthm}
    Let $Q$ be a connected polar graph with degree-polar assignment $\Gamma$.
    Then $Q$ is not $\Gamma$-colourable if and only if $Q$ is a polar Gallai tree and $\Gamma$ is bad.
\end{theorem}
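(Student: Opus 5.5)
We prove the two directions separately, treating the ``if'' direction as routine and spending most of the effort on the ``only if'' direction.

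\emph{Bad assignments are not colourable.} Suppose $Q$ is a polar Gallai tree and $\Gamma=(L,R)$ is bad. We induct on the number of blocks. If $Q$ is a single block that is not a polar $K_2$-block, it is a complete graph or odd cycle with a uniform $d(B)$-list assignment; such a graph is not $d(B)$-colourable. If $Q$ is a single polar $K_2$-block, then both lists are singletons and $R$ forbids exactly that pair. Otherwise, by \cref{leaf block lemma} take a leaf block $B$ with cut vertex $x$. The colours that the non-conflicting block assignments give $x$ are disjoint, and $|L(x)|=d_Q(x)$. In any $\Gamma$-colouring $\phi$, if $\phi(x)$ lies in the part of $L(x)$ coming from $B$, then $B$ receives its own bad assignment and fails to colour. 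Otherwise, restricting to $Q-(V(B)\setminus\{x\})$ gives a bad assignment of a smaller polar Gallai tree, which fails by induction.

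\emph{Non-colourable assignments are bad.} Take a minimal counterexample: $Q$ connected, $\Gamma$ a degree-polar assignment, $Q$ not $\Gamma$-colourable, and either $Q$ is not a polar Gallai tree or $\Gamma$ is not bad. The steps are as follows.

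First, $|L(v)|=d(v)$ for every $v$. If some vertex had $|L(v)|>d(v)$, we would colour greedily towards it along a BFS order ending at $v$, since each earlier vertex has a later neighbour.

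Second, we handle parallel and polarised edges via \cref{cdlemma1,cdlemma2}. Suppose $uv$ is polarised, or $e(u,v)\ge2$, and $uv$ is not a $K_2$-block. Consider the vertices $w$ with $|L(w)|\ge 2$. Colour $v$ by one of two colours $a,b$ chosen so that $u$ loses fewer than $e(u,v)$ colours. In the deletion $Q-v$, every component then contains a vertex whose list exceeds its degree: $u$ lies in the component, and the other components also meet $N(v)$ appropriately using $2$-connectivity of the block. Hence $Q-v$ is colourable by the greedy argument. We must check that some component does not receive a list that is exactly its degree. The cleanest route is to choose $v$ to be a non-cut vertex of a leaf block containing the edge, so that $Q-v$ is connected.

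This shows that every polarised edge and every multi-edge lies in a $K_2$-block. A multi-edge would itself be a block with two parallel edges, and the same argument excludes it. Thus the underlying graph is simple away from polar $K_2$-blocks.

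Third, we show every block is a clique or an odd cycle. Contract away the polar $K_2$-blocks mentally: take a leaf block $B$ with cut vertex $x$. If $B$ is a polar $K_2$-block $xy$ with $L(y)=\{c\}$, perform the $(y,c)$-colour deletion. This removes at most one colour from $x$, namely the colour $c_x$ with $R=(c_x,c)$. The result is a degree-polar assignment on $Q-y$, which by minimality is a polar Gallai tree with a bad assignment. Reassembling, $\Gamma$ is bad unless $c_x\notin L(x)$ or $R$ disagrees with $c$; in those cases $x$ has a spare colour and we get a colouring. If $B$ is an unpolarised block, use \cref{dcthm} and \cref{baddegreeassign} on $B$ together with a deletion. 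Colour $Q-(V(B)\setminus\{x\})$ after removing from $L(x)$ the colours that $x$ would need inside $B$, and vice versa. This mirrors the standard block-by-block proof of \cref{baddegreeassign}. Non-colourability forces the list of $x$ to split as a disjoint union of a uniform $d(B)$-list for $B$ and the list for the rest, and $B$ must be a Gallai block by \cref{dcthm}.

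The main obstacle is the second step. We need to ensure that, after colouring one endpoint of a polarised or parallel edge inside a $2$-connected block, the remainder still has a vertex with surplus in each component. We also need the case analysis to cover the case where the whole graph is a single $2$-connected block with a polarised chord. There we would try first to choose the endpoint $v$ so that $Q-v$ is connected (a $2$-connected block minus a vertex is connected), and then apply the greedy surplus argument directly.
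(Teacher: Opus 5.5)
Your ``if'' direction, your first step (a vertex with surplus gives a greedy colouring) and the reassembly in your third step are fine. The gap is in your second step: it does not establish the global conclusion that every polarised or parallel edge lies in a $K_2$-block.

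\textbf{Why the second step fails as written.} Suppose the edge $uv$ lies in a non-leaf block $B$ and both $u$ and $v$ are cut vertices of $Q$. Then $Q-v$ is disconnected. The components of $Q-v$ attached to $v$ through blocks other than $B$ get no surplus from $uv$; their vertices adjacent to $v$ simply lose the colour of $v$. For example, let $v$ also have a pendant unpolarised $K_2$-block $vw$ with $L(w)=\{c\}$. Giving $v$ the colour $c$ is not excluded by your choice via \cref{cdlemma1}, and it leaves $w$ with an empty list. So ``the other components also meet $N(v)$ appropriately'' is false. Your fallback, choosing $v$ to be a non-cut vertex of a leaf block containing the edge, only works when the edge lies in a leaf block.

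\textbf{A direct fix by counting.} Let $C_1,\dots,C_m$ be the components of $Q-v$ not containing $u$, and let $d_i$ be the number of edges from $v$ to $C_i$. At most $d_i$ colours of $v$ make $C_i$ uncolourable; otherwise your first step colours $Q[V(C_i)\cup\{v\}]$ with the list of $v$ cut down to those colours. Also $1+\sum_i d_i=d(v)-d_B(v)+1<|L(v)|$. Hence some colour of $v$ gives $u$ its surplus and keeps every $C_i$ colourable.

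\textbf{A simpler fix by reordering.} You only need the second step when $Q$ is $2$-connected. Take a minimal counterexample with a leaf block $B$ and cut vertex $x$. Let $Q'=Q-(V(B)\setminus\{x\})$, and let $A$ be the set of $c\in L(x)$ for which $B$ has a $\Gamma$-colouring with $x$ coloured $c$.
\begin{itemize}
\item $B$ with list $L(x)\setminus A$ at $x$ is uncolourable, and so is $Q'$ with list $A$ at $x$.
\item Your first step gives $|L(x)\setminus A|\le d_B(x)$ and $|A|\le d_{Q'}(x)$. Since $|L(x)|=d_B(x)+d_{Q'}(x)$, both are equalities.
\item Minimality applied to $B$ and to $Q'$ then writes $\Gamma$ as a non-conflicting union of bad block assignments, which is a contradiction.
\end{itemize}
So a minimal counterexample is a single block. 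There $Q-v$ is connected for every $v$, your second step applies verbatim, and \cref{dcthm,baddegreeassign} finish the proof. This argument also replaces the hand-waved ``mirrors the standard block-by-block proof'' in your third step.

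\textbf{Comparison with the paper.} With this repair your route genuinely differs from the paper's, which does not use a minimal counterexample. The paper handles non-simple graphs through a two-vertex induced subgraph. It handles non-Gallai trees by proving that polar even cycles and polar theta graphs are degree-choosable and invoking \cref{theorem R}. It then inducts on blocks using ambiguous and unambiguous leaf blocks. Your version is shorter because it uses the unpolarised \cref{dcthm} as a black box instead of re-deriving it for polar graphs.
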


Our proof approach is broadly similar to that used by Erd\H os, Rubin, and Taylor to prove \cref{dcthm}.
We start with some simple observations.
\begin{lemma}
    \label{dcponedirection}
    Let $Q$ be a polar Gallai tree.
    If $\Gamma$ is a bad degree-polar assignment for $Q$, then $Q$ is not $\Gamma$-colourable.
\end{lemma}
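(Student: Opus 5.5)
We argue by induction on the number of blocks of $Q$. Write $\Gamma=(L,R)$ as the pairwise non-conflicting union of per-block assignments $\Gamma_B=(L_B,R_B)$. Each $\Gamma_B$ is a uniform $d(B)$-polar assignment when $B$ is not a polar $K_2$-block, and a polar-conforming $1$-polar assignment when it is. Since the union is non-conflicting, $L(v)$ is the disjoint union of the sets $L_B(v)$ over the blocks $B$ containing $v$. The key local fact, which is the base case, is the following: a single block $B$ is not $\Gamma_B$-colourable. There are two kinds of block to check.
\begin{enumerate}
\item If $B$ is a complete graph or an odd cycle with a uniform $d(B)$-list and no polarised edges, then a $\Gamma_B$-colouring is a proper $d(B)$-colouring of $B$. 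None exists, since $\chi(K_n)=n>n-1$ and odd cycles need three colours.
\item If $B$ is a polar $K_2$-block $uv$ with $L_B(u)=\{c_u\}$, $L_B(v)=\{c_v\}$ and $R_B(e,u,v)=(c_u,c_v)$, then the only candidate colouring is forbidden by the polarised edge.
\end{enumerate}
Note that $K_2$-blocks that are not polarised fall under the first case, as $K_2$ is complete with $d(B)=1$.

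For the inductive step, suppose $Q$ has at least two blocks and, for a contradiction, that $\phi$ is a $\Gamma$-colouring of $Q$. Choose a leaf block $B$ with unique cut vertex $x$, and let $Q'$ be the polar subgraph induced by the union of all other blocks. Then $Q'$ is a polar Gallai tree, and $\Gamma' := \bigcup_{B'\neq B}\Gamma_{B'}$ is a bad degree-polar assignment for $Q'$. This agrees with $\Gamma|Q'$ except at $x$, where $L'(x)=L(x)\setminus L_B(x)$.

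There are now two cases. If $\phi(x)\in L_B(x)$, then every vertex of $B-x$ lies only in $B$, so $L(w)=L_B(w)$ for each $w \in V(B)\setminus\{x\}$, and the edges of $B$ are exactly the edges of $Q$ within $B$. Hence $\phi|_{V(B)}$ is a $\Gamma_B$-colouring of $B$, contradicting the base case. Otherwise $\phi(x)\in L(x)\setminus L_B(x)=L'(x)$, so $\phi|_{V(Q')}$ is a $\Gamma'$-colouring of $Q'$, contradicting the induction hypothesis.

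The proof contains no real obstacle. The only point needing care is the bookkeeping of the union when the blocks at the cut vertex interact: the lists given to a vertex by different blocks are disjoint, and every colour of $L(x)$ comes from exactly one block. Both facts follow directly from the definitions of a non-conflicting union and of a bad assignment. Equivalently, one can avoid the leaf-block induction: given a colouring $\phi$, each vertex $v$ has $\phi(v)$ in $L_B(v)$ for a unique block $B\ni v$, and one argues that some block has all of its vertices "assigned" to it. This follows since the block-cut tree has fewer cut-vertex/block incidences than blocks would require otherwise. That block then gives the same contradiction as above.
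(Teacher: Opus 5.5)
Your proof is correct and follows the paper's argument. The paper likewise checks that each block $B$ is not $\Gamma_B$-colourable, then glues the blocks together using \cref{1-joins bad} (a $1$-join of two non-colourable polar graphs is not colourable under the union assignment); your leaf-block induction, with its case split on whether $\phi(x)\in L_B(x)$, proves exactly that lemma inline.
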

\begin{proof}
    Let $\Gamma$ be a bad degree-polar assignment for $Q$.
    By definition, $\Gamma$ is a union of bad degree-polar assignments $\Gamma_B$ for each block $B$ of $Q$.  It is easily checked that $B$ is not $\Gamma_B$-colourable for any such $B$.  The lemma then follows from \cref{1-joins bad}.
\end{proof}


\begin{lemma}
    \label{polar paths}
    Let $Q$ be a non-null polar graph whose underlying graph is a tree, and let $\Gamma=(L,R)$ be a degree-polar assignment for $Q$. If $|L(v)| > d(v)$ for some $v \in V(Q)$, then $Q$ is $\Gamma$-colourable.
\end{lemma}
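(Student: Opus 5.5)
Since the underlying graph of $Q$ is a tree, I will use greedy colouring along a rooted tree order. Root the tree at the vertex $v$ with $|L(v)| > d(v)$. Order the vertices so that every vertex other than $v$ comes before its parent; for instance, take a reverse breadth-first order from $v$, so that $v$ is last. Then colour greedily in this order, always choosing a colour from the current list that respects \ref{Gamma 2} on every edge to an already-coloured neighbour.

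The key count is per edge. A vertex $u \neq v$ has exactly one parent, and the parent is coloured after $u$. So when $u$ is coloured, at most $d(u)-1$ of its incident edges (those to its children) join it to already-coloured vertices. Each such edge $uw$ forbids at most one colour for $u$. If $uw$ is not polarised, it forbids $\phi(w)$. If $uw$ is polarised with $R(e,u,w)=(c_u,c_w)$, it forbids $c_u$ only when $\phi(w)=c_w$, and nothing otherwise. This is the same observation used in the colour deletion discussion, namely that each edge removes at most one colour. Since $|L(u)| \ge d(u)$, at least one colour remains for $u$. Finally, $v$ is coloured last with all $d(v)$ neighbours already coloured. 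These forbid at most $d(v) < |L(v)|$ colours, so a colour remains for $v$.

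Equivalently, the argument is an induction by deleting a leaf $\ell \neq v$; such a leaf exists whenever $|V(Q)| \ge 2$, since a tree has at least two leaves. Colour $Q-\ell$ by induction, noting that $Q-\ell$ is still a tree and still satisfies the hypotheses with the same $v$, since degrees only drop. Then $\ell$ has one coloured neighbour and at least one colour in its list, since $|L(\ell)| \ge d(\ell) = 1$, so one of its colours survives. The base case is the one-vertex tree, which is null. The statement excludes null graphs, so I take it to mean $Q$ has at least one edge, and otherwise note that $|L(v)| > 0$ trivially gives a colouring. There is no real obstacle here. The only point needing care is that a polarised edge, like a non-polarised edge, removes at most one colour from the later-coloured endpoint, which is what makes the greedy count go through.
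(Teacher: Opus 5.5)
Your greedy argument is correct and is essentially the paper's proof. The paper takes a search ordering $(v_1,\dots,v_n)$ starting at the slack vertex and colours in the order $v_n,\dots,v_1$. Then every vertex other than $v_1$ still has an uncoloured neighbour when it is coloured, exactly as in your reverse breadth-first order.

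However, your leaf-deletion induction is not equivalent to that argument, and it fails as written. Colouring $Q-\ell$ first and then $\ell$ puts the leaf last. By then its only neighbour $w$ is already coloured and may remove the only colour of $L(\ell)$, which need only have size $d(\ell)=1$. For instance, take the path $\ell w v$ with no polarised edges and $L(\ell)=\{1\}$, $L(w)=L(v)=\{1,2\}$. The induction hypothesis may return $\phi(w)=1$, $\phi(v)=2$, which does not extend to $\ell$.

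The induction has to run the other way. Colour $\ell$ first and pass to the $(\ell,\phi(\ell))$-colour deletion on $Q-\ell$. This removes at most one colour from $L(w)$, while $d_{Q-\ell}(w)=d_Q(w)-1$. So the hypotheses are preserved on $Q-\ell$, including the slack at $v$ even when $w=v$. Either drop that paragraph or fix it in this way; the greedy argument alone proves the lemma.
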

\begin{proof}
    Let $v_1 \in V(Q)$ such that $|L(v_1)| > d(v_1)$, and let $G=G(Q)$.
    We let $(v_1,v_2,\dotsc,v_n)$ be a search ordering of $G$ starting at $v_1$, that is, an ordering of $V(G)$ such that $G[\{v_1,\dotsc,v_i\}]$ is connected for all $i \in [n]$.
    We greedily construct a $\Gamma$-colouring for $Q$ starting at $v_n$, then $v_{n-1}$, and so on, where, for each $i \ge 2$, as $v_i$ has a neighbour in $\{v_1,\dotsc,v_{i-1}\}$, and $|L(v_i)| \ge d_G(v_i) > d_{G[v_i,v_{i+1},\dotsc,v_n]}(v_i)$, there is always at least one colour in $L(v_i)$ that we can choose for $v_i$ while maintaining a $\Gamma$-colouring for $Q[\{v_i,v_{i+1},\dotsc,v_n\}]$.
    Finally, we can also greedily colour $v_1$, since $|L(v_1)| > d(v_1)$, thereby obtaining a $\Gamma$-colouring of $Q$.
\end{proof}

\begin{lemma}
\label{restriction subgraphs}
    Let $Q$ be a polar graph.  If $Q$ has an induced polar subgraph that is degree-choosable, then $Q$ is degree-choosable.
\end{lemma}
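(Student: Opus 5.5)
The plan is the standard greedy-extension argument: colour the remainder of $Q$ greedily toward the induced subgraph, then apply degree-choosability of that subgraph to the reduced lists. Let $Q' = Q[V']$ be a degree-choosable induced polar subgraph of $Q$, and let $\Gamma = (L,R)$ be a degree-polar assignment for $Q$. The aim is a $\Gamma$-colouring of $Q$.

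We may assume $Q$ is connected. Otherwise, we handle the component containing a degree-choosable piece as below and colour every other component greedily, suitably ordered. Colouring those other components requires some care, so the precise statement should be checked. The natural reading is that every component must contain such a subgraph, or else the lemma is applied only to connected $Q$, as in \cref{dcpolarthm}. I would state the reduction to connected $Q$ explicitly.

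With $Q$ connected, I choose an ordering $v_1,\dots,v_m$ of $V(Q)\setminus V'$ such that each $v_i$ has a neighbour in $V' \cup \{v_{i+1},\dots,v_m\}$. For example, take a BFS from $V'$ and reverse it, so that vertices far from $V'$ come first. I colour $v_1,v_2,\dots,v_m$ greedily in this order. When $v_i$ is reached, the edges to already coloured vertices $v_j$ with $j<i$ each remove at most one colour from $L(v_i)$; this holds for polarised edges too, as noted after \cref{full colour deletion def}. At least one edge at $v_i$ goes to an uncoloured vertex, so fewer than $d(v_i) \le |L(v_i)|$ colours are removed. Hence a colour compatible with all edges to coloured vertices exists, and this is exactly the greedy step used in \cref{polar paths}.

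Let $\phi$ be the resulting colouring of $Q - V'$, and let $(Q',\Gamma')$ be the $(V(Q)\setminus V',\phi)$-colour deletion from $(Q,\Gamma)$, noting that $Q - (V(Q)\setminus V') = Q[V'] = Q'$. For $u \in V'$, at most $d_Q(u) - d_{Q'}(u)$ colours are removed, since each edge to $V(Q)\setminus V'$ removes at most one colour. Therefore $|L'(u)| \ge d_Q(u) - (d_Q(u) - d_{Q'}(u)) = d_{Q'}(u)$, so $\Gamma'$ is a degree-polar assignment for $Q'$. By degree-choosability $Q'$ is $\Gamma'$-colourable, and the colour-deletion lemma combines this colouring with $\phi$ into a $\Gamma$-colouring of $Q$.

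The only delicate point is the ordering and the connectivity reduction: each outside vertex must still have an uncoloured neighbour, or a neighbour in $V'$, at the moment it is coloured. The reversed BFS ordering from $V'$ secures this.
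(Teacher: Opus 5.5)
Your argument is correct for connected $Q$ with $V'\neq\emptyset$, and it is essentially the paper's proof. The paper inducts on $|V(Q)\setminus V'|$, colour-deleting one outside vertex at a time and noting that the remainder still carries a degree-polar assignment. That is your greedy step followed by your final degree count on $Q'$.

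Your reverse-BFS ordering is a small improvement: the paper picks the outside vertex arbitrarily, so it may reach a vertex that has become isolated with an empty list. Do drop the aside that other components could be coloured greedily. The statement is false for disconnected $Q$: take a degree-choosable $Q'$ together with a disjoint triangle carrying the uniform list $\{1,2\}$. So restricting to connected $Q$ is necessary, not merely convenient.
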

\begin{proof}
    Suppose $Q$ has an induced polar subgraph $Q'$ that is degree-choosable.
    Our proof is by induction on $t=|V(Q)| - |V(Q')|$.
    Clearly the result holds when $t=0$.
    Suppose $t \ge 1$, and let $\Gamma = (L,R)$ be a degree-list assignment for $Q$.
    Then there exists $v \in V(Q) \setminus V(Q')$.
    We choose any $c \in L(v)$.  Then, considering the $(v,c)$-colour deletion from $Q$, we obtain a polar graph that still contains $Q'$ as an induced polar subgraph, together with a degree-polar assignment.  The lemma follows by induction.
\end{proof}

We next consider the case where the underlying graph is a Gallai tree consisting of a single block.

\begin{lemma}
\label{polar cycles+}
Let $Q$ be a simple non-null polar graph with a degree-polar assignment~$\Gamma$ that is not bad. If $Q$ is cycle or a complete graph, then $Q$ is $\Gamma$-colourable
\end{lemma}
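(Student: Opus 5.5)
Since $Q$ is simple and either a cycle or a complete graph, every vertex has the same degree $d$, and every edge (polarised or not) joins two distinct vertices. We first make two reductions. If $|L(v)|>d$ for some vertex $v$, we greedily colour $Q-v$ along an ordering in which each vertex other than $v$ has a later neighbour, ending with $v$. When a vertex is coloured, each of its already coloured neighbours forbids at most one colour, whether through a polarised or a non-polarised edge, so each vertex has a free colour; $v$ is last and has $|L(v)|>d$. This is the argument of \cref{polar paths}. We may therefore assume $|L(v)|=d$ for all $v$.

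Next, suppose the graph is $K_2$ with edge $e=uv$, so $d=1$, $L(u)=\{a\}$ and $L(v)=\{b\}$. If $e$ is unpolarised, badness means $a=b$. If $e$ is polarised, badness means $R(e,u,v)=(a,b)$. So a non-bad assignment gives a colouring directly. Hence assume $|V(Q)|\ge 3$. Then $Q$ is not a polar Gallai tree if it has a polarised edge, so every non-colourable instance must be ruled out there. Badness then amounts to: $F(Q)=\emptyset$ and $L$ is uniform.

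\textbf{Case 1: some edge $e=uv$ is polarised,} with $R(e,u,v)=(c_u,c_v)$. Pick a colour $a\in L(v)$ with $a\neq c_v$; this is possible since $|L(v)|=d\ge 2$. Perform the $(v,a)$-colour deletion. The edge $e$ removes nothing from $L(u)$, so $|L'(u)|=d>d_{Q-v}(u)$. Each other vertex $w$ loses at most $e(v,w)$ colours, so $|L'(w)|\ge d_{Q-v}(w)$. Now $Q-v$ is connected: it is a path or a complete graph. Greedily colour $Q-v$ using an ordering that ends at $u$, in which every vertex except $u$ has a later neighbour. This colours everything, and \cref{polar paths}-style counting finishes the case. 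No tree is needed; the standard greedy argument on any connected graph works.

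\textbf{Case 2: $F(Q)=\emptyset$ and $L$ is not uniform.} Then there are adjacent vertices $u,v$ with $L(u)\neq L(v)$, since the graph is connected. Choose $a\in L(v)\setminus L(u)$ and colour $v$ with $a$. Then $u$ keeps all $d$ colours while having degree $d-1$ in $Q-v$, and we finish greedily toward $u$ exactly as in Case 1. This is the classical Erd\H{o}s--Rubin--Taylor argument.

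The main subtlety, though a mild one, is checking that $Q-v$ is connected and that the greedy ordering ending at $u$ exists. This holds because cycles and complete graphs on at least $3$ vertices are $2$-connected. The other point to confirm is that a single edge removes at most one colour, including polarised edges, which follows from \cref{full colour deletion def}.
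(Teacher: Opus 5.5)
There is one missing case: an even cycle with $F(Q)=\emptyset$ and a uniform list assignment. Your sentence ``Badness then amounts to: $F(Q)=\emptyset$ and $L$ is uniform'' is only true in one direction.

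\begin{itemize}
\item A bad assignment does force $F(Q)=\emptyset$ and $L$ uniform.
\item The converse fails when $Q$ is an even cycle. Badness requires $Q$ to be a polar Gallai tree, and an even cycle is not a Gallai tree, so it admits no bad assignment at all.
\end{itemize}

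Your case split (Case~1: some edge is polarised; Case~2: $F(Q)=\emptyset$ and $L$ not uniform) silently relies on the converse. So the even cycle with no polarised edges and $L(x)=\{a,b\}$ for every $x$ is never treated, yet it is a non-bad degree-polar assignment covered by the statement, which says ``cycle'', not ``odd cycle''. The fix is immediate: colour the vertices alternately $a,b$ around the cycle. Alternatively, handle all unpolarised cycles by \cref{dcthm,baddegreeassign}, as the paper does.

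With that case added, your proof is complete and somewhat more direct than the paper's.

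\begin{itemize}
\item \emph{Polarised cycles.} The paper uses the same idea as your Case~1, with the roles of the two ends swapped. It colours one end of the polarised edge with a colour avoiding its side of the polarisation, so the other end keeps its full list, and then applies \cref{polar paths}.
\item \emph{Complete graphs $K_n$, $n\ge 4$.} The paper deletes a vertex outside the polarised edge and inducts down to the triangle. You instead note that the one-step Case~1 argument already works here: $Q-v$ is connected, and a greedy colouring ending at $u$ needs only connectivity, not a tree.
\item \emph{Unpolarised cases.} The paper cites the classical results, while your Case~2 reproves the Erd\H{o}s--Rubin--Taylor step directly. That step is exactly where the parity of the cycle has to enter, and that is where it was lost.
\end{itemize}
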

\begin{proof}
    Let $\Gamma=(L,R)$ and assume that $\Gamma$ is not bad.
    First, let $Q$ be a cycle.
    If $F(Q) = \emptyset$, then the result follows from \cref{dcthm,baddegreeassign}.
    So assume that $Q$ contains a polarised edge $e=uv$ with $R(e,u,v)=(c_u,c_v)$.
    Since $d(u)=2$ and $\Gamma$ is a degree-polar assignment, $|L(u)|\geq 2$, and therefore there is some $c_u' \in L(u) \setminus \{c_u\}$.
    Let $(Q-u,\Gamma')$ be the $(u,c_u')$-colour deletion from $(Q,\Gamma)$, and let $\Gamma'=(L',R')$.
    Note that since $c_u' \neq c_u$, we have that $L'(v)=L(v)$.
    Therefore, $\Gamma'$ is a degree-polar assignment for the path $Q-u$, but since $|L'(v)|>d_{Q-u}(v)$, we have that $Q-u$ is $\Gamma'$-colourable by \cref{polar paths}, and therefore $Q$ is $\Gamma$-colourable, as required.

    Next, assume that $G(Q) \cong K_2$, and let $V(Q)=\{u,v\}$.
    In this case $d(u)=d(v)=1$, so $|L(u)| \geq 1$ and $|L(v)| \geq 1$.
    Clearly $Q$ is $\Gamma$-colourable if either $|L(u)|>1$ or $|L(v)|>1$,
    so we may assume that $|L(u)|=|L(v)|=1$.
    If $e=uv$ is not polarised, then $\Gamma$ is not uniform (as $\Gamma$ is not bad), so $Q$ is $\Gamma$-colourable.
    On the other hand, if $e=uv$ is polarised, then
    $\Gamma$ is not polar conforming for the polar $K_2$-block $Q$, so again $Q$ is $\Gamma$-colourable.

    It remains only to consider the case where $Q \cong K_n$ for $n \geq 4$.
    We may assume that $Q$ contains a polarised edge $e=uv$, for otherwise the result follows from \cref{dcthm,baddegreeassign}.
    We proceed by induction on $n$, where the base case, $n=3$, holds by the foregoing.
    So assume $n \ge 4$ and let $\Gamma=(L,R)$ be a degree-polar assignment for $Q$.
    There exists a vertex $w \in V(Q) \setminus \{u,v\}$.
    Let $c_w \in L(w)$, and let $(Q-w,\Gamma')$ be the $(w,c_w)$-colour deletion from $(Q,\Gamma)$, with $\Gamma'=(L',R')$.
    Observe that $G(Q-w) \cong K_{n-1}$, and that $|L'(x)|\geq n-2$ for all $x \in V(G(Q-w))$, so $\Gamma'$ is a degree-polar assignment for $Q-w$, where $Q-w$ contains a polarised edge.
    The result follows by induction.
\end{proof}

A {\em theta graph} is a graph obtained by identifying the endpoints of three paths each with at least one edge and two with at least two edges.
We will use the following lemma, which appears in \cite{ERT1979}.

\begin{lemma}
\label{theorem R}
    Let $G$ be a $2$-connected graph that is not an odd cycle or a complete graph. Then $G$ contains an induced subgraph that is isomorphic to an even cycle or a theta graph.
\end{lemma}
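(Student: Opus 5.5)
This is the classical lemma of Erd\H{o}s, Rubin, and Taylor (often called ``Theorem~R''), and I plan to prove it by taking a suitable induced cycle and then attaching an induced path to it. The argument is structural and uses only 2-connectivity. Since $G$ is $2$-connected and not a cycle, or is a cycle that is even, we first dispose of the case where $G$ is itself a cycle. Then $G$ is an even cycle, which is its own induced subgraph. So assume $G$ is not a cycle. Being $2$-connected, $G$ then contains a cycle, and therefore an induced cycle. If $G$ has parallel edges I would work with the underlying simple graph; the intended setting here is simple graphs, as in \cite{ERT1979}.

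\textbf{Case 1: $G$ contains an induced cycle $C$ that does not dominate $G$,} meaning that some vertex lies off $C$ and is not adjacent to all of $C$. Among such configurations, I choose an induced cycle $C$ and a shortest path $P$ whose ends lie on $C$, whose interior lies outside $C$, and whose interior is nonempty. Such a $C$-ear exists by $2$-connectivity together with $V(G)\ne V(C)$; if every vertex is on $C$, then $C$ has a chord, contradicting inducedness. With $P$ chosen shortest (in the right sense), $C\cup P$ is induced except for possible edges from interior vertices of $P$ to $C$. The cleanest way to set this up: pick a vertex $x\notin C$ adjacent to some vertex of $C$. If $x$ has exactly two neighbours on $C$ (or a suitable pair of them), consecutive neighbours $a,b$ of $x$ along $C$ bound an arc $A$. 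Then $x$ together with $A$ gives an induced cycle, and $x$, $A$ and the complementary arc give a theta, provided the arcs have the right lengths. In general, I would take two consecutive neighbours $a,b$ of $x$ on $C$ with $a,b$ nonadjacent, when possible, and then $C+x$ restricted appropriately is a theta. If $x$ has only one neighbour on $C$, I would follow a shortest path from $x$ back to $C$ avoiding the first attachment, which exists by $2$-connectivity, and obtain an induced ear.

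\textbf{Key parity step.} An induced theta is already the desired conclusion. Otherwise the structure found is a cycle together with a vertex whose neighbourhood on it forces either a triangle configuration or an even induced cycle. Here I use that the three cycles of a theta cannot all be odd, since their lengths sum to an even number; but we want a theta anyway, so parity matters only when degenerate cases collapse the theta. The main degenerate case is when $x$ is adjacent to two consecutive vertices $a,b$ of $C$, so the ``theta'' has a path of length $1$, and more generally when $x$ is adjacent to every vertex of $C$. If $x$ misses some vertex of $C$, choose consecutive neighbours $a,b$ of $x$ along $C$ such that the arc between them has length at least $2$. Then the arc plus $x$ is an induced cycle $C'$, and since $x$ has further neighbours on $C$, the edge $xa$ and the rest yields an induced theta using a third path. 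I would check this carefully by taking a maximal run of non-neighbours.

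\textbf{Case 2 (the main obstacle): every induced cycle dominates.} Take a shortest odd or even induced cycle $C$. If $|C|\ge4$ and some vertex is adjacent to all of $C$, then two nonadjacent vertices of $C$ together with $x$ and one of their common neighbours on $C$ give an induced $C_4$ when $|C|=4$. For longer $C$, $x$ with two vertices at distance $2$ on $C$ gives an induced $C_4$, which is even. So we may assume every induced cycle is a triangle, i.e.\ $G$ is chordal, and every vertex is adjacent to the triangle's vertices appropriately. Since $G$ is not complete, pick nonadjacent $u,v$ and a minimal $(u,v)$-separator $S$, which has size $\ge2$ by $2$-connectivity. In a chordal graph $S$ is a clique. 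Take $s,t\in S$ and shortest paths through the two full components between $s$ and $t$. Together with the edge $st$, these give an induced theta, with two paths of length $\ge2$ and one of length $1$. I expect verifying inducedness of these paths (chordality forcing common neighbours) to be the fiddly part. If it fails, I would instead take a vertex of $S$ with a neighbour on each side and a shortest $(\text{side}_1,\text{side}_2)$ configuration to produce the theta directly.
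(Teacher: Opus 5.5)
The paper does not prove this lemma; it quotes it from \cite{ERT1979}. Your argument therefore has to stand on its own, and as organised it has a genuine gap in Case~1.

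\textbf{The gap.} In Case~1 you only ever examine a vertex $x\notin V(C)$ that has a neighbour on $C$. You set aside the ``degenerate'' situation in which $x$ is adjacent to every vertex of $C$, but never resolve it. For $|C|\ge 4$ this is harmless: $x$ and three consecutive vertices of $C$ induce $K_4^-$, which is the theta graph with paths of lengths $1,2,2$. For a triangle $C$, however, it produces only $K_4$, and your sketch gives no way forward. This does occur. Take $K_4$ on $\{a,b,c,x\}$, add $y$ adjacent to exactly $a,b,c$, and add $z$ adjacent to exactly $x$ and $y$.
\begin{itemize}
\item This graph is $2$-connected and not complete.
\item $C=abc$ is an induced cycle that is non-dominating in your sense, witnessed by $z$.
\item Every vertex with a neighbour on $C$ is complete to $C$, so every shortest $C$-ear has a single interior vertex complete to $C$.
\end{itemize}
The desired subgraph exists ($\{z,x,a,y\}$ induces $C_4$), but only after abandoning $C$. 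You give no rule for choosing a better cycle, nor any reason such switching terminates.

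Separately, the step ``follow a shortest path from $x$ back to $C$ avoiding the first attachment \dots and obtain an induced ear'' is unjustified. Interior vertices of that path may be adjacent to the first attachment or to other vertices of $C$. What works is a globally shortest $C$-ear $P$. If $P$ has at least two interior vertices, minimality forces the only edges between the interior of $P$ and $V(C)$ to be the first and last edges of $P$, so $C\cup P$ is an induced theta.

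\textbf{The fix.} Split on chordality rather than on your notion of domination. Your Case~2 hypothesis is in fact very strong: if every vertex off every induced cycle is complete to that cycle, a single triangle already forces $G$ to be complete. Your minimal-separator argument never uses that hypothesis. In any $2$-connected, non-complete chordal graph, take a shortest $(s,t)$-path through each of the two full components and close it with the edge $st$. Each resulting cycle is induced, hence a triangle, so $s$, $t$ and the two middle vertices induce $K_4^-$.

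If $G$ is not chordal, take an induced cycle $C$ with $|C|\ge 4$. You may assume it is odd, so $|C|\ge5$; then take a shortest $C$-ear. If the ear has at least two interior vertices, you get an induced theta as above. If its interior is a single vertex $x$, there are three possibilities:
\begin{itemize}
\item $x$ is complete to $C$, giving $K_4^-$;
\item $x$ has exactly two neighbours on $C$, in which case $C+x$ is an induced theta;
\item $x$ has at least three neighbours and misses some vertex of $C$. Then $x$ together with two consecutive arcs of $C$ between neighbours of $x$, chosen so that the remainder of $C$ has length at least $2$, induces a theta.
\end{itemize}

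\textbf{A smaller slip.} In Case~2, the configurations you call induced $C_4$'s ($x$ together with consecutive vertices $a,m,b$ of $C$) are actually copies of $K_4^-$, because $x$ is adjacent to $m$. This is still a theta, so that conclusion survives.
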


The previous lemma motivates considering the case where the underlying graph is a theta graph.

\begin{lemma}
\label{even theta}
Let $Q$ be a polar graph with degree-polar assignment $\Gamma$. If $G(Q)$ is a theta graph, then $Q$ is $\Gamma$-colourable.
\end{lemma}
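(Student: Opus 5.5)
We prove that a polar graph $Q$ whose underlying graph is a theta graph is $\Gamma$-colourable for every degree-polar assignment $\Gamma=(L,R)$. Let $G=G(Q)$, with branch vertices $x$ and $y$ joined by internally disjoint paths $P_1,P_2,P_3$. Here $d(x)=d(y)=3$ and every internal vertex has degree $2$, so $|L(x)|,|L(y)|\ge 3$ and $|L(w)|\ge 2$ for internal $w$. Note $G$ may have a parallel pair $x,y$: one path may be a single edge, while the other two have at least two edges. By the definition of a theta graph, at most one of the paths is a single edge.

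The plan is to colour $x$ and $y$ first, choosing colours that leave some vertex with a surplus, and then colour the remaining paths. After the $(\{x,y\},\phi)$-colour deletion, $Q-\{x,y\}$ is a disjoint union of (at most three) polar paths. Each internal vertex adjacent to $x$ or $y$ loses at most one colour per such edge, so the resulting assignment is a degree-polar assignment on each path. By \cref{polar paths}, each path component is colourable as soon as \emph{one} vertex of that component has a list strictly larger than its new degree. So it suffices to choose $\phi(x)\in L(x)$ and $\phi(y)\in L(y)$ such that
\begin{enumerate}
\item the pair is compatible on any direct edges between $x$ and $y$, and
\item on each path $P_i$ with internal vertices, some end-neighbour of $x$ or $y$ on $P_i$ loses no colour.
\end{enumerate}

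We choose $\phi(x)$ first. Let $a_1,a_2,a_3$ be the neighbours of $x$ along the three paths. We cannot simply choose $\phi(x)\notin L(a_i)$, since lists may coincide. The better idea is the standard one from \cite{ERT1979}: treat the two paths with internal vertices as forming an even or odd cycle through $x$ and $y$.

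An alternative, cleaner strategy is the following. Pick an internal vertex $w$ on a path with internal vertices, adjacent to $x$ via an edge $e=xw$. Choose $\phi(x)$ such that $e$ removes no colour from $L(w)$. This is possible because $e$ removes a colour only when $\phi(x)$ equals a single specific colour, namely $c$ if $e$ is unpolarised or the $x$-coordinate of $R(e,w,x)$ if polarised, and $|L(x)|\ge 3$ leaves alternatives. Then colour $y$ so that it does not kill the surplus at $w$ (if $w$ is also adjacent to $y$) and respects any $xy$ edges. This leaves $Q-\{x,y\}$ with $w$ having surplus. The other path component, however, still needs a surplus vertex, which is where the counting gets tight.

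Counting gives: $x$ has $3$ colours, at most one of which is forbidden for each of two chosen neighbours $w_1,w_2$ on different paths. So some colour of $x$ spares both $w_1$ and $w_2$ unless the two forbidden colours are distinct and the third colour kills via the third edge. The third edge matters only when it is an $xy$ edge, which is handled when choosing $\phi(y)$. If all three paths have internal vertices, we instead need $\phi(x)$ to spare one neighbour on two paths and $\phi(y)$ to spare one on the third; $y$ has $3$ colours and only the third path's neighbour constrains it, so this works. If some path is the edge $xy$, choose $\phi(x)$ to spare $w_1$ and $w_2$ (possible, since only two colours are forbidden among three), then $\phi(y)$ avoids at most one colour from the $xy$ edge, leaving at least two choices.

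The hard part is that in the first case $y$ may also be adjacent to the spared vertices, for example when a path has length $2$ and $w$ is adjacent to both $x$ and $y$. Then $\phi(y)$ may destroy the surplus. I expect to resolve this by a case split on path lengths, using \cref{cdlemma1} to pick between two candidate colours, and by applying \cref{polar paths} to the whole remaining tree when only the parallel-edge case remains.
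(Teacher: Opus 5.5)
You have a genuine gap, and it is not the one you flagged. The reduction itself is sound: after the colour deletion, each remaining vertex loses at most one colour per deleted edge, so each path component carries a degree-polar assignment, and \cref{polar paths} colours any component that contains a surplus vertex. The problem is the counting meant to produce those surplus vertices. You assert that an edge $xw$ removes a colour from $L(w)$ for only one value of $\phi(x)$. That is true for a polarised edge. For an unpolarised edge, however, every colour of $L(x)\cap L(w)$ is removed, so when $|L(w)|=2$ up to two colours of $x$ are forbidden. Hence ``choose $\phi(x)$ to spare $w_1$ and $w_2$, since only two colours are forbidden among three'' fails for $L(x)=\{1,2,3\}$, $L(w_1)=\{1,2\}$, $L(w_2)=\{2,3\}$.

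Worse, the criterion (ii) you reduce to can be unsatisfiable. Take the theta graph with paths $xy$, $xa_1b_1y$ and $xa_2b_2y$, no polarised edges, and lists $L(x)=L(y)=\{1,2,3\}$, $L(a_1)=L(b_2)=\{1,2\}$, $L(a_2)=L(b_1)=\{2,3\}$. Check the six admissible pairs with $\phi(x)\neq\phi(y)$. In each, one of the paths $a_1b_1$ or $a_2b_2$ has both of its vertices losing a colour, so that component has no surplus vertex. Yet $Q$ is colourable, for example via $(x,y,a_1,b_1,a_2,b_2)\mapsto(2,1,1,2,3,2)$. The worry you did flag is harmless: if one edge from $w$ to $\{x,y\}$ removes no colour, $w$ keeps a surplus whatever the other edge does. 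Each such edge removes at most one colour and also lowers the degree by one.

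The missing idea is to colour only one branch vertex. Then $Q-x$ is connected: a tree consisting of $y$ with pendant paths. So \cref{polar paths} needs just one surplus vertex in all of $Q-x$.

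\begin{enumerate}
\item Let $w$ be an internal vertex adjacent to $x$; it exists because at most one path is a single edge.
\item If $e=xw$ is unpolarised and $|L(w)|=2$, pick $\phi(x)\in L(x)\setminus L(w)$, which is possible since $|L(x)|\ge 3$. If $e$ is unpolarised and $|L(w)|\ge 3$, any colour works.
\item If $e$ is polarised with $R(e,x,w)=(c_x,c_w)$, pick $\phi(x)\neq c_x$.
\end{enumerate}

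After the $(x,\phi(x))$-colour deletion, $Q-x$ has a degree-polar assignment in which $w$ keeps at least two colours while $d_{Q-x}(w)=1$. Then \cref{polar paths} finishes the colouring. This is exactly the paper's proof.
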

\begin{proof}
    Assume that $G(Q)$ is a theta graph, and let $\Gamma=(L,R)$.
    Let $u$ and $v$ be the distinct vertices of $Q$ with three internally disjoint paths between them.
    Since at most one of these paths has length 1, there exists $w \in V(Q) \setminus \{u,v\}$ adjacent to $u$.
    If $e=uw$ is not polarised, then, since $|L(u)| \geq d(u)=3$ and $|L(w)| \geq d(v)= 2$, there is some $c_u \in L(u)$ such that $|L(w) \setminus \{c_u\}| \geq 2$.
    Similarly, if $e$ is polarised and $R(e,u,w)=(c_u',c_w)$, then, since $|L(u)| \geq 3$, there is some $c_u \in L(u)$ with $c_u' \neq c_u$.
    Let $(Q-u,\Gamma')$ be the $(u,c_u)$-colour deletion from $(Q,\Gamma)$, with $\Gamma'=(L',R')$.
    Then $|L'(w)| \geq 2$, since if $e$ is not polarised, then $L'(w)=L(w) \setminus \{c_u\}$; and if $e$ is polarised, then $L'(w)=L(w)$.
    Therefore $Q-u$ is a tree and $|L'(w)| > d_{Q-u}(w)=1$, so $Q-u$ is $\Gamma'$-colourable by \cref{polar paths}, and therefore $Q$ is $\Gamma$-colourable.
\end{proof}

We require one more lemma that handles when $Q$ is not simple.

\begin{lemma}
\label{non simple}
Let $Q$ be a polar graph with degree-polar assignment $\Gamma$. If $Q$ is not simple, then $Q$ is $\Gamma$-colourable.
\end{lemma}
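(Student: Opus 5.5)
We will prove the statement for connected $Q$, which is the setting of \cref{dcpolarthm} where this lemma is used. The idea is to concentrate all the slack on a single parallel class. Let $\Gamma=(L,R)$. Since $Q$ is not simple, there are vertices $u,v$ with $e(u,v)\ge 2$.

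First I choose a search ordering $(v_1,v_2,\dotsc,v_n)$ of $G(Q)$ with $v_1=u$ and $v_2=v$. This is possible since $u$ and $v$ are adjacent. It has the property that $G(Q)[\{v_1,\dotsc,v_i\}]$ is connected for every $i$, so each $v_i$ with $i\ge 2$ has a neighbour in $\{v_1,\dotsc,v_{i-1}\}$.

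Next I colour greedily in the order $v_n,v_{n-1},\dotsc,v_3$, exactly as in the proof of \cref{polar paths}. When $v_i$ is coloured ($i\ge 3$), at least one of its $d(v_i)$ incident edges goes to an as yet uncoloured earlier vertex. Each edge to an already coloured vertex removes at most one colour from $L(v_i)$ (as noted after \cref{full colour deletion def}, this holds for polarised and non-polarised edges alike). Hence at most $d(v_i)-1<|L(v_i)|$ colours are forbidden, and some colour remains. After this step we have a $\Gamma$-colouring $\phi$ of $Q[\{v_3,\dotsc,v_n\}]$.

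Now consider the $(\{v_3,\dotsc,v_n\},\phi)$-colour deletion; it gives the polar graph $Q[\{u,v\}]$ with some assignment $\Gamma''=(L'',R'')$. The vertex $v$ has exactly $d(v)-e(u,v)$ edges to the deleted set, so
\[|L''(v)|\ge d(v)-(d(v)-e(u,v))=e(u,v)\ge 2,\]
and similarly $|L''(u)|\ge e(u,v)$. Pick distinct $a,b\in L''(v)$ and apply \cref{cdlemma2} to $Q[\{u,v\}]$ with $\Gamma''$. For one of these colours, say $a$, the $(v,a)$-colour deletion leaves $u$ with more than $|L''(u)|-e(u,v)\ge 0$ colours, so at least one colour remains for $u$. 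Colouring $v$ with $a$ and $u$ with that remaining colour extends $\phi$. By the colour-deletion lemma, the result is a $\Gamma$-colouring of $Q$.

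The only delicate point is the final step: the two end vertices must both be left with enough colours, and parallelism must supply the extra colour at $u$. This is exactly why $u$ and $v$ are placed first in the ordering, so that their parallel class is the last thing handled and \cref{cdlemma2} can be applied there. If the lemma is intended for disconnected $Q$ as well, it would have to be restricted to components; I expect it is only applied to connected $Q$ in \cref{dcpolarthm}.
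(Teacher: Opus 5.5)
Your proof is correct and follows the paper's route. The paper also colour-deletes everything outside the parallel pair (by citing \cref{restriction subgraphs}) to reduce to the two-vertex polar graph $Q[\{u,v\}]$, and then finishes with \cref{cdlemma2} exactly as in your last step.

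The one difference is in your favour. Your reverse search ordering from $u,v$ guarantees that each greedily coloured vertex keeps an uncoloured neighbour. By contrast, \cref{restriction subgraphs} deletes outside vertices in arbitrary order with arbitrary colours, which can leave a vertex with an empty list. Your connectivity hypothesis is also genuinely necessary: for $I_2$ together with a disjoint edge whose ends both have list $\{1\}$, the statement fails. Assuming it costs nothing, since \cref{non simple} is only applied to connected $Q$ in \cref{dcpolarthm}.
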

\begin{proof}
    By \cref{restriction subgraphs}, it suffices to consider the case where $|V(Q)|=2$.
    Let $V(Q) = \{u,v\}$ with $e(u,v)>1$. Then $d(u) \geq 2$, and so, by \cref{cdlemma2}, there is some $c \in L(v)$ such that the $(v,c)$-colour deletion from $Q$ leaves $u$ with more than $|L(u)|-e(u,v)$ colours. That is, there will be at least one colour left to colour $u$ with, so $Q$ is $\Gamma$-colourable.
\end{proof}

We now prove the main result of this section.

\begin{proof}[Proof of \cref{dcpolarthm}]
    One direction is given by \cref{dcponedirection}.
    For the other direction, we assume that either $Q$ is not a polar Gallai tree, or $\Gamma$ is not bad, and will show that $Q$ is $\Gamma$-colourable.
    If $Q$ is not simple, then it is $\Gamma$-colourable by \cref{non simple}.
    So we may assume that $Q$ is simple.
    If $G(Q)$ is not a Gallai tree, then it contains a block that is not a complete graph or an odd cycle.
    Therefore, by \cref{theorem R}, $G(Q)$ contains an even cycle or a theta graph as an induced subgraph.
    By \cref{restriction subgraphs,even theta,polar cycles+}, it follows that $Q$ is $\Gamma$-colourable, as required.

    It now suffices to show that if $G(Q)$ is a Gallai tree and $\Gamma$ is not bad, then $Q$ is $\Gamma$-colourable.
    We prove this by induction on the number of blocks in $Q$.
    If $Q$ has only one block, then this follows from \cref{polar cycles+}.
    So assume that $Q$ has at least two blocks.


    Let $\Gamma=(L,R)$, and let $B$ be a leaf block of $G(Q)$.
    Then $B$ contains a unique cut vertex $v$ of $Q$.
    We say that the leaf block $B$ is \emph{unambiguous (with respect to $\Gamma$)} if $B$ contains no polarised edges, $\Gamma|(B-v)$ is uniform, and $L(u) \subseteq L(v)$ for all $u \in V(B) \setminus \{v\}$.
    We say a leaf block is \emph{ambiguous} if it is not unambiguous.
    Observe that when $Q$ has an ambiguous leaf block with respect to $\Gamma$, then $\Gamma$ is not bad.

    First, we assume $B$ is an unambiguous leaf block.
    Let $Q' = Q - (V(B) \setminus \{v\})$, so $Q$ is the $1$-join of $B$ and $Q'$.
    Then there exist degree-polar assignments $\Gamma_B$ and $\Gamma_{Q'}$ for $B$ and $Q'$ respectively such that $\Gamma = \Gamma_B \cup \Gamma_{Q'}$ and $\Gamma_B$ is uniform.
    Note that $\Gamma_{Q'}$ is not bad, since $\Gamma_B$ is bad but $\Gamma$ is not bad.
    Now $B$ is either an odd cycle or a complete graph, and $|L(v)| > d_B(v)$, so $B$ is $\Gamma|B$-colourable by \cref{polar cycles+}.
    Let $\phi$ be a $\Gamma|B$-colouring of $B$.
    It follows that the $(B-v,\phi|_{V(B-v)})$-colour deletion from $(Q,\Gamma)$ is $(Q',\Gamma_{Q'})$.
    As $Q'$ is $\Gamma_{Q'}$-colourable, by induction, we see that $Q$ is $\Gamma$-colourable, as required.

    We may now assume that $Q$ has no unambiguous leaf blocks.
    As before, let $B$ be a leaf block of $G(Q)$, let $v$ be the cut vertex of $Q$ in $B$, let $Q' = Q - (V(B) \setminus \{v\})$, so $Q$ is the $1$-join of $B$ and $Q'$, and let $\Gamma=(L,R)$.
    Now $B$ is either an odd cycle or a complete graph, and $|L(v)| > d_B(v)$, so $B$ is $\Gamma|B$-colourable by \cref{polar cycles+}.
    Let $\phi$ be a $\Gamma|B$-colouring of $B$.
    Let $(Q',\Gamma')$ be the $(B-v,\phi|_{V(B-v)})$-colour deletion from $(Q,\Gamma)$, with $\Gamma'=(L',R')$.
    Then $\Gamma'$ is a degree-polar assignment for $Q'$.
    Let $B'$ be a leaf block of $Q$ that is distinct from $B$.
    If $B$ and $B'$ are the only two blocks of $Q$, then, as $B'$ is ambiguous with respect to $\Gamma$, we have that $\Gamma'$ is not bad for $Q'$.
    Otherwise, $B'$ is also a leaf block in $Q'$ and it is ambiguous with respect to $\Gamma'$ so, again, $\Gamma'$ is not bad for $Q'$.
    It follows that $Q'$ is $\Gamma_{Q'}$-colourable, by induction, and therefore $Q$ is $\Gamma$-colourable, as required.
\end{proof}

\section{One higher-degree vertex: choosability}
\label{dcpolarhdsec}

We have seen that Gallai trees are the only connected graphs that are not degree-choosable (see \cref{dcthm}), and polar Gallai trees are the only connected polar graphs that are not degree-choosable (see \cref{dcpolarthm}).
In this section, we prove an analogue for polar graphs where a single vertex is allowed to have a list that is smaller than its degree.

We first describe the exceptional structures, which are analogous to Gallai trees for connected graphs, or polar Gallai trees for connected polar graphs.
A \emph{rooted polar graph} is a pair $(Q,h)$ where $Q$ is a polar graph and $h \in V(Q)$ is a distinguished vertex called the \emph{root}.
We will define a class $\mathcal{N}_k$ of rooted polar graphs, for each $k \ge 2$. The exceptional structures for this setting 
are members of $\mathcal{N}_k$, where the root $h$ corresponds to the unique vertex with a list of size $k$ that can be smaller than $d(h)$.
For a rooted polar graph $(Q,h)$ that is not colourable for any appropriate polar assignment, colour deleting the vertex $h$, using any colour in its list, will result in a degree-polar assignment for $Q-h$ that is not colourable. So $Q-h$ is a polar Gallai tree.
Further properties of $Q-h$ will be captured in the upcoming definition of a ``balanced Gallai tree''. 

For a polar graph $Q$, recall that $\mathcal{B}_Q$ denotes the set of blocks of $Q$. 
Throughout this section, when $v \in V(Q)$, we write $\mathcal{B}_{Q}(v)$ (or just $\mathcal{B}(v)$ when $Q$ is clear from context) to denote the set of blocks of $Q$ that contain $v$.
For a block $B \in \mathcal{B}_Q$, we also write $I(B)$ to denote the internal vertices of $G(Q)$ contained in $V(B)$.

\begin{definition}[balanced Gallai tree]
\label{bgt rules}
Let $Q$ be a polar Gallai tree and let $U \subseteq V(Q)$.
We say a function $f : \mathcal{B}_Q \rightarrow \{-1,0,1\}$ is a \emph{balancing function} for $(Q,U)$ if:
    \begin{enumerate}[label=\textbf{\textup{(G\arabic*)}}]
        \item for each $B \in \mathcal{B}_Q$ with $I(B) \neq \emptyset$, either
            \begin{itemize}
                \item $f(B)=-1$ and $I(B) \subseteq U$, or
                \item $f(B)=0$ and $I(B) \cap U = \emptyset$;
            \end{itemize}\label{G1}
        \item for each cut vertex $v$ of $Q$, either
            \begin{itemize}
                \item $f(B)=0$ for each $B \in \mathcal{B}(v)$, and $v \notin U$;
                \item there exists $B^- \in \mathcal{B}(v)$ such that $f(B^-)=-1$, and $f(B) = 0$ for each $B \in \mathcal{B}(v) \setminus \{B^-\}$, and $v \in U$; or
                \item there exist $B^-,B^+ \in \mathcal{B}(v)$ such that $f(B^-)=-1$ and $f(B^+)=1$, and $f(B) = 0$ for each $B \in \mathcal{B}(v) \setminus \{B^-,B^+\}$, and $v \notin U$; and
            \end{itemize}\label{G2}
        \item If $F(B) \neq \emptyset$ for some $B \in \mathcal{B}_Q$, then $f(B)=0$.\label{G3}
    \end{enumerate}
    We say $(Q,U)$ is a {\em balanced Gallai tree} if $(Q,U)$ has a balancing function.
\end{definition}

We will see (in \cref{uniquegalaxy}) that for a balanced Gallai tree, there is a unique balancing function.
Let $(Q,U)$ be a balanced Gallai tree, let $f$ be a balancing function for $(Q,U)$, and let $B$ be a block of $Q$.
We say $B$ is \emph{normal} if $f(B)=0$, we say $B$ is \emph{light} if $f(B)=-1$, and we say $B$ is \emph{heavy} if $f(B)=1$.



Let $k \ge 2$ be an integer.
We now define $\mathcal{N}_k$ to be the class consisting of all rooted polar graphs $(Q,h)$ such that the following hold:
\begin{enumerate}[label=\textbf{(\textup{N\arabic*})}]
    \item $(Q-h,N_Q(h))$ is a balanced Gallai tree,\label{N1}
    \item if $B$ is a light block of $(Q-h,N_Q(h))$, then $d_B(v) \geq k-1$ for all $v \in V(B)$, and\label{N2}
    \item $Q$ is simple and no polarised edges of $Q$ are incident with $h$.\label{N3}
\end{enumerate}

\begin{figure}[b]
    \centering
    \includegraphics[width=13.5cm]{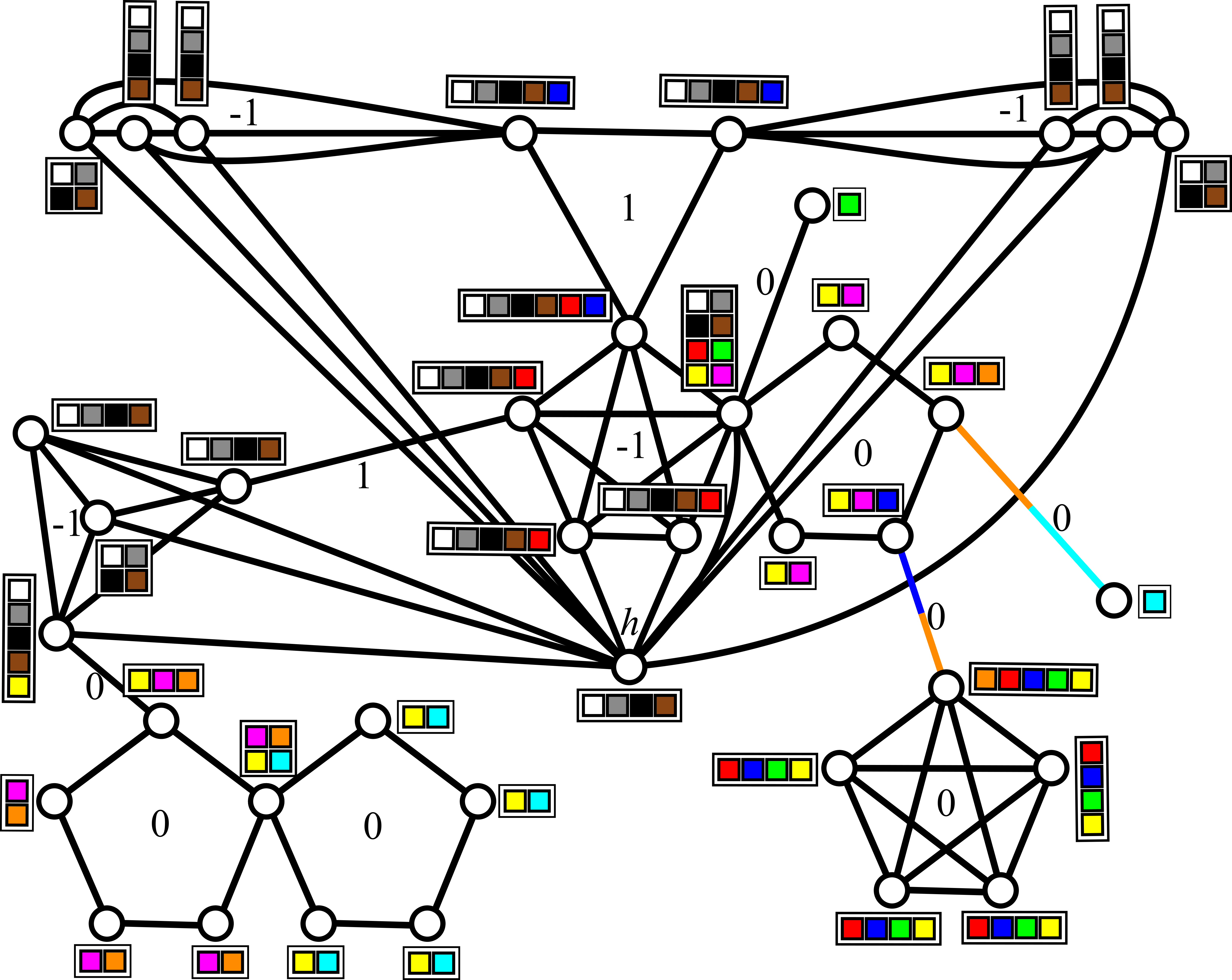}
    \caption{A rooted polar graph $(Q,h)$ belonging to $\mathcal{N}_4$, together with an $h$-bad polar assignment $\Gamma$.
    There are four light blocks and two heavy blocks, labeled $-1$ and $1$ respectively, while the remaining normal blocks are labelled $0$.}
    \label{graph in N4}
\end{figure}

To get some intuition for the rooted polar graphs in $\mathcal{N}_k$, we describe how to construct such a rooted polar graph.
Let $Q$ be a polar Gallai tree.
First, assign some subset $\mathcal{S}^+$ of the blocks of $Q$ to be heavy, where no two blocks in $\mathcal{S}^+$ are adjacent and, for each $B \in \mathcal{S}^+$, there are no internal vertices in $B$, and $B$ has no polarised edges.
    Then, we add blocks to $\mathcal{S}^-$ in such a way that, for each block $B \in \mathcal{S}^+$ and each cut vertex $v$ in $B$, the vertex $v$ belongs to precisely one block in $\mathcal{S}^-$;
 no two blocks in $\mathcal{S}^-$ are adjacent; and no block in $\mathcal{S}^-$ has any polarised edges.
We define the balancing function such that the blocks in $\mathcal{S}^-$ are light, and the blocks in $\mathcal{B}_Q \setminus (\mathcal{S}^- \cup \mathcal{S}^+)$ are normal.
Now let $Q'$ be the polar graph obtained from $Q$ by adding a single vertex~$h$, where $h$ is incident with each vertex that belongs to a block in $\mathcal{S}^-$ but does not belong to any block in $\mathcal{S}^+$, via a single edge that is not polarised.
We also require that $\min_{B \in \mathcal{S}^-} d(B) \ge k-1$.
Then, it is easily checked that $(Q', h) \in \mathcal{N}_k$.

Finally, we describe the properties of a ``bad'' polar assignment for a rooted polar graph in $\mathcal{N}_k$.

\begin{definition}[$h$-bad polar assignment]
    \label{hbaddef}
Let $(Q,h)$ be a rooted polar graph in $\mathcal{N}_k$.
Let $H$ be the polar graph with vertex set $\{h\} \cup N_Q(h)$ and edge set consisting of all edges of $Q$ incident with $h$.
Let $\mathcal{B}_{(Q,h)}=\mathcal{B}_{Q-h} \cup \{H\}$.
By \ref{N1}, each block of $Q-h$ is either normal, light, or heavy.
A polar assignment~$\Gamma$ for $Q$ is \emph{$h$-bad} if it can be obtained as the union of a polar assignment $\Gamma_B$ for each $B \in \mathcal{B}_{(Q,h)}$ where the $\Gamma_B$'s are pairwise non-conflicting, and each $\Gamma_B$ satisfies:
\begin{enumerate}
        \item If $B=H$, then $\Gamma_B=(L_B,\emptyset)$ where $|L_B(h)| = k$ and $L_B(u)=\emptyset$ for all other $u \in V(H)$.
        \item If $B$ is a normal block of $Q-h$, then $\Gamma_B$ is a bad degree-polar assignment for $B$.
        \item If $B$ is a heavy block of $Q-h$, then $\Gamma_B=(L_B,\emptyset)$ where $L_B$ is a uniform list assignment with $|L_B(u)|=d_B(u)-1$ for all $v \in V(B)$. 
        \item If $B$ is a light block of $Q-h$, then $\Gamma_B=(L_B,\emptyset)$ where $L_B$ is a uniform list assignment with $|L_B(u)|=d_B(u)+1$ and $L_H(h) \subseteq L_B(u)$ for all $u \in V(B)$.
\end{enumerate}
\end{definition}

Note that $B$ does not contain any polarised edges in cases (i), (iii), and (iv), by \ref{G3} and \ref{N3}.
Moreover, the final property of (iv) is possible by \ref{N2}.

See \cref{graph in N4} for an example of a rooted polar graph $(Q,h)$ in $\mathcal{N}_4$, together with an $h$-bad polar assignment for $Q$.
It is not difficult to see the following:

\begin{lemma}
    \label{hbadexistencelemma}
    Suppose, for some integer $k \ge 3$, that the rooted polar graph $(Q,h)$ is in $\mathcal{N}_k$.
    Then there exists an $h$-bad polar assignment for $Q$.
\end{lemma}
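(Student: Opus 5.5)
We construct an $h$-bad assignment directly, following \cref{hbaddef}: choose a polar assignment $\Gamma_B$ for every $B\in\mathcal{B}_{(Q,h)}$ with the required shape, so that the $\Gamma_B$ are pairwise non-conflicting, and take their union. Conflicts can only occur at shared vertices: the cut vertices of $Q-h$ and the vertices of $N_Q(h)$, which lie in $H$. So the plan is to reserve disjoint palettes of fresh colours for each block, subject to the one forced overlap in (iv).

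First fix a set $C_h$ of $k$ colours and set $L_H(h)=C_h$ and $L_H(u)=\emptyset$ for $u\in N_Q(h)$. The blocks of $H$ and $Q-h$ share no vertex with $h$ apart from $H$ itself, since $h\notin V(Q-h)$. Hence the only non-conflict requirement involving $H$ is at the vertices $u\in N_Q(h)$, and there $L_H(u)=\emptyset$, so it holds trivially.

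Next treat the blocks of $Q-h$ in turn, always using colours not used so far (in particular not in $C_h$), except as noted. For a normal block $B$ that is not a polar $K_2$-block, let $L_B$ be a uniform list of $d_B$ fresh colours. For a normal polar $K_2$-block $uv$, take fresh $c_u\ne c_v$, set $L_B(u)=\{c_u\}$, $L_B(v)=\{c_v\}$ and $R_B(e,u,v)=(c_u,c_v)$; this is polar conforming, so $\Gamma_B$ is a bad degree-polar assignment for $B$. For a heavy block, \ref{G3} gives that it has no polarised edges, and we take a uniform list of $d_B-1$ fresh colours; this may be empty if $d_B=1$. For a light block $B$, again there are no polarised edges by \ref{G3}. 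By \ref{N2} we have $d_B+1\ge k=|C_h|$, so we can take the uniform list $C_h\cup D_B$, where $D_B$ is a set of $d_B+1-k$ fresh colours. This satisfies (iv).

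It remains to check non-conflict between two distinct blocks of $Q-h$ at a shared cut vertex $v$; this is the only delicate step. Fresh colours never collide, so a conflict could only come from two light blocks sharing $C_h$ at $v$. By \ref{G2}, however, each cut vertex lies in at most one light block, so this never happens. The blocks chosen above thus have pairwise non-conflicting lists, and their union is $h$-bad. We also note that its lists have sizes $k$ at $h$ and $d_Q(u)$ elsewhere, although this is not required by the lemma. The hypothesis $k\ge3$ is not needed beyond $|C_h|\le d_B+1$, which is exactly \ref{N2}.
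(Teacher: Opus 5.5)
Your proposal is correct. It is the routine direct construction that the paper leaves to the reader; the paper states this lemma without proof. You give every block fresh colours, give the light blocks the common set $C_h$ (possible because (N2) gives $d_B+1\ge k$), and use (G2) to ensure that no cut vertex lies in two light blocks, which is exactly what guarantees the pieces are pairwise non-conflicting.
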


Our main result of this section is the following:
\begin{theorem}
\label{dcpolarhdthm}
    Let $Q$ be a polar graph with polar assignment $\Gamma=(L,R)$.
    Let $h \in V(Q)$ such that $Q-h$ is connected, $|L(h)| \geq 2$, and $|L(v)| \geq d(v)$ for all $v \in V(Q) \setminus \{h\}$.
    Then $Q$ is not $\Gamma$-colourable if and only if $(Q,h) \in \mathcal{N}_{|L(h)|}$ and $\Gamma$ is $h$-bad.
\end{theorem}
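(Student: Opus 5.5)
We prove the two directions separately. Write $k=|L(h)|$.

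\emph{Sufficiency.} Suppose $(Q,h)\in\mathcal{N}_k$ and $\Gamma$ is $h$-bad. Let $\phi$ be a putative $\Gamma$-colouring, and let $c=\phi(h)\in L_H(h)$. Consider the $(h,c)$-colour deletion $(Q-h,\Gamma')$. By \ref{N3}, each edge $hu$ is simple and non-polarised, so exactly the colour $c$ is removed from $L(u)$ for each $u\in N_Q(h)$. We claim that $\Gamma'$ is a bad degree-polar assignment of the polar Gallai tree $Q-h$, which contradicts \cref{dcponedirection}.

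To verify the claim, work locally at each vertex $u$ of $Q-h$, with $U=N_Q(h)$.
\begin{itemize}
\item If $u$ is internal to a light block $B$, then $u\in U$. Since $L_H(h)\subseteq L_B(u)$, removing $c$ turns the uniform $(d_B+1)$-list on $B$ into a uniform $d_B$-list.
\item If $u$ is a cut vertex, \ref{G2} gives three cases.
\begin{itemize}
\item If $u$ lies only in normal blocks and $u\notin U$, nothing changes.
\item If $u$ lies in exactly one light block and $u\in U$, then $c$ is deleted from that light block's list, again producing a uniform $d_B$-list.
\item If $u$ lies in one light block $B^-$ and one heavy block $B^+$ and $u\notin U$, we reattribute $c$ from $B^-$ to $B^+$. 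This turns $B^+$'s uniform $(d-1)$-list into a uniform $d$-list and $B^-$'s list into a $d$-list.
\end{itemize}
\end{itemize}
For this reattribution to be consistent, the colour moved must be the same at every vertex of a light or heavy block. This holds because every vertex of a light block either is in $U$ (and loses $c$) or is a cut vertex meeting a heavy block (and donates $c$). Every vertex of a heavy block is a cut vertex meeting a light block, since $I(B^+)=\emptyset$. We must also check non-conflicting: $c\in L_H(h)$, and $L_H$ is disjoint from the normal and heavy block lists at each vertex. Hence the new blockwise assignments are pairwise non-conflicting, and the claim follows.

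\emph{Necessity.} Suppose $Q$ is not $\Gamma$-colourable. For each $c\in L(h)$, the $(h,c)$-colour deletion gives a degree-polar assignment $\Gamma_c$ on the connected graph $Q-h$: each vertex $u$ loses at most $e(h,u)$ colours, and $d_{Q-h}(u)=d(u)-e(h,u)$. By \cref{dcpolarthm}, $Q-h$ is a polar Gallai tree and every $\Gamma_c$ is bad. In particular, $|L_c(u)|=d_{Q-h}(u)$ for all $u$ and all $c$.

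From this we extract the structure.
\begin{itemize}
\item Tightness forces $|L(u)|=d(u)$, and every edge $hu$ removes a colour for every choice of $c\in L(h)$. Using $|L(h)|\ge2$ together with \cref{cdlemma1} and \cref{cdlemma2}, this rules out polarised edges at $h$ and parallel edges at $h$. Parallel edges elsewhere are already excluded because $Q-h$ is a polar Gallai tree. This gives \ref{N3}.
\item Each $u\in N(h)$ has $L(u)\supseteq L(h)$. Now compare the bad decompositions of $\Gamma_c$ and $\Gamma_{c'}$ for two distinct colours $c\ne c'$ in $L(h)$. At a vertex $u$, the colour $c$ either lies in $L(u)$ (if $u\in U$) or is absent. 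Look at which block of $\Gamma_{c'}$'s decomposition the colour $c$ belongs to, and compare this with the block lists of $\Gamma_c$. Define $f(B)=-1$ if $B$'s uniform list in $\Gamma_{c'}$ contains $c$, $f(B)=+1$ if $B$'s list in $\Gamma_c$ lacks a colour it has in $\Gamma_{c'}$ (it lacks $c'$), and $f(B)=0$ otherwise.
\item We then verify \ref{G1}–\ref{G3}. Internal vertices of a block all have the same status. The counting $|L(u)|=\sum_{B\ni u}d_B(u)+[u\in U]$ at cut vertices gives \ref{G2}. Polarised $K_2$-blocks have fixed lists given by $R$ and so cannot change with $c$, which gives \ref{G3}. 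Light blocks contain all of $L(h)$ in a uniform list of size $d_B+1$, which gives \ref{N2}.
\end{itemize}
Finally, assembling these blockwise lists exhibits $\Gamma$ as $h$-bad.

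The main obstacle is this last extraction: showing that the two bad decompositions (for $c$ and $c'$) differ only by moving single colours from light blocks and into heavy blocks in a globally consistent way. The plan is to induct on the number of blocks, peeling a leaf block via \cref{leaf block lemma}. For the leaf block we argue directly that its list is uniform, and that it is either unchanged, or shrinks by the colour of $h$ at every vertex, or grows.
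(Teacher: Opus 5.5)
Your sufficiency direction is correct and is essentially the paper's argument. At each cut vertex lying in a light block $B^-$ and a heavy block $B^+$ you move $c$ from $B^-$ to $B^+$, and at vertices of $N_Q(h)$ you delete $c$ from the light block. One correction: $L(h)$ need not be disjoint from normal-block lists everywhere, only at vertices of light blocks, and that is the only place you use it. The opening of the necessity direction also matches the paper: every $\Gamma_c$ is bad, \ref{N3} holds, and $L(h)\subseteq L(u)$ on $N_Q(h)$. After that there is a genuine gap, and it is the core of the proof.

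\textbf{Your $f$ is wrong as defined.} The condition ``$B$'s list in $\Gamma_{c'}$ contains $c$'' also holds for normal blocks whose list never changes but happens to contain a colour of $L(h)$. Take $Q-h$ to be the path $xyzw$ with $N_Q(h)=\{x,y\}$, and let $L(h)=L(x)=\{1,2\}$, $L(y)=\{1,2,3\}$, $L(z)=\{1,3\}$, $L(w)=\{1\}$. Then $Q$ is not colourable, and the block $zw$ carries $\{1\}$ in every $\Gamma_c$. Yet $(c,c')=(1,2)$ gives $f(zw)=-1$, which violates \ref{G1} since $w\notin N_Q(h)$, while $(c,c')=(2,1)$ gives $0$. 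The type of a block must be read off from how its list varies over all of $L(h)$: it is $C\setminus\{c\}$ with $L(h)\subseteq C$, or $C\cup\{c\}$ with $C\cap L(h)=\emptyset$, or independent of $c$.

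\textbf{You never show that every block has one of these three types, or that the types sit at cut vertices as \ref{G2} requires.} This is the more serious gap. The identity $|L(u)|=\sum_{B\ni u}d_B(u)+[u\in U]$ is just $|L(u)|=d(u)$. It says nothing about which blocks at $u$ absorb the change in $L_c(u)$ as $c$ varies, so it cannot give \ref{G2}. The paper's key step is a minimality argument:
\begin{itemize}
\item Take $u$ lying in exactly one block $B$ of the wrong type; such a $u$ exists by a leaf-block argument on the wrong-type blocks.
\item The block lists at $u$ partition $L_c(u)$, and $L_{c_i}(u)$, $L_{c_j}(u)$ are either equal or differ by exchanging $c_i$ and $c_j$.
\item So any colour outside $\{c_i,c_j\}$ in $L_{c_i,B}(u)\symdiff L_{c_j,B}(u)$ would force a second wrong-type block at $u$. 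Hence these symmetric differences are exactly $\{c_i,c_j\}$.
\item A short comparison over indices then forces $B$'s list to be $C\setminus\{c\}$ or $C\cup\{c\}$.
\item Non-conflict gives at most one light and at most one heavy block per vertex, and \ref{G2} follows.
\end{itemize}

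\textbf{Your peeling induction does not replace this as stated.} Remove a light leaf block attached at a cut vertex $v\notin N_Q(h)$. The remaining blocks at $v$ must then carry $(L(v)\setminus C)\cup\{c\}$, a list that \emph{gains} the deleted colour. That is not an $(h,c)$-colour deletion in any smaller instance of the theorem. You would first have to strengthen the statement, for example to families of bad assignments indexed by $L(h)$ in which vertices may keep, lose, or gain $c$. Finally, the assembly into an $h$-bad assignment, including the non-conflict check between a light block and its neighbours, is only asserted.
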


\noindent
Note that \cref{dcpolarhdthm} does not require that $|L(h)| < d(h)$ (in which case \cref{dcpolarthm} also applies), but does require that $h$ is not a cut vertex. 
For any polar Gallai tree $Q$, we have that $(Q,h) \in \mathcal{N}_{d(h)}$ for every internal vertex $h$ of $Q$.

We first show that a balanced Gallai tree has a unique balancing function.

\begin{lemma}
\label{uniquegalaxy}
Let $(Q,U)$ be a balanced Gallai tree, 
and let $f_0$ and $f_1$ be balancing functions for $(Q,U)$.
Then $f_0=f_1$.
\end{lemma}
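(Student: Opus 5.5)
Since $f_0$ and $f_1$ take values in $\{-1,0,1\}$, I would study the difference $g = f_0 - f_1 : \mathcal{B}_Q \rightarrow \mathbb{Z}$ and show that it vanishes identically. The plan is to derive two local constraints on $g$ from \ref{G1} and \ref{G2}, and then obtain a contradiction from acyclicity of the block-cut graph $B(G(Q))$ (\cref{block cut graph theorem}\ref{block cut graph is forest}).

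\emph{Step 1: blocks with internal vertices.} Let $B$ be a block with $I(B) \neq \emptyset$. Then \ref{G1} forces $f_i(B)=-1$ if $I(B) \subseteq U$, and $f_i(B)=0$ if $I(B) \cap U = \emptyset$. Since $I(B)$ is nonempty, at most one of these conditions holds, and \ref{G1} requires one of them to hold for each $f_i$. So $f_0(B)$ and $f_1(B)$ are both determined by $U$ alone, and $g(B)=0$. In particular $g$ vanishes on every leaf block. This is because a leaf block has at least two vertices but contains only one cut vertex, so it has an internal vertex.

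\emph{Step 2: a conservation law at cut vertices.} Let $v$ be a cut vertex. Checking the three alternatives of \ref{G2} gives
\[
\sum_{B \in \mathcal{B}(v)} f_i(B) = \begin{cases} -1 & \text{if } v \in U,\\ 0 & \text{if } v \notin U.\end{cases}
\]
The first alternative gives sum $0$ with $v\notin U$. The second gives $-1$ with $v \in U$. The third gives $-1+1=0$ with $v \notin U$. The right-hand side depends only on $U$, so $\sum_{B \in \mathcal{B}(v)} g(B) = 0$ for every cut vertex $v$.

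\emph{Step 3: acyclicity.} Suppose, for a contradiction, that the set $S=\{B : g(B)\neq 0\}$ is nonempty. By Step 1, every $B \in S$ has no internal vertices. So each of its at least two vertices is a cut vertex. By Step 2, each cut vertex $v$ lying in a block $B\in S$ lies in some other block $B' \in S$, since the nonzero value $g(B)$ must be cancelled.

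We now build a walk in $B(G(Q))$. Start at some $B_0\in S$, leave through one of its vertices $v_0$, and move to another block $B_1 \in S$ containing $v_0$. Then leave $B_1$ through a vertex $v_1 \neq v_0$, which exists because $|V(B_1)|\ge 2$ and all of its vertices are cut vertices. Continue in this way indefinitely. The result is a walk $B_0 v_0 B_1 v_1 B_2 \dotsm$ in the forest $B(G(Q))$ that never immediately backtracks. A non-backtracking walk in a forest never revisits a vertex, so this walk would have infinitely many distinct vertices. This contradicts finiteness, so $S=\emptyset$ and $f_0=f_1$.

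The argument is short; the only point requiring care is Step 2. There one must check that all three cases of \ref{G2} give the stated sums, and in particular that the third case gives sum $0$ and not some other value. Note that \ref{G3} is not needed.
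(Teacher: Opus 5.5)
Your proof is correct and follows the paper's outline: \ref{G1} fixes every block with an internal vertex, a local rule at each cut vertex rules out exactly one disagreeing block there, and acyclicity of the block structure then rules out any disagreement at all. Your identity $\sum_{B\in\mathcal{B}(v)} f_i(B) = -1$ if $v\in U$ and $0$ otherwise repackages the paper's case-by-case claim that $f_i(B)$ is determined by the other blocks at $v$. Your non-backtracking walk in $B(G(Q))$ replaces the paper's leaf block of a component $H$ of $Q\bigl[\bigcup_{B'\in\mathcal{B}'}V(B')\bigr]$, and it avoids having to check that the relevant blocks of $H$ are blocks of $Q$.
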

\begin{proof}
    We first prove the following claim:

    \begin{claim}
    \label{determined claim}
    Let $v$ be a cut vertex of $Q$ and let $B$ be a block in $\mathcal{B}(v)$ such that for all $B' \in \mathcal{B}(v) \setminus \{B\}$ we have $f_0(B')=f_1(B')$.
        Then $f_0(B)=f_1(B)$.
    \end{claim}
    \begin{subproof}
        Let $i \in \{0,1\}$.
        We will show that $f_i(B)$ is determined by the values of $f_i$ on $\mathcal{B}(v) \setminus \{B\}$.
        First assume that $v \in U$.
        Then, by \ref{G2}, if there exists $B' \in \mathcal{B}(v) \setminus \{B\}$ with $f_i(B')=-1$ then $f_i(B)=0$, and if there is no such block then $f_i(B)=-1$.
        Next assume that $v \notin U$.
        By \ref{G2}, there are two cases to consider.
        If $f_i(B')=0$ for all $B' \in \mathcal{B}(v) \setminus \{B\}$, then we are in the first case, so $f_i(B)=0$.
        Otherwise, there is some block $B_1 \in \mathcal{B}(v) \setminus \{B\}$ with $f_i(B_1) \in \{-1,1\}$, and we are in the second case.
        Therefore, either there is a block $B_2 \in \mathcal{B}(v) \setminus \{B_1,B\}$ such that $\{f_i(B_1),f_i(B_2)\} = \{-1,1\}$, in which case $f_i(B)=0$; or for every block $B' \in \mathcal{B}(v) \setminus \{B_1,B\}$ we have $f_i(B')=0$, in which case we choose $f_i(B) \in \{-1,1\}$ such that $\{f_i(B_1),f_i(B)\} = \{-1,1\}$. 
    \end{subproof}

    Let $B$ be a block of $Q$ containing an internal vertex $v$.
    By \ref{G1}, if $v \in U$, then $f_0(B)=f_1(B)=-1$, and if $v \notin U$, then $f_0(B)=f_1(B)=0$.
    Therefore $f_0(B)=f_1(B)$ for any block $B$ of $Q$ containing an internal vertex.
    In particular, $f_0(B)=f_1(B)$ for any leaf block $B$ of $Q$.

    Towards a contradiction, suppose $f_0 \neq f_1$.
    Let $$\mathcal{B}' = \{B \in \mathcal{B}_Q : f_0(B) \neq f_1(B)\}.$$
    Let $H$ be a connected component of $Q\big[\bigcup_{B' \in \mathcal{B}'} V(B')\big]$, 
    and let $B''$ be a leaf block of $H$.
    Now $f_0(B'') \neq f_1(B'')$, by construction, so $B''$ is not a leaf block of $Q$.
    Let $v \in V(B'')$ such that $v$ is a cut vertex in $Q$, but $v$ is not a cut vertex in $H$.
    Then $B$ is the unique block in $\mathcal{B}(v)$ for which $f_0(B) \neq f_1(B)$.
    But this contradicts \cref{determined claim}.
\end{proof}

We now state some properties of balanced Gallai trees, which follow easily from \cref{bgt rules}.

\begin{lemma}
    \label{list}
    Let $(Q,U)$ be a balanced Gallai tree and let $v \in V(Q)$.
    \begin{enumerate}
        \item If $v \in U$, then $v$ is not in any heavy blocks, and $v$ is in precisely one light block.\label{V not in sync}\label{V means one async}\label{V only async}
        \item If $v \not\in U$ and is in at least one light or heavy block, then $v$ is in exactly one light block and exactly one heavy block.\label{several conditions}
        \item If $v$ is in a heavy block, then $v$ is a cut vertex and is also in a light block.\label{sync only cut}
        \item If $v$ is in distinct blocks $B$ and $B'$, then at most one of $B$ and $B'$ is light and at most one of $B$ and $B'$ is heavy.\label{no neighbours}
        \item If there are no light blocks, then all blocks are normal.\label{all fixed}
    \end{enumerate}
\end{lemma}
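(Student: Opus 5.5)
The plan is to derive all five items directly from the definition of a balancing function $f$ for $(Q,U)$, by splitting according to whether $v$ is internal or a cut vertex. The central observation is that every block containing $v$ has $v$ in its vertex set. So if $v$ is internal, then $v$ lies in exactly one block $B$, and \ref{G1} applies to $B$. If $v$ is a cut vertex, then \ref{G2} describes exactly the multiset of values of $f$ on $\mathcal{B}(v)$.

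For (i), suppose $v \in U$. If $v$ is internal, its unique block $B$ has $v \in I(B) \cap U$. Hence $I(B) \cap U \neq \emptyset$, so \ref{G1} forces $f(B)=-1$: this gives precisely one light block and no heavy block. If $v$ is a cut vertex, the only option in \ref{G2} with $v \in U$ is the second, which gives exactly one light block and all other blocks normal.

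For (ii), suppose $v \notin U$. If $v$ is internal, then \ref{G1} gives $f(B)=0$, so $v$ lies in no light or heavy block and the hypothesis fails. If $v$ is a cut vertex, the options in \ref{G2} with $v \notin U$ are the first and the third. The hypothesis rules out the first, and the third gives exactly one light and one heavy block.

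Item (iii) follows from the same analysis. An internal vertex only lies in a block with $f \in \{-1,0\}$, so a vertex in a heavy block is a cut vertex. Such a vertex cannot be in $U$, by (i), so (ii) supplies a light block containing it.

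Item (iv) is immediate from the above: in every case, at most one block of $\mathcal{B}(v)$ is light and at most one is heavy. This is clear for internal vertices, and for cut vertices it can be read off from each option in \ref{G2}.

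For (v), suppose there is a heavy block $B$. Then $B$ has at least one vertex, and by (iii) that vertex lies in a light block. So if there are no light blocks, there are no heavy blocks either, and all blocks are normal.

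No step is a real obstacle. The only care needed is to check that the internal-vertex case is handled by \ref{G1} and the cut-vertex case by \ref{G2}, so that no vertex falls outside the case split.
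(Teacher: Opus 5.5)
Your proof is correct and is the same direct unfolding of the definition of a balancing function that the paper relies on; the paper omits the proof, saying only that these properties follow easily from the definition. As you do, internal vertices are handled by (G1) through their unique block, cut vertices by the three options of (G2), and (iii)--(v) then follow from (i) and (ii). One small caveat: your claim that an internal vertex lies in exactly one block needs $|V(Q)|\ge 2$, because a one-vertex Gallai tree has no blocks, and then (i) actually fails for $U=\{v\}$. This is a degenerate defect of the statement rather than of your argument, but such a vertex is exactly one that escapes your case split.
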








Using \cref{hbaddef,list}, and the definition of $\mathcal{N}_k$, it is not difficult (but somewhat tedious) to prove the following.  We omit the proof.

\begin{lemma}
    Let $(Q,h)$ be a rooted polar graph in $\mathcal{N}_k$, for some $k \ge 2$, and let $\Gamma=(L,R)$ be an $h$-bad polar assignment for $Q$. Then $|L(v)|=d(v)$ for each $v \in V(Q) \setminus \{h\}$.
\end{lemma}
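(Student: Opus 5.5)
The plan is to fix $v \in V(Q)\setminus\{h\}$ and compute $|L(v)|$ directly from the decomposition of $\Gamma$ as a union of the $\Gamma_B$, $B \in \mathcal{B}_{(Q,h)}$. Because the $\Gamma_B$ are pairwise non-conflicting, $L(v)$ is the disjoint union of the sets $L_B(v)$ over the blocks $B$ containing $v$. So $|L(v)| = \sum_{B \ni v} |L_B(v)|$. Here $H$ contributes $|L_H(v)|=0$. Each normal block $B$ contributes $d_B(v)$: this is immediate for non-polar blocks, since a bad degree-polar assignment is uniform of size $d(B)=d_B(v)$, and it also holds for polar $K_2$-blocks, which receive $1$-polar assignments. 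Each heavy block contributes $d_B(v)-1$ and each light block contributes $d_B(v)+1$. On the other side, $d_Q(v) = \sum_{B\ni v,\,B\in\mathcal{B}_{Q-h}} d_B(v) + e_Q(v,h)$, and $e_Q(v,h)\in\{0,1\}$ by \ref{N3}, with $e_Q(v,h)=1$ exactly when $v \in U := N_Q(h)$. Thus it suffices to show that
\[ \#\{\text{light blocks containing } v\} - \#\{\text{heavy blocks containing } v\} = [v \in U]. \]

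This identity follows from \cref{list}. If $v\in U$, then by \cref{list}\ref{V only async} $v$ lies in exactly one light block and no heavy block, so the difference is $1$. If $v\notin U$, there are two cases. If $v$ lies in some light or heavy block, then \cref{list}\ref{several conditions} says it lies in exactly one of each, giving difference $0$. Otherwise all blocks containing $v$ are normal and the difference is again $0$.

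The only point needing care is that in the polar case the normal-block contribution really is $d_B(v)$, and that nothing from $H$ leaks into $L(v)$. The first holds because every polarised edge lies in a $K_2$-block (the definition of a polar Gallai tree) and, by \ref{G3}, such a block is normal, where a polar-conforming assignment gives lists of size $1=d_B(v)$. The second holds by item (i) of \cref{hbaddef}. I expect no real obstacle beyond carefully matching each case of \cref{list} to \ref{G1}--\ref{G2}, including internal vertices: an internal vertex lies in a single block, light iff $v\in U$ by \ref{G1}, which agrees with the identity.
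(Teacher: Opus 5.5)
Your argument is correct and is the one the paper intends; the paper omits the proof, saying only that it follows from \cref{hbaddef}, \cref{list} and the definition of $\mathcal{N}_k$. You sum the pairwise disjoint contributions $|L_B(v)|$ and use \cref{list}(i)--(ii) together with \ref{N3} to match the number of light minus heavy blocks at $v$ against $e_Q(v,h)$.

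The one caveat concerns the statement itself, not your reasoning: take $Q-h$ to be a single vertex $v$, so $Q$ is one non-polarised edge $hv$ and $(Q,h)\in\mathcal{N}_k$ vacuously; under the paper's convention that blocks have at least two vertices, $v$ lies in no block, \cref{list}(i) fails, and every $h$-bad assignment has $L(v)=L_H(v)=\emptyset$ although $d(v)=1$, so this case must be excluded (harmlessly, since it cannot arise under the hypothesis $|L(v)|\ge d(v)$ of \cref{dcpolarhdthm}).
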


We now prove the main result of this section.

\setcounter{theorem}{3}

\begin{proof}[Proof of \cref{dcpolarhdthm}]
    Let $k = |L(h)|$.
    We first assume that $(Q,h)$ is in $\mathcal{N}_k$ and $\Gamma$ is $h$-bad, and show that $Q$ is not $\Gamma$-colourable.
    By \ref{N1}, $Q-h$ is a polar Gallai tree.
    For $i \in [k]$, let $(Q-h,\Gamma_i)$ be the $(h,c_i)$-colour deletion from $(Q,\Gamma)$.
    It suffices to show that $\Gamma_i$ is a bad polar assignment for $Q-h$ for each $i \in [k]$.

    Let $\{B_1,B_2,\dotsc,B_t\}$ be the set of blocks of $Q-h$.
    Recall that $\Gamma$, the $h$-bad polar assignment for $Q$, can be obtained as a union of polar assignments $\Gamma_{B_s}$ for $s \in [t]$, conforming with \cref{hbaddef}.
    Fix some $i \in [k]$.
    For each $s \in [t]$, we will construct a bad polar assignment $\Gamma_{i,s}$ for $B_s$ such that $\Gamma_i = \bigcup_{s \in [t]} \Gamma_{i,s}$.
    We let $\mathcal{F}$, $\mathcal{S}^-$, and $\mathcal{S}^+$ denote the set of all normal, light, and heavy blocks of $Q-h$, respectively.

    Let $s \in [t]$.
    If $B_s \in \mathcal{F}$, then $\Gamma_{B_s}$ is a bad degree-polar assignment for $B_s$, in which case we let $\Gamma_{i,s}=\Gamma_{B_s}$.
    Suppose $B_s \in \mathcal{S}^+ \cup \mathcal{S}^-$.
    Then $F(B_s)=\emptyset$ by \ref{G3}.
    So we let $\Gamma_{i,s}=(L_{i,s},\emptyset)$ where,
    if $B_s \in \mathcal{S}^+$, then $L_{i,s}(u)=L_{B_s}(u) \cup \{c_i\}$ for all $u\in V(B_s)$;
    whereas if $B_s$ is in $\mathcal{S}^-$, then $L_{i,s}(u)=L_{B_s}(u) \setminus \{c_i\}$ for all $u \in V(B_s)$.
    When $u$ is in a heavy block $B_s$, then $|L_{B_s}(u)| = d_B(u) - 1$ by \cref{hbaddef}(iii).
    By \cref{list}(ii), $u$ is also in a light block $B'$, so, by \cref{hbaddef}(iv), $c_i \in L(h) \subseteq L_{B'}(u)$.  Since $L_{B_s}$ and $L_{B'}$ are non-conflicting, $c_i \notin L_{B_s}(u)$.  Thus, $|L_{i,s}(u)| = |L_{B_s}(u)| + 1 = d_B(u)$.
    When $u$ is in a light block~$B_s$, then $|L_{B_s}(u)| = d_B(u) + 1$ by \cref{hbaddef}(iv), and, by \cref{hbaddef}(iv), $c_i \in L(h) \subseteq L_{B_s}(u)$.
    So $|L_{i,s}(u)| = |L_{B_s}(u)|-1 = d_B(u)$.
    In either case, since $L_{B_s}$ is a uniform list assignment (by \cref{hbaddef}), $L_{i,s}$ is a bad degree-list assignment for $B_s$.
    Since the polar assignments $\Gamma_{B_s}$ are non-conflicting for $s \in [t]$, and by \cref{list}\ref{no neighbours}, the polar assignments $\Gamma_{i,s}$ are non-conflicting for $s \in [t]$.
    So $\bigcup_{s \in [t]} \Gamma_{i,s}$ is a bad polar assignment for $Q-h$,
    and it is easily checked that this polar assignment coincides with $\Gamma_i$. 
    This proves one direction.

    We now assume that $Q$ is not $\Gamma$-colourable, and show that $(Q,h) \in \mathcal{N}_k$ and $\Gamma$ is $h$-bad. 

    \begin{claim}
    \label{fundamental claim}
    If $c \in L(h)$ and $(Q-h,\Gamma_c)$ is the $(h,c)$-colour deletion from $(Q,\Gamma)$, then $Q-h$ is a polar Gallai tree and $\Gamma_c$ is bad.
    \end{claim}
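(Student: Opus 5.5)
The plan is to apply \cref{dcpolarthm} to the colour-deleted polar graph. Fix $c \in L(h)$ and let $(Q-h,\Gamma_c)$ be the $(h,c)$-colour deletion, with $\Gamma_c=(L_c,R_c)$.

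\textbf{Step 1: $\Gamma_c$ is a degree-polar assignment.} Each vertex $v \in V(Q)\setminus\{h\}$ loses at most one colour per edge joining it to $h$, as noted after \cref{full colour deletion def}. Hence
\[
|L_c(v)| \ge |L(v)| - e_Q(v,h) \ge d_Q(v) - e_Q(v,h) = d_{Q-h}(v).
\]
So $\Gamma_c$ is a degree-polar assignment for $Q-h$.

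\textbf{Step 2: $Q-h$ is not $\Gamma_c$-colourable.} By the colour-deletion lemma, any $\Gamma_c$-colouring $\psi$ of $Q-h$ extends to a $\Gamma$-colouring of $Q$: set $\phi(h)=c$ and $\phi|_{V(Q)\setminus\{h\}}=\psi$. Here $\phi|_{\{h\}}$ is trivially a $\Gamma$-colouring of $Q[\{h\}]$, since $c \in L(h)$ and $Q$ is loopless. This contradicts the assumption that $Q$ is not $\Gamma$-colourable.

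\textbf{Step 3: apply \cref{dcpolarthm}.} By hypothesis $Q-h$ is connected. Together with Steps 1 and 2, \cref{dcpolarthm} then gives that $Q-h$ is a polar Gallai tree and $\Gamma_c$ is bad.

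There is no real obstacle. The only points to check are that colour deletion does not alter the polarisation on $Q-h$, so $R_c=R|(Q-h)$ is a valid polarisation, and that \cref{dcpolarthm} allows lists larger than degrees. Both are immediate from the definitions.
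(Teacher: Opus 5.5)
Your proposal is correct and follows the paper's argument essentially step for step. It deduces that $Q-h$ is not $\Gamma_c$-colourable from the non-colourability of $Q$, checks that $\Gamma_c$ is a degree-polar assignment because each edge to $h$ removes at most one colour, and then applies \cref{dcpolarthm} using the connectedness of $Q-h$. The only difference is that you spell out the two routine verifications, namely the extension via the colour-deletion lemma and the bound $|L_c(v)|\ge d_{Q-h}(v)$, which the paper states in a line each.
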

    \begin{subproof}
        Since $Q$ is not $\Gamma$-colourable, $Q-h$ is not $\Gamma_c$-colourable.
        Since $|L(v)| \ge d(v)$ for each $v \in V(Q)\setminus \{h\}$, it follows that
        $\Gamma_c$ is a degree-polar assignment for $Q-h$.
        Since $Q-h$ is connected, \cref{dcpolarthm} implies that $Q-h$ is a polar Gallai tree and $\Gamma_c$ is bad, as required.
    \end{subproof}

    By \cref{fundamental claim}, $Q-h$ is simple.
    To satisfy \ref{N3}, we require the following:

    \begin{claim}
        \label{no restrict h claim}
        There is no polarised edge, and no pair of parallel edges, incident with $h$.
    \end{claim}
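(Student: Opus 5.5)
We argue by contradiction, using the freedom to choose the colour of $h$. Suppose some $u \in N_Q(h)$ is joined to $h$ either by a polarised edge or by at least two parallel edges. Since $|L(h)| = k \ge 2$, we may fix two distinct colours $a, b \in L(h)$. Let $(Q-h,\Gamma_a)$ and $(Q-h,\Gamma_b)$ be the $(h,a)$- and $(h,b)$-colour deletions from $(Q,\Gamma)$, and write $\Gamma_a=(L_a,R_a)$ and $\Gamma_b=(L_b,R_b)$. If some edge $uh$ is polarised we apply \cref{cdlemma1}; otherwise $e(u,h)\ge 2$ and we apply \cref{cdlemma2}. Either way, one of the two deletions, say the one with colour $a$, satisfies $|L_a(u)| > |L(u)| - e(u,h)$.

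Next we compare this list size with the degree of $u$ in $Q-h$. By hypothesis $|L(u)| \ge d_Q(u)$, so
\[
|L_a(u)| > d_Q(u) - e(u,h) = d_{Q-h}(u).
\]
For every other vertex $w \in V(Q-h)$, each edge $wh$ removes at most one colour from $L(w)$. Hence $|L_a(w)| \ge d_Q(w) - e(w,h) = d_{Q-h}(w)$, and $\Gamma_a$ is a degree-polar assignment for $Q-h$.

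By \cref{fundamental claim}, $Q-h$ is a polar Gallai tree and $\Gamma_a$ is bad. The paper notes after the definition of a bad degree-polar assignment that a bad assignment has $|L_a(v)| = d_{Q-h}(v)$ for every vertex $v$. The same holds directly from the blockwise construction, since each block contributes exactly $d_B(v)$ colours to $v$ and the contributions are disjoint. This contradicts $|L_a(u)| > d_{Q-h}(u)$. Equivalently, \cref{dcpolarthm} (bad assignments are tight) shows directly that $Q-h$ is $\Gamma_a$-colourable, so $Q$ is $\Gamma$-colourable, again a contradiction.

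The only delicate point is checking that tightness ($|L(v)|=d(v)$ for all $v$) really follows from the definition of a bad degree-polar assignment, including for polar $K_2$-blocks. There each polar-conforming $1$-polar assignment gives exactly one colour per end, which matches $d_B(v)=1$, so the union is tight. Everything else is a direct application of \cref{cdlemma1}, \cref{cdlemma2} and \cref{fundamental claim}.
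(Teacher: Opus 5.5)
Your proof is correct and follows the paper's argument. You use Lemma~\ref{cdlemma1} (for a polarised edge) or Corollary~\ref{cdlemma2} (for parallel edges) to find a colour of $h$ whose deletion leaves the neighbour with more than $d_{Q-h}$ colours, and this contradicts the badness guaranteed by Claim~\ref{fundamental claim}, since bad degree-polar assignments are tight. Your explicit check that tightness also holds at polar $K_2$-blocks is a detail the paper leaves implicit.
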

    \begin{subproof}
        First let $e=hv$ be a polarised edge incident with $h$.
        By \cref{cdlemma1} there is some $c \in L(h)$ such that the $(h,c)$-colour deletion $(Q-h,\Gamma_c)$ from $(Q,\Gamma)$ leaves $v$ with a list of size more than $|L(v)|-e(h,v) \ge d_{Q-h}(v)$,
        so $\Gamma_c$ is not bad, contradicting \cref{fundamental claim}.
        Similarly, suppose that $h$ has a neighbour $v$ such that $e(h,v) > 1$.
        By \cref{cdlemma2}, there is some $c \in L(h)$ such that the $(h,c)$-colour deletion leaves $v$ with a list of size more than $|L(v)|-e(h,v)$, again contradicting \cref{fundamental claim}. 
    \end{subproof}

    By \cref{fundamental claim,no restrict h claim}, \ref{N3} holds for $(Q,h)$.
    Next we work towards showing \ref{N1} holds, that is, showing that $(Q-h,N_Q(h))$ is a balanced Gallai tree.

    \begin{claim}
    \label{containment claim}
    Let $v \in N_Q(h)$. Then $L(h) \subseteq L(v)$.
    \end{claim}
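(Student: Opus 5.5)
We argue by contradiction. Suppose $v \in N_Q(h)$ and there is a colour $c \in L(h) \setminus L(v)$. By \cref{no restrict h claim}, the unique edge $hv$ is not polarised and has no parallel edges. Hence in the $(h,c)$-colour deletion $(Q-h,\Gamma_c)$, with $\Gamma_c=(L_c,R_c)$, the colour removed from $L(v)$ is $c$ itself. Since $c \notin L(v)$, nothing is actually removed, so $L_c(v)=L(v)$. Therefore
\[|L_c(v)| = |L(v)| \ge d_Q(v) = d_{Q-h}(v)+1.\]

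By \cref{fundamental claim}, $Q-h$ is a polar Gallai tree and $\Gamma_c$ is a bad degree-polar assignment for $Q-h$. As the paper notes after the definition of a bad degree-polar assignment, a bad assignment satisfies $|L_c(u)| = d_{Q-h}(u)$ for every vertex $u$. This holds because it is the union of pairwise non-conflicting lists, each of size $d(B)$ (or size $1$ for a polar $K_2$-block), over the blocks $B$ containing $u$.

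This contradicts $|L_c(v)| > d_{Q-h}(v)$, so $L(h) \subseteq L(v)$.

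The only point needing care is the exact size identity for bad assignments. Non-conflicting unions are disjoint unions, so the sizes add up to $\sum_{B \ni u} d_B(u) = d_{Q-h}(u)$. Polar $K_2$-blocks contribute exactly one colour each, which equals the degree contribution of that block. Apart from this, the argument is immediate.
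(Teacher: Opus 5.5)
Your proof is correct and is essentially the paper's argument. Since $hv$ is a single non-polarised edge, a colour $c \in L(h)\setminus L(v)$ would give $L_c(v)=L(v)$ with $|L_c(v)|>d_{Q-h}(v)$, which contradicts the badness of $\Gamma_c$ from the fundamental claim; your explicit justification that bad assignments satisfy $|L_c(u)|=d_{Q-h}(u)$ is a harmless addition that the paper leaves implicit.
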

    \begin{subproof}
        Let $c \in L(h)$ and let $(Q-h,\Gamma_c)$ be the $(h,c)$-colour deletion from $(Q,\Gamma)$, with $\Gamma_c=(L_c,R_c)$.
        By \cref{no restrict h claim}, $hv$ is not polarised.
        If $c \notin L(v)$, then $L_c(v)=L(v)$, so $|L_c(v)| > d_{Q-h}(v)$, and therefore $L_c$ is not bad, contradicting \cref{fundamental claim}.
        Hence $c \in L(v)$.
    \end{subproof}

    Let $L(h) = \{c_1,\dotsc,c_k\}$.
    For each $i \in [k]$, let $(Q-h,\Gamma_i)$ be the $(h,c_i)$-colour deletion from $(Q,\Gamma)$, with $\Gamma_i=(L_i,R_i)$.
    By \cref{fundamental claim}, $Q-h$ is a polar Gallai tree and $\Gamma_i$ is a bad polar assignment for $Q-h$, so it is a union of bad polar assignments for each block of $Q-h$.
    Let $\{B_1,B_2,\dotsc,B_t\}$ be the set of blocks of $Q-h$ and, for each $s \in [t]$, let $\Gamma_{i,s}=(L_{i,s},R_{i,s})$ be a bad polar assignment for $B_s$ such that $\Gamma_i = \bigcup_{s \in [t]} \Gamma_{i,s}$.

    Let $s \in [t]$.
    We call $B_s$ {\em airy} if there is some $C_s \subseteq \mathbb{N}$ with $L(h) \subseteq C_s$ such that $L_{i,s}(v)=C_s \setminus \{c_i\}$ for all $v \in V(B_s)$ and all $i \in [k]$.
    We call $B_s$ {\em weighty} if there is some $C_s \subseteq \mathbb{N}$ with $L(h) \cap C_s = \emptyset$ such that $L_{i,s}(v)=C_s \cup \{c_i\}$ for all $v \in V(B_s)$ and all $i \in [k]$.
    We call $B_s$ {\em standard} if $L_{i,s}=L_{j,s}$ for all $i,j \in k$.
    Observe that a block of $Q-h$ can have at most one of the properties: airy, weighty, or standard. (The notions of airy, weighty, and standard blocks of $Q-h$ will eventually align with the notion of light, heavy, and normal blocks in a balanced Gallai tree $(Q-h,N_Q(h))$, respectively.)

    The next claim is a straightforward consequence of the definition of colour deletion, which we will use often.
    We then prove a series of claims that describe properties of airy, standard, and weighty blocks of $Q-h$.
    \begin{claim}
        \label{cdimmediate}
        If $u \in V(Q-h) \setminus N_Q(h)$, then $L_i(u)=L_j(u)$ for all $i,j \in [k]$.
    \end{claim}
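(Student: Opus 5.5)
The claim follows directly from \cref{full colour deletion def} applied to the single vertex $h$. Fix $u \in V(Q-h) \setminus N_Q(h)$ and $i \in [k]$. By definition, $L_i(u) = L(u) \setminus C_u$, where $C_u$ is the set of colours $c \in L(u)$ for which some edge $e = uv$ of $Q$ with $v \in \{h\}$ \emph{removes} $c$. An edge $e$ removes $c$ either because $e$ is not polarised and $\phi(h) = c$, or because $e$ is polarised and $R(e,u,h) = (c, \phi(h))$. In both cases $e$ must join $u$ to $h$.

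Since $u \notin N_Q(h)$, there is no edge with ends $u$ and $h$. Hence $C_u = \emptyset$, and therefore $L_i(u) = L(u)$ for every $i \in [k]$. In particular, $L_i(u) = L_j(u)$ for all $i,j \in [k]$.

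There is no real obstacle here. The only point to check is that the colour deletion of $h$ affects the list of $u$ only through edges between $u$ and $h$, which is exactly what the definition of $C_u$ says. One might also note that the polarisations $R_i = R|(Q-h)$ agree for all $i$, though the claim as stated concerns only the lists.
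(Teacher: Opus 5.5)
Your proposal is correct and matches the paper's approach. The paper gives no separate proof, describing the claim as a straightforward consequence of the definition of colour deletion: an $(h,c_i)$-colour deletion only removes colours from the lists of vertices joined to $h$ by an edge, so $L_i(u)=L(u)$ for every $i\in[k]$ whenever $u\notin N_Q(h)$.
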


    \begin{claim}
        \label{first async claim}
        Let $u \in I(B_s)$, for $s \in [t]$. Then $B_s$ is airy if and only if $u \in N_Q(h)$.
    \end{claim}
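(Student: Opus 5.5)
Fix $s \in [t]$ and $u \in I(B_s)$. Since $u$ is an internal vertex of $Q-h$, it lies in no block of $Q-h$ other than $B_s$. So for every $i\in[k]$ the list $L_i(u)$ is exactly $L_{i,s}(u)$, because the list assignment union contributes only from $B_s$. I will compare $L_i(u)$ across different $i$ using the definition of colour deletion, and then push the information to all of $B_s$ using the fact that each $\Gamma_{i,s}$ is bad, hence uniform on a block that is not a polar $K_2$-block.

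\emph{($\Leftarrow$)} Suppose $u\in N_Q(h)$. By \cref{no restrict h claim} there is exactly one edge $hu$, and it is not polarised. By \cref{containment claim}, $L(h)\subseteq L(u)$, so $L_i(u)=L(u)\setminus\{c_i\}$ for each $i$. Set $C_s=L(u)$, which contains $L(h)$.

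Next I show that $B_s$ is not a polar $K_2$-block. If it were, then $u$, being internal, would have degree $1$ in $Q-h$, and $L_{i,s}(u)=L_i(u)$ would be the singleton fixed by the polarisation $R(e,\cdot,\cdot)$, independent of $i$. But $k=|L(h)|\ge 2$ and the sets $L(u)\setminus\{c_i\}$ differ for distinct $i$, a contradiction. Hence each $\Gamma_{i,s}$ is uniform on $B_s$, so $L_{i,s}(v)=L_{i,s}(u)=C_s\setminus\{c_i\}$ for all $v\in V(B_s)$ and all $i$. This is exactly the definition of $B_s$ being airy.

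\emph{($\Rightarrow$)} Suppose $B_s$ is airy. Then $L_i(u)=L_{i,s}(u)=C_s\setminus\{c_i\}$ with $c_i\in C_s$. Since $k\ge 2$, the lists $L_1(u)$ and $L_2(u)$ differ. If $u\notin N_Q(h)$, then \cref{cdimmediate} gives $L_1(u)=L_2(u)$, a contradiction. Hence $u\in N_Q(h)$.

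The step needing the most care is the case where $B_s$ might be a polar $K_2$-block in the forward direction, since there the bad assignment is not uniform. I handle it by noting that the polar-conforming list at $u$ is determined by $R$ alone, whereas the lists $L_i(u)$ genuinely vary with $i$. The rest is direct unwinding of the definitions of colour deletion and of a bad degree-polar assignment.
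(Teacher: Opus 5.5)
Your proof is correct and follows the paper's route. In the forward direction, $u$ lies only in $B_s$, so $L_i(u)=L_{i,s}(u)=L(u)\setminus\{c_i\}$ by \cref{containment claim} and \cref{no restrict h claim}, and uniformity of the bad assignment spreads this over $B_s$; in the converse, $L_1(u)\neq L_2(u)$ contradicts \cref{cdimmediate}. Your explicit exclusion of the polar $K_2$-block case, where a bad assignment is polar conforming rather than uniform, covers a point the paper skips when it simply asserts that $L_{i,s}$ is uniform, so your version is slightly more careful.
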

    \begin{subproof}
        Suppose $u \in N_Q(h)$.
        Then $L(h) \subseteq L(u)$, by \cref{containment claim}, and $L_i(u)=L(u) \setminus \{c_i\}$ for all $i \in [k]$.
        Since $u$ is not a cut vertex, $B_s$ is the only block of $Q-h$ containing $u$, 
        so $L_{i,s}(u)=L_i(u)$.
        Now $L_{i,s}(u)=L(u) \setminus \{c_i\}$ and $L_{i,s}$ is a bad degree-list assignment for $B_s$, so it is uniform.
        Thus, letting $C_s=L(u)$, we have $L_{i,s}(v)=C_s \setminus \{c_i\}$ for all $v \in V(B_s)$ and all $i \in [k]$, so $B_s$ is airy.

        For the other direction, suppose $B_s$ is airy.
        Let $i \in [k]$.
        Then there is some $C_s \subseteq \mathbb{N}$ with $L(h) \subseteq C_s$ such that $L_{i,s}(v)=C_s \setminus \{c_i\}$ for all $v \in V(B_s)$.
        Since $u$ is only in one block of $Q-h$, we have $L_i(u)=L_{i,s}(u)$.
        Since $L_{i,s}(u)=C_s \setminus \{c_i\}$ for all $i \in [k]$, we have $L_i(u) \neq L_j(u)$ for any $j \in [k] \setminus \{i\}$. %
        Hence $u \in N_Q(h)$, by \cref{cdimmediate}.
    \end{subproof}


    \begin{claim}
        \label{no polar edges claim}
        If $F(B_s) \neq \emptyset$, for some $s \in [t]$, then $B_s$ is standard.
    \end{claim}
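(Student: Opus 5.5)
The plan is to use the structure of bad polar assignments for blocks that contain a polarised edge, combined with \cref{cdimmediate}. Suppose $F(B_s) \neq \emptyset$. Since each $\Gamma_i$ is bad for the polar Gallai tree $Q-h$, every polarised edge of $Q-h$ lies in a $K_2$-block. So $B_s$ is a polar $K_2$-block with a single polarised edge $e=uv$. For each $i \in [k]$, the assignment $\Gamma_{i,s}$ is polar conforming. This means $L_{i,s}(u)=\{a\}$ and $L_{i,s}(v)=\{b\}$, where $(a,b)=R(e,u,v)$.

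The key observation is that the polarisation is unchanged by colour deletion: $R_i = R|(Q-h)$ for every $i$. The restriction to $B_s$ is therefore the same for all $i$. Because $\Gamma_{i,s}$ is polar conforming, its lists are forced by this common polarisation. Hence $L_{i,s}(u)=\{a\}$ and $L_{i,s}(v)=\{b\}$ for all $i$, independent of $i$, and also $R_{i,s}=R|B_s$ for all $i$. This gives $\Gamma_{i,s}=\Gamma_{j,s}$ for all $i,j\in[k]$, so $B_s$ is standard.

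The only point needing care is that the decomposition $\Gamma_i=\bigcup_s \Gamma_{i,s}$ might not be unique, so one should check that the definition of ``bad'' really pins down $L_{i,s}$ on a polar $K_2$-block. It does: by the definition of a bad degree-polar assignment, a polar $K_2$-block receives a polar-conforming $1$-polar assignment, and that assignment is determined by $R$. I do not expect a genuine obstacle here; the claim is essentially immediate once we note that the polar $K_2$-block's lists are read off from $R$, which does not depend on $i$.
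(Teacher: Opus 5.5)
Your proposal is correct and follows essentially the same argument as the paper. You note that $B_s$ must be a polar $K_2$-block, that each $\Gamma_{i,s}$ is polar conforming, and that the polarisation $R_i = R|(Q-h)$ does not depend on $i$. Since a polar-conforming assignment's lists are read off from the polarisation, $L_{i,s}=L_{j,s}$ for all $i,j$, so $B_s$ is standard.
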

    \begin{subproof}
        Assume that $B_s$ contains a polarised edge $e=uv$, for some $s \in [t]$.
        Since $Q-h$ is a polar Gallai tree, $B_s$ is a $K_2$-block of $Q-h$.
        Let $i,j \in [k]$.
        Since $\Gamma_{i,s}$ and $\Gamma_{j,s}$ are bad polar assignments for $B_s$, and $R_i(e,u,v)=R_j(e,u,v)$, we have that $L_{i,s}=L_{j,s}$, implying that $B_s$ is standard.
    \end{subproof}

    \begin{claim}
        \label{first sync claim}
        Suppose $v \in V(B_s)$ for some $s \in [t]$ such that $B_s$ is weighty.
        Then $v$ is a cut vertex.
    \end{claim}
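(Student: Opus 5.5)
We argue by contradiction: suppose $v \in V(B_s)$ is an internal vertex of $Q-h$, so that $B_s$ is the only block of $Q-h$ containing $v$. The strategy is to compare the list of $v$ after deleting $h$ with colour $c_i$, for different $i$, and show that these lists cannot behave as a weighty block demands.

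First, since $v$ lies in no other block, we have $L_i(v)=L_{i,s}(v)$ for every $i\in[k]$. Because $B_s$ is weighty, there is a set $C_s$ with $L(h)\cap C_s=\emptyset$ and $L_{i,s}(v)=C_s\cup\{c_i\}$ for all $i$. Combining these, $L_i(v)=C_s\cup\{c_i\}$ for each $i$. Here we use that $k=|L(h)|\ge 2$, which is a hypothesis of \cref{dcpolarhdthm}. Taking distinct $i,j\in[k]$, the colour $c_i$ lies in $L_i(v)$. It does not lie in $C_s$, since $C_s$ is disjoint from $L(h)$, and it differs from $c_j$. Hence $c_i\notin L_j(v)$, and so $L_i(v)\neq L_j(v)$.

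Now \cref{cdimmediate}, applied in contrapositive form, gives $v\in N_Q(h)$. We then compute $L_i(v)$ directly from the definition of colour deletion. By \cref{no restrict h claim}, $hv$ is a single non-polarised edge, and by \cref{containment claim}, $c_i\in L(h)\subseteq L(v)$. Therefore $L_i(v)=L(v)\setminus\{c_i\}$, so $c_i\notin L_i(v)$. This contradicts $c_i\in C_s\cup\{c_i\}=L_i(v)$. Hence $v$ must be a cut vertex.

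An equivalent route is through \cref{first async claim}: since $v\in N_Q(h)$ and $v\in I(B_s)$, that claim forces $B_s$ to be airy. A block cannot be both airy and weighty, which again gives the contradiction.

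The only delicate step is the one where we use $k\ge2$ to separate $L_i(v)$ from $L_j(v)$. Everything else is bookkeeping from the earlier claims.
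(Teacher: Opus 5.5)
Your proof is correct and uses the same two ingredients as the paper's argument. The first is that an internal vertex of a weighty block cannot lie in $N_Q(h)$: the paper gets this from \cref{first async claim} together with the fact that no block is both airy and weighty, while you compute $L_i(v)=L(v)\setminus\{c_i\}$ directly. The second is \cref{cdimmediate}, which forces equal lists otherwise. The only difference is the order. The paper first rules out $v\in N_Q(h)$ and then derives $L_{i,s}(v)=L_{j,s}(v)$, contradicting weightiness. You instead use $|L(h)|\ge 2$ and the contrapositive of \cref{cdimmediate} to get $v\in N_Q(h)$, and then reach the contradiction.
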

    \begin{subproof}
        Towards a contradiction, assume that $v$ is an internal vertex of $B_s$.
        Since $B_s$ is weighty, \cref{first async claim} implies that $v \notin N_Q(h)$.
        Therefore, by \cref{cdimmediate}, $L_i(v)=L_j(v)$ for all $i,j \in [k]$.
        Since $v$ is an internal vertex, $L_{i,s}(v)=L_i(v)=L_j(v)=L_{j,s}(v)$, so $B_s$ is not weighty, a contradiction.
        We deduce that $v$ is a cut vertex, as required.
    \end{subproof}

    \begin{claim}
    \label{classification claim}
        Each block of $Q-h$ is either airy, weighty, or standard.
    \end{claim}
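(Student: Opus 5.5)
The plan is to classify blocks from the leaves of the block-cut tree inwards, in the same way as the proof of \cref{uniquegalaxy}. The key point is that at a cut vertex, once all but one incident block is classified, the remaining block is forced to be airy, weighty, or standard.

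Preliminary observations. Blocks with a polarised edge are standard by \cref{no polar edges claim}, so assume $F(B_s)=\emptyset$. Then each $\Gamma_{i,s}$ is a uniform $d(B_s)$-list assignment, say $L_{i,s}(v)=D_{i,s}$ for all $v\in V(B_s)$. For every $v\in V(Q-h)$ and $i\in[k]$, the sets $L_{i,s}(v)$ over the blocks $B_s\in\mathcal{B}(v)$ partition $L_i(v)$. Moreover, by \cref{containment claim} and \cref{no restrict h claim}, $L_i(v)=L(v)\setminus\{c_i\}$ with $L(h)\subseteq L(v)$ if $v\in N_Q(h)$, and $L_i(v)=L(v)$ otherwise.

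Blocks with an internal vertex $u$. If $u\in N_Q(h)$, then $B_s$ is airy by \cref{first async claim}. Otherwise $D_{i,s}=L_i(u)=L(u)$ for every $i$ by \cref{cdimmediate}, so $B_s$ is standard. In particular every leaf block of $Q-h$ is classified.

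Induction step. Suppose, for a contradiction, that the set of unclassified blocks is nonempty. Take a component of the union of these blocks and a leaf block $B$ of it. As in \cref{uniquegalaxy}, there is a cut vertex $v\in V(B)$ of $Q-h$ such that every other block of $\mathcal{B}(v)$ is classified. I then need a disjointness observation: among the blocks of $\mathcal{B}(v)\setminus\{B\}$, at most one is non-standard.
\begin{itemize}
\item Two airy blocks at $v$ would both contain $c_j$ in their index-$i$ list for any $j\neq i$, contradicting disjointness (this uses $k\ge2$).
\item Two weighty blocks would both contain $c_i$.
\item An airy and a weighty block would both contain $c_j$ at index $j$'s complement. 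More precisely, the airy list at index $i$ contains $c_j$, and the weighty list at index $j$ contains $c_j$. I would double-check this conflict by using the same index $i$ and a colour $c_j\neq c_i$: the airy list at $i$ contains $c_j$, while the weighty block $C_w$ is disjoint from $L(h)$, so this pair needs care. I expect the main obstacle to lie here, and I would try to resolve it by comparing index $i$ on one block with the weighty $c_i$ and the airy $C_a\ni c_i$ minus $c_i$, showing both cannot fit inside $|L_i(v)|$ with the degree count $|L(v)|=d(v)$.
\end{itemize}

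Writing $S$ for the (index-independent) union of the standard lists at $v$, $D_{i,B}=L_i(v)\setminus S\setminus(\text{nonstandard part})$. There are four cases.
\begin{itemize}
\item $v\in N_Q(h)$ with an airy neighbour block $C_a\setminus\{c_i\}$: this gives $D_{i,B}=L(v)\setminus(S\cup C_a)$, so $B$ is standard.
\item $v\notin N_Q(h)$ with an airy neighbour block: this gives $(L(v)\setminus(S\cup C_a))\cup\{c_i\}$. It is disjoint from $L(h)$ apart from $c_i$, since $L(h)\subseteq C_a$, so $B$ is weighty.
\item $v\in N_Q(h)$ with a weighty neighbour block $C_w\cup\{c_i\}$: this gives $L(v)\setminus(S\cup C_w\cup\{c_i\})$, which is standard.
\item $v\notin N_Q(h)$ with a weighty neighbour block: this gives $(L(v)\setminus(S\cup C_w))\setminus\{c_i\}$. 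Here $c_i\in L_i(v)=L(v)$ for all $i$, so $L(h)$ is contained in $L(v)\setminus(S\cup C_w)$, and $B$ is airy.
\end{itemize}
With no non-standard neighbour block, $B$ is airy if $v\in N_Q(h)$ and standard if not. In every case $B$ is classified, which is the desired contradiction.
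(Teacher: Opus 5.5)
Your overall strategy is sound: classify every block with an internal vertex first, then take a leaf block $B$ of a component of the unclassified blocks, and read off $D_{i,B}$ as the complement in $L_i(v)$ of the other lists at a vertex $v$ where $B$ is the only unclassified block. Your airy/airy and weighty/weighty exclusions are also correct.

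The gap is the claim that at most one block of $\mathcal{B}(v)\setminus\{B\}$ is non-standard. An airy block $A$ and a weighty block $W$ at the same vertex cannot be excluded, by disjointness or by any degree count. At index $i$ their lists are $C_a\setminus\{c_i\}$ and $C_w\cup\{c_i\}$. These share no colour of $L(h)$, since $C_w\cap L(h)=\emptyset$, so nothing forces a conflict. When $C_a\cap C_w=\emptyset$, they together form the index-independent set $C_a\cup C_w$.

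This configuration genuinely occurs: it is a cut vertex $v\notin N_Q(h)$ lying in one light and one heavy block, the third alternative of (G2). For example, take the $5$-cycle on $h,a,b,c,d$ (in this order) plus a pendant vertex $e$ at $b$, with $L(e)=\{3\}$, $L(b)=\{1,2,3\}$ and all other lists $\{1,2\}$. Here $Q$ is not $\Gamma$-colourable. At $b$, the block $ab$ is airy, $bc$ is weighty, and $be$ is standard. So the resolution you propose would fail, and your case analysis omits the case where both an airy and a weighty block lie in $\mathcal{B}(v)\setminus\{B\}$.

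The repair is short.
\begin{itemize}
\item Weaken the observation to ``at most one airy and at most one weighty block at $v$'', and add the missing case. If $v\notin N_Q(h)$, then $D_{i,B}=L(v)\setminus(S\cup C_a\cup C_w)$, so $B$ is standard. If $v\in N_Q(h)$, the case is impossible, since $c_i\in L_{i,W}(v)\subseteq L_i(v)=L(v)\setminus\{c_i\}$.
\item The same observation shows that your third bullet ($v\in N_Q(h)$ with a weighty neighbour) is vacuous. Your conclusion ``standard'' there is wrong as computed, because $L(v)\setminus(S\cup C_w\cup\{c_i\})$ depends on $i$.
\item Elsewhere you use, without saying so, that the relevant colours of $L(h)$ lie in $L(v)\setminus S$. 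This holds because $S\subseteq L_j(v)$ is disjoint from every non-standard list at index $j$, and $c_j\notin L_j(v)$ when $v\in N_Q(h)$.
\end{itemize}

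With these fixes your argument is a valid alternative to the paper's. The paper selects the same kind of vertex $u$, but argues differently. It shows $L_{i,s}(u)\symdiff L_{j,s}(u)\subseteq\{c_i,c_j\}$, since otherwise another block at $u$ would be sporadic. It then uses a consistency argument over triples of indices to conclude airy or weighty. Your route instead computes the list of $B$ explicitly from its classified neighbours, at the cost of the case split above.
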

    \begin{subproof}
        We call a block of $Q-h$ {\em sporadic} if it is not airy, weighty, or standard.
        It suffices to show that there are no sporadic blocks in $Q-h$.
        Assume that $Q-h$ has at least one sporadic block.
        First, we observe that there is some $s \in [t]$ and some $u \in V(B_s)$ such that $B_s$ is the unique sporadic block containing $u$.
        (To see this, consider a leaf block of an arbitrary component in the polar graph obtained by restricting $Q-h$ to the vertices in sporadic blocks.)

        Since $B_s$ is not standard, there exists a vertex $v \in V(B_s)$ such that $L_{i,s}(v) \neq L_{j,s}(v)$ for some $i,j \in [k]$.
        By \cref{no polar edges claim}, $B_s$ has no polarised edges, and so any bad polar assignment for $B_s$ is uniform.
        Thus $L_{i,s}(v') \neq L_{j,s}(v')$ for any $v' \in V(B_s)$. 
        In particular, $L_{i,s}(u) \neq L_{j,s}(u)$.
        If $u$ is an internal vertex of $Q-h$, then $L_i(u) \neq L_j(u)$, so $u \in N_Q(h)$ by \cref{cdimmediate}, implying that $B_s$ is airy by \cref{first async claim}, a contradiction.
        So $u$ is a cut vertex of $Q-h$.

        Let $i,j \in [k]$ such that $L_{i,s}(u) \neq L_{j,s}(u)$.
        Now $|L_{i,s}(u)|=|L_{j,s}(u)|$, so there exist $c \in L_{i,s}(u) \setminus L_{j,s}(u)$ and $c' \in L_{j,s}(u) \setminus L_{i,s}(u)$.
        Suppose that at least one of $c$ or $c'$ is not in $\{c_i,c_j\}$.
        Without loss of generality, $c \notin \{c_i,c_j\}$. 
        There are two cases.
        If $u \notin N_Q(h)$, then, for all $i,j \in [k]$, we have $L_i(u)=L_j(u)$ and so
        \[
            \bigcup_{B_t \in \mathcal{B}_{Q-h}(u)} L_{i,t}(u) =L_i(u)=L_j(u) = \bigcup_{B_t \in \mathcal{B}_{Q-h}(u)} L_{j,t}(u).
        \]
        On the other hand, if $u \in N_Q(h)$, then, by \cref{no restrict h claim,containment claim}, for all $i,j \in [k]$ we have that $L_i(u) \setminus \{c_j\} \cup \{c_i\}=L_j(u)$, and so
        \[
            \left( \bigcup_{B_t \in \mathcal{B}_{Q-h}(u)} L_{i,t}(u) \right) \setminus \{c_j\} \cup \{c_i\} =\bigcup_{B_t \in \mathcal{B}_{Q-h}(u)} L_{j,t}(u).
        \]
        In either case, since $c \in L_{i,s}(u) \setminus L_{j,s}(u)$, and (for the latter case) since $c \neq c_j$, there exists some $B_t \in \mathcal{B}_{Q-h}(u) \setminus \{B_s\}$ such that $c \in L_{j,t}(u) \setminus L_{i,t}(u)$.
        However, such a $B_t$ is not standard, and, since $c \notin \{c_i,c_j\}$, we have that $B_t$ is also neither airy nor weighty.
        Hence $B_t$ is sporadic, which contradicts the fact that $B_s$ is the only sporadic block that contains $u$.

        Now we may assume,
        for any $i,j \in [k]$ such that $L_{i,s}(u) \neq L_{j,s}(u)$,
        that $L_{i,s}(u) \symdiff L_{j,s}(u) = \{c_i,c_j\}$.
        Suppose $L_{i,s}(u) \neq L_{j,s}(u)$ for all distinct $i,j \in [k]$.
        In this case, we argue that $B_s$ is either airy or weighty.
        Towards a contradiction, say that, for distinct $i,j,j' \in [k]$, we have $c_i \in L_{i,s}(u)$ and $c_j \in L_{j,s}(u)$, 
        but $c_{j'} \in L_{i,s}(u)$ and $c_i \in L_{j',s}(u)$. 
        Since $L_{i,s}(u) \symdiff L_{j',s}(u)=\{c_i,c_{j'}\}$, we also have $c_i \notin L_{i,s}(u)$, a contradiction.
        It now follows that for all distinct $i,j \in [k]$ either $c_i \in L_{i,s}(u)$ and $c_j \in L_{j,s}(u)$, in which case $B_s$ is weighty; or $c_j \in L_{i,s}(u)$ and $c_i \in L_{j,s}(u)$, in which case $B_s$ is airy.

        Finally, suppose there are distinct $i,j,j' \in [k]$ such that $L_{i,s}(u) \neq L_{j,s}(u)$ but $L_{j,s}(u) = L_{j',s}(u)$.
        Then $L_{i,s}(u) \symdiff L_{j,s}(u) = L_{i,s}(u) \symdiff L_{j',s}(u)$.
        But $L_{i,s}(u) \symdiff L_{j,s}(u) = \{c_i,c_j\}$ and $L_{i,s}(u) \symdiff L_{j',s}(u) = \{c_i,c_{j'}\}$, where $c_j \neq c_{j'}$, since $j \neq j'$.
        This contradiction completes the proof of the claim.
    \end{subproof}


    \begin{claim}
        \label{at most one}
        Let $v \in V(Q-h)$. Then $v$ is in at most one airy block, and $v$ is in at most one weighty block.
    \end{claim}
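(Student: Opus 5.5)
The plan is to work with a single vertex $v$ and compare its lists across the colour deletions $\Gamma_1,\dots,\Gamma_k$. We may assume $v$ is a cut vertex of $Q-h$, since otherwise it lies in only one block. For each $i$, $L_i(v)=\bigcup_{B_t\in\mathcal{B}_{Q-h}(v)} L_{i,t}(v)$. This union is disjoint, because each $\Gamma_i$ is bad and is therefore a union of pairwise non-conflicting block assignments. Moreover, since $\Gamma_i$ is bad, $|L_i(v)|=d_{Q-h}(v)=\sum_t d_{B_t}(v)$ and $|L_{i,t}(v)|=d_{B_t}(v)$. The argument compares what happens to $c_i$ and $c_j$ as $i$ varies.

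\textbf{Airy blocks.} Suppose $v$ lies in two distinct airy blocks $B_s$ and $B_{s'}$, and fix distinct $i,j\in[k]$ (possible since $k\ge 2$). By definition of airy, $L(h)\subseteq C_s$, so $c_j\in C_s\setminus\{c_i\}=L_{i,s}(v)$. Likewise $c_j\in L_{i,s'}(v)$. This contradicts the non-conflicting property of $\Gamma_{i,s}$ and $\Gamma_{i,s'}$ at $v$.

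\textbf{Weighty blocks.} Suppose $v$ lies in two distinct weighty blocks $B_s$ and $B_{s'}$. Then $c_i\in C_s\cup\{c_i\}=L_{i,s}(v)$, and similarly $c_i\in L_{i,s'}(v)$. Again two blocks share a colour in $L_i(v)$, which is a contradiction.

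Both cases need only the disjointness of the block lists at $v$ within a single bad assignment $\Gamma_i$; no counting argument is required. The one point to check carefully, and the only place I expect any subtlety, is that this disjointness really holds at cut vertices. It does: a bad polar assignment is by definition a union of pairwise non-conflicting block assignments, so distinct blocks at $v$ receive disjoint lists at $v$. The polarised $K_2$-block case also causes no trouble, because such blocks are standard by \cref{no polar edges claim}, and so they are neither airy nor weighty.
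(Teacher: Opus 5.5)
Your proof is correct and is essentially the paper's argument. Two airy blocks at $v$ would both contain $c_j$ in their $\Gamma_i$-lists at $v$ (the paper takes $i=1$, $j=2$), and two weighty blocks would both contain $c_i$; either way this contradicts that $\Gamma_i$ is a union of pairwise non-conflicting block assignments. The size facts $|L_i(v)|=d_{Q-h}(v)$ and the remark about polarised $K_2$-blocks are harmless but not needed.
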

    \begin{subproof}
        Suppose $v$ is in distinct airy blocks $B_s$ and $B_{s'}$ of $Q-h$, for $s,s' \in [t]$.
        By the definition of airy, $c_2 \in L_{1,s}(v) \cap L_{1,s'}(v)$.
        Then $L_{1,s}$ and $L_{1,s'}$ are conflicting, so $L_1$ is not bad, a contradiction.
        Similarly, if $v$ is in distinct weighty blocks $B_s$ and $B_{s'}$ of $Q-h$, for $s,s' \in [t]$, then $c_1 \in L_{1,s}(v) \cap L_{1,s'}(v)$, so $L_1$ is not bad, a contradiction.
    \end{subproof}

    \begin{claim}
    \label{cut vertex claim}
    Let $u$ be a cut vertex of $Q-h$. 
    \begin{itemize}
        \item If $u \in N_Q(h)$, then exactly one block of $\mathcal{B}_{Q-h}(u)$ is airy, and the rest are standard.
        \item If $u \notin N_Q(h)$, then either every block in $\mathcal{B}_{Q-h}(u)$ is standard; or exactly one is airy, exactly one is weighty, and the rest are standard.
    \end{itemize}
    \end{claim}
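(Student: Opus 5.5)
The plan is to analyse, for the fixed cut vertex $u$ of $Q-h$, how the lists $L_i(u)$ decompose as the union $\bigcup_{B_s \in \mathcal{B}_{Q-h}(u)} L_{i,s}(u)$. By \cref{classification claim}, every block containing $u$ is airy, weighty, or standard, and by \cref{at most one} at most one is airy and at most one is weighty. It therefore remains to pin down exactly which combinations can occur, in terms of whether $u \in N_Q(h)$. We compare the union for two distinct indices $i,j \in [k]$, which exist because $k \ge 2$.

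A standard block contributes the same set for every $i$. An airy block $B_s$ contributes $C_s \setminus \{c_i\}$ with $L(h) \subseteq C_s$. A weighty block contributes $C_s \cup \{c_i\}$ with $C_s \cap L(h) = \emptyset$. Since the $\Gamma_{i,s}$ are pairwise non-conflicting, the union is disjoint. Hence the colour $c_i$ lies in $L_i(u)$ if and only if there is an airy block at $u$ or there is a weighty block at $u$, and in the weighty case $c_i$ lies there precisely because of that block. We use two known facts about $L_i(u)$:
\begin{itemize}
\item if $u \notin N_Q(h)$, then $L_i(u)=L(u)$ for all $i$, by \cref{cdimmediate};
\item if $u \in N_Q(h)$, then $L_i(u)=L(u)\setminus\{c_i\}$ with $L(h)\subseteq L(u)$, by \cref{containment claim,no restrict h claim}.
\end{itemize}

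\emph{Case $u \in N_Q(h)$.} Here $c_i \notin L_i(u)$ but $c_j \in L_i(u)$. If there were no airy block, then $c_j$ would have to come from a weighty block, since standard blocks are $i$-independent and $c_i \notin L_i(u)$. But a weighty block would put $c_i$ into $L_i(u)$, a contradiction; so with no airy block, $c_j \notin L_i(u)$, which is impossible. Hence there is an airy block. If there were also a weighty block $B_{s'}$, then $c_i \in L_{i,s'}(u) \subseteq L_i(u)$, a contradiction. So exactly one block is airy and the rest are standard.

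\emph{Case $u \notin N_Q(h)$.} Here $L_i(u)=L_j(u)$. Suppose exactly one of airy or weighty is present. If only an airy block is present, then $c_i \notin L_i(u)$ (the airy block omits it, and the disjointness of the other contributions rules out standard blocks supplying it), while $c_i \in L_j(u)$, contradicting equality. Symmetrically, if only a weighty block is present, then $c_i \in L_i(u)$ but $c_i \notin L_j(u)$. Hence either both are present, giving exactly one of each, or neither is, so all blocks are standard.

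The main obstacle is the disjointness bookkeeping, namely ensuring that standard blocks cannot supply the colours $c_i$ in a way that masks the asymmetry. The point to verify is this: if an airy block is present, non-conflicting rules out another block containing $c_i$ in its $i$-list, since the airy block contains $c_i$ in its $j$-list. Also, a standard block containing $c_i$ would conflict with the airy block's list $C_s\setminus\{c_j\}$ at index $j$, which contains $c_i$. Carrying these checks through at both indices $i$ and $j$ makes each step above rigorous.
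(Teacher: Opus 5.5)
Your proof is correct and follows essentially the same route as the paper: compare $L_i(u)$ and $L_j(u)$ as disjoint unions of the block contributions at $u$, use the classification and at-most-one claims together with the non-conflicting property to rule out standard blocks supplying $c_i$ or $c_j$, and read off the airy/weighty pattern from the two cases $u \in N_Q(h)$ and $u \notin N_Q(h)$. One sentence in your setup is misstated (an airy block never puts $c_i$ into $L_i(u)$), and in the first case the fact that excludes standard blocks is $c_j \notin L_j(u)$ rather than $c_i \notin L_i(u)$. Neither slip affects your case analysis, which argues these points directly.
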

    \begin{subproof}
%
        Suppose $u \in N_Q(h)$.
        By \cref{no restrict h claim,containment claim}, $L_1(u)=L(u) \setminus \{c_1\}$ and $L_2(u)=L(u) \setminus \{c_2\}$.
        Therefore, there exists some $B_s$ in $\mathcal{B}_{Q-h}(u)$, for $s \in [t]$, such that $c_2 \in L_{1,s}(u) \setminus L_{2,s}(u)$.
        Clearly $B_s$ is not standard. 
        Also, each $B_{s'} \in \mathcal{B}_{Q-h}(u)$ is not weighty, for $s' \in [t]$, since $c_1 \notin L_1(u)$ and so $c_1 \notin L_{1,s'}(u)$.
        Hence $B_s$ is airy, and all other blocks in $\mathcal{B}_{Q-h}(u)$ are standard, by \cref{classification claim,at most one}.

        Now suppose $u \notin N_Q(h)$.
        Then $L_i(u)=L_j(u)$ for all $i,j \in [k]$, by \cref{cdimmediate}.
        If $u$ is not in any airy or weighty blocks of $Q-h$, then the claim holds, by \cref{classification claim}, so we assume that $u$ is in a block $B_s$ of $Q-h$ that is either airy or weighty, for some $s \in [t]$.
        If $B_s$ is airy, then, for all distinct $i,j \in [k]$, we have 
        $c_j \in L_{i,s}(u) \setminus L_{j,s}(u)$.
        On the other hand, if $B_s$ is weighty, then, for all distinct $i,j \in [k]$, we have 
        $c_i \in L_{i,s}(u) \setminus L_{j,s}(u)$.
        In either case, since $L_i(u)=L_j(u)$, we have
        \[
            \bigcup_{B_{s'} \in \mathcal{B}_{Q-h}(u)} L_{i,s'}(u) = 
            \bigcup_{B_{s'} \in \mathcal{B}_{Q-h}(u)} L_{j,s'}(u).
        \]
        Since there exists $c \in \{c_i,c_j\}$ such that $c \in L_{i,s}(u) \setminus L_{j,s}(u)$, there is some $B_{s'} \in \mathcal{B}_{Q-h}(u) \setminus \{B_s\}$ such that $c \in L_{j,s'}(u) \setminus L_{i,s'}(u)$, for $s' \in [t]$.
        Then $B_{s'}$ is not standard, so is either airy or weighty.
        It now follows from \cref{classification claim,at most one} that one of $B_s$ and $B_{s'}$ is airy and the other is weighty, and all other blocks of $Q-h$ are standard, as required.
    \end{subproof}
    
    We now show that $(Q-h,N_Q(h))$ is a balanced Gallai tree, conforming with \cref{bgt rules}.  Let $f : \mathcal{B}_Q \rightarrow \{-1,0,1\}$ be the function where $f(B) = -1$ if $B$ is airy, $f(B) = 0$ if $B$ is standard, and $f(B) = 1$ if $B$ is weighty (this is well defined by \cref{classification claim}).
    We let $\mathcal{S}^-$ be the set of airy blocks of $Q-h$, we let $\mathcal{S}^+$ be the set of weighty blocks of $Q-h$ and we let $\mathcal{F}$ be the set of standard blocks of $Q-h$. 

    By \cref{first sync claim}, 
    no block of $Q-h$ in $\mathcal{S}^+$ contains any internal vertices.
    Therefore, if $u$ is an internal vertex of a block $B$ of $Q-h$, then $B$ is in either $\mathcal{S}^-$ or $\mathcal{F}$.
    By \cref{first async claim}, if $u$ is an internal vertex in $N_Q(h)$, then $B$ is in $\mathcal{S}^-$ and otherwise $B$ is in $\mathcal{F}$.
    Therefore $(Q-h,N_Q(h))$ satisfies \ref{G1}.
    By \cref{cut vertex claim}, $(Q-h,N_Q(h))$ satisfies \ref{G2}, and, finally, by \cref{no polar edges claim}, $(Q-h,N_Q(h))$ satisfies \ref{G3}.
    Thus $f$ is a balancing function for $(Q-h,N_Q(h))$, so $(Q-h,N_Q(h))$ is a balanced Gallai tree, and $(Q,h)$ satisfies \ref{N1}.
    By \cref{uniquegalaxy}, the airy, standard, and weighty blocks of $Q-h$ coincide with the light, normal, and heavy blocks of $Q-h$, respectively.  Henceforth, we use that the light, normal, and heavy blocks of $Q-h$ satisfy the defining properties of airy, standard, and weighty blocks of $Q-h$, respectively.

    We next consider \ref{N2}.
    Let $u$ be a vertex of a light block $B_s$, for $s \in [t]$.
    Then there is some $C_s \subseteq \mathbb{N}$ with $L(h) \subseteq C_s$ such that $L_{i,s}(v) = C_s \setminus \{c_i\}$ for all $v \in V(B_s)$ and all $i \in [k]$.
    Since $L(h) \subseteq C_s$, we have that $|C_s| \geq |L(h)|=k$, so $|C_s \setminus \{c_i\}| \geq k-1$ for all $i \in [k]$.
    Since $L_{i,s}(u)=C_s \setminus \{c_i\}$, we have that $|L_{i,s}(u)| \geq k-1$ for all $i \in [k]$.
    Since $L_{i,s}$ is a bad degree-list assignment for $B_s$, we have that $|L_{i,s}(u)|=d_{B_s}(u)$, so $d_{B_s}(u) \geq k-1$.
    Hence $(Q,h)$ satisfies \ref{N2}.
    This completes the proof that $(Q,h) \in \mathcal{N}_k$.

    It remains to prove that $\Gamma$ is an $h$-bad polar assignment for $Q$.
    To see this, we show that $\Gamma = \bigcup_{B \in \mathcal{B}_{(Q,h)}} \Gamma_B$, where each $\Gamma_B$ is a polar assignment for $B$ satisfying \cref{hbaddef}(i)--(iv), and the $\Gamma_B$'s are pairwise non-conflicting.
    If $B_s$ is normal for some $s \in [t]$, then, since $L_{i,s}=L_{j,s}$ for all $i,j \in [k]$, we let $\Gamma_{B_s}=\Gamma_{i,s}$ for any $i \in [k]$, which is a bad degree-polar assignment for $B_s$.
    By \cref{no restrict h claim,no polar edges claim}, every other subgraph $B \in \mathcal{B}_{(Q,h)}$ has no polarised edges, so it suffices to find a list assignment $L_B$ for $B$.
    For $H$, we simply let $L_H(h)=L(h)$ and $L_H(u)=\emptyset$ for all $u \in V(H)-h$. 
    Now consider $B_s$ for some $s \in [t]$.
    If $B_s$ is light or heavy, then there is some $C_s \subseteq \mathbb{N}$ such that either
    $L(h) \subseteq C_s$ and $L_{i,s}(v)=C_s \setminus \{c_i\}$ for all $v \in V(B_s)$ and all $i \in [k]$, or
    $L(h) \cap C_s = \emptyset$ and $L_{i,s}(v)=C_s \cup \{c_i\}$ for all $v \in V(B_s)$ and all $i \in [k]$, respectively.
    In either case, let $L_{B_s}$ be the uniform list assignment that assigns $C_s$ to each vertex of $B_s$.
    If $B_s$ is heavy, then since $L_{i,s}$ is a bad degree-list assignment for $B_s$ for $i \in [k]$, we have that $|C_s|=d_{B_s}(u)+1$ for all $u \in V(B_s)$. %
    Similarly, if $B_s$ is light, then since $L_{i,s}$ is a bad degree-list assignment for $B_s$ for $i \in [k]$, we have that $|C_s|=d_{B_s}(u)-1$ for all $u \in V(B_s)$. 
    Now each $B \in \mathcal{B}_{(Q,h)}$ satisfies (i)--(iv) of \cref{hbaddef}.

    It remains only to show that the $\Gamma_{B_s}$'s are pairwise non-conflicting.
    Since $L_H(u)=\emptyset$ for all $u \in V(H)-h$, clearly $\Gamma_H$ and $\Gamma_{B_s}$ are non-conflicting for each $s \in [t]$.
    Therefore, let $B_s$ and $B_{s'}$ be adjacent blocks of $Q-h$, and let $L_s$ and $L_{s'}$ be their respective list assignments, such that $L_s(u) \cap L_{s'}(u) \neq \emptyset$ for some $u \in V(B_s) \cap V(B_{s'})$.
    If $B_s$ and $B_{s'}$ are normal, then, since $L_{i,s}(u)=L_s(u)$ and $L_{i,s'}(u)=L_{s'}(u)$ for any $i \in [k]$, we have $L_{i,s}(u) \cap L_{i,s'}(u) \neq \emptyset$, contradicting that $\Gamma_i$ is a bad polar assignment for $Q-h$.
    If $B_s$ is heavy and $B_{s'}$ is normal, then, for any $i \in [k]$, we have $L_{i,s'}(u) \cap C_s \neq \emptyset$, where $L_{i,s}(u)=C_s \cup \{c_i\}$, so $L_{i,s}(u) \cap L_{i,s'}(u) \neq \emptyset$, a contradiction.
    Since $B_s$ and $B_{s'}$ are not both heavy, by \cref{list}\ref{no neighbours}, we may now assume without loss of generality that $B_s$ is light.
    There exists $c_i \in L_s(u) \cap L_{s'}(u)$, for $i \in [k]$. %
    Let $j \in [k] \setminus \{i\}$.
    Since $B_s$ is light, $c_i \in L_{j,s}(u)$.
    By \cref{list}\ref{no neighbours}, $B_{s'}$ is heavy or normal, so $L_{s'}(u) \subseteq L_{j,s'}(u)$, and thus $c_i \in L_{j,s'}(u)$.
    Therefore, $L_{j,s}(u) \cap L_{j,s'}(u) \neq \emptyset$, a contradiction. 
    We deduce that the members of $\{\Gamma_B : B \in \mathcal{B}_{(Q,h)}\}$ are pairwise non-conflicting, as required. 
\end{proof}


\section{One higher-degree vertex: \texorpdfstring{$k$}{k}-restricted vertices}
\label{k restricted sec}

The previous section concerned polar graphs with a polar assignment that was ``close'' to being a degree-polar assignment, in the sense that at most one vertex has degree larger than the size of its list.
We now focus on $k$-polar assignments that are ``close'' to being a degree-polar assignment, in the sense that the $k$-polar assignment is for a polar graph having just one vertex with degree more than $k$.
Using \cref{dcpolarhdthm} from the previous section, we can obtain a characterisation of when such polar graphs are $k$-choosable.
In this section, we will prove a necessary property for such a polar graph to have at least one $k$-restricted vertex (see \cref{k restricted theorem}).
This leads to a characterisation of when a graph in the class has a $k$-restricted vertex (see \cref{krestcor}).
%
In order to prove \cref{restrictedthm}, we do not require the full generality of \cref{k restricted theorem}; the special case where $Q$ is $2$-connected suffices (see \cref{krtcorollary}).  We prove the more general result as it may be of independent interest.

For a graph $G$, we say that $H$ is a \emph{Gallai tree component of $G$} if $H$ is a connected component of $G$ that is a Gallai tree.
Let $Q$ be a polar graph.
We say that a polar subgraph $Q'$ of $Q$ is a \emph{Gallai tree component of $Q$} if $G(Q')$ is a Gallai tree component of $G(Q)$.
Note that when $Q'$ is a Gallai tree component of $Q$, the polar subgraph $Q'$ might not be a polar Gallai tree, as it may have polarised edges that are not in $K_2$-blocks.
We say that a polar subgraph $Q'$ of $Q$ is a \emph{polar Gallai tree component of $Q$} if $Q'$ is a Gallai tree component of $Q$ and $Q'$ is a polar Gallai tree.
We also say that $Q$ \emph{contains a (polar) Gallai tree component} if there exists a polar subgraph of $Q$ that is a (polar) Gallai tree component (respectively).
To simplify notation, we also say that $Q$ is a Gallai tree if $G(Q)$ is a Gallai tree (where $Q$ might or might not be a polar Gallai tree).

\begin{theorem}
  \label{k restricted theorem}
  Let $Q$ be a polar graph and let $k$ be an integer with $k \geq 3$.
  Let $h \in V(Q)$ such that $d(h) > k$ but $d(u)\leq k$ for all $u \in V(Q) \setminus \{h\}$.
  If $Q$ is $k$-restricted, then there is some $w \in V(Q)$ such that $Q-w$ contains a Gallai tree component $Q'$ 
  such that $d_Q(v) = k$ for all $v \in V(Q')$.
\end{theorem}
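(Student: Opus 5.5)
The plan is to unpack what it means for $Q$ to be $k$-restricted and then apply \cref{dcpolarthm} and \cref{dcpolarhdthm} to the resulting colour deletions, keeping track of list sizes.

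\textbf{Setup and the degree count.} Fix a $k$-polar assignment $\Gamma=(L,R)$, a vertex $r$ and a colour $c\in L(r)$ such that the $(r,c)$-colour deletion $(Q-r,\Gamma')$ is not colourable. Some connected component $C$ of $Q-r$ is then not $\Gamma'$-colourable. The basic estimate, used throughout, comes from the fact that each edge removes at most one colour in a colour deletion. For $v\in V(C)\setminus\{h\}$ we get
\[
|L'(v)|\ \ge\ k-e(v,r)\ \ge\ d_Q(v)-e(v,r)\ =\ d_{Q-r}(v).
\]
Equality throughout forces $d_Q(v)=k$.

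\textbf{Easy cases.} Suppose first that $r=h$, or that $h\notin V(C)$. Then $\Gamma'|C$ is a degree-polar assignment for $C$, so by \cref{dcpolarthm} the component $C$ is a polar Gallai tree and $\Gamma'|C$ is bad. In particular $|L'(v)|=d_C(v)$ for every $v\in V(C)$. By the equality case above, $d_Q(v)=k$ for all $v\in V(C)$. So $C$ is a Gallai tree component of $Q-w$ with $w=r$ (this covers $r=h$ as well), as required.

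\textbf{Main case: $r\neq h$ and $h\in V(C)$.} Here $h$ may have a list much shorter than its degree. I would colour $h$ next. For each $a\in L'(h)$, take the $(h,a)$-colour deletion from $(C,\Gamma'|C)$. The same count, now subtracting $e(v,r)+e(v,h)$, gives a degree-polar assignment on each component of $C-h$. By \cref{dcpolarthm}, some component $D_a$ of $Q-\{r,h\}$ is a bad polar Gallai tree, and all its vertices satisfy $d_Q(v)=k$. This splits into three subcases.
\begin{enumerate}
\item If $D_a$ has no neighbour of $r$, then $D_a$ is a component of $Q-h$, and we take $w=h$.
\item If $D_a$ has no neighbour of $h$, then $D_a$ is a component of $Q-r$, and we take $w=r$.
\item Otherwise $D_a$ is attached to both $r$ and $h$. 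This is the obstacle.
\end{enumerate}

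\textbf{Handling the obstacle.} Suppose that for every $a$ the bad component is attached to both $r$ and $h$. I would first treat $|L'(h)|\ge 2$ with $C-h$ connected by applying \cref{dcpolarhdthm} to $(C,h)$ with assignment $\Gamma'|C$. This yields $(C,h)\in\mathcal{N}_{|L'(h)|}$ and $\Gamma'|C$ being $h$-bad. I would then exploit the structure: every neighbour of $h$ lies in a light block $B$ with $d_B\ge |L'(h)|-1$, and $|L'|=d_C$ holds everywhere. I would try to show this forces the neighbours of $r$ and of $h$ to sit in a part of $C-h$ that, together with the degree equalities, produces a Gallai tree component of $Q-h$ or of $Q-r$ with all degrees $k$. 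If $C-h$ is disconnected, I would apply the same argument to the uncolourable pieces. The residual case $|L'(h)|\le 1$ means $e(h,r)\ge k-1$. Then $r$, which has degree at most $k$, has at most one further edge, and a short direct analysis (taking $w=h$) should handle it. I expect the bookkeeping in this $\mathcal{N}_k$-structure step to be the main difficulty.
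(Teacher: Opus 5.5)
Your setup, the degree count, the cases $r=h$ and $h\notin V(C)$, subcase (i), and the residual case $|L'(h)|\le 1$ are all fine. Subcase (ii) is in fact vacuous, since $C$ is connected and contains $h$. Your overall direction (colour $h$, then invoke \cref{dcpolarhdthm}) is also the one the paper takes. But the case you call the obstacle is essentially the whole theorem, and there the proposal contains no argument: ``I would try to show this forces \dots'' only restates the conclusion. There is a genuine gap, and two concrete ideas are missing.

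\emph{Finding a piece that is bad for two colours.} When $C-h$ is disconnected, ``apply the same argument to the uncolourable pieces'' fails as stated. A fixed component $P$ of $Q-\{r,h\}$ makes $Q[V(P)\cup\{h\}]$ uncolourable only for the colours $a$ with $D_a=P$, which may be a single colour, whereas \cref{dcpolarhdthm} needs at least two colours at $h$. You need a pigeonhole step. If no component is bad for two colours, there are at least $|L'(h)|\ge k-e(r,h)$ distinct bad components, each containing a neighbour of $r$ (otherwise (i) applies). Then $d_Q(r)\le k$ forces $d_Q(r)=k$ and exactly one edge from $r$ into each of them, so $r$ together with them is a Gallai tree component of $Q-h$ with all degrees $k$. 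Only after this can you fix one component $T$ and a sublist $L^+(h)$ with $|L^+(h)|\ge 2$. Even then, \ref{N2} gives only $d_B\ge |L^+(h)|-1$ on light blocks. So you must also use the bad components for colours outside $L^+(h)$, for instance to show that $r$ has a neighbour outside $T$ when $L^+(h)\ne L(h)$.

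\emph{The counting against $d_Q(r)\le k$.} Even with $(T^+,h)\in\mathcal{N}_{|L^+(h)|}$ in hand, the contradiction does not come from bookkeeping around light blocks. It comes from counting the edges $r$ must send into $T$. Call a block terminal if it is a light block with $d_B=k-1$, or a heavy $K_2$-block attached to one. One must prove three things:
\begin{itemize}
\item some light block has $d_B<k-1$;
\item $T$ has at least two non-terminal blocks;
\item for a non-terminal block $B$ and a cut vertex $u$ of $T$ in $B$, the part of $T$ on $B$'s side of $u$ contains a block $A$ receiving at least $k-2$ edges from $r$ into $V(A)\setminus\{u\}$, or at least $k-1$ if $A$ is normal.
\end{itemize}
Two disjoint such branches give $2(k-2)\le k$, hence $k\le 4$. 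The case $k=4$ then contradicts $r$ having a neighbour outside $T$. The case $k=3$ needs a separate argument: the blocks joining the two branches are $K_2$-blocks alternating between light and normal. Together with $r$ they close into an odd cycle, so $Q[V(T)\cup\{r\}]$ is a Gallai tree with $k-1$ neighbours of $r$ in $T$. That in turn yields a Gallai tree component of $Q-h$ with all degrees $k$.

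None of these steps appears in your plan, so as written it establishes only the easy cases.
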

\begin{proof}
  Assume that $Q$ is $k$-restricted.
  Let $\Gamma = (L,R)$ be a $k$-polar assignment for $Q$ where $Q$ is $\Gamma$-restricted.
  Then $Q$ has a $\Gamma$-restricted vertex $r$,
  that is, there exists $r \in V(Q)$ with $c \in L(r)$ such that, letting $(Q-r,\Gamma')$ be the $(r,c)$-colour deletion from $(Q,\Gamma)$, we have that $Q-r$ is not $\Gamma'$-colourable.
  Let $V_k = \{v \in V(Q) : d_Q(v)=k\}$.
  Towards a contradiction, assume that there is no $w \in V(Q)$ such that $Q-w$ contains a Gallai tree component whose vertex set is contained in $V_k$.
  Under this assumption, we prove the following sequence of claims.

  \begin{claim}
    \label{h neq v}
    We have that $h \neq r$.
  \end{claim}
  \begin{subproof}
    For every $u \in V(Q) \setminus \{h\}$, we have that $|L(u)|=k\geq d_Q(u)$.
    Let $(Q-h,\Gamma'')$ be a colour-deletion of $h$ from $(Q,\Gamma)$, with $\Gamma''=(L'',R'')$.
    Then, for all $u \in V(Q) \setminus \{h\}$, we have $|L''(u)|\geq d_{Q-h}(u)$ where, as $d_Q(u) \le k = |L(u)|$, we have $|L''(u)|=d_{Q-h}(u)$ only if $u \in V_k$.
    Therefore $\Gamma''$ is a degree-polar assignment for $Q-h$.
    Towards a contradiction, suppose $h=r$.
    Then, by \cref{dcpolarthm}, $Q-h$ contains a polar Gallai tree component $T$, and $\Gamma''$ is a bad degree-polar assignment for $T$.
    Thus $|L''(u)|=d_{Q-h}(u)$ for all $u \in V(T)$.
    Therefore $V(T) \subseteq V_k$, a contradiction. 
    We deduce $h \neq r$, as required.
  \end{subproof}

  By \cref{h neq v}, $h \neq r$.
  Let $Q'=Q-\{r,h\}$ and let $\Gamma' = (L',R')$.
  For each $a \in L'(h)$, let $(Q',\Gamma_a)$ be the $(h,a)$-colour deletion from $(Q-r,\Gamma')$.
  Note that
  $\Gamma_a$ is a degree-polar assignment for $Q'$, for each $a\in L'(h)$.
  If $Q'$ is $\Gamma_a$-colourable, then $Q-r$ is $\Gamma'$-colourable, a contradiction.
  Therefore, by \cref{dcpolarthm}, for every $a \in L'(h)$ there is a polar Gallai tree component $T_a$ of $Q'$ such that $\Gamma_a$ is a bad degree-polar assignment for $T_a$.
  Then any such $T_a$ has $V(T_a) \subseteq V_k$. 

  \begin{claim}
    \label{only one}
    There exist distinct $a,b \in L'(h)$ such that there is a polar Gallai tree component $T$ of $Q'$, where $\Gamma_a$ and $\Gamma_b$ are both bad degree-polar assignments for $T$.
  \end{claim}
  \begin{subproof}
    Towards a contradiction, assume no such $T$ exists.
    Then, for every $a \in L'(h)$, there is a polar Gallai tree component $T_a$ of $Q'$ and a bad degree-polar assignment $\Gamma_a$ for $T_a$, where the members of $\{T_a : a \in L'(h)\}$ are pairwise distinct.
    Therefore, $Q'$ has at least $|L'(h)|$ polar Gallai tree components.
    By assumption, $Q-h$ does not contain a Gallai tree component whose vertex set is contained in $V_k$.
    For each $T_a$, we have $V(T_a) \subseteq V_k$, so $T_a$ is not a component of $Q-h$.
    That is, each $T_a$ contains at least one vertex in $N_Q(r)$.

    If $h \notin N_Q(r)$, then $|L'(h)|=k$, so there are $k$ such $T_a$ components; 
    whereas if $h \in N_Q(r)$, then $|L'(h)| \in \{k-1,k\}$, so there are at least $k-1$ such $T_a$ components.
    In either case, since $d_Q(r) \leq k$ and each $T_a$ contains a vertex in $N_Q(r)$, the vertex~$r$ has precisely one neighbour in each $T_a$ (and, in the latter case, one neighbour that is $h$), and $r \in V_k$.
    Then $Q[V(T_a) \cup \{r\}]$ is a Gallai tree, where the edge between $r$ and $T_a$ forms a $K_2$-block.
    It follows that $Q[\{r\} \cup \bigcup_{a \in L'(h)} V(T_a)]$ is a Gallai tree component of $Q-h$ with all vertices in $V_k$, a contradiction.
  \end{subproof}

  Let $T$ be a polar Gallai tree component of $Q'$ described by \cref{only one}.
  Let $T^+=Q[V(T) \cup \{h\}]$.
  We obtain a polar assignment $\Gamma^+=(L^+,R^+)$ for $T^+$ from $\Gamma'$ by removing each colour $a \in L'(h)$ for which $\Gamma_a$ is not a bad degree-polar assignment for $T$.
  By \cref{only one}, $|L^+(h)| \geq 2$.
  Now $T^+$ is not $\Gamma^+$-colourable, by construction.
  We also have that $|L^+(u)| \geq d_{T^+}(u)$ for all $u \in V(T^+)\setminus \{h\}$, and $T$ is connected.
  Therefore, by \cref{dcpolarhdthm}, $(T^+,h) \in \mathcal{N}_{|L^+(h)|}$, and $\Gamma^+$ is an $h$-bad polar assignment for $T^+$.
%
  Moreover, $V(T) \subseteq V_k$.
  Note that 
  $|L^+(u)|=d_{T^+}(u)$ for all $u \in V(T^+) \setminus \{h\}$, 
  and, by \ref{N3}, there are no parallel edges between $h$ and a vertex of $T$.
  We continue to work towards a contradiction.

  \begin{claim}
    \label{fullblocks}
    Let $B$ be a block of $T$.  Then $d_B(u) = d_B(v) \le k-1$ for all $u,v \in V(B)$.
  \end{claim}
  \begin{subproof}
    Since $T$ is a Gallai tree, $d_B(u) = d_B(v)$ for all $u,v \in V(B)$. 
    Let $v \in V(B)$ and observe that $d_B(v) \le d_T(v) \le d_Q(v) \le k$.
    Suppose $d_B(v)=k$.
    Then $v$, and indeed every vertex in $B$, is an internal vertex of $T$, and $v \notin N_Q(r) \cup N_Q(h)$.
    So $B \cong K_{k+1}$ and $B$ is the only block of $T$.
    Thus $B$ is a Gallai tree component in $Q-h$ (and $Q-r$) whose vertex set is contained in $V_k$, a contradiction.
  \end{subproof}

  We call a block $B$ of $T$ {\em full} if $d_B(u)=k-1$ for all $u \in V(B)$.
  By \ref{N1}, each block of $T$ is light, normal, or heavy.
  We call a block {\em terminal} if it is a full light block, or a heavy $K_2$-block that shares a cut vertex with a full light block.

  \begin{claim}
    \label{max to min}
    Let $B$ be a full light block of $T$, and let $u$ be a cut vertex of $T$ in $V(B)$. Then $u \notin N_Q(r) \cup N_Q(h)$, and $u$ is in exactly one other block of $T$, which is a heavy $K_2$-block.
  \end{claim}
  \begin{subproof}
    Since $B$ is full, we have that $d_B(u)=k-1$.
    If $u \in N_Q(r) \cup N_Q(h)$, then $d_T(u) \leq k-1$, implying $u$ is not a cut vertex of $T$.
    Hence $u \notin N_Q(r) \cup N_Q(h)$.
    Therefore, by \ref{G2}, we have that $u$ is in a heavy block $S$ of $T$.
    Since $d_B(u)=k-1$ and $d_B(u)+d_S(u) \leq k$, we have that $d_S(u)=1$, so $S$ is a $K_2$-block, and $u$ is not in any other blocks of $T$.
  \end{subproof}

  \begin{claim}
    \label{not bad 2}
    Let $u \in V(T) \cap N_Q(r)$. Then there is some block $B$ of $T$ containing $u$ such that $B$ is not terminal.
  \end{claim}
  \begin{subproof}
    Assume that all blocks of $T$ that contain $u$ are terminal.
    Suppose $u$ is an internal vertex of $T$.
    Then $u$ is not in a heavy block, by \cref{list}\ref{sync only cut}, so $u$ is in a full light block.
    By \ref{N1} and \ref{G1}, this implies that $u \in N_Q(h)$.
    So $u \in N_Q(r) \cap N_Q(h)$, and therefore $d_T(u) \leq k-2$, contradicting that $u$ is in a full block.
    Now suppose $u$ is a cut vertex of $T$.
    Since every block of $T$ containing $u$ is terminal, either $u$ is in a heavy block, or $u$ is in a full light block.
    If $u$ is in a heavy block, then $u \notin N_Q(h)$, by \cref{list}\ref{V only async}; whereas if $u$ is in a full light block, then, by \cref{max to min}, $u \notin N_Q(h)$.
    Since $u \notin N_Q(h)$ in either case, \cref{list}\ref{several conditions} implies that $u$ is in exactly one light block $A$ and exactly one heavy block $S$.
    But then $d_A(u)=k-1$ and $d_S(u)=1$, so $d_A(u)+d_S(u)=k$ and therefore $u \notin N_Q(r)$, a contradiction.
  \end{subproof}

  \begin{claim}
    \label{some not maximal}
    There exists at least one light block of $T$ that is not full.
  \end{claim}
  \begin{subproof}
    Observe that $T$ has at least one vertex in $N_Q(h)$, for otherwise $T$ is a Gallai tree component of $Q-r$ with all vertices in $V_k$, contrary to our initial assumption.
    If $u \in N_Q(h) \cap V(T)$, then, by \cref{list}\ref{V means one async}, $u$ is in some light block of $T$.
    So $T$ has at least one light block.

    Assume that all light blocks of $T$ are full.
    By \cref{not bad 2}, if every block of $T$ is terminal, then $N_Q(r) \cap V(T) = \emptyset$, implying $T$ is a Gallai tree component of $Q-h$ with all vertices in $V_k$, a contradiction.
    So $T$ has a block that is not terminal.
    Now $T$ has both terminal and non-terminal blocks, so there exists a cut vertex $v$ of $T$ that is in both a terminal block~$B$ and a non-terminal block~$A$.
    By \cref{max to min}, if $B$ is a full light block, then $A$ is a heavy $K_2$-block and therefore is terminal, a contradiction.
    So $B$ is a heavy $K_2$-block.
    By \cref{list}\ref{V not in sync}, $v \notin N_Q(h)$, and therefore, by \cref{list}\ref{several conditions}, $v$ is in exactly one light block $A'$.
    However, since all light blocks of $T$ are full, $A'$ is full, and therefore, by \cref{max to min}, $A'$ and $B$ are the only blocks containing $v$, and so $v$ is not in a non-terminal block, a contradiction.
  \end{subproof}

  \begin{claim}
    \label{ntinter}
    $L^+(h) \neq L(h)$.
  \end{claim}
  \begin{subproof}
    Suppose that $L^+(h) = L(h)$, so $|L^+(h)|=k$, in which case $(T^+,h) \in \mathcal{N}_k$.
    By \cref{some not maximal}, there exists a light block $B$ of $T$ that is not full.
    Now, for any vertex $v$ of $B$, we have $d_B(v) \geq |L^+(h)|-1=k-1$ by \ref{N2}, and $d_B(v) \le k-1$ by \cref{fullblocks}, so $d_B(v)=k-1$, contradicting that $B$ is not full.
  \end{subproof}

  \begin{claim}
    \label{n is not in t}
    There is at least one vertex in $N_Q(r)$ that is not in $V(T)$.
  \end{claim}
  \begin{subproof}
    Assume that $N_Q(r) \subseteq V(T)$.
    In particular, $h \notin N_Q(r)$, so $L'(h)=L(h)$.
    By \cref{ntinter}, $L^+(h) \neq L'(h)$.
    Since $L^+(h) \subseteq L'(h)$, there exists $d \in L'(h) \setminus L^+(h)$, and a polar Gallai tree component $T_d$ of $Q'$, distinct from $T$, for which $V(T_d) \subseteq V_k$. 
    Now $N_Q(r) \cap V(T_d) = \emptyset$, so $T_d$ is a Gallai tree component of $Q-h$, a contradiction.
  \end{subproof}

  Now, since $d_Q(r) \leq k$, we have $|N_Q(r) \cap V(T)| \le k-1$ by \cref{n is not in t}.

  \begin{claim}
    \label{no complete graphs}
    If $|N_Q(r) \cap V(T)| = k-1$, then $Q[V(T) \cup \{r\}]$ is not a Gallai tree.
  \end{claim}
  \begin{subproof}
    Towards a contradiction, suppose that $|N_Q(r) \cap V(T)| = k-1$ and $Q[V(T) \cup \{r\}]$ is a Gallai tree.
    Then, since $d_Q(r) \leq k$ and by \cref{n is not in t}, $r \in V_k$ and $r$ has precisely one neighbour not in $T$.
%
%
    If there is an edge between $r$ and $h$, then $Q[V(T) \cup \{r\}]$ is a component of $Q-h$, so it is a Gallai tree component with all vertices in $V_k$, a contradiction.
    So there is no edge between $r$ and $h$, and therefore $L'(h)=L(h)$. 
    By \cref{ntinter}, $L^+(h) \neq L'(h)$.
    Since $L^+(h) \subseteq L'(h)$, there is some $d \in L'(h) \setminus L^+(h)$, and a polar Gallai tree component $T_d$ of $Q'$, distinct from $T$, for which $V(T_d) \subseteq V_k$.
    Now $T_d$ is not a component of $Q-h$, for otherwise $T_d$ is a Gallai tree component of $Q-h$.
    So $r$ has a neighbour $u \in V(T_d)$.
    Now $u$ is the unique neighbour of $r$ that is not in $T$, so $Q[V(T_d) \cup \{r\}]$ is a Gallai tree, where $\{r,u\}$ is the vertex set of a $K_2$-block in this Gallai tree.
    It follows that $Q[V(T) \cup V(T_d) \cup \{r\}]$ is a Gallai tree, since it can be obtained by identifying $r$ in the Gallai trees $Q[V(T) \cup \{r\}]$ and $Q[V(T_d) \cup \{r\}]$, with all vertices in $V_k$.
    But $Q[V(T) \cup V(T_d) \cup \{r\}]$ is a component of $Q-h$, so this is a Gallai tree component, a contradiction.
  \end{subproof}

  \begin{claim}
    \label{at least two}
    There are at least two blocks in $T$ that are not terminal.
  \end{claim}
  \begin{subproof}
    By \cref{some not maximal}, there exists a block $B$ of $T$ such that $B$ is light and not full.
    By \cref{fullblocks} and since $B$ is not full, $d_B(v) < k-1$ for each $v \in V(B)$.

    Assume, towards a contradiction, that $B$ is the only non-terminal block of $T$. 
    Let $m=k-1-d_B(u)$ for some arbitrary $u \in V(B)$ (recalling that $d_B(u)=d_B(v)$ for all $u,v \in V(B)$). 
    We show that $e_Q(r,v)=m$ for all $v \in V(B)$.
    Let $v \in V(B)$.
    If $v$ is an internal vertex of $T$, then, by \ref{G1}, $v \in N_Q(h)$ and so $d_{Q-h}(v) = k-1$.
    Since $d_{Q-h}(v) - e_Q(r,v) = d_{Q'}(v) = d_B(v)$, we have that $e_Q(r,v)=m$.
    On the other hand, if $v$ is a cut vertex of $T$, then $v$ is in at least one block other than $B$, and all such blocks are terminal.
    As a terminal block is either light or heavy, \cref{list}\ref{no neighbours} implies that there is exactly one block other than $B$ that contains $v$, and this block is a heavy $K_2$-block.
    Thus $d_T(v)=d_B(v)+1$.
    As $v \notin N_Q(h)$ by \cref{list}\ref{V not in sync}, $d_T(v)=k-e_Q(r,v)$, and so, once again, $e_Q(r,v)=m$.

    Note that $m \ge 1$, since $d_B(v) < k-1$ for all $v \in V(B)$.
    We claim that $B$ is a complete graph.
    Suppose that $B$ is an odd cycle with $|V(B)| \ge 5$.
    Then $m=k-3$, so $k \ge 4$.
    As $|V(B)| \ge 5$ and $e_Q(r,v) =k-3$ for all $v \in V(B)$, there are at least $5(k-3)$ edges between $r$ and vertices of $B$.
    But $5(k-3) > k$ for all $k \ge 4$, so this implies that $d_Q(r) > k$, a contradiction.
    So $B$ is a complete graph with $d_B(u)+1$ vertices.
    Therefore, there are at least $m(d_B(u)+1)=m(k-m)$ edges between $r$ and vertices of $B$.
    By \cref{n is not in t}, there are at most $k-1$ such edges.
    So $m(k-m) \le k-1$, implying $k(m-1) \le (m-1)(m+1)$, and thus $m=1$ or $m = k-1$.  But $m=k-1$ implies that $B$, and hence $T$, consists of a single vertex, a contradiction.
    So $m=1$, and thus $B \cong K_{k-1}$, where $e_Q(r,v)=1$ for each $v \in V(B)$.
    But then $r$ has no other neighbours in $T$, by \cref{n is not in t}, so $Q[V(T) \cup \{r\}]$ is a Gallai tree, contradicting \cref{no complete graphs}.
    We deduce that there are at least two non-terminal blocks in $T$.
  \end{subproof}

  \Cref{at least two} implies, in particular, that there are at least two blocks in $T$, and therefore every block of $T$ has at least one cut vertex.

  \begin{claim}
    \label{non cut vertex claim}
    Let $u$ be an internal vertex of $T$, and let $B$ be the block of $T$ containing $u$.
    \begin{enumerate}
      \item If $u \notin N_Q(r)$, then $u \in N_Q(h)$ and $B$ is a full light block.
      \item If $u \in N_Q(r)$, then either $u \in N_Q(h)$ and $B$ is light and not full, or $u \notin N_Q(h)$ and $B$ is a normal block.
    \end{enumerate}
  \end{claim}
  \begin{subproof}
    The block $B$ contains a cut vertex $v \neq u$.
    Then $d_T(v) > d_T(u)$ and so, since $d_Q(u)=d_Q(v)=k$, we have that $u \in N_Q(r) \cup N_Q(h)$.
    Suppose $u \notin N_Q(r)$. Then $u \in N_Q(h)$, so $B$ is light by \ref{G1}.
    As $u \in N_Q(h) \setminus N_Q(r)$, we have $d_T(u)=k-1$, so $d_B(u)=k-1$ and therefore $B$ is full. So (i) holds.

    Now suppose $u \in N_Q(r)$.
    By \ref{G1}, if $u \in N_Q(h)$ then $B$ is light, whereas if $u \notin N_Q(h)$ then $B$ is normal. 
    Moreover, when $u \in N_Q(r) \cap N_Q(h)$ we have $d_B(u) = d_T(u) \le k-2$, so $B$ is not full.
  \end{subproof}

  \begin{figure}
    \centering
    \includegraphics[width=10.5cm]{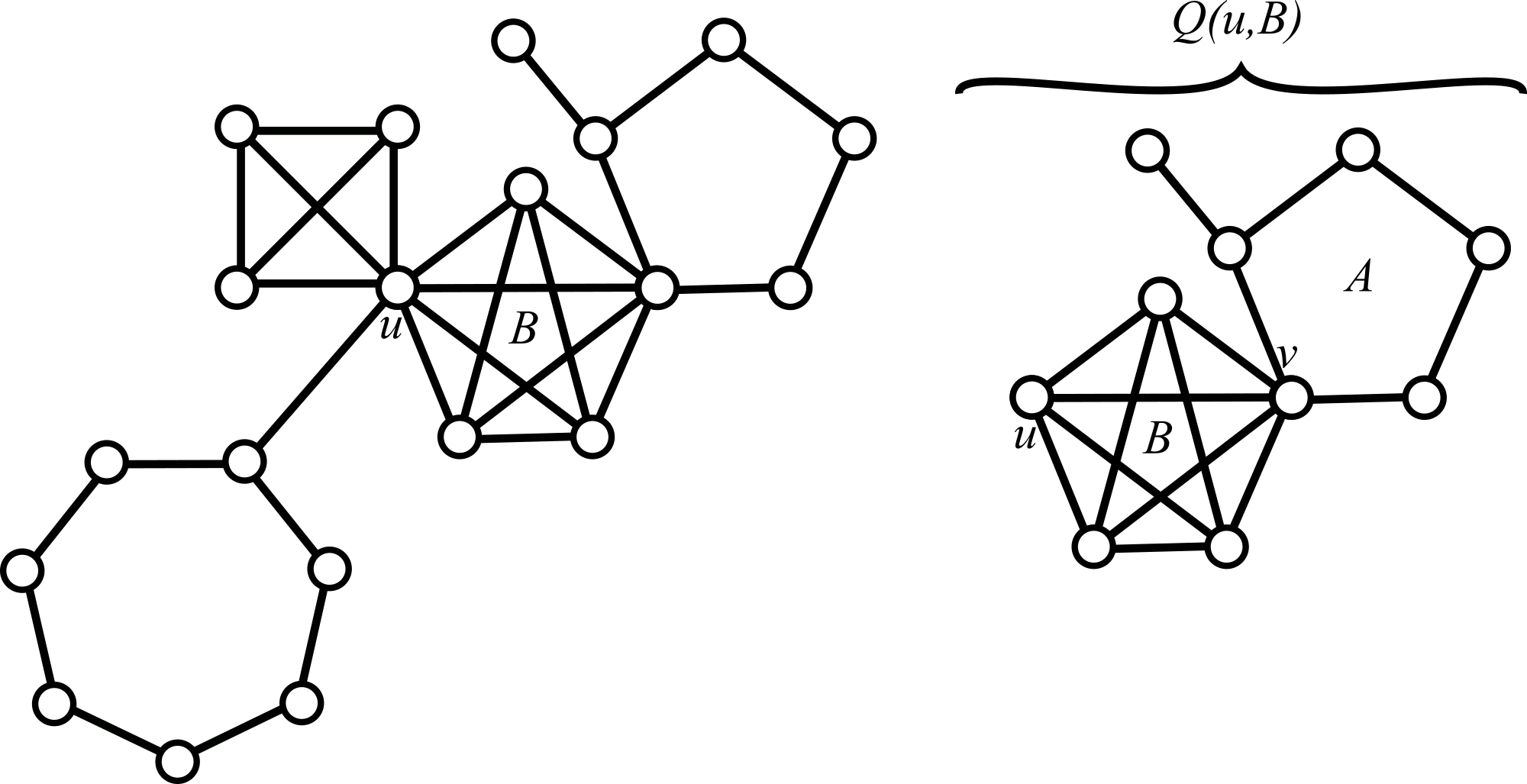}
    \caption{A polar Gallai tree $T$ with a cut vertex $u$ and a block $B$ containing $u$ (left); and $Q_T(u,B)$, with a block $A$ and cut vertex $v$ illustrated (right). Here $Q_T(v,A)$ does not contain $u$, so it is a subgraph of $Q_T(u,B)$.}
    \label{qtub}
  \end{figure}

  Let $u$ be a cut vertex of the polar Gallai tree $T$.
  Then $T$ is the union of some set of polar Gallai trees~$\mathcal{U}$, each of which contains $u$ but has no other vertex in common with a distinct member of $\mathcal{U}$.
  For each block $B$ of $T$ containing $u$, there is a unique polar graph in $\mathcal{U}$ that contains $B$; we denote this polar Gallai tree as $Q_T(u,B)$, see \cref{qtub}.
  Let $A$ be a block of $T$ with $u \notin V(A)$, and let $v$ be a cut vertex of $T$ in $A$.
  Observe that if $Q_T(v,A)$ does not contain $u$, then $Q_T(v,A)$ is a subgraph of $Q_T(u,B)$.


  \begin{claim}
    \label{surprise claim}
    Let $u$ be a cut vertex of $T$ in a block $B$. If $B$ is not terminal, then there is a block $A$ of $Q_T(u,B)$ such that either
    \begin{enumerate}
      \item $A$ is normal and there are at least $k-1$ edges between $r$ and $V(A)\setminus \{u\}$, or
      \item $A$ is light and there are at least $k-2$ edges between $r$ and $V(A) \setminus \{u\}$.
    \end{enumerate}
  \end{claim}
  \begin{subproof}
    The proof is by induction on the number of blocks of $Q_T(u,B)$.
    Since $B$ is a block of $Q_T(u,B)$, for the base case we have that $B$ is the only block of $Q_T(u,B)$.
    Therefore, for any $v \in V(B) \setminus \{u\}$, we have that $v$ is an internal vertex of $T$, and so, by \cref{non cut vertex claim} and since $B$ is not terminal, $v \in N_Q(r)$.
    First, suppose that there exists some $v' \in V(B) \setminus \{u\}$ such that $v' \notin N_Q(h)$, and let $m = e_Q(r,v')$.
    Then, by \cref{non cut vertex claim}, $B$ is normal and $v \notin N_Q(h)$ for all $v \in V(B) \setminus \{u\}$.
    Now $d_{B}(v)=d_T(v)=k-m$ and $e_Q(r,v)=m$ for all $v \in V(B) \setminus \{u\}$.
    Moreover, $|V(B)| \ge k-m+1$, so $|V(B)\setminus \{u\}| \ge k-m$.
    Hence, there are at least $(k-m)m$ edges between $r$ and $V(B) \setminus \{u\}$.
    Clearly $1 \le m$, and $m \leq k-1$ by \cref{n is not in t}.
    Thus $k(m-1) \ge (m+1)(m-1)=m^2-1$, implying $(k - m)m \ge k-1$, so (i) holds in this case.

    Suppose $v \in N_Q(h)$ for all $v \in V(B) \setminus \{u\}$.
    Then $B$ is light, by \cref{non cut vertex claim}. 
    Letting $m = e_Q(r,v')$ for an arbitrary $v' \in V(B) \setminus \{u\}$, we have that $d_{B}(v)=k-m-1$ and $e_Q(r,v)=m$ for all $v \in V(B) \setminus \{u\}$.
    So $|V(B) \setminus \{u\}| = k-m-1$.
    Thus there are at least $(k-m-1)m$ edges between $r$ and $V(B) \setminus \{u\}$.
    Clearly $m \ge 1$ and, since $d_{B}(v) \ge 1$, we have $m \le k-2$.
    Hence $k(m-1) \ge (m+2)(m-1)$, implying $(k-m-1)m \ge k-2$, so (ii) holds, and thus the claim holds when $Q_T(u,B)$ consists of one block.

    Now suppose that $Q_T(u,B)$ has $i$ blocks, for some $i \ge 2$, and the result holds for any cut vertex $u'$ and block $B'$ of $T$ such that $Q_T(u',B')$ has fewer than than $i$ blocks.
    Since $i \ge 2$, there is at least one cut vertex $x$ of $Q_T(u,B)$ in $B$, with $x \neq u$.
    Let $B_1$ be a block of $Q_T(u,B)$ that contains $x$ and is distinct from $B$.
    If $B_1$ is not terminal, then, as $Q_T(x,B_1)$ has fewer than $i$ blocks, by the induction assumption it contains a block $A$ such that either $A$ is normal and there are at least $k-1$ edges between $r$ and $V(A) \setminus \{x\}$, or $A$ is light and there are at least $k-2$ edges between $r$ and $V(A) \setminus \{x\}$.
    Since $Q_T(x,B_1)$ does not contain $u$, it is a subgraph of $Q_T(u,B)$, and so the claim holds in either case.

    Therefore, we may assume that all other blocks that contain $x$ are terminal.
    In particular, $B_1$ is terminal.
    If $B_1$ is a full light block, then by \cref{max to min} we have that $B$ is a heavy $K_2$-block, implying $B$ is terminal, a contradiction.
    Hence $B_1$ is a heavy $K_2$-block.
    By \cref{list}\ref{no neighbours}, $x$ is in at most one heavy block.
    It follows that $B$ and $B_1$ are the only blocks that contain $x$, and, by \cref{list}\ref{sync only cut}, $B$ is light.
    By \cref{list}\ref{V not in sync}, $x \notin N_Q(h)$.
    If $x \notin N_Q(r)$, then $k=d_T(x)=d_{B}(x)+d_{B_1}(x)$, with $d_{B_1}(x)=1$, so $d_{B}(x)=k-1$, implying $B$ is a full light block, contradicting that $B$ is not terminal.
    So $x \in N_Q(r)$.
    Let $m=e_Q(x,r)$. 
    We have that $d_B(x)=k-1-m$, so $|V(B)| \ge k-m$.

    Now let $x' \in V(B) \setminus \{u\}$.  If $x'$ is a cut vertex of $Q_T(u,B)$, then $x'$ has a single neighbour in $Q_T(u,B) \setminus V(B)$ (as $x$ was an arbitrary cut vertex of $Q_T(u,B)$ in $B$ in the foregoing argument).  On the other hand, if $x'$ is an internal vertex of $Q_T(u,B)$, then $x' \in N_Q(h)$, by \ref{G1} and since $B$ is light.
    Thus $d_{Q-r}(x') = k-m$ for all such $x'$, so there are $m$ edges between $r$ and each $x' \in V(B) \setminus \{u\}$.  As $|V(B)| \ge k-m$, there are at least $(k-m-1)m$ edges between $r$ and $V(B) \setminus \{u\}$.
    As before, $(k-m-1)m \ge k-2$, so (ii) holds for the block $B$, as required.
    The result follows by induction.
  \end{subproof}

  By \cref{at least two}, $T$ has distinct blocks $B$ and $B'$ that are not terminal, and there is a cut vertex $u_B \in V(B)$ that separates $V(B) \setminus \{u_B\}$ from $V(B') \setminus \{u_B\}$, and a cut vertex $u_B' \in V(B')$ that separates $V(B') \setminus \{u_B'\}$ from $V(B) \setminus \{u_B'\}$.
  Note that if $B$ and $B'$ share a cut vertex, then $u_B=u_B'$; otherwise, $Q_T(u_B,B)$ and $Q_T(u_B',B')$ are vertex disjoint.

  By \cref{surprise claim}, there is a block $A$ of $Q_T(u_B,B)$ such that there is a set $Z$ of at least $k-2$ edges between $r$ and $V(A) \setminus \{u_B\}$, and there is a block $A'$ of $Q_T(u_B',B')$ such that there is a set $Z'$ of at least $k-2$ edges between $r$ and $V(A') \setminus \{u_B'\}$.
  Since $Q_T(u_B,B)$ and $Q_T(u_B,B')$ are vertex disjoint except for potentially $u_B$ and $u_B'$, $Z$ and $Z'$ are disjoint, so there are at least $2(k-2)$ edges between $r$ and $V(T)$.
  Since $d_Q(r) \le k$, we have $k \le 4$.
  Moreover, if $k=4$, then $N_Q(r) \subseteq V(T)$, which contradicts \cref{n is not in t}.

  It remains to examine the case when $k=3$.
  By \cref{n is not in t}, $N_Q(r) \not\subseteq V(T)$, so there are at most two edges between $r$ and $V(T)$.
  Therefore, there is exactly one edge between $r$ and $V(A) \setminus \{u_B\}$ and exactly one edge between $r$ and $V(A') \setminus \{u_B'\}$.
  By \cref{surprise claim}, $A$ and $A'$ are light.
  Let $z \in V(A) \cap N_Q(r)$.
  If $z$ is an internal vertex of $T$, then $A$ is a $K_2$-block by \cref{non cut vertex claim}.
  Otherwise, $z$ has at least one neighbour in $V(T) \setminus V(A)$, so, as $d_Q(z) = 3$, we have $d_A(z) =1$, and again $A$ is a $K_2$-block.
  So $A$ and, similarly, $A'$ are light $K_2$-blocks.
  In particular, $A$ and $A'$ are not terminal.
  There is a unique path in the block-cut graph of $G(T)$ between $A$ and $A'$.
  Let $T_B$ be the induced subgraph of $T$ consisting of all vertices in the blocks of this path; then $T_B$ consists of a path of blocks, each of which is also a block in $T$, and where $A$ and $A'$ are the leaf blocks of $T_B$.
  Let $V(A) = \{v_A,u_A\}$ and $V(A') = \{v'_A,u'_A\}$ where $u_A$ and $u_A'$ are cut vertices of $T_B$.
  Observe that the only vertex in $Q_T(u_A,A)$ that neighbours $r$ in $Q$ belongs to $V(A)$.
  By \cref{surprise claim}, as $A$ is not terminal, $v_A \in N_Q(r)$.
  Similarly, $v'_A \in N_Q(r)$.

  \begin{claim}
    \label{endgameclaim}
    Each block of $T_B$ is a $K_2$-block that is either light or normal.
    Moreover, each cut vertex of $T_B$ is in two blocks: one light, and one normal.
  \end{claim}
  \begin{subproof}
    Let the blocks of $T_B$ be $B_1,B_2,\dotsc,B_m$, where $B_1=A$, $B_m=A'$, and $B_j$ and $B_{j-1}$ share a cut vertex $u_j$ for each $j \in \{2,\dotsc,m\}$.
    We first show that each $B_j$, for $j \in \{1,2,\dotsc,m\}$, is a light or normal $K_2$-block.
    Towards a contradiction, suppose there exists some such $j$ such that $B_j$ is not a light or normal $K_2$-block; choose $i$ to be the smallest such $j$ (see \cref{endgameclaimfig}).
    Since $A$ is light, $i \ge 2$.
    So $B_i$ has a cut vertex $u_i$ that separates $V(B_i) \setminus \{u_i\}$ from $V(A) \setminus \{u_i\}$, and $u_i$ belongs to one other block, $B_{i-1}$, which is a light or normal $K_2$-block.

    \begin{figure}[b]
      \centering
      \includegraphics[width=10.5cm]{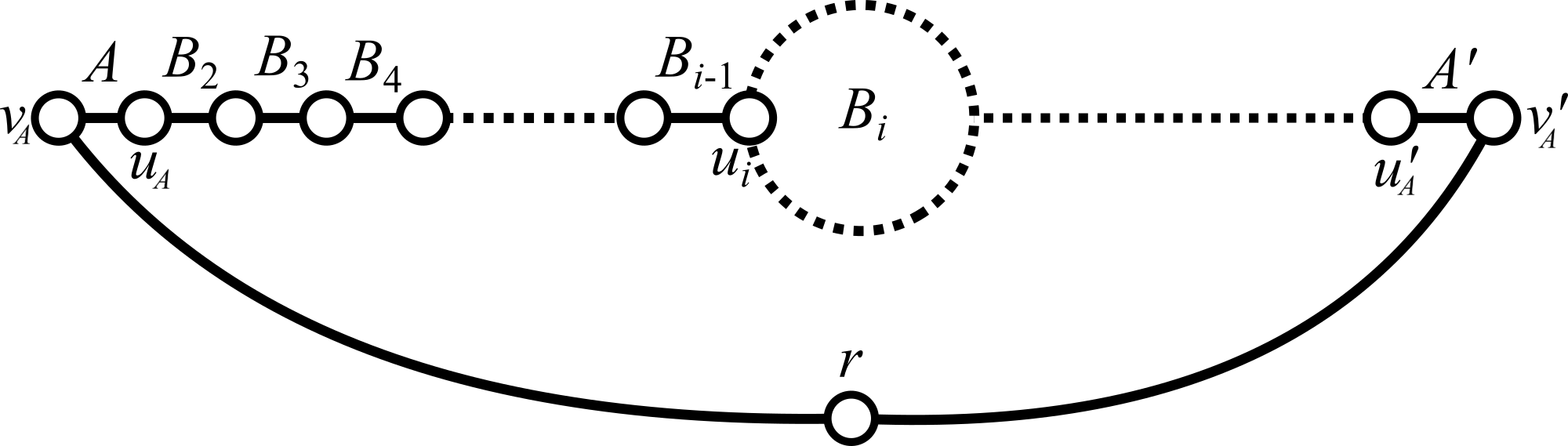}
      \caption{The graph induced by $V(T_B) \cup \{r\}$, with the blocks $B_1,\dotsc,B_m$ of $T_B$ illustrated. The block $B_i$ is the first block that is not a light or normal $K_2$-block.}
      \label{endgameclaimfig}
    \end{figure}

    Recall that $u_i \notin N_Q(r)$.
    So either $u_i \in N_Q(h)$ and $d_T(u_i)=2$, or $u_i \notin N_Q(h)$ and $d_T(u_i)=3$.
    Suppose we are in the first case.
    Since $B_{i-1}$ is a $K_2$-block, $B_i$ is also a $K_2$-block.
    By \ref{G2}, if $B_{i-1}$ is light then $B_i$ is normal, and if $B_{i-1}$ is normal then $B_i$ is light.
    So $B_i$ is a $K_2$-block that is light or normal, a contradiction.
    So we may assume $u_i \notin N_Q(h)$ and $d_T(u_i)=3$.
    To simplify notation, let $d_i=d_{B_i}(u_i)$ and $d_{i-1}=d_{B_{i-1}}(u_{i-1})$.
    Then $d_{i-1}=1$ and, as $u_i \notin N_Q(h)$, we have $d_i \in \{1,2\}$.

    Suppose $d_i=1$.
    Then, since $d_T(u_i)=3$ and $d_{i-1}=1$, there is some third block $B_i'$ that contains $u_i$, and which is also a $K_2$-block.
    By \ref{G2}, either $B_i$, $B_{i-1}$, and $B_i'$ are all normal; or one is light, one is heavy, and one is normal.
    By assumption, $B_i$ is heavy, so $B_i'$ is either light or normal, and so is not terminal.
    Therefore, by \cref{surprise claim}, there is a block $A_i$ of $Q_T(u_i,B_i')$ such that there is an edge between $r$ and a vertex of $V(A_i) \setminus \{u_i\}$.
    But then $r$ has a neighbour in $T$ distinct from $v_A$ and $v_A'$, a contradiction.

    Now we may assume that $d_i=2$.
    Then the only blocks of $T$ that contain $u_i$ are $B_i$ and $B_{i-1}$.
    By \ref{G2}, if $B_{i-1}$ is normal then $B_i$ is normal, and if $B_{i-1}$ is light then $B_i$ is heavy.
    In any case, $B_i$ is not light.
    Since $d_i=2$, we have $|V(B_i)| \ge 3$.
    As $B_i \neq A'$, we choose $x \in V(B_i) \setminus \{u_i,u_{i+1}\}$, and have $x \notin N_Q(r)$.
    If $x \in N_Q(h)$, then $d_T(x)=2=d_i=d_{B_i}(x)$, implying $x$ is an internal vertex of $T$, so $B_i$ is light by \cref{non cut vertex claim}, a contradiction.
    Therefore $x \notin N_Q(r) \cup N_Q(h)$ and $d_T(x)=3$.
    Since $d_{B_i}(x)=2$, this means there is another block $B_x$ containing $x$, and $B_x$ is a $K_2$-block.
    Then the only blocks containing $x$ are $B_i$ and $B_x$, where $B_i$ is not light, so, by \ref{G2}, $B_x$ is not heavy.
    Thus $B_x$ is not terminal, and so, by \cref{surprise claim}, $Q_T(x,B_x)$ contains a vertex in $N_Q(r)$, distinct from $v_A$ and $v_A'$, a contradiction.
    Hence all blocks of $T_B$ are light or normal $K_2$-blocks, as required.

    Now, by \cref{endgameclaim}, $T_B$ is a path with leaves $v_A$ and $v_A'$.
    Let $x \in V(T_B) \setminus \{v_A,v_A'\}$.
    By the foregoing, $x$ belongs to two $K_2$-blocks in $T_B$ that are light or normal.
    It remains to show that $x$ belongs to one light block and one normal block.
    We have $d_Q(x)=3$ and $d_{T_B}(x)=2$, and $x \notin N_Q(r)$.
    By \ref{G2}, if $x \in N_Q(h)$, then $x$ belongs to a light block and every other block containing $x$ is normal, as required.
    So assume $x \notin N_Q(h)$.
    Then $x$ belongs to another block $X$ of $T$ that is not in $T_B$, and $X$ is a $K_2$-block.
    Suppose $X$ is not heavy.
    Then $X$ is not terminal, and so, by \cref{surprise claim}, there is a block $A'$ of $Q_T(x,X)$ that has a vertex in $N_Q(r)$, distinct from $v_A$, and $v_A'$, a contradiction.
    So $X$ is heavy.
    But then, by \ref{G2}, one of the blocks of $T_B$ containing $x$ is light and the other normal, as required.
  \end{subproof}

  Now $T_B$ alternates between light $K_2$-blocks and normal $K_2$-blocks, starting and ending with light blocks.
  Hence $T_B$ has an odd number of blocks.
  Therefore $Q[V(T_B) \cup \{r\}]$ is an odd cycle, and hence $Q[V(T) \cup \{r\}]$ is a Gallai tree, with $|N_Q(r) \cap V(T)| = k-1$, contradicting \cref{no complete graphs}. 
\end{proof}

For graphs (that is, when we have no polarised edges), we obtain the following characterisation:

\begin{theorem}
  \label{krestcor}
  Let $G$ be a simple graph and let $k$ be an integer with $k \geq 3$.
  Let $h \in V(G)$ such that $d(h) > k$ but $d(u) \leq k$ for all $u \in V(G) \setminus \{h\}$.
  The graph $G$ is $k$-restricted if and only if there is some $r \in V(G)$ such that $G-r$ contains a Gallai tree component $H$ such that $d_G(v)=k$ for all $v \in V(H)$.
\end{theorem}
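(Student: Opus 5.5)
The forward direction ("only if") is immediate from \cref{k restricted theorem}. View $G$ as the polar graph $(G,\emptyset)$. If $G$ is $k$-restricted, that theorem supplies $w \in V(G)$ such that $G-w$ has a Gallai tree component $H$ with $d_G(v)=k$ for all $v \in V(H)$. Taking $r=w$ finishes this direction.

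The converse needs a construction. Suppose $r \in V(G)$ and $H$ is a Gallai tree component of $G-r$ with $d_G(v)=k$ for every $v \in V(H)$. We will build a $k$-list assignment $L$ and a colour $c \in L(r)$ such that the $(r,c)$-colour deletion is not colourable. It suffices to make $H$ alone uncolourable after deleting $c$ from $r$. Since $H$ is a component of $G-r$, its vertices are adjacent only to vertices of $H$ and to $r$; in particular $h \notin V(H)$, because $d(h)>k$. Each $v \in V(H)$ satisfies $d_H(v)=k-e_G(r,v)$, and $e_G(r,v)\in\{0,1\}$ because $G$ is simple.

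First fix a bad degree-list assignment $L_H$ for $H$, built blockwise from uniform $d(B)$-lists that are pairwise non-conflicting. We may choose all colours of $L_H$ to differ from a reserved colour $c$, so that $|L_H(v)|=d_H(v)$. Then set
\[
L(v)=L_H(v)\cup\{c\} \text{ for } v\in N_G(r)\cap V(H), \qquad L(v)=L_H(v) \text{ for the other } v\in V(H).
\]
Every vertex of $H$ now has a list of size exactly $k$, since $e_G(r,v)=1$ for the former vertices and $0$ for the latter. Give $r$ any $k$-set containing $c$, and give all vertices outside $V(H)\cup\{r\}$ arbitrary $k$-lists. After the $(r,c)$-colour deletion, each neighbour of $r$ in $H$ loses exactly $c$, so the list on $H$ becomes exactly $L_H$. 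By \cref{baddegreeassign}, $H$ is then not colourable. Hence the whole colour deletion is not colourable, and $r$ is $k$-restricted on $c$.

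The main point needing care is the simplicity hypothesis. It guarantees $e_G(r,v)\le 1$, so the single colour $c$ accounts for the entire deficit $k-d_H(v)$. With parallel edges one colour would not suffice, and this is presumably why the statement is for simple graphs. One should also check that $H$ is nonempty and that reserving $c$ outside all lists of $L_H$ is always possible. Both are trivial because we may choose colours freely. The hypotheses on $h$ are used only to invoke \cref{k restricted theorem} in the forward direction.
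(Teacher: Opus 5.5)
Your proof is correct and follows the paper's argument: the forward direction comes from \cref{k restricted theorem} applied to $(G,\emptyset)$. The converse takes a bad degree-list assignment on $H$, adds a fresh colour $c$ to the lists of $r$'s neighbours in $H$ (exactly one colour is missing there, since $G$ is simple), puts $c$ in $L(r)$, and applies \cref{baddegreeassign} after the $(r,c)$-colour deletion; your observation that the hypotheses on $h$ are needed only for the forward direction is also accurate.
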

\begin{proof}
  One direction holds by \cref{k restricted theorem}.
  For the other direction, let $V_k=\{v \in V(G) : d(v)=k\}$, and assume that there is some $r \in V(G)$ such that $G-r$ contains a Gallai tree component $H$ with $V(H) \subseteq V_k$.
  Then each $v \in V(H)$ has $k$ neighbours in $G$, where at most one, $r$, is not in $V(H)$.
  So each $v \in V(H)$ has $d_H(v) \in \{k-1,k\}$.
  Let $L$ be a bad degree-list assignment for $H$, and note that $|L(v)|=k$ if $v \notin N_G(r)$ and $|L(v)|=k-1$ if $v \in N_G(r)$.
  Let $c \in \mathbb{N} \setminus \bigcup_{v \in N_G(r)} L(v)$, and let $L'$ be a $k$-list assignment for $G$ obtained by letting $L'(v)=L(v) \cup \{c\}$ if $v \in N_G(r) \cap V(H)$, letting $L'(v)=L(v)$ if $v \in V(H) \setminus N_G(r)$, and letting $L'(v)$ be an arbitrary list of size $k$, but with $c \in L'(r)$.
  Then, by construction, the $(r,c)$-colour deletion from $(G,L')$ results in a graph with a Gallai tree component having a bad degree-list assignment, and is therefore not colourable.
  Therefore $r$ is $L'$-restricted on $c$, so $G$ has a $k$-restricted vertex, as required.
\end{proof}

We are primarily interesting in the case where the polar graph is $2$-connected, in which case we obtain a slightly stronger outcome.

\begin{corollary}
    \label{krtcorollary}
    Let $Q$ be a $2$-connected polar graph and let $k$ be an integer with $k \geq 3$.
    Let $h \in V(Q)$ such that $d(h) > k$ but $d(u)\leq k$ for all $u \in V(Q) \setminus \{h\}$.
    If $Q$ is $k$-restricted, then $G(Q-h)$ is a Gallai tree and $d_Q(v)=k$ for all $v \in V(Q) \setminus \{h\}$.
\end{corollary}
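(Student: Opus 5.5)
We apply \cref{k restricted theorem} and then use $2$-connectivity to force the Gallai tree component it produces to be everything except $h$. By \cref{k restricted theorem}, there is some $w \in V(Q)$ such that $Q-w$ has a Gallai tree component $Q'$ with $d_Q(v)=k$ for all $v \in V(Q')$. Since $d_Q(h) > k$, we have $h \notin V(Q')$.

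First we show $w = h$. Suppose instead $w \neq h$. Then $h$ is a vertex of $Q-w$ outside $Q'$. Also $Q'$ is non-empty, as Gallai trees are non-empty by definition. Hence $Q-w$ is disconnected, so $w$ is a cut vertex of $Q$. This contradicts $2$-connectivity, which requires $|V(Q)| \ge 3$ and no cut vertices. So $w=h$.

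Next, $Q-h$ is connected, again by $2$-connectivity. So its component $Q'$ is all of $Q-h$. Therefore $G(Q-h)=G(Q')$ is a Gallai tree, and $d_Q(v)=k$ for every $v \in V(Q)\setminus\{h\} = V(Q')$.

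The only point needing care is the first step. The argument there uses only that $Q'$ is non-empty and misses $h$. If one worries that $Q'$ might be empty, note that a Gallai tree is non-empty by definition.
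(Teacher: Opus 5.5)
Your proof is correct and follows essentially the same route as the paper: apply \cref{k restricted theorem}, then use $2$-connectivity to conclude that the Gallai tree component is all of $Q-w$ and that $w=h$. The only difference is order: the paper first gets $H=G(Q-w)$ from connectivity and then deduces $h=w$ from $d(h)>k$, whereas you establish $w=h$ first.
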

\begin{proof}
    By \cref{k restricted theorem}, there is some $w \in V(Q)$ such that $G(Q-w)$ contains a Gallai tree component~$H$, where $d_Q(u)=k$ for all $u \in V(H)$.
    Since $Q$ is $2$-connected, $Q-w$ is connected, so $H=G(Q-w)$, and $h=w$.
    The result follows.
\end{proof}

\section{The class \texorpdfstring{$\mathcal{T}_k^-$}{Tk-}}

Let $k \ge 3$.
For a graph $H$ with maximum degree at most $k$ and minimum degree strictly less than $k$, recall that the \emph{almost $k$-regular extension} of $H$, denoted $H^+_k$, is the graph obtained from $H$ by adding a vertex $h$ and, for each vertex $v \in V(G)$, adding $k-d_H(v)$ edge(s) in parallel between $h$ and $v$.
%
We define $\mathcal{T}_k^-$ to be the class of graphs consisting of the almost $k$-regular extension of a Gallai tree, with maximum degree at most $k$ and minimum degree less than $k$, containing no $K_2$-blocks.
See \cref{t5graphs} for some examples of graphs in $\mathcal{T}_5^-$.

\begin{figure}
    \centering
    \includegraphics[scale=0.5]{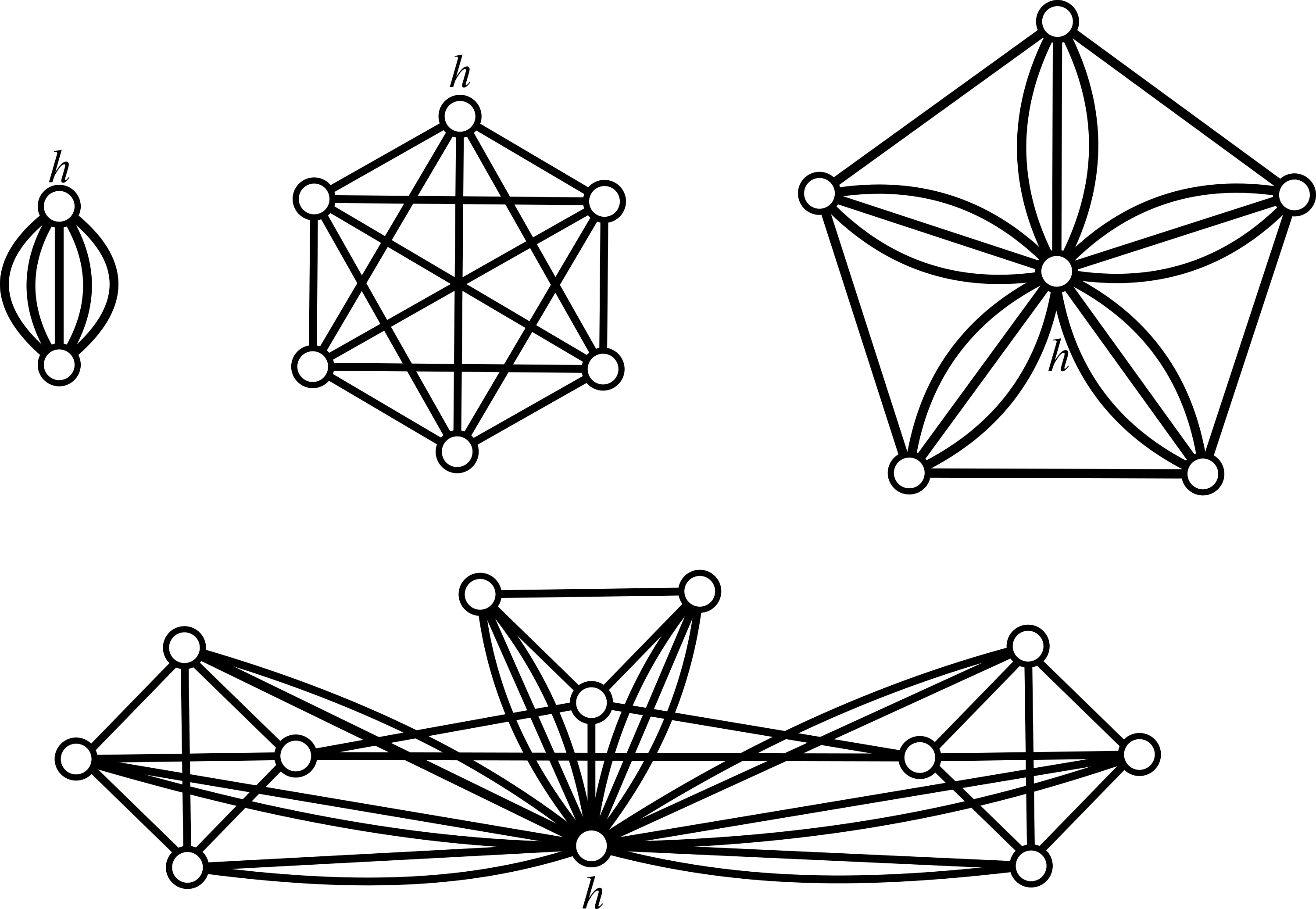}
    \caption{Some graphs in $\mathcal{T}_5^-$.}
    \label{t5graphs}
\end{figure}

In the next section, we close $\mathcal{T}_k^-$ under Haj\'os joins in order to obtain $\mathcal{T}_k$.
Note that this definition of $\mathcal{T}_k$ slightly differs from that given in the introduction, due to the exclusion of $K_2$-blocks when defining $\mathcal{T}_k^-$.
However, later we will see \cref{Hajos joins give k2}, which implies that if $G$ is the almost $k$-regular extension of a Gallai tree (even one with $K_2$-blocks), then $G \in \mathcal{T}_k$.
So the two definitions do in fact coincide.

In this section, we first prove some structural properties of graphs in $\mathcal{T}_k^-$.
We then consider polar odd wheels, which are a class of polar graphs whose underlying graphs are in $\mathcal{T}_3^-$.
We next characterise when a polar graph whose underlying graph is in $\mathcal{T}_k^-$ is $k$-restricted, and consider some properties of such polar graphs when there is a vertex that is $k$-restricted on more than one colour (see \cref{tkminusone}).
Finally, we introduce the notion of $k$-joinable polar graphs, and show that a polar graph whose underlying graph is in $\mathcal{T}_k^-$ is $k$-joinable.

\subsection{Structural properties}

Suppose $G \in \mathcal{T}_k^-$, for $k \ge 3$.
We say that $h \in V(G)$ is a \emph{hub} of $G$ if $G$ is isomorphic to the almost $k$-regular extension of $G-h$, where $G-h$ is a Gallai tree with maximum degree at most $k$ and minimum degree less than $k$.
Clearly $G$ has at least one hub, but it is possible that it has more than one.
We say that $G$ is \emph{$k$-symmetric} if $G \cong K_{k+1}$ or $G \cong I_k$, where $I_k$ is the graph consisting of two vertices with $k$ edges between them.
When $G$ is $k$-symmetric, then $G$ is the almost $k$-regular extension of $G-v$ for any $v \in V(G)$; that is, every vertex of $G$ is a hub.
The next lemma asserts that when $G$ is not $k$-symmetric, it has just one hub.
Moreover, we prove a further useful property when $G$ is not $k$-symmetric.

\begin{lemma}
    \label{no parallel edges lemma}
    Let $G$ be a graph in $\mathcal{T}_k^-$, for $k \ge 3$, that is not $k$-symmetric.
    \begin{enumerate}
        \item $G$ has a unique hub $h \in V(G)$.
        \item Moreover, if $G$ is not an odd wheel, then $e_G(u,h) \ge 2$ for every internal vertex $u$ of $G-h$.
    \end{enumerate}
\end{lemma}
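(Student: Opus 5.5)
The proof rests on one degree observation, which I will use first for part (ii) and then again for part (i). Write $G = H^+_k$ with hub $h$, where $H = G-h$ is a Gallai tree with maximum degree at most $k$, minimum degree less than $k$, and no $K_2$-blocks. Every vertex $v \in V(H)$ has $d_G(v) = k$ and $e_G(v,h) = k - d_H(v)$. If $H$ has a single vertex then $G \cong I_k$, which is $k$-symmetric. So I will assume $|V(H)| \ge 2$. Since $H$ has no $K_2$-blocks, every block of $H$ is a complete graph on at least three vertices or an odd cycle.

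\emph{Key observation.} Let $u$ be an internal vertex of $H$, lying in the block $B$, so $e_G(u,h) = k - d(B)$.
\begin{enumerate}
\item If $d(B) = k$, then $B \cong K_{k+1}$, because $k \ge 3$ rules out a cycle. A cut vertex of $B$ would have degree greater than $k$, so $B = H$. Then $H$ is $k$-regular, contradicting that its minimum degree is less than $k$.
\item If $d(B) = k-1$, then $B$ has no cut vertex. Otherwise that cut vertex would lie in another block of degree at least $2$ (there are no $K_2$-blocks), giving it degree at least $k+1$. So $H = B$, and $B$ is either $K_k$ or, when $k=3$, an odd cycle. In the first case $G \cong K_{k+1}$, and in the second $G$ is an odd wheel with hub $h$.
\end{enumerate}
Hence, if $G$ is neither $k$-symmetric nor an odd wheel, then $d(B) \le k-2$, that is, $e_G(u,h) \ge 2$. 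This is exactly (ii), once (i) guarantees that ``the'' hub is well defined.

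\emph{Part (i).} Suppose $h' \neq h$ is also a hub. In an almost $k$-regular extension every non-hub vertex has degree exactly $k$, and each vertex of $G$ is a non-hub for at least one of $h$ and $h'$. So $G$ is $k$-regular. Moreover, both $G-h$ and $G-h'$ are simple, because Gallai trees are simple. Therefore every parallel class of size at least $2$ lies between $h$ and $h'$, and $e_G(u,h) \le 1$ for every $u \in V(H) \setminus \{h'\}$.

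Next I need an internal vertex $u \neq h'$ of $H$. Take a leaf block of $H$, or $H$ itself if it is a single block. It has at least three vertices and at most one cut vertex, so it has at least two internal vertices, and one of them is not $h'$. For this $u$ we have $e_G(u,h) \le 1$. By the key observation, $e_G(u,h) = 0$ is impossible, and $e_G(u,h) = 1$ forces $G \cong K_{k+1}$ or $G$ to be an odd wheel. The first is excluded because $G$ is not $k$-symmetric. In the second case $k=3$; an odd wheel with hub $h$ is $K_4$ (excluded, being $3$-symmetric) or has $d_G(h) \ge 5 > 3$, contradicting $k$-regularity. So the hub is unique.

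The main care needed is in the boundary cases: the single-vertex $H$ (giving $I_k$), keeping $K_{k+1}$ versus odd wheels straight when $k=3$, and ensuring that an internal vertex other than $h'$ exists. The last of these uses the absence of $K_2$-blocks.
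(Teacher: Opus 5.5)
Your proof is correct and essentially the paper's: two distinct hubs force $k$-regularity and, since Gallai trees are simple, confine every parallel class of size at least $2$ to the pair $\{h,h'\}$. The degree analysis of the block containing an internal vertex of $G-h$, which the paper splits into one-block and several-block cases, then gives $e_G(u,h)\ge 2$ unless $G$ is $K_{k+1}$ or an odd wheel; you rightly take $h$ to be the hub whose deletion leaves a Gallai tree without $K_2$-blocks, which that analysis genuinely needs.
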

\begin{proof}
    Let $h$ be a hub of $G$, so $(G-h)_k^+ \cong G$.
    Then every vertex of $G$ except perhaps $h$ has degree $k$ in $G$.
    So clearly (i) holds if $G$ is not $k$-regular.
    In particular, this is the case if 
    $G$ is an odd wheel that is not a complete graph, so henceforth we assume that $G$ is not an odd wheel.
    Furthermore, if $|V(G)|=2$, then $G \cong I_k$, a contradiction.
    So $|V(G)| \ge 3$.

    Let $T=G-h$.  Then $T$ is a Gallai tree with $|V(T)| \ge 2$.
    It follows that $T$ has at least two internal vertices.
    Note that, since a Gallai tree is simple, any parallel edges of $G$ are incident with $h$.  Thus (i) holds if $G$ has distinct parallel classes.
    It now suffices to show that (ii) holds, as this implies that $G$ has distinct parallel classes, which in turn implies that (i) holds.
    
    Suppose $T$ consists of a single block.
    Then every vertex in $T$ is an internal vertex, and has the same degree $d$.
    Note that $d < k$, since the minimum degree of $T$ is less than $k$.
    If $d=k-1$, then $T$ is an odd cycle when $k=3$, and $T \cong K_k$ when $k > 3$.
    In the former case $G$ is an odd wheel, and in the latter case $G \cong K_{k+1}$; either case is contradictory.
    So $d \le k-2$, in which case there are at least two edges between $h$ and each vertex in $T$, so (ii) holds. 

    We may now assume that $T$ has more than one block.
    Then each block of $T$ contains at least one cut vertex of $T$.
    Let $u$ be an internal vertex of a block $B$ of $T$, and let $x$ be a cut vertex in $B$.
    Then $d_T(u) = d_B(u) = d_B(x) < d_T(x) \le k$, 
    so $d_T(u) \le k-1$.
    If $d_T(u) = k-1$, then $d_B(x) = k-1$ and $d_T(x) = k$, so $x$ belongs to a $K_2$-block, a contradiction.
    So $d_T(u) \le k-2$.
    Then there are at least two parallel edges between $u$ and $h$ in $G$, so (ii) holds.
\end{proof}

\begin{lemma}
    \label{tkcliques}
    Let $G \in \mathcal{T}_k^-$, for $k \ge 3$, and let $U \subseteq V(G)$ such that $G[U] \cong K_k$. Then $G \cong K_{k+1}$.
\end{lemma}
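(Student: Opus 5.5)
We first dispose of the $k$-symmetric case: if $G \cong K_{k+1}$ there is nothing to prove, and $G \cong I_k$ has only two vertices, so it cannot contain a copy of $K_k$ (as $k \ge 3$). So assume $G$ is not $k$-symmetric, and let $h$ be its unique hub, given by \cref{no parallel edges lemma}(i). Write $T = G-h$; this is a Gallai tree with no $K_2$-blocks, maximum degree at most $k$ and minimum degree less than $k$. Since $G[U] \cong K_k$ is simple, we will compare where $U$ sits relative to $h$ and show that each possibility yields a contradiction or forces $G \cong K_{k+1}$.

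\textbf{Case 1: $h \notin U$.} Then $U \subseteq V(T)$ is a clique of size $k \ge 3$ in $T$. A clique of size at least $3$ in a graph is $2$-connected, so it lies within a single block $B$ of $T$. Since $B$ is a complete graph or an odd cycle, and odd cycles of length at least $5$ contain no triangle, $B$ is complete with $|V(B)| \ge k$. Then $d_B(v) \ge k-1$ for every $v \in V(B)$.
\begin{itemize}
\item If $|V(B)| = k+1$, then every vertex of $B$ has degree $k$ in $B$, so $B$ is a component of $T$. Since $T$ is connected, $T = B \cong K_{k+1}$, which contradicts the requirement that $T$ has minimum degree less than $k$.
\item Otherwise $B \cong K_k$, so every vertex of $B$ has $d_B = k-1$. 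If $T$ has another block, some vertex of $B$ is a cut vertex, and it lies in a second block, which has degree at least $2$ there because $T$ has no $K_2$-blocks. That gives degree at least $k+1$, a contradiction. Hence $T = B \cong K_k$, and $h$ is joined by exactly one edge to each vertex, so $G \cong K_{k+1}$, which is again excluded.
\end{itemize}

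\textbf{Case 2: $h \in U$.} Then $U \setminus \{h\}$ is a clique of size $k-1$ in $T$, all of whose vertices are adjacent to $h$.
\begin{itemize}
\item If $k \ge 4$, the same block argument places $U\setminus\{h\}$ in a complete block $B$ of $T$ with $|V(B)| \ge k-1$. Each $v \in U\setminus\{h\}$ has $d_T(v) \ge k-2$ and $d_T(v) \le k-1$, since $v$ has a neighbour $h$.
 \begin{itemize}
 \item If $T$ has a single block, then $T \cong K_{k-1}$ or $K_k$. For $T \cong K_{k-1}$, each vertex has two parallel edges to $h$; this is allowed, so we must check it against the hypothesis. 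It is fine, because $G[U]$ only uses the induced simple structure, but the statement presumably means $G[U]$ has no parallel edges, and $e_G(u,h) \ge 2$ then breaks $G[U]\cong K_k$. For $T \cong K_k$ we get $G \cong K_{k+1}$.
 \item If $T$ has several blocks, \cref{no parallel edges lemma}(ii) gives $e_G(u,h) \ge 2$ for internal vertices $u$, so parallel edges again break $G[U]\cong K_k$. The remaining option is that the vertices of $U\setminus\{h\}$ are all cut vertices, each with $d_B \ge k-2$ and another block contributing at least $2$. That totals at least $k$ plus the edge to $h$, giving degree $k+1$, a contradiction.
 \end{itemize}
\item If $k = 3$, $U\setminus\{h\}$ is an edge $uv$ of $T$ lying in a block that is a triangle or an odd cycle. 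A similar count, using that the endpoints have degree at most $2$ in $T$ and so cannot be cut vertices, forces $T$ to be a single odd cycle with single edges to $h$. Then $G$ is an odd wheel, and $G[U]$ is a triangle; so the claim needs $G \cong K_4$, i.e.\ the cycle is a triangle.
\end{itemize}

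\textbf{Main obstacle.} The $k=3$ odd-wheel situation is where I expect the difficulty. Longer odd wheels contain triangles through the hub, so either the statement implicitly excludes this case, or $G[U]$ must be read as forbidding it. I would resolve this first by rechecking the definitions, since it determines how the lemma is applied. The second delicate point is the careful handling of parallel edges, which is what rules out the cases in the second bullet of Case 2.
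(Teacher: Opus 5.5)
Your suspicion about $k=3$ is right, and rereading the definitions will not remove it: the lemma as stated is false for $k=3$, so that case cannot be closed. Take $G=W_5$, the $5$-cycle $u_1u_2u_3u_4u_5$ plus a hub $h$ joined by a single edge to each $u_i$. It is the almost $3$-regular extension of $C_5$, which is a Gallai tree with maximum degree $2$ and no $K_2$-blocks. So $G\in\mathcal{T}_3^-$, and the paper itself notes that odd wheels lie in $\mathcal{T}_3^-$. For $U=\{h,u_1,u_2\}$, $G[U]$ is a simple triangle, yet $G\not\cong K_4$.

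The paper's proof breaks at exactly this point. For $h\in U$ it asserts that the block of $G-h$ containing $u$ and its unique outside neighbour $v_u$ has a vertex $w_u$ adjacent to both. That holds for a complete block, but fails for an odd cycle of length at least $5$, as in $W_5$; for $k\ge 4$ a degree count forces that block to be the complete block containing the clique, so the failure is confined to $k=3$.

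So you should state the counterexample and the repair rather than hedge. The lemma is true for $k\ge 4$, and for every $k\ge 3$ under the extra hypothesis $d_G(u)=k$ for all $u\in U$. That hypothesis is what the only application, in \cref{the hajos decomp lemma}, actually supplies, since each vertex of the added clique has degree $k$ in $G_Y$. For $k=3$ it rules out $h\in U$, because the hub of an odd wheel other than $K_4$ has degree at least $5$; your Case 1 then finishes.

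For $k\ge 4$ your proof is correct, and your Case 1 is the paper's first case. Your Case 2 takes a different route.
\begin{itemize}
\item The paper uses $e_G(u,h)=1$ to get $d_{G-h}(u)=k-1$, takes the outside neighbour $v_u$, shows that $(U\setminus\{h\})\cup\{v_u\}$ induces a $K_k$ avoiding $h$, and reduces to Case 1.
\item You exclude internal vertices of $G-h$ using \cref{no parallel edges lemma}(ii), and exclude cut vertices by a degree count.
\end{itemize}
Both work. Yours needs the remark that (ii) applies, namely that no odd wheel lies in $\mathcal{T}_k^-$ for $k\ge 4$, since rim vertices would have degree $3\ne k$.

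Two small clean-ups. First, replace the \emph{presumably} discussion with the direct observation that $G[U]$ contains every edge with both ends in $U$. Hence $G[U]\cong K_k$ forces $e_G(u,h)=1$ for all $u\in U\setminus\{h\}$, which disposes of $T\cong K_{k-1}$ at once. Second, for $k=3$ make your \emph{similar count} explicit. Each non-$K_2$ block contributes at least $2$ to the degree of each of its vertices, and $T$ has maximum degree at most $3$. So $T$ has no cut vertices and is a single odd cycle.
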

\begin{proof}
    Since $G \in \mathcal{T}_k^-$, we have that $G \cong H_k^+$ where $H$ is a Gallai tree with no $K_2$-blocks, having maximum degree $k$ and minimum degree less than $k$.
    Let $h$ be a hub of $G$.

    We first prove the lemma for the case where $h \notin U$.
    So assume $h \notin U$.
    Then $U \subseteq V(H)$, and so 
    $U$ is contained in a block $B$ of $H$.
    Suppose $U$ is properly contained in $V(B)$. Then there exists $v \in V(B) \setminus U$ such that $v$ has a neighbour $u$ in $U$, and therefore $u$ has degree at least $k$ in $H$. But, as $H$ is a Gallai tree with maximum degree at most $k$, this implies that $H$ consists of a single block where each vertex has degree~$k$, so has minimum degree~$k$, a contradiction.
    We deduce that $U = V(B)$.
    If $B$ is the only block of $H$, then $G \cong K_{k+1}$, as required.
    So we may assume that $B$ contains a cut vertex that is in some block $B'$ distinct from $B$.
    Since each vertex in $U$ has degree $k-1$ in $B$, and $H$ has maximum degree $k$, we have that $B'$ is a $K_2$-block, a contradiction.
    So the lemma holds when $h \notin U$.

    Now assume that $h \in U$.
    Note that $G[U \setminus \{h\}] \cong K_{k-1}$, where there is precisely one edge between $h$ and $u$ for each $u \in U \setminus \{h\}$.
    This implies that each $u \in U \setminus \{h\}$ has degree $k-1$ in $H$, so has precisely one neighbour $v_u$ in $V(H) \setminus U$.
    Since $H$ has no $K_2$-blocks, the block containing $u$ and $v_u$ contains some vertex $w_u$ that neighbours both $u$ and $v_u$.
    Since $v_u$ is the unique neighbour of $u$ that is in $V(H) \setminus U$, we have $w_u \in U$.
    Therefore $v_u$ is in the same block of $H$ as $u$ and $w_u$, that is, the block containing $U \setminus \{h\}$.
    It now follows that the vertices $U'=U \setminus \{h\} \cup \{v_u\}$ form a block of $H$, with $G[U'] \cong K_{k}$, and $h \notin U'$.  By the previous paragraph, $G \cong K_{k+1}$, as required.
\end{proof}


\begin{lemma}
\label{k edge con lemma}
Let $G$ be a graph in $\mathcal{T}_k^-$, for $k \ge 3$. Then $G$ is $k$-edge-connected.
\end{lemma}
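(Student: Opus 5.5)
To prove that $G$ is $k$-edge-connected, I will show that every edge cut $\delta(X)$ with $\emptyset \neq X \subsetneq V(G)$ has at least $k$ edges. Let $h$ be a hub, so $G \cong H^+_k$ with $H = G-h$ a Gallai tree with no $K_2$-blocks. By replacing $X$ by its complement if necessary, I may assume $h \notin X$, so that $X \subseteq V(H)$. Every vertex of $H$ has degree exactly $k$ in $G$, which gives
\[
|\delta_G(X)| \;=\; \sum_{v\in X}\bigl(k-d_H(v)\bigr) + |\delta_H(X)|,
\]
since edges from $X$ to $h$ contribute $k-d_H(v)$ each and all other boundary edges lie in $H$. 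If $X=\{v\}$, this is just $d_G(v)=k$. So the goal is to show that the right-hand side is at least $k$ for an arbitrary nonempty $X \subseteq V(H)$.

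The key step is a lemma about a single block. Let $B$ be a block of $H$ (a complete graph $K_{d+1}$ or an odd cycle, with $d=d_B$), and let $Y=X\cap V(B)$ be a nonempty proper subset of $V(B)$. Then $|\delta_B(Y)| \ge d$ and in fact $|\delta_B(Y)| \ge 2$. For a complete graph, $|Y|(d+1-|Y|) \ge d$. For a cycle, any nonempty proper subset cuts at least $2=d$ edges.

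For the whole count I will argue by induction on the number of blocks of $H$. First choose a vertex $v\in X$. Then $k-d_H(v)+d_H(v)=k$ splits as the $h$-edges at $v$ plus $\sum_{B\ni v} d_B(v)$. For each block $B$ containing $v$, I will compare:

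\begin{enumerate}
\item If $V(B)\not\subseteq X$, then $\delta_B(X\cap V(B))$ already contributes at least $d_B = d_B(v)$ edges.
\item If $V(B)\subseteq X$, then I will instead follow the block tree outward from $B$ through its other vertices, and obtain enough boundary edges from the part of $H$ beyond $B$.
\end{enumerate}

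A cleaner route for the second case is to charge each $w \in X$ with its $h$-edges, and to charge each block $B$ that meets both $X$ and its complement with at least $d_B$ edges. I then need to show that these charges total at least $k$. I will do this by picking a block $B$ that meets $X$ and is extremal in the block-cut tree among blocks whose vertices all lie in $X$, and then applying the per-block lemma along with the fact that leaf blocks have internal vertices. Internal vertices of a leaf block send $k-d_B\ge 1$ edges to $h$; the absence of $K_2$-blocks gives $d_B\ge 2$, so there are at least 2 such internal vertices.

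The main obstacle is making this charging airtight when $X$ contains several whole blocks. I expect to handle it by induction: contract a leaf block $B\subseteq X$ at its cut vertex $x$. Using $|V(B)|\ge 3$, the $h$-edges from $V(B)\setminus\{x\}$ total at least $d_B(x)$, since each internal vertex $u$ of $B$ has $k-d_B(u) \ge k-d_B\ge \ldots$. This is the delicate inequality, and it may require using $d_H(x)\le k$ to get $k-d_B \ge d_B(x)$-type bounds, or simply counting $|V(B)\setminus\{x\}|(k-d_B)\ge d_B$. Given that inequality, deleting $B\setminus\{x\}$ and adding $d_B(x)$ parallel $h$-edges at $x$ gives a smaller member of the same form (possibly now with $K_2$-blocks elsewhere, which needs checking), with cut size not increased. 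This reduces to the case where no block lies entirely inside $X$, and that case follows directly from the per-block lemma.
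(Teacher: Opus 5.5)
Your framework is sound. For $h\notin X$, the identity $|\delta_G(X)|=\sum_{v\in X}(k-d_H(v))+|\delta_H(X)|$ holds, the per-block bound $|\delta_B(Y)|\ge d_B$ holds, and if no block of $H$ lies inside $X$ then any $v\in X$ gives $|\delta_G(X)|\ge (k-d_H(v))+\sum_{B\ni v}d_B(v)=k$. The pruning inequality $(|V(B)|-1)(k-d_B)\ge d_B$ is also correct. Pruning a leaf block creates no new blocks, so no $K_2$-blocks appear and the cut vertex drops below degree $k$.

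The gap is the claim that pruning leaf blocks contained in $X$ ``reduces to the case where no block lies entirely inside $X$''. Pruning stops once no \emph{leaf} block lies in $X$, but a non-leaf block may still lie in $X$. For instance, take $k\ge 4$, let $H$ be a chain of three triangles $B_1,B_2,B_3$ (the two cut vertices have degree $4$), and let $X=V(B_2)$. No leaf block lies in $X$, so nothing can be pruned. Yet every $v\in X$ lies in $B_2\subseteq X$, and the local count at $v$ yields only $k-2$, because it ignores the edges leaving $X$ at the other vertices of $B_2$. The cut actually has $3k-6\ge k$ edges, but your argument does not reach that bound, and the ``extremal block'' charging sketch is too vague to cover this case.

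A repair within your approach is to prune an \emph{arbitrary} leaf block $B$, with cut vertex $x$ and $W=V(B)\setminus\{x\}$, whatever $X\cap V(B)$ is.
\begin{itemize}
\item If $X\subseteq W$, then directly $|\delta_G(X)|\ge (k-d_B)|X|+|\delta_B(X)|\ge (k-d_B)+d_B=k$.
\item Otherwise $X'=X\setminus W$ is nonempty. With $G'=(H-W)^+_k$, one has $|\delta_G(X)|-|\delta_{G'}(X')|=(k-d_B)|X\cap W|+|\delta_B(X\cap V(B))|-\epsilon$, where $\epsilon=d_B(x)$ if $x\in X$ and $\epsilon=0$ otherwise.
\item This difference is nonnegative. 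Use $|W|(k-d_B)\ge d_B$ when $V(B)\subseteq X$, and the per-block bound when $x\in X$ but $W\not\subseteq X$.
\end{itemize}
Induction then reduces to $H$ being a single vertex or a single block. There the only new case is $X=V(B)$, which follows from $(d_B+1)(k-d_B)\ge k$ for $d_B\le k-1$.

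This cut-counting proof would be a genuine alternative to the paper's. The paper instead splits $H$ at a cut vertex into two smaller members of $\mathcal{T}_k^-$ and glues their systems of $k$ edge-disjoint paths to $h$.
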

\begin{proof}
    Let $G = T_k^+$ for some Gallai tree $T$ with maximum degree at most $k$ and minimum degree less than $k$, and let $h$ be a hub of $T$.  
    It suffices to show that if $v \in V(G) \setminus \{h\}$, then there are $k$ edge-disjoint paths between $v$ and $h$ in $G$.
    Suppose this is not the case for some $v \in V(G) \setminus \{h\}$.
    We assume that, among all such counterexamples, $G$ has the minimum number of vertices.

    First assume that $T$ consists of only one block.
    Let $S \subseteq E(G)$ with $|S|<k$ such that $v$ and $h$ are in different components of $G\ba S$.
    Clearly, if $|V(T)|=1$, then $G \cong I_k$, so no such $S$ exists, a contradiction.
    Therefore we may assume that $|V(T)| \ge 2$.
    Since $T$ is either a complete graph or odd wheel, every vertex has the same degree $d_T(v)$.
    By definition, there are $k-d_T(v)$ edges between $v$ and $h$, and so $S$ contains these $k-d_T(v)$ edges.
    Then, since each vertex in $N_T(v)$ is also in $N_G(h)$, such an edge cut also contains at least one edge incident with each vertex in $N_T(v)$.
    As there are $d_T(v)$ such vertices, $|S| \ge k-d_T(v)+d_T(v)=k$, a contradiction.

    So we may assume that $T$ has more than one block, in which case $T$ has a cut vertex~$x$.
    Let $T_0$ and $T_1$ be Gallai trees such that $V(T_0) \cap V(T_1) = \{x\}$ and $V(T_0) \cup V(T_1) = V(T)$.
    Let $G_i$ be the almost $k$-regular extension of $T_i$ with hub $h$, for $i \in \{0,1\}$.
    Then, for each $i\in \{0,1\}$, we have $G_i \in \mathcal{T}_k^-$, with $V(G_i) \subset V(G)$.
    We also note that each edge in $G_0$ corresponds to an edge in $G$ except for an extra $d_{T_1}(x)$ edges between $x$ and $h$ in $G_0$.
    Similarly, each edge in $G_1$ corresponds to an edge in $G$ except for an extra $d_{T_0}(x)$ edges between $x$ and $h$ in $G_1$.
    Without loss of generality, assume that $v \in V(G_0)$.
    By the minimality of $G$, there exists a set $\mathcal{P}_0$ of $k$ edge-disjoint paths between $v$ and $h$ in $G_0$, at most $d_{T_1}(x)$ of which use edges that are not in $G$.
    Similarly, there exists a set $\mathcal{P}_1$ of $k$ edge-disjoint paths between $x$ and $h$ in $G_1$, at most $d_{T_0}(x)$ of which use edges that are not in $G$.
    Therefore, there are at least $k-d_{T_0}(x)$ paths in $\mathcal{P}_1$ that use edges in $G$.
    Note that $k-d_{T_0}(x)=k-(d_T(x)-d_{T_1}(x))=k-d_T(x)+d_{T_1}(x) \ge d_{T_1}(x)$. 
    By taking the union of the paths in $\mathcal{P}_0$ that use edges of $G$, of which there are at least $k-d_{T_1}(x)$, and the paths in $\mathcal{P}_1$ that use edges of $G$, of which there are at least $d_{T_1}(x)$, we deduce that there are $k$ edge-disjoint paths between $v$ and $h$ in $G$, as required.
\end{proof}

\subsection{Polar odd wheels}
\label{powsec}

We say that a polar graph $Q$ is a \emph{polar odd wheel} if $G(Q)$ is an odd wheel.
Recall that the class of odd wheels is contained in $\mathcal{T}_3^-$.
In this section, we 
characterise when a polar odd wheel is $3$-restricted, consider properties of a $3$-polar assignment for which a vertex is $3$-restricted on more than one colour, and show that any proper polar subgraph of a polar odd wheel is $3$-unrestricted.
We start with some lemmas that describe how $3$-restricted vertices can appear in polar odd wheels.

Let $G$ be an odd wheel. 
Recall that a dominating vertex $h$ of $G$ is called a \emph{hub}, so $G-h$ is an odd cycle.
Note that either $G \cong K_4$ and every vertex is dominating, in which case we choose the hub $h$ arbitrarily, or $G$ has a unique hub $h$.
Suppose that $v \in V(G) \setminus \{h\}$.
For an edge $xy$ of $G-h$, we have that $G'=(G-h)\ba xy$ is a path with even length, so the distances $d_{G'}(x,v)$ and $d_{G'}(y,v)$ have the same parity.
If $d_{G'}(x,v)$ and $d_{G'}(y,v)$ are both even, then we say that $xy$ has {\em even displacement to $v$} in $G-h$.

\begin{figure}[hb]
    \centering
    \begin{subfigure}{0.4\textwidth}
        \centering
        \includegraphics[scale=0.5]{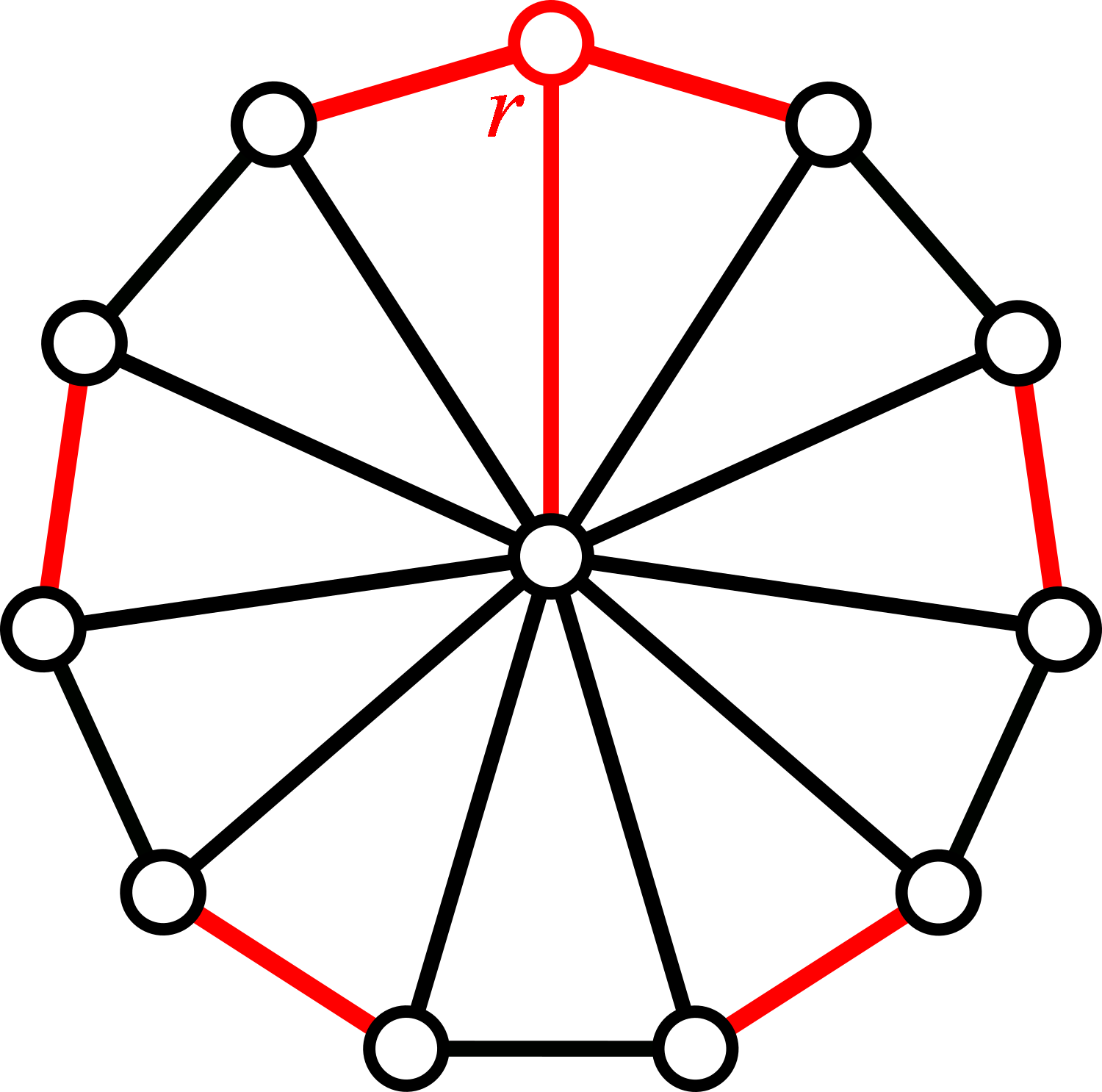}
        \subcaption{An odd wheel $G$ with a vertex $r$, where $\Ev_G(r)$ consists of the red edges.}
        \label{pow1fig}
    \end{subfigure}\quad
    \begin{subfigure}{0.4\textwidth}
        \centering
        \includegraphics[scale=0.45]{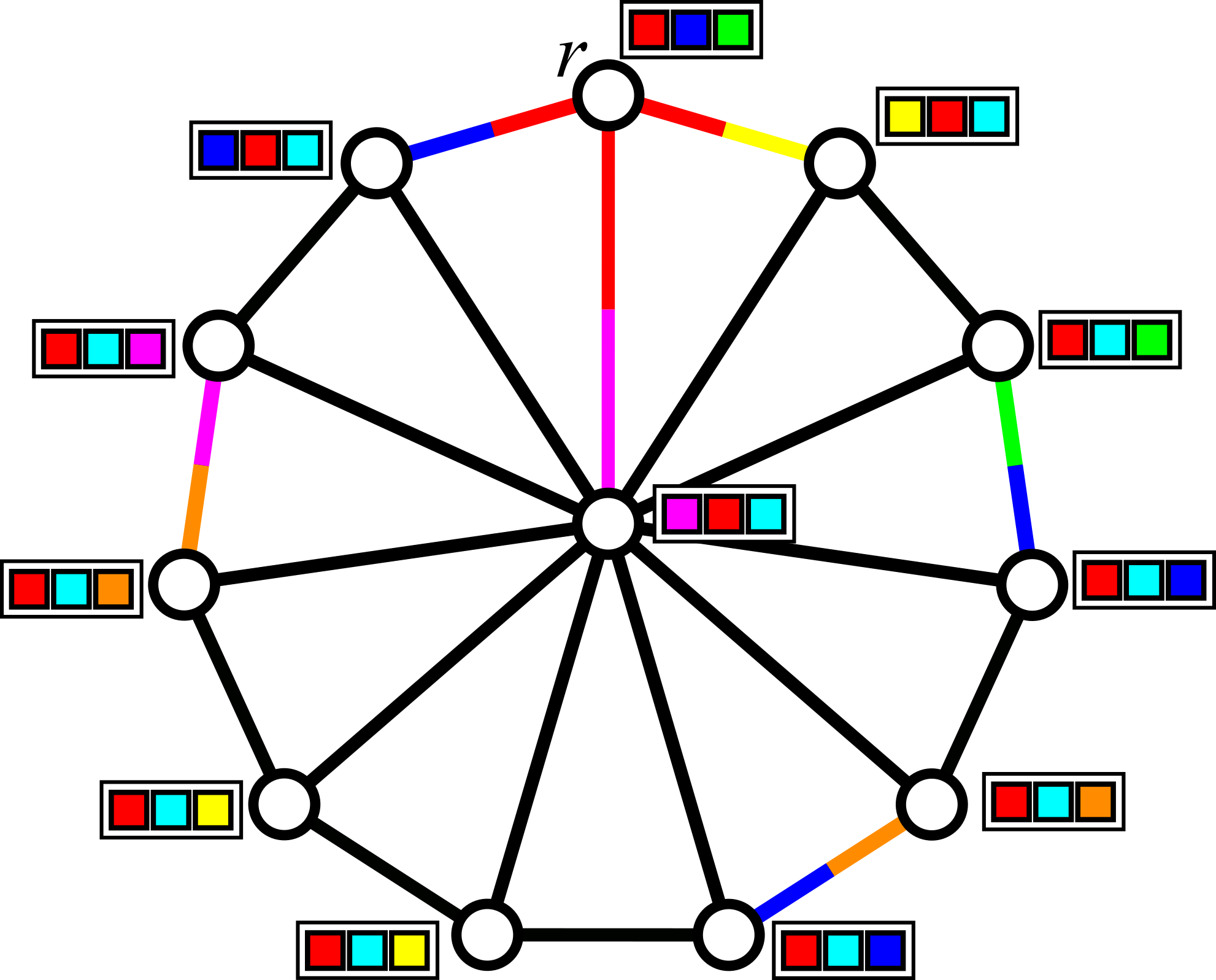}
        \subcaption{A polar odd wheel $Q$ with a $3$-polar assignment $\Gamma$. Here the vertex $r$ is $\Gamma$-restricted on red, and $F(Q) \subseteq \Ev_Q(r)$.}
        \label{pow2fig}
    \end{subfigure}
    \caption{A (polar) odd wheel with a vertex $r$.}
\end{figure}

We now define, for each $v \in V(G) \setminus \{h\}$, a set of edges of $G$, denoted $\Ev_G(v)$, as follows:
if $v$ is dominating, then $\Ev_G(v)$ is the set of edges incident to $v$;
whereas if $v$ is not dominating, then, letting $h$ be the dominating vertex of $G$, we define $$\Ev_G(v) = \{vh\} \cup \{e \in E(G-h) : e \textrm{ has even displacement to $v$ in $G-h$}\}.$$ 
Notice that if $v$ is incident with an edge $e \in E(G)$, then $e \in \Ev_G(v)$.
See \cref{pow1fig} for an example.
For a polar graph $Q$, we write $\Ev_Q$ rather than $\Ev_{G(Q)}$.

\begin{lemma}
    \label{clsu2}
    Let $Q$ be a polar odd wheel with $|V(Q)| > 4$ and hub $h$, and let $r \in V(Q) \setminus \{h\}$.
    Suppose $(Q-r,h) \in \mathcal{N}_2$.
    Then $(Q-\{r,h\},V(Q-\{r,h\}))$ is a balanced Gallai tree where the blocks of $Q-\{r,h\}$ alternate between light and normal, with the leaf blocks being light, and $F(Q) \subseteq \Ev_Q(r)$.
\end{lemma}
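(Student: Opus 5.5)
The plan is to work directly with the definitions of $\mathcal{N}_2$ and of a balancing function, using the explicit structure of an odd wheel. Since $|V(Q)|>4$, the underlying graph is not $K_4$, so $h$ is the unique hub and $Q-h$ is an odd cycle $C$ of length $n\ge 5$. Label the rim as $r=x_0,x_1,\dots,x_{n-1}$ in cyclic order. Then $P:=Q-\{r,h\}$ is the path $x_1x_2\cdots x_{n-1}$. It has $n-1$ vertices and $n-2$ edges, so an odd number of edges, and all of its blocks are $K_2$-blocks. In $Q-r$ the hub $h$ is adjacent to every $x_i$ with $i\ge1$, so $N_{Q-r}(h)=V(P)$. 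Hence \ref{N1} says exactly that $(P,V(P))$ is a balanced Gallai tree with some balancing function $f$. By \cref{uniquegalaxy} this $f$ is unique, so it suffices to determine it.

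Next I determine $f$ from \ref{G1} and \ref{G2} with $U=V(P)$. The endpoints $x_1$ and $x_{n-1}$ are internal vertices of $P$ lying in $U$. By \ref{G1}, the two leaf blocks $x_1x_2$ and $x_{n-2}x_{n-1}$ are therefore light. Every cut vertex $x_i$ ($2\le i\le n-2$) lies in $U$ and in exactly two blocks. By \ref{G2}, exactly one of these two blocks is light and the other is normal. Consequently there are no heavy blocks, and the blocks alternate light, normal, light, and so on along $P$. Concretely, the block $x_ix_{i+1}$ is light when $i$ is odd and normal when $i$ is even. Because $P$ has an odd number of edges, this alternation is consistent with both leaf blocks being light. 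This gives the first assertion. The condition \ref{N2} imposes nothing further here, since light blocks are $K_2$-blocks with $d_B=1=k-1$.

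For $F(Q)\subseteq \Ev_Q(r)$, I classify each edge of $Q$ by type.
\begin{itemize}
\item Edges incident with $r$ lie in $\Ev_Q(r)$ by the remark following its definition. This covers $rh$, $rx_1$ and $rx_{n-1}$.
\item Edges $hx_i$ with $i\ge1$ are edges of $Q-r$ incident with $h$, so by \ref{N3} they are not polarised.
\item A rim edge $x_ix_{i+1}$ of $P$ that is light is not polarised, by \ref{G3}.
\end{itemize}
So any polarised edge of $P$ is a normal block $x_ix_{i+1}$, which means $i$ is even. It remains to check that these are exactly the edges of $C$ with even displacement to $r$. Let $G'=C\setminus x_ix_{i+1}$. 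Then $G'$ is a path in which the distance from $x_i$ to $x_0$ is $i$ (traversing $x_i,x_{i-1},\dots,x_0$), and the distance from $x_{i+1}$ to $x_0$ is $n-1-i$. These two distances have the same parity, and both are even precisely when $i$ is even. Hence every normal block of $P$ lies in $\Ev_Q(r)$, and so $F(Q)\subseteq\Ev_Q(r)$.

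The only step needing care is the parity bookkeeping: matching the light/normal alternation, which is anchored at the light leaf block $x_1x_2$, with the even-displacement condition, and confirming that the odd number of edges of $P$ makes both ends light. Everything else follows immediately from \ref{G1}--\ref{G3}, \ref{N3} and \cref{uniquegalaxy}.
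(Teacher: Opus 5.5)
Your proof is correct and follows the same route as the paper. You read off the balancing function on the odd path $Q-\{r,h\}$ from (G1) and (G2), and then use (G3) to confine polarised rim edges to the normal blocks. You also make explicit two points the paper leaves implicit: the spokes $hx_i$ are unpolarised by (N3), and a parity check shows that the normal blocks $x_ix_{i+1}$ (with $i$ even) are exactly the edges of $Q-\{r,h\}$ with even displacement to $r$.
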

\begin{proof}
    Since $(Q-r,h) \in \mathcal{N}_2$, \ref{N1} holds, that is, $(Q-\{r,h\},V(Q-\{r,h\}))$ is a balanced Gallai tree.
    Since $Q-\{r,h\}$ is an odd path of length at least three, $Q-\{r,h\}$ consists of two degree-$1$ vertices that are internal vertices, whereas the remaining vertices are cut vertices that each belong to two $K_2$-blocks.
    By \ref{G1} and \ref{G2}, each internal vertex is in a light block, and each cut vertex is in both a light and normal block.
    Thus the blocks of $Q-\{r,h\}$ alternate between light and normal.
    By \ref{G3}, polarised edges can only appear in normal blocks of a balanced Gallai tree, so $F(Q) \subseteq \Ev_Q(r)$.
\end{proof}

\begin{lemma}
    \label{odd wheels in G2}
    Let $Q$ be a polar odd wheel with $|V(Q)| > 4$ and hub $h$, and let $r \in V(Q) \setminus \{h\}$.
    Let $\Gamma = (L,R)$ be a $3$-polar assignment for $Q$ with $c \in L(r)$, let $(Q-r,\Gamma_c)$ be the $(r,c)$-colour deletion from $(Q,\Gamma)$, and suppose that $Q-r$ is not $\Gamma_c$-colourable.
    Then $(Q-r,h) \in \mathcal{N}_2$ and $\Gamma_c$ is $h$-bad.
    In particular, letting $\Gamma_c=(L_c,R_c)$, there exist distinct $c_1,c_2 \in \mathbb{N}$ such that $\{c_1,c_2\} \subseteq L_c(u)$ for all $u \in V(Q-r)$, where $|L_c(u)| = 2$ if $u \in N_Q(r)$, and $|L_c(u)| = 3$ otherwise.
\end{lemma}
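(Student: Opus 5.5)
The plan is to apply \cref{dcpolarhdthm} to the polar graph $Q-r$ with the polar assignment $\Gamma_c$ and the distinguished vertex $h$, and then to read off the list structure from \cref{hbaddef} together with \cref{clsu2}.

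First we check the hypotheses of \cref{dcpolarhdthm}. Since $G(Q)$ is an odd wheel with $|V(Q)|>4$, the graph $Q-\{r,h\}$ is a path, so $(Q-r)-h$ is connected. Every $u\in V(Q)\setminus\{h,r\}$ has degree $3$ in $Q$. In the $(r,c)$-colour deletion, $u$ loses at most $e_Q(r,u)$ colours, so $|L_c(u)|\ge 3-e_Q(r,u)=d_{Q-r}(u)$.

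For $h$, note that $|L(h)|=3$ and $h$ is joined to $r$ by a single edge, so $|L_c(h)|\ge 2$. If $|L_c(h)|=3$, we remove one arbitrary colour from $L_c(h)$. The resulting assignment is still not colourable and now has $|L(h)|=2$. (In fact, non-colourability will force the structure regardless, but to match $\mathcal{N}_2$ I would argue as follows.) If $|L_c(h)|=3$, then \cref{dcpolarhdthm} with $k=3$ would give $(Q-r,h)\in\mathcal{N}_3$. By \ref{N2}, every light block would then have degree at least $2$. But every block of the path $Q-\{r,h\}$ is a $K_2$-block, and $h$ has neighbours in it, so by \cref{list} a light block must exist. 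This is a contradiction. Hence $|L_c(h)|=2$, meaning $c$ was actually removed from $L(h)$. The theorem then yields $(Q-r,h)\in\mathcal{N}_2$ and that $\Gamma_c$ is $h$-bad.

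For the ``in particular'' part, let $L_c(h)=\{c_1,c_2\}$. By \cref{clsu2}, the blocks of the path $Q-\{r,h\}$ alternate between light and normal, and every vertex lies in a light block. By \cref{hbaddef}(iv), each light $K_2$-block $B$ receives a uniform list $L_B$ of size $d_B+1=2$ containing $L_H(h)$, so $L_B=\{c_1,c_2\}$. Thus every vertex $u\neq h$ of $Q-r$ has $\{c_1,c_2\}\subseteq L_c(u)$, and trivially so does $h$. Finally, since $\Gamma_c$ is $h$-bad, the lemma following \cref{list} gives $|L_c(u)|=d_{Q-r}(u)$, which equals $2$ if $u\in N_Q(r)$ and $3$ otherwise. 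For $h$ itself, $|L_c(h)|=2$, and $h\in N_Q(r)$, which is consistent.

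The main obstacle is handling $h$ correctly, that is, ruling out $|L_c(h)|=3$ so that the rooted graph lands in $\mathcal{N}_2$ rather than $\mathcal{N}_3$. The argument above, using \ref{N2} against the $K_2$-blocks of the path, is the step I would scrutinise most carefully.
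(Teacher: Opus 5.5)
Your proposal is correct and follows the paper's proof. Like the paper, you apply \cref{dcpolarhdthm} to $(Q-r,\Gamma_c)$ with root $h$, use a light $K_2$-block of the path $Q-\{r,h\}$ together with \ref{N2} to force $|L_c(h)|=2$, and then read off the lists from \cref{clsu2} and \cref{hbaddef}(iv). Two remarks are unnecessary but harmless, since neither is used: the false start of deleting a colour from $L_c(h)$, and the claim that $c$ is the colour removed from $L(h)$, which fails when $rh$ is polarised.
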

\begin{proof}
    Observe that $d_Q(h) \ge 5$, and $|L(v)| = d_Q(v)=3$ for all $v \in V(Q) \setminus \{h\}$.
    Let $\Gamma_c = (L_c,R_c)$.
    For each vertex $v \in V(Q) \setminus \{r,h\}$, there is at most one edge between $v$ and $r$, so $|L_c(v)| \ge d_{Q-r}(v)$.

    Suppose $Q-r$ is not $\Gamma_c$-colourable.
    Since $Q-\{r,h\}$ is connected, \cref{dcpolarhdthm} implies that $(Q-r,h) \in \mathcal{N}_k$ for $k=|L_c(h)|$, and $\Gamma_c$ is $h$-bad.
    Note that there is $u \in N_Q(h) \cap N_Q(r)$ such that $d_{Q-\{r,h\}}(u)=1$, so $u$ is an internal vertex of $Q-\{r,h\}$ that belongs to a block~$B$.
    Since $u \in N_{Q-r}(h)$, the block $B$ is light, by \ref{N1} and \ref{G1}, and so, by \ref{N2}, $k=2$.
    Let $Q' = Q-\{r,h\}$.
    By \cref{clsu2}, $(Q',V(Q'))$ is a balanced Gallai tree where the blocks of $Q'$ alternate between light and normal, with the leaf blocks being light.
    Since $\Gamma_c$ is $h$-bad, it follows that there exist distinct $c_1,c_2\in \mathbb{N}$ such that $L_c(h)=\{c_1,c_2\}$, and $L_c(h) \subseteq L_c(u)$ for all $u \in V(Q')$, and $|L_c(u)|=2$ for $u \in N_Q(r) \setminus \{h\}$.
\end{proof}

\begin{lemma}
    \label{More than 4}
    \label{tidying up odd wheels}
    Let $Q$ be a polar odd wheel and let $r \in V(Q)$.
    All of the following hold:
    \begin{enumerate}
        \item The vertex $r$ is $3$-restricted if and only if $F(Q) \subseteq \Ev_Q(r)$.
        \item If $\Gamma$ is a $3$-polar assignment for $Q$ such that $r$ is $\Gamma$-restricted on more than one colour, then
            \begin{itemize}
                \item $F(Q)=\emptyset$,
                \item $M_{Q,\Gamma}(r) \subseteq L(u)$ for all $u \in V(Q-r)$, and
                \item $\Gamma|V(Q-r)$ is uniform.
            \end{itemize}
        \item The vertex $r$ is $3$-unrestricted in $Q'$ for any proper polar subgraph $Q'$ of $Q$.
    \end{enumerate}
\end{lemma}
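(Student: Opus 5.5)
I would split according to whether $Q\cong K_4$ (where every vertex is dominating and $\Ev_Q(r)$ is just the three edges at $r$) or $|V(Q)|>4$ (unique hub $h$). Within the second case I would further split on whether $r=h$ or $r\ne h$. In each case I fix a $3$-polar assignment $\Gamma=(L,R)$ and $c\in L(r)$, and study the $(r,c)$-colour deletion $(Q-r,\Gamma_c)$.

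\emph{Case $r\neq h$ and $|V(Q)|>4$.} For (i), the forward direction follows directly from \cref{odd wheels in G2}: it gives $(Q-r,h)\in\mathcal{N}_2$, and then \cref{clsu2} gives $F(Q)\subseteq\Ev_Q(r)$. For the converse, I would build an explicit assignment. Give $h$ a list containing $\{1,2\}$ and colour $c$. Colour the light blocks of the path $Q-\{r,h\}$ uniformly with $\{1,2,3\}$. Choose the lists of the normal $K_2$-blocks so that, after deleting $1$ or $2$ via $h$, they form a bad assignment; polarised edges are allowed here because they lie in normal blocks, and we make them polar conforming. Finally, give the neighbours of $r$ the colour $c$ so that deleting $c$ reduces their lists to exactly the light-block size. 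The edges $rh$ and the two edges from $r$ into the cycle are in $\Ev_Q(r)$; if they are polarised, we set $R$ so that they remove the intended colour. Then the $\Leftarrow$ direction of \cref{dcpolarhdthm} shows that $Q-r$ is not $\Gamma_c$-colourable. For (ii), suppose $r$ is restricted on two colours $c\neq c'$. Each deletion is $h$-bad, and the structure in \cref{odd wheels in G2} pins down the lists. Comparing the two deletions at the neighbours of $r$ forces each edge at $r$ to be non-polarised, and since $|L|=3$ it forces the lists on $Q-r$ to be the same three colours $\{c_1,c_2,c\}=\{c_1,c_2,c'\}$ up to the deleted colour. From this I would conclude that the normal blocks must be uniform as well, so no edge of $Q-r$ is polarised. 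This gives $F(Q)=\emptyset$, uniformity of $\Gamma|(Q-r)$, and $M_{Q,\Gamma}(r)\subseteq L(u)$.

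\emph{Case $r=h$, or $Q\cong K_4$.} Here $Q-r$ is an odd cycle (respectively a triangle) with a degree-polar assignment; the neighbours of $r$ lose at most one colour. So \cref{dcpolarthm} says $r$ is restricted on $c$ exactly when $Q-r$ is a polar Gallai tree and $\Gamma_c$ is bad. Since the cycle has no $K_2$-blocks, this requires $F(Q-r)=\emptyset$ and a uniform $2$-list after deletion, which in turn requires every edge at $r$ to remove a colour. This matches $\Ev_Q(r)$ being the spokes (respectively the edges at $r$) and gives (i). For (ii), two different restricting colours $c,c'$ yield uniform lists $L(u)\setminus\{x\}$ equal to a fixed pair for both, and I would deduce that all edges at $r$ are non-polarised and that $L(u)=\{c,c',\cdot\}$ is uniform.

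For (iii), let $Q'$ be a proper polar subgraph. It suffices to take $Q'=Q\setminus e$ for an edge $e$, or $Q-v$, since restrictedness is monotone under taking polar subgraphs; indeed the $k$-unrestricted property passes to subgraphs by padding lists with new colours. For $Q-v$ or $Q\setminus e$, every vertex other than possibly $h$ has degree at most $3$. I would apply \cref{k restricted theorem} when the degree of $h$ exceeds $3$, and \cref{dcpolarthm} otherwise. The conclusion should be that no vertex $w$ has $Q'-w$ containing a Gallai tree component all of whose vertices have degree $3$ in $Q'$, because deleting an edge or vertex creates a degree-$2$ vertex in every relevant component. \textbf{The main obstacle} is (ii) in the case $r\neq h$: I need to trace carefully how the $h$-bad structure for $c$ and for $c'$ interacts with polarised edges in the normal blocks to force them to be absent.
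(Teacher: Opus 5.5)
Your handling of (i) and of the dominating case ($r=h$ or $Q\cong K_4$) is essentially the paper's. However, the last bullet of (ii) for $r\neq h$ has a genuine gap. You flag it, but the justification you give is invalid. Uniform lists do not exclude polarised edges: a polarised normal $K_2$-block $uv$ with $R(e,u,v)=(a,a)$ carries the uniform polar-conforming list $\{a\}$. So ``the normal blocks must be uniform as well, so no edge of $Q-r$ is polarised'' does not follow.

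The missing idea is to compare the two $h$-bad decompositions.
\begin{enumerate}
\item First correct your equation $\{c_1,c_2,c\}=\{c_1,c_2,c'\}$. By \cref{cdlemma1} the edge $rh$ is non-polarised, so $c,c'\in L(h)$, with $L_c(h)=L(h)\setminus\{c\}$ and $L_{c'}(h)=L(h)\setminus\{c'\}$. By \cref{odd wheels in G2}, both sets lie in $L(u)$ for every $u\in V(Q-\{r,h\})$, hence $L(u)=L(h)$.
\item Now take a polarised normal block $B'=uv$. Then $u$ is a cut vertex of the path, not adjacent to $r$, and lies in a light block $B$.
\item In the $(r,c)$-deletion, the list of $B$ at $u$ is the uniform $2$-set containing $L_c(h)$, namely $L(h)\setminus\{c\}$. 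Since $L_c(u)=L(u)=L(h)$ is the disjoint union of the block lists at $u$, the list of $B'$ at $u$ is $\{c\}$.
\item In the $(r,c')$-deletion the same reasoning gives $\{c'\}$.
\item Polar conformity forces both lists to equal the first coordinate of $R(e,u,v)$. Hence $c=c'$, a contradiction.
\end{enumerate}

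Separately, your explicit construction for the converse of (i) has inconsistent list sizes. With $L_c(h)=\{1,2\}$, the light blocks must get the uniform list $\{1,2\}$ of size $d_B+1=2$, not $\{1,2,3\}$, and the neighbours of $r$ get $\{1,2,c\}$. It is cleaner to take an $h$-bad assignment from \cref{hbadexistencelemma} and add $c$ on $N_Q(r)$, as the paper does.

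For (iii) you take a genuinely different and valid route.
\begin{itemize}
\item The paper argues by cases:
  \begin{itemize}
  \item for dominating $r$ it checks directly that $\Gamma_c$ cannot be bad;
  \item for $e=hr$ it uses \cref{dcpolarhdthm} to place $(Q\ba e-r,h)$ in $\mathcal{N}_3$, contradicting \ref{N2};
  \item otherwise it deletes a colour at $h$ and applies \cref{dcpolarthm}.
  \end{itemize}
\item You apply \cref{k restricted theorem} to $Q\ba e$, and \cref{dcpolarthm} when $Q\cong K_4$. This checks out:
  \begin{itemize}
  \item for $w=h$, $Q\ba e-h$ is a single odd cycle or path containing an end of $e$ of degree $2$;
  \item for a rim vertex $w$, $Q\ba e-w$ is connected and contains $h$, which has degree at least $4$.
  \end{itemize}
\item Your reduction to single-edge deletions via monotonicity, padding with new colours, is also correct.
\end{itemize}
Your route gives one uniform argument showing $Q\ba e$ has no $3$-restricted vertex at all. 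The cost is invoking the heavier general theorem in place of the paper's elementary case analysis.
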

\begin{proof}
    We first assume $r$ is dominating.
    Suppose that $F(Q) \subseteq \Ev_Q(r)$.
    Note that if $e \in F(Q)$, then $e \in \Ev_Q(r)$, and so $r$ is incident with $e$.
    We will construct a $3$-polar assignment $\Gamma=(L,R)$ for $Q$ such that $r$ is $\Gamma$-restricted.
    Let $L$ be a uniform $3$-list assignment with $c \in L(r)$, and define $R$ such that,
        for any $ur \in F(Q)$, we have $R(e,u,r)=(c,c)$.
    Let $(Q-r,\Gamma_c)$ be the $(r,c)$-colour deletion from $(Q,\Gamma)$.
    Then $Q-r$ is an odd cycle with no polarised edges and, for any vertex $u \in V(Q-r)$, we have $L_c(u)=L(u) \setminus \{c\}$.
    Therefore $\Gamma_c$ is a bad degree-polar assignment and $Q-r$ is not $\Gamma_c$-colourable, so $r$ is $3$-restricted.

    For the converse, assume that $F(Q) \not \subseteq \Ev_Q(r)$.
    Then there is some polarised edge $e=uw$, not incident with $r$.
    Let $\Gamma$ be a $3$-polar assignment for $Q$, and let $(Q-r,\Gamma_c)$ be the $(r,c)$-colour deletion from $(Q,\Gamma)$ for some $c \in L(r)$.
    Then $Q-r$ is an odd cycle with a polarised edge, and $\Gamma_c$ is a degree-polar assignment, so $Q-r$ is $\Gamma_c$-colourable by \cref{dcpolarthm}.
    Therefore, $Q$ is $3$-unrestricted, so (i) holds when $r$ is dominating.

    We continue to assume $r$ is dominating and next consider (ii).
    Suppose $\Gamma=(L,R)$ is a $3$-polar assignment for $Q$ such that $r$ is $\Gamma$-restricted on more than one colour; that is, $|M_{Q,\Gamma}(r)| \ge 2$.
    For $c \in M_{Q,\Gamma}(r)$, we again let $(Q-r,\Gamma_c)$ be the $(r,c)$-colour deletion from $(Q,\Gamma)$, with $\Gamma_c=(L_c,R_c)$.
    Then $Q-r$ is not $\Gamma_c$-colourable for such a $c$, where $Q-r$ is an odd cycle, and $\Gamma_c$ is a degree-polar assignment.
    By the foregoing, $F(Q) \subseteq \Ev_Q(r)$, so $F(Q-r)=\emptyset$.
    By \cref{cdlemma1}, if $Q$ has a polarised edge $ur$, then, as $|M_{Q,\Gamma}(r)| \ge 2$, there exists $c \in M_{Q,\Gamma}(r)$ such that $|L_c(u)| > 2$, so $\Gamma_c$ is not bad, implying $Q-r$ is $\Gamma_c$-colourable by \cref{dcpolarthm}, a contradiction.
    So $F(Q) = \emptyset$.
    Similarly, for each $c \in M_{Q,\Gamma}(r)$, we have $c \in L(u)$ for all $u \in V(Q-r) = N_Q(r)$.
    That is, $M_{Q,\Gamma}(r) \subseteq L(u)$ for all $u \in V(Q-r)$.
    Now, for each $c \in M_{Q,\Gamma}(r)$, we have that $\Gamma_c$ is uniform.
    It follows that $\Gamma|V(Q-r)$ is uniform, as required.
    So (ii) also holds when $r$ is dominating.

    We now assume that $r$ is not dominating.
    Then $Q$ has a unique dominating vertex $h \in V(Q) \setminus \{r\}$.
    Let $Q'=(Q-r)-h$ and observe that $Q'$ is an odd path of length at least one.

    Suppose $F(Q) \not \subseteq \Ev_Q(r)$.
    Then, by \cref{clsu2}, $(Q-r,h) \notin \mathcal{N}_2$.
    Now, for any $3$-polar assignment $\Gamma=(L,R)$ for $Q$ with $c \in L(r)$, we have, by \cref{odd wheels in G2}, that $Q-r$ is $\Gamma_c$-colourable for the $(r,c)$-colour deletion $(Q-r,\Gamma_c)$ from $(Q,\Gamma)$.
    Thus $r$ is $3$-unrestricted.

    Conversely, suppose $F(Q) \subseteq \Ev_Q(r)$.
    We have that $(Q',V(Q'))$ is a balanced Gallai tree, where the blocks of $Q'$ alternate between light and normal, with the leaf blocks being light, so \ref{N1} holds.
    Since each block of $Q'$ is a $K_2$-block, \ref{N2} is satisfied, and since $Q-r$ is simple and $F(Q) \subseteq \Ev_Q(r)$, \ref{N3} holds.
    So $(Q-r,h) \in \mathcal{N}_2$.
    By \cref{hbadexistencelemma}, there exists an $h$-bad polar assignment $\Gamma_2=(L_2,R_2)$ for $Q-r$.
    We extend $\Gamma_2$ to a $3$-polar assignment $\Gamma=(L,R)$ for $Q$ as follows.
    Let $L(r)=\{a,b,c\}$ where $c \notin L_2(u)$ for any $u \in N_Q(r)$, let $L(u)=L_2(u) \cup \{c\}$ for each $u \in N_Q(r)$, and let $L(w)=L_2(w)$ for all $w \in V(Q) \setminus (N_Q(r) \cup \{r\})$.
    Finally, we let $R(e,u,r)=(c,c)$ for any polarised edge $e=ur$ incident with $r$.
    It follows that the $(r,c)$-colour deletion from $(Q,\Gamma)$, namely $(Q-r,\Gamma_2)$, is not colourable, so $r$ is 
    $3$-restricted, as required.

    We continue to assume that $r$ is not dominating, and consider (ii).
    Let $c \in M_{Q,\Gamma}(r)$, so, by \cref{odd wheels in G2}, we have that 
    $L_c(h) \subseteq L_c(u)$ for all $u \in V(Q')$, and $|L_c(u)|=2$ for $u \in N_Q(r)$.
    This implies, by \cref{cdlemma1}, that $Q$ has no polarised edges incident with $r$.
    Thus $L_c(h)=L(h) \setminus \{c\}$.
    Since $|M_{Q,\Gamma}(r)| \ge 2$, it follows that $\Gamma|V(Q-r)$ is uniform, and $c \in L(u)$.
    So $M_{Q,\Gamma}(r) \subseteq L(u)$ for all $u \in V(Q-r)$.

    It remains to show that $F(Q)=\emptyset$.
    We have $F(Q) \subseteq \Ev_Q(r)$, and, by \cref{clsu2}, any polarised edge of $Q'$ belongs to a normal $K_2$-block of $Q'$.
    Let $e=uv$ be a polarised edge, belonging to a normal $K_2$-block $B'$ of $Q'$.  Then $u$ also belongs to a light block $B$.
    Consider distinct $a,b \in M_{Q,\Gamma}(r)$, and recall that $\Gamma_a=(L_a,R_a)$ and $\Gamma_b=(L_b,R_b)$ are the polar assignments obtained from $(r,a)$- and $(r,b)$-colour deletions, respectively.
    Since $\Gamma_a$ and $\Gamma_b$ are both $h$-bad, they can be obtained as the union of pairwise non-conflicting polar assignments as described in \cref{hbaddef}.
    Let $\Gamma_{B,a}$ and $\Gamma_{B',a}$ be such polar assignments for $B$ and $B'$ that appear in the union to obtain $\Gamma_a$, and similarly for $\Gamma_{B,b}$ and $\Gamma_{B',b}$ relative to $\Gamma_b$.
    Then $\Gamma_{B',a}$ and $\Gamma_{B',b}$ are bad degree-polar assignments for $B'$ and, letting $\Gamma_{B,a}=(L_{B,a},R_{B,a})$ and $\Gamma_{B,b}=(L_{B,b},R_{B,b})$, we have that $L_{B,a}$ and $L_{B,b}$ are uniform $2$-list assignments for $B$ such that $L_a(h) \subseteq L_{B,a}(u)$ and $L_b(h) \subseteq L_{B,b}(u)$.
    Since $L_{B,a}(u)=L(u) \setminus \{a\}$ and $L_{B,b}(u)=L(u) \setminus \{b\}$, where $u \in V(B) \cap V(B')$ and $\Gamma_{B,a}$ and $\Gamma_{B',a}$ (or $\Gamma_{B,b}$ and $\Gamma_{B',b}$) are non-conflicting,
    the polar assignments $\Gamma_{B',a}$ and $\Gamma_{B',b}$ are distinct.
    But since $e$ is polarised, $\Gamma_{B',a}$ and $\Gamma_{B',b}$ are polar-conforming, so they are equal, a contradiction.
    Thus there are no polarised edges in $\Ev_Q(r)$ except perhaps edges that are incident with $r$.
    The fact that no polarised edges are incident with $r$ follows from \cref{cdlemma1}.

    It remains to prove (iii) holds.  It suffices to prove 
    that $r$ is $3$-unrestricted in $Q'$ where
    $Q' = Q\ba e$ for some $e \in E(Q)$.
    So let $Q' = Q\ba e$ for some $e \in E(Q)$. 
    Towards a contradiction, let $\Gamma=(L,R)$ be a $3$-polar assignment for $Q'$ such that $r$ is $\Gamma$-restricted on $c$ in $Q'$ for some $c \in L(r)$.
    Let $(Q' - r,\Gamma_c)$ be the $(r,c)$-colour deletion from $(Q', \Gamma)$, so that $Q'-r$ is not $\Gamma_c$-colourable, with $\Gamma_c=(L_c,R_c)$.

    First, we assume that $r$ is dominating in $Q$.
    Then $\Gamma_c$ is a degree-polar assignment so, by \cref{dcpolarthm}, $\Gamma_c$ is bad.
    Suppose that $e$ is incident with $r$ in $Q$; let $e=ru$ for some vertex $u$.
    Then $|L_c(u)|=3$, but $d_{Q'-r}(u)=2$, and it follows that $\Gamma_c$ is not bad, a contradiction.
    So we may assume that $e$ is not incident with $r$.
    Then $Q'-r$ is a path.
    Let $v$ be an end of this path.
    Then $|L_c(v)| \ge 2$, since at most one colour was removed in the colour deletion, but $d_{Q'-r}(u)=1$, and it follows that $\Gamma_c$ is not bad, a contradiction.

    We now assume that $r$ is not dominating in $Q$.
    Let $h$ be a dominating vertex in $Q$, and note that $d_Q(h) \ge 5$.
First assume that $e=hr$.
In this case notice that $|L_c(h)| =3$ while $|L_c(u)| \geq d_{Q'-r}(u)$ for all $u \in V(Q'-r) \setminus \{h\}$.
We also have that $Q' -\{r,h\}$ is connected, so, by \cref{dcpolarhdthm}, $(Q' -r, h) \in \mathcal{N}_3$.
    By \ref{N1}, $(Q'-\{r,h\}, N_{Q'-r}(h))$ is a balanced Gallai tree.
    Let $u \in N_{Q'}(r) \cap N_{Q'}(h)$, so $u$ is an end in the path $Q'-\{r,h\}$.
    Then $u$ is in a light block of $Q'-\{r,h\}$ by \ref{G1}, however $u$ has degree one in this block, contradicting \ref{N2}.
We may now assume that $e \neq hr$.
Consider a connected component of $Q'-\{r,h\}$.  Let $u$ be a vertex of this component that is incident with $e$ in $Q$.  Then $d_{Q'}(u)=2$.
Let $\Gamma_h$ be a colour deletion of $h$ from $(Q'-r,\Gamma_c)$.
Then
    $\Gamma_h$ is a degree-polar assignment, but $|L_h(u)| > d_{Q'-\{r,h\}}(u)$.
    Therefore each component of $Q'-\{r,h\}$ is $\Gamma_h$-colourable, by \cref{dcpolarthm}, and the result follows.
\end{proof}

\subsection{Restricted vertices}

We next build on the results of the last section, by generalising them from polar odd wheels to any graph in $\mathcal{T}_k^-$.
More specifically, for a polar graph $Q$ with $G(Q) \in \mathcal{T}_k^-$, we characterise the $k$-restricted vertices; we show that if a vertex is $k$-restricted on more than one colour, then $Q$ is an odd wheel or complete graph with no polarised edges; and we show that any proper polar subgraph is $k$-unrestricted.

We require the notion of a central vertex.
For a polar odd wheel $Q$, recall the definition of $\Ev_Q$ given in \cref{powsec}.

\begin{definition}[central vertex]
    Let $Q$ be a polar graph with $G(Q) \in \mathcal{T}_k^-$, for $k \ge 3$.
    A vertex $v \in V(Q)$ is {\em central} if 
    \begin{itemize}
        \item $Q$ is a polar odd wheel and $F(Q) \subseteq \Ev_Q(v)$, or
        \item $Q$ is not a polar odd wheel, every parallel class has at most one edge that is not polarised, and all polarised edges are incident with $v$.
    \end{itemize}
\end{definition}

\begin{theorem}
\label{tkminusone}
Let $Q$ be a polar graph with $G(Q) \in \mathcal{T}_k^-$, for $k \geq 3$, and let $r \in V(Q)$. Both of the following hold:
    \begin{enumerate}
        \item The vertex $r$ is $k$-restricted if and only if $r$ is central.\label{tk a}
        \item If $\Gamma$ is a $k$-polar assignment for $Q$ such that $r$ is $\Gamma$-restricted on more than one colour, then\label{tk b}
            \begin{itemize}
                \item $F(Q)=\emptyset$,
                \item $G(Q)$ is an odd wheel if $k=3$, or a complete graph with $k+1$ vertices otherwise,
                \item $M_{Q,\Gamma}(r) \subseteq L(u)$ for all $u \in V(Q-r)$, and
                \item $\Gamma|V(Q-r)$ is uniform.
            \end{itemize}
        \item The vertex $r$ is $k$-unrestricted in $Q'$ for any proper polar subgraph $Q'$ of $Q$.\label{tk d}
    \end{enumerate}
\end{theorem}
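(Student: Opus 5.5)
We split according to whether $Q$ is a polar odd wheel. If $G(Q)$ is an odd wheel then $k=3$, and all three parts are exactly \cref{tidying up odd wheels}: part (i) there is part (i) here, because in that case ``central'' just means $F(Q)\subseteq\Ev_Q(r)$. So we may assume $Q$ is not a polar odd wheel. We first handle the $k$-symmetric case $G(Q)\cong K_{k+1}$ or $I_k$ directly, as a warm-up. In the remaining case, \cref{no parallel edges lemma} gives a unique hub $h$, and $e(u,h)\ge 2$ for every internal vertex $u$ of $T=Q-h$. Every vertex other than $h$ has degree exactly $k$.

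\textbf{Sufficiency in (i).} Suppose $r$ is central. We build an explicit $k$-polar assignment. Take a uniform list $C$ of size $k$ with $c\in C$, and give every polarised edge $e=ur$ the polarisation $R(e,u,r)=(c,c)$. Then the $(r,c)$-colour deletion removes exactly one colour, namely $c$, from each neighbour of $r$ per parallel class, whether the edges are polarised or not. What remains is a degree-polar assignment on $Q-r$ that is not colourable:
\begin{itemize}
\item if $r=h$, we get a bad degree-polar assignment on the Gallai tree $T$, constructed as in \cref{krestcor} and using \cref{baddegreeassign};
\item if $r\neq h$, we want $(Q-r,h)\in\mathcal{N}_{|L'(h)|}$ together with an $h$-bad assignment, and we build it via \cref{hbadexistencelemma} in the same way as the odd-wheel proof does.
\end{itemize}
Centrality guarantees that no polarised edges lie inside $Q-r$, so \ref{N3} and \ref{G3} hold. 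The condition ``at most one non-polarised edge per parallel class'' ensures that the deletion removes exactly the intended colours.

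\textbf{Necessity in (i).} Suppose $r$ is $\Gamma$-restricted on $c$, and let $\Gamma_c$ be the $(r,c)$-colour deletion. If $r=h$, then $\Gamma_c$ is a degree-polar assignment on the connected graph $T$, so \cref{dcpolarthm} forces $\Gamma_c$ to be bad. In particular $|L_c(u)|=d_T(u)$ for every $u$, so every edge between $r$ and $u$ removes a distinct colour. By \cref{cdlemma1,cdlemma2}, applied to the different colours of $L(r)$, this shows that polarised edges must be incident with $r$. It also shows that a parallel class with two non-polarised edges would remove only one colour, which contradicts badness. If $r\neq h$, then $Q-\{r,h\}$ is connected by \cref{k edge con lemma}/2-connectivity, and we apply \cref{dcpolarhdthm} to $(Q-r,h)$. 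Property \ref{N3} forbids parallel edges at $h$. Since $e(u,h)\ge2$ for internal $u$ of $T$, this forces every internal vertex of $T$ to be adjacent to $r$ or to equal $r$. That is a strong constraint, and together with the degree counting of \cref{k restricted theorem} (in the style of \cref{surprise claim}) it should force $T$ to be a single block containing $r$ with $Q=K_{k+1}$. This would contradict non-symmetry unless the polarised edges behave as claimed.

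\textbf{Parts (ii) and (iii).} For (ii), $|M_{Q,\Gamma}(r)|\ge2$ together with \cref{cdlemma1,cdlemma2} kills all polarised edges and all multi-edges at $r$. The case analysis above then leaves only $K_{k+1}$, and uniformity of $\Gamma|V(Q-r)$ follows as in \cref{tidying up odd wheels}(ii). For (iii), it suffices to delete a single edge $e$. After deleting $e$, some vertex of $Q'-r$ (or $h$) has a list strictly larger than its degree, or \ref{N2} fails. Either way, \cref{dcpolarthm} or \cref{dcpolarhdthm} makes the deletion colourable.

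The main obstacle is the $r\neq h$ necessity argument: we must show that the balanced-Gallai-tree structure coming from \cref{dcpolarhdthm} is incompatible with the parallel edges at $h$ unless the graph is symmetric or the polarisation pattern is central.
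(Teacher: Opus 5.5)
Your proposal has two genuine gaps.

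\textbf{The sufficiency construction in (i) fails.} You give every polarised edge $e=ur$ the polarisation $R(e,u,r)=(c,c)$. Then the $(r,c)$-colour deletion removes only the single colour $c$ from $L(u)$, no matter how many edges join $u$ and $r$.

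When $r=h$ and $Q$ is neither $k$-symmetric nor an odd wheel, \cref{no parallel edges lemma}(ii) gives $e(u,r)\ge 2$ at every internal vertex $u$ of $T$. For such $u$ we get $|L_c(u)|\ge k-1>d_T(u)$, so the deleted assignment is not bad, and $T$ is colourable by \cref{dcpolarthm}. The same failure occurs in your warm-up case $I_k$.

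The polarised edges of a parallel class must remove \emph{distinct} colours. Fix a bad degree-list assignment $L_T$ of $T$ and fresh colours $c_1,\dots,c_k$. List the edges between $u$ and $r$ as $e_{u,1},\dots,e_{u,k-d_T(u)}$, with $e_{u,1}$ the non-polarised one if it exists. Set $L(u)=L_T(u)\cup\{c_1,\dots,c_{k-d_T(u)}\}$, $L(r)=\{c_1,\dots,c_k\}$, and $R(e_{u,i},u,r)=(c_i,c_1)$. Then the $(r,c_1)$-deletion is exactly $(T,L_T)$. This is where the hypothesis ``at most one non-polarised edge per class'' is actually used.

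\textbf{The case $r\neq h$ (non-symmetric, not an odd wheel) is not proved.} There is nothing to construct here. $T$ has at least two internal vertices, so some internal $u\neq r$ has $e(u,h)\ge 2$. If each parallel class has at most one non-polarised edge, that class contains a polarised edge not incident with $r$. So $r$ is never central, and what you must show is that $r$ is never $k$-restricted.

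Your argument for that asserts that $Q-\{r,h\}=T-r$ is connected. This is false whenever $r$ is a cut vertex of $T$; for example, take $k=4$ and $T$ two triangles sharing $r$. In that case \cref{dcpolarhdthm} cannot be applied to $(Q-r,h)$.

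The missing idea is a pigeonhole argument over the pieces $S=Q[V(C)\cup\{r\}]$, where $C$ ranges over the components of $T-r$.
\begin{enumerate}
\item Since $T$ has no $K_2$-blocks, $d_S(r)\ge 2$, so there are at most $d_T(r)/2$ pieces.
\item On the other hand, $|L_c(h)|\ge k-e(r,h)=d_T(r)$.
\item Hence $h$ is restricted on two colours in $Q[(V(S')\setminus\{r\})\cup\{h\}]$ for some piece $S'$.
\item Apply \cref{dcpolarhdthm} with a list of size $2$ at $h$. This is contradicted by the parallel edges from $h$ to an internal vertex of a leaf block of $T$ lying in $S'$, which violate \ref{N3}.
\end{enumerate}

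When $T-r$ is connected, \ref{N3} fails at once: $|L_c(h)|\ge d_T(r)\ge 2$, and the parallel edges from $h$ to an internal vertex $u\neq r$ survive in $Q-r$. So your inference that internal vertices must be adjacent to $r$, and the appeal to degree counting as in \cref{k restricted theorem}, are not needed.

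Parts (ii) and (iii) for $r\neq h$ rest on this unproved step and inherit the gap. For $r=h$, your edge-deletion argument for (iii) is fine, since $T$ has no $K_2$-blocks and hence no bridges.
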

\begin{proof}
    Let $h$ be the hub of $Q$ if $G(Q)$ is not $k$-symmetric; otherwise, each vertex of $Q$ is a hub, and we let $h = r$.
    Now, if $r \neq h$, then $G(Q)$ is not $k$-symmetric; whereas if $r=h$, then $G(Q)$ may or may not be $k$-symmetric.

    First we consider the case where $r=h$.
    Let $\Gamma=(L,R)$ be a $k$-polar assignment for $Q$, let $c$ be any colour in $L(r)$, and let $(Q-r,\Gamma_c)$ be the $(r,c)$-colour deletion from $(Q,\Gamma)$, where $\Gamma_c=(L_c,R_c)$.
    \begin{claim}
    \label{the real v=h claim}
    Suppose $r=h$ and $Q-r$ is not $\Gamma_c$-colourable.
    Then $Q-r$ is a polar Gallai tree, $\Gamma_c$ is a bad polar assignment for $Q-r$, and $|L_c(u)|= d_{Q-r}(u) =|L(u)|-e(u,r)$ for all $u \in V(Q-r)$.
    \end{claim}
    \begin{subproof}
        As $r=h$, we have $d_Q(u)=k=|L(u)|$ for all $u \in V(Q-r)$.
        Then 
        $|L_c(u)| \geq |L(u)|-e(u,r) = d_Q(u)-e(u,r) = d_{Q-r}(u)$, so
        $\Gamma_c$ is a degree-polar assignment for $Q-r$.
        Therefore, as $Q-r$ is not $\Gamma_c$-colourable, $Q-r$ is a polar Gallai tree and $\Gamma_c$ is bad, by \cref{dcpolarthm}.
        Since $\Gamma_c$ is bad, $|L_c(u)| = d_{Q-r}(u) = |L(u)|-e(u,r)$
        for all $u \in V(Q-r)$.
    \end{subproof}

    \begin{claim}
        \label{rha}
        When $r=h$, \ref{tk a} holds.
    \end{claim}
    \begin{subproof}
        If $Q$ is a polar odd wheel, then \ref{tk a} holds by \cref{More than 4}(i).  So we may assume that $G(Q)$ is not an odd wheel.

        Let $T = Q-r$.
    Assume that $r$ is central.
    Since $G(Q)$ is not an odd wheel, all polarised edges of $Q$ are incident with $r$ by the definition of a central vertex.
    Therefore $Q-r$ has no polarised edges and, since $r=h$, it is a polar Gallai tree.
    Thus there exists a bad degree-polar assignment $\Gamma_T=(L_T,R_T)$ for $T$.

    We extend $\Gamma_T$ to a $k$-polar assignment $\Gamma$ for $Q$ as follows.
    For any $u \in V(T)$, let $\{e_{u,1},e_{u,2},\dotsc,e_{u,k-d_T(u)}\}$ be the set of edges between $u$ and $r$ in $Q$, where $e_{u,1}$ is the unique non-polarised edge if such an edge exists.
    Let $C=\{c_1,c_2,\dotsc,c_k\}$ be a set of $k$ colours such that $C \cap L_T(u)=\emptyset$ for all $u \in V(T)$.
    For $i \le k$, we write $C_i$ to denote the subset $\{c_1,c_2,\dotsc,c_i\}$ of $C$.
    Let $\Gamma=(L,R)$ be the $k$-polar assignment for $Q$ where
    \[
    L(v)=\begin{cases}
        L_T(v) \cup C_{k-d_T(v)} & \mbox{if $v \in V(T)$}\\
        C_k & \mbox{if $v=r$,}
    \end{cases}
    \]
    and $R(e_{u,i},u,r)=(c_i,c_1)$ for each $u \in V(T)$ and each polarised edge $e_{u,i}$.
    It is easy to see that $\Gamma$ is a $k$-polar assignment and, as all polarised edges are incident with $r$, the polarisation $R$ is well defined.
    Notice that if $e_{u,1}$ is polarised, then $R(e_{u,1},u,r)=(c_1,c_1)$, and so, whether $e_{u,1}$ is polarised or not, the $(r,c_1)$-colour deletion from $(Q,\Gamma)$ is $(T,\Gamma_T)$.
    Since $T$ is not $\Gamma_T$-colourable,
    $r$ is $\Gamma$-restricted on $c_1$.
    This shows that if $r$ is central, then it is $k$-restricted.
    
    For the other direction, assume that $r$ is not central.
    If there are any polarised edges not incident with $r$, then, as $T$ has no $K_2$-blocks, $T$ is not a polar Gallai tree and so, by \cref{the real v=h claim}, $r$ is not $k$-restricted.
    Recall that $G(Q)$ is not an odd wheel.
    As $r$ is not central, we may also assume, by the foregoing, that there is some $u \in V(T)$ such that there are distinct non-polarised edges, $e_0$ and $e_1$, between $u$ and $r$.  But then $|L_c(u)| > d_T(u)$, as both $e_0$ and $e_1$ remove $c$ from $L(u)$.
    Hence, by \cref{the real v=h claim}, $r$ is not $k$-restricted.
    This shows that if $r$ is not central, then $r$ is not $k$-restricted, as required.
    \end{subproof}

    \begin{claim}
        \label{rhb}
        When $r=h$, \ref{tk b} holds.
    \end{claim}
    \begin{subproof}
        Let $C = M_{Q,\Gamma}(r)$, so $r$ is $\Gamma$-restricted on each colour in $C$, with $|C| \ge 2$.
        Let $u$ be adjacent to $r$ in $Q$.
        By \cref{the real v=h claim}, for each $c \in C$, the $(r,c)$-colour deletion removes $e(u,r)$ colours from $L(u)$.
        Thus, by \cref{cdlemma2}, $e(u,r)=1$.
        Moreover, if there is a polarised edge $e=ur$ with $R(e,u,r)=(c_u,c_r)$, then only $c_r$ removes a colour from $u$.
        Therefore, there are no polarised edges incident with $r$.
        Since $r$ is central, by \ref{tk a}, it now follows that $F(Q)=\emptyset$ and, by \cref{no parallel edges lemma}, either $G(Q)$ is an odd wheel if $k=3$, or a complete graph with $k+1$ vertices otherwise.
        In either case, $G(Q-r)$ is a Gallai tree consisting of a single block, so, by \cref{the real v=h claim}, $\Gamma_c$ is uniform for all $c \in C$.
        It now follows that $\Gamma|V(Q-r)$ is also uniform.
        We then note that, for each $c \in C$, we have $c \in L(u)$ for all $u \in N_Q(r)=V(Q-r)$, since an $(r,c)$-colour deletion removes a colour from $u$. So $C \subseteq L(u)$ for all $u \in V(Q-r)$, as required.
    \end{subproof}

    \begin{claim}
        \label{rhc}
        When $r=h$, \ref{tk d} holds.
    \end{claim}
    \begin{subproof}
        If $Q$ is a polar odd wheel, then \ref{tk d} holds by \cref{More than 4}(iii).  So we may assume that $G(Q)$ is not an odd wheel.
        Moreover, it suffices to prove \ref{tk d} in the case that $Q' = Q \ba e$ for some $e \in E(Q)$.
        We have two cases: either $e$ is incident with $r$, or $e$ is not incident with $r$.
        First assume $e$ is incident with $r$.  Then $Q'-r = Q-r$.
        We let $(Q'-r,\Gamma_c')$ be the $(r,c)$-colour deletion from $(Q',\Gamma)$, where $\Gamma_c'=(L_c',R_c')$.
        Towards a contradiction, assume that $Q'-r$ is not $\Gamma_c'$-colourable.
        Let $e=ur$ for some $u \in V(Q) \setminus \{r\}$.
        Then $\Gamma_c'$ is a degree-polar assignment for $Q'-r$.
        By \cref{dcpolarthm}, $\Gamma_c'$ is bad.
        Each edge of $Q'$ that is incident to $r$, say $rv$, removes at most one colour from $L(u)$.
        In particular, since $e$ is in $Q$ but not $Q'$, we have $|L_c'(u)| > k-e_Q(u,r) = d_{Q'-r}(u)$, implying $\Gamma_c'$ is not bad.
        Hence we may assume that $r$ is not incident with $e$.
        Let $Q''$ be the polar graph obtained from $Q$ by adding $e$ to $F(Q)$.
        We let $\Gamma''=(L,R'')$ where $R''|Q = R$ and $R''(e,u,v)=(a,a')$ such that $a\notin L(u)$ and $a' \notin L(v)$.
        It is clear that $\phi$ is a $\Gamma$-colouring of $Q'$ if and only if $\phi$ is a $\Gamma''$-colouring of $Q''$.
        Since $e$ is a polarised edge not incident with $r$, it follows that $r$ is $k$-unrestricted in $Q'$ by \cref{rha}.
        This proves the claim.
    \end{subproof}

    Now, by \cref{rha,rhb,rhc}, the theorem holds when $r=h$, so assume that $r \neq h$.
    Recall that, in this case, $G(Q)$ is not $k$-symmetric.
    If $Q$ is a polar odd wheel, then $k=3$, and 
    the theorem holds by \cref{More than 4}.
    So we may now assume that $G(Q)$ is not an odd wheel.

    \begin{claim}
        \label{existpara}
        There exists a vertex $u \in V(Q) \setminus \{h,r\}$ such that $e_Q(h,u) \ge 2$.
    \end{claim}
    \begin{subproof}
        Consider $T=Q-h$, so $G(T)$ is a Gallai tree.
        We will show that $T$ has at least two internal vertices.
        If $T$ consists of a single block, then that block contains at least two vertices, as otherwise $G(Q)$ is $k$-symmetric; so $T$ has at least two internal vertices in this case.
        Otherwise, when $T$ has more than one block, it has at least two leaf blocks, each of which contains an internal vertex (which is not in any other block).
        Thus, in either case, $T$ has at least two internal vertices, $u_1$ and $u_2$ say.
        Therefore, by \cref{no parallel edges lemma}(ii), $e(h,u_1) \ge 2$ and $e(h,u_2) \ge 2$.
        Now, either $r \neq u_1$ or $r \neq u_2$, so we let $u \in \{u_1,u_2\}$ such that $r \neq u$.
    \end{subproof}
    Let $u$ be as given by \cref{existpara}.
    If every parallel class in $Q$ has at most one non-polarised edge, then there is at least one polarised edge between $h$ and $u$, where $r \notin \{h,u\}$.
    This shows that $r$ is not central.

    We now show that $r$ is $k$-unrestricted.
    Let $\Gamma=(L,R)$ be a $k$-polar assignment for $Q$.  For any $c \in L(r)$, let $(Q-r,\Gamma_c)$ be the $(r,c)$-colour deletion from $(Q,\Gamma)$, where $\Gamma_c=(L_c,R_c)$.
    Again, let $T=Q-h$, so $G(T)$ is a Gallai tree with no $K_2$-blocks.
    We have two cases: either $r$ is a cut vertex of $T$, or $r$ is an internal vertex of $T$.
    First assume that $r$ is an internal vertex of $T$.
    Then $T-r=(Q-r)-h$ is connected.
    Since $T$ has no $K_2$-blocks, every vertex of $T$ has degree at least two.
    In particular, $r$ has degree at least two in $T$, so $e_Q(r,h) \le k-2$.
    Thus, for $c \in L(r)$, we have $|L_c(h)| \ge 2$.
    By \cref{dcpolarhdthm}, if $Q-r$ is not $\Gamma_c$-colourable, then $(Q-r,h) \in \mathcal{N}_{|L_c(h)|}$.
    But, by \cref{existpara}, there exists a vertex $u$ in $Q-r$ such that $e_{Q-r}(h,u) \ge 2$, so $Q-r$ is not simple, implying $(Q-r,h) \notin \mathcal{N}_{|L_c(h)|}$ by \ref{N3}.
    Therefore $r$ is not $k$-restricted.

    Next assume that $r$ is a cut vertex of $T$.
    In this case, $T-r$ is not connected, so we cannot apply \cref{dcpolarhdthm} directly; instead, we find a particular polar subgraph on which to apply \cref{dcpolarhdthm}.
    For any connected component $C$ of $T-r$, we consider the polar subgraph $Q[V(C) \cup \{r\}]$.
    Let $\mathcal{S}$ be the set of all such polar subgraphs; that is, let $\mathcal{S} = \{Q[V(C) \cup \{r\}] : C \textrm{ is a component of $T-r$}\}$.
    Note that for any polar graph in $\mathcal{S}$, the underlying graph is a Gallai tree, since $G(T)$ is a Gallai tree.
    Any two distinct polar graphs in $\mathcal{S}$ are edge-disjoint, and have only the vertex $r$ in common.
    Moreover, the union of all members of $\mathcal{S}$ is $T$.
    Since $T$ has no $K_2$-blocks, each member of $\mathcal{S}$ has no $K_2$-blocks.
    Thus $d_S(r) \ge 2$ for each $S \in \mathcal{S}$.
    It follows that $|\mathcal{S}| \le d_T(r)/2$.
    
    Suppose that $r$ is $k$-restricted on $c$.
    Then for every $d \in L_c(h)$, the $(h,d)$-colour deletion from $(Q-r,\Gamma_c)$ is not colourable.
    For $S \in \mathcal{S}$, let $S^+=Q[(V(S)-\{r\}) \cup \{h\}]$.
    Now $Q-r$ is the union of all members of $\{S^+ : S \in \mathcal{S}\}$, where any two members of this set have only the vertex $h$ in common.
    For each $d \in L_c(h)$ there exists some $S_d \in \mathcal{S}$ such that $h$ is $\Gamma_c$-restricted on $d$ in $(S_d)^+$.
    As $|L_c(h)| \ge k-e(r,h)$, and $e(r,h)=k-d_T(r)$, we have $|L_c(h)| \ge d_T(r)$.
    Since $|\mathcal{S}| \le d_T(r)/2$, there exists some $S' \in \mathcal{S}$ for which $h$ is $\Gamma_c$-restricted on at least two colours in $(S')^+$; let $c_1$ and $c_2$ be two such colours.
    We obtain a polar assignment $\Gamma'=(L',R_c)$ for $(S')^+$ by letting $L'(h)=\{c_1,c_2\}$ and $L'(v)=L_c(v)$ for $v \neq h$.
    Now $(S')^+-h$ is connected, $|L'(h)| =2$, and $(S')^+$ is not $\Gamma'$-colourable.
    Therefore, by \cref{dcpolarhdthm}, $((S')^+,h) \in \mathcal{N}_2$.
    However, there is a leaf block of $T$ that is contained in $(S')^+-h$, so, by \cref{no parallel edges lemma}, there is $u \in V((S')^+-h) \setminus \{r\}$ such that $e(u,h) \ge 2$. Therefore $((S')^+,h) \notin \mathcal{N}_2$.
    From this contradiction, we deduce that $r$ is not $k$-restricted.

    We have now shown, in the case that $r \neq h$, that $r$ is not central and $r$ is not $k$-restricted, so \ref{tk a} and \ref{tk b} hold.
    Moreover, since $r$ is not $k$-restricted in $Q$, it follows that $r$ is not $k$-restricted in $Q\ba e$ for any $e \in E(Q)$, and $\ref{tk d}$ follows, in the case that $r \neq h$.
    This completes the proof.
\end{proof}

\begin{corollary}
    \label{tk c}
    Let $Q$ be a polar graph with $G(Q) \in \mathcal{T}_k^-$, for $k \geq 3$, and let $\Gamma$ be a $k$-polar assignment for $Q$. Then
    $Q$ is not $\Gamma$-colourable if and only if 
    \begin{enumerate}[label=\rm(\Roman*)]
        \item $F(Q)=\emptyset$,
        \item $G(Q)$ is an odd wheel if $k=3$, or a complete graph with $k+1$ vertices otherwise, and
        \item $\Gamma$ is uniform.
    \end{enumerate}
\end{corollary}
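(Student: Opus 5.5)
The plan is to reduce non-colourability to restrictedness, so that \cref{tkminusone} does the real work.

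For the ``if'' direction, assume (I)--(III), so $\Gamma=(L,\emptyset)$ with $L$ uniform, say $L(v)=C$ with $|C|=k$, and $G(Q)$ is $K_{k+1}$ or (when $k=3$) an odd wheel. Then $Q$ is $\Gamma$-colourable only if $G(Q)$ is $k$-colourable. But $K_{k+1}$ is not $k$-colourable, and an odd wheel is not $3$-colourable, since the rim needs three colours and the hub is adjacent to every rim vertex. So $Q$ is not $\Gamma$-colourable.

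For the ``only if'' direction, suppose $Q$ is not $\Gamma$-colourable, where $\Gamma=(L,R)$. Fix any vertex $r$. For every $c\in L(r)$, the $(r,c)$-colour deletion from $(Q,\Gamma)$ is not colourable, since otherwise its colouring together with $\phi(r)=c$ would be a $\Gamma$-colouring of $Q$. Hence $r$ is $\Gamma$-restricted on every colour of $L(r)$, so $M_{Q,\Gamma}(r)=L(r)$ and $|M_{Q,\Gamma}(r)|=k\ge 3>1$.

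Applying \cref{tkminusone}\ref{tk b} to this $r$ gives (I) and (II) immediately. It also gives that $\Gamma|V(Q-r)$ is uniform and that $L(r)=M_{Q,\Gamma}(r)\subseteq L(u)$ for every $u\in V(Q-r)$. As all lists have size $k$, the inclusion forces $L(u)=L(r)$ for every $u$, so $L$ is uniform on all of $V(Q)$, which is (III). Note that $R$ is empty because $F(Q)=\emptyset$, so uniformity of $\Gamma$ reduces to uniformity of $L$.

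I expect no serious obstacle, since \cref{tkminusone}\ref{tk b} already carries the structural content. The only points needing care are that $M_{Q,\Gamma}(r)$ is all of $L(r)$, which holds because $k\ge 2$ guarantees at least two colours, and the upgrade from uniform on $V(Q-r)$ to uniform on $V(Q)$, which follows from the size-$k$ inclusion above. If one prefers, $r$ can be chosen as the hub, but the argument works for any vertex.
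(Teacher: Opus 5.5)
Your proof is correct and follows the paper's argument. Non-colourability makes a vertex $\Gamma$-restricted on every colour of its list; you then apply \cref{tkminusone}(ii) and upgrade the inclusion $L(r)\subseteq L(u)$ to equality because all lists have size $k$. The differences are cosmetic: the paper takes $r$ to be the hub (any vertex works, as you note), and for the converse it cites \cref{stthm}, whereas your direct observation that $K_{k+1}$ and odd wheels are not $k$-colourable is more elementary.
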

\begin{proof}
    Let $h$ be the hub of $Q$, where $h$ is arbitrary if $G(Q)$ is $k$-symmetric.
    Suppose that $Q$ is not $\Gamma$-colourable, and let $\Gamma = (L,R)$.
    Then $h$ is $\Gamma$-restricted for each $c \in L(h)$, so $M_{Q,\Gamma}(h) = L(h)$.
    Now, by \cref{tkminusone}\ref{tk b}, we have that (I) and (II) hold, $L(h) = M_{Q,\Gamma}(h) \subseteq L(u)$ for $u \in V(Q-h)$, and $\Gamma|V(Q-h)$ is uniform.
    For any $u \in V(Q-h)$, we have $|L(h)| = |L(u)|$, so $L(h)=L(u)$, and therefore $\Gamma$ is uniform.
    The other direction follows from \cref{stthm}.
\end{proof}

\subsection{\texorpdfstring{$k$}{k}-joinability}

In this subsection, we introduce the notion of a $k$-joinable graph, and show that any polar graph whose underlying graph is in $\mathcal{T}_k^-$ is $k$-joinable.

We first require a notion of when a colour is ``forced'' at a vertex.

\begin{definition}[forces]
    Let $Q$ be a polar graph with distinct vertices $u,v \in V(Q)$, and let $\Gamma = (L,R)$ be a polar assignment of $Q$ with $c_v \in L(v)$ and $c_u \in L(u)$.
    We say that \emph{$(v,c_v)$ forces $(u,c_u)$} in $(Q,\Gamma)$ if, for any $\Gamma$-colouring $\phi$ of $Q$ with $\phi(v)=c_v$, we have $\phi(u)=c_u$.
    We also say that $(v,c_v)$ \emph{forces} $u$ in $(Q,\Gamma)$ if there exists $c \in L(u)$ such that $(v,c_v)$ forces $(u,c)$ in $(Q,\Gamma)$.
\end{definition}


\begin{lemma}
\label{forcing-lemma}
Let $Q$ be a polar graph with distinct $u,v \in V(Q)$.
Let $\Gamma=(L,R)$ be an $f$-polar assignment for $Q$ such that $(v,c_v)$ forces $u$ in $(Q,\Gamma)$ for some $c_v \in L(v)$.
Then, for any $c \in \mathbb{N} \setminus \{c_v\}$, there exists an $f$-polar assignment $\Gamma'$ such that $(v,c_v)$ forces $(u,c)$ in $(Q,\Gamma')$.
\end{lemma}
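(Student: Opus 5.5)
The plan is a colour swap, in the style of the proof of \cref{switching}. By hypothesis there is some $c_u \in L(u)$ such that $(v,c_v)$ forces $(u,c_u)$ in $(Q,\Gamma)$. If $c = c_u$ there is nothing to do, so assume $c \neq c_u$. Since $c \neq c_v$, we want to relabel $c_u$ as $c$ without disturbing $c_v$. The natural device is a bijective relabelling of colours that fixes $c_v$ and sends $c_u$ to $c$. This is well defined precisely because $c_v \notin \{c_u,c\}$ (we have $c\neq c_v$ by assumption, and $c_u\neq c_v$ is treated in the last paragraph).

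First, I will take a permutation $\sigma$ of $\mathbb{N}$ that swaps $c_u$ and $c$ and fixes every other colour, in particular $c_v$. Define $\Gamma' = (L',R')$ by $L'(w) = \sigma(L(w))$ for every $w\in V(Q)$, and $R'(e,x,y) = (\sigma(a),\sigma(b))$ whenever $R(e,x,y)=(a,b)$. Then $|L'(w)| = |L(w)| = f(w)$, so $\Gamma'$ is an $f$-polar assignment. The symmetry condition on $R'$ is inherited from $R$.

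Next, I will check that $\phi \mapsto \sigma\circ\phi$ is a bijection from $\Gamma$-colourings to $\Gamma'$-colourings. Condition \ref{Gamma 1} transfers directly. For \ref{Gamma 2}, $\sigma$ is injective, so it preserves inequality on non-polarised edges, and $R(e,x,y)\ne(\phi(x),\phi(y))$ holds if and only if $R'(e,x,y)\ne(\sigma\phi(x),\sigma\phi(y))$. The inverse map is induced by $\sigma^{-1}=\sigma$. Since $\sigma(c_v)=c_v$, we have $c_v \in L'(v)$, and $\Gamma'$-colourings with $v$ coloured $c_v$ correspond exactly to $\Gamma$-colourings with $v$ coloured $c_v$. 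Every such $\Gamma$-colouring gives $u$ the colour $c_u$, so every such $\Gamma'$-colouring gives $u$ the colour $\sigma(c_u)=c$, and $c\in L'(u)$. Hence $(v,c_v)$ forces $(u,c)$ in $(Q,\Gamma')$.

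The main obstacle is the degenerate case in which $c_u = c_v$: the required swap would have to move $c_v$ itself, so the argument above does not apply. I would handle this case by modifying $\Gamma$ first. Take a colour $c^\ast$ that is new for $\Gamma$ and also different from $c$. Perform the swap of $c_v$ with $c^\ast$ only in the lists and polarisation entries belonging to vertices other than $v$. This is legitimate because $c_v$ is only ever compared with neighbours' colours through edges, and the same relabelling can be applied consistently to the $v$-coordinate of $R$ versus the other coordinate. Concretely, I would define a per-vertex relabelling $\sigma_w$, equal to the identity for $w=v$ and to the swap of $c_v$ and $c^\ast$ otherwise. This changes the constraints on edges at $v$: a non-polarised edge $vw$ must then become a polarised edge with $R'=(c_v,c^\ast)$, which is not available without altering $F(Q)$. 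So I expect this approach to fail in general. Instead, I would argue directly that $c_u = c_v$ is impossible when $uv$ is a non-polarised edge. For the remaining cases, I would look for the correct reading of the hypothesis, namely the colour forced. If necessary, I would restrict the statement to $c_u\neq c_v$, which is presumably how the lemma is applied later.
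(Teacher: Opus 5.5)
\textbf{There is a genuine gap: the case $c_u = c_v$ is not proved, and you propose to drop it.}

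Your permutation argument is correct when $c_u \neq c_v$. The case $c_u = c_v$ is not excluded by the hypotheses. For instance, $u$ and $v$ may be non-adjacent, or joined only by polarised edges. Even with a non-polarised edge $uv$, the forcing can hold vacuously: if $L(u)=\{c_v\}$, then $c_u=c_v$ is the only possible witness, so your claim that this case is ``impossible'' fails. More importantly, this is exactly the situation the lemma exists for. It is what motivates the definition of a $k$-mimic: even when $(v,c_v)$ forces $(u,c_v)$, one can rearrange the assignment so that $(v,c_v)$ forces any other colour on $u$. As you observed, any global relabelling sending $c_u$ to $c$ must move $c_v$ when $c_u=c_v$. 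So the permutation approach cannot close this case, and restricting the statement is not an option.

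The missing idea is that you need not relabel colours at all; it suffices to change the list at $u$ alone.
\begin{enumerate}
\item If $c=c_u$ there is nothing to do, so assume $c\neq c_u$.
\item Arrange $c\notin L(u)$. If $c\in L(u)$, swap $c$ with a colour new for $\Gamma$. This swap fixes both $c_u$ and $c_v$, since $c\notin\{c_u,c_v\}$ and the new colour lies in neither list. So, by your own bijection argument, $(v,c_v)$ still forces $(u,c_u)$.
\item Set $L'(u)=(L(u)\setminus\{c_u\})\cup\{c\}$, and keep every other list and $R$ unchanged. Then $\Gamma'$ is an $f$-polar assignment, and $c_v\in L'(v)$ because $u\neq v$.
\item Let $\phi$ be a $\Gamma'$-colouring with $\phi(v)=c_v$ and $\phi(u)\neq c$. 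Then $\phi(u)\in L(u)\setminus\{c_u\}$. The other lists and all edge constraints are unchanged, so $\phi$ is a $\Gamma$-colouring with $\phi(v)=c_v$ and $\phi(u)\neq c_u$, a contradiction.
\end{enumerate}
Hence $(v,c_v)$ forces $(u,c)$ in $(Q,\Gamma')$ in every case, including $c_u=c_v$. This is essentially the paper's proof; the paper makes $c$ new for $\Gamma$ outright, which is slightly more than needed.
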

\begin{proof}
    By assumption, there exists $c_u \in \mathbb{N}$ such that $(v,c_v)$ forces $(u,c_u)$ in $(Q,\Gamma)$.
    We may assume that $c_u \neq c$, otherwise $\Gamma'=\Gamma$ is our desired $f$-polar assignment.
    Recall that we say $c' \in \mathbb{N}$ is {\em new for $\Gamma=(L,R)$} if $c' \notin L(v)$ for all $v \in V(Q)$, and $c' \notin R(e,u,v)$ for all $e=uv \in F(Q)$.
    We may also assume that $c$ is new for $\Gamma$, as if $c$ is not new, then we perform the colour swap of $c$ and $c'$ for some $c'$ that is new for $\Gamma$
    (where this operation does not affect $c_v$, since $c \neq c_v$).

    Now let
    \[
    L_u(x) = \begin{cases}
        L(x) \setminus \{c_u\} \cup \{c\} & \mbox{if $x=u$}\\
        L(x)& \mbox{otherwise,}
    \end{cases}
    \]
    and let $\Gamma_u = (L_u,R)$.
    Then it is not difficult to see that $\Gamma_u$ is an $f$-polar assignment for which $(v,c_v)$ forces $(u,c)$ in $(Q,\Gamma_u)$, as required.
\end{proof}

We would like to be able to talk about the colours that one vertex, $v$ say, forces on another vertex, $u$ say.
However, \cref{forcing-lemma} tells us that when we only know of a single colour $c_v \in L(v)$ such that $(v,c_v)$ forces $u$, we cannot say anything meaningful about the colours that are forced on $u$.
This motivates our next two definitions.

\begin{definition}[$k$-forces]
    Let $Q$ be a polar graph with polar assignment $\Gamma=(L,R)$, and distinct $u,v \in V(Q)$.
    Let $k$ be a positive integer.
    We say that $v$ {\em $k$-forces} $u$ in $(Q,\Gamma)$ if there exists $C \subseteq L(v)$ with $|C| \ge k$ such that $(v,c)$ forces $u$ in $(Q,\Gamma)$ for each $c \in C$.
\end{definition}

\begin{definition}[$k$-mimic]
    Let $Q$ be a polar graph with distinct $u,v\in V(Q)$.
    Let $\mathcal{L}$ be the set of all $k$-polar assignments $\Gamma$ for $Q$ such that $v$ $2$-forces $u$ in $(Q,\Gamma)$.
    We say that $u$ is a {\em $k$-mimic} of $v$ in $Q$ if, for all $\Gamma=(L,R) \in \mathcal{L}$ and all $c \in L(v)$ such that $(v,c)$ forces $u$ in $(Q,\Gamma)$, we have that $(v,c)$ forces $(u,c)$ in $(Q,\Gamma)$.
\end{definition}

Intuitively, if $u$ is a $k$-mimic of $v$, then whenever $v$ $2$-forces $u$, we have that $v$ forces $u$ to be the same colour as $v$.
Note that if
$v$ does not $2$-force $u$ for any $k$-polar assignment,
then $u$ is vacuously a $k$-mimic of $v$.

\begin{definition}[$k$-joinable]
\label{joinability def}
We say that a polar graph $Q$ is {\em $k$-joinable} if, for all $e=uv \in E(Q)$, we have that $Q\ba e$ is $k$-unrestricted and $u$ is a $k$-mimic of $v$ in $Q\ba e$.
If $G$ is a graph such that any polar graph $Q$ with $G(Q)=G$ is $k$-joinable, then we say that $G$ is {\em $k$-joinable}.
\end{definition}

As we will see in \cref{joinablesimulation}, for $k$-joinable graphs we can simulate one side of a Haj\'{o}s join using polarised edges.
We first work towards showing that if $G$ is $k$-polar-unrestricted, then $G$ is $k$-joinable.

\begin{lemma}
\label{unrestricted vacuous}
Let $G$ be a $k$-polar-unrestricted graph, let $Q$ be a polar graph with $G(Q)=G\ba e$ for some $e=uv \in E(G)$, and let $\Gamma=(L,R)$ be a $k$-polar assignment for $Q$.
Then there is no $c \in L(v)$ such that $(v,c)$ forces $u$ in $(Q,\Gamma)$.
\end{lemma}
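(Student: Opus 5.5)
Suppose, for a contradiction, that there is some $c \in L(v)$ such that $(v,c)$ forces $u$ in $(Q,\Gamma)$. Then there is $d \in L(u)$ with $(v,c)$ forcing $(u,d)$. The plan is to re-insert the missing edge $e=uv$ as a polarised edge, obtaining a polar graph $Q^+$ with $G(Q^+)=G$, and to choose its polarisation so that $v$ becomes $\Gamma^+$-restricted on $c$. This contradicts the hypothesis that $G$ is $k$-polar-unrestricted.

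Concretely, let $Q^+$ be the polar graph with underlying graph $G$ and $F(Q^+)=F(Q)\cup\{e\}$. Let $\Gamma^+=(L,R^+)$, where $R^+$ agrees with $R$ on $Z(F(Q))$ and $R^+(e,u,v)=(d,c)$. Then $\Gamma^+$ is a $k$-polar assignment for $Q^+$, since the list assignment is unchanged.

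Next, take the $(v,c)$-colour deletion $(Q^+-v,\Gamma^+_c)$ from $(Q^+,\Gamma^+)$; we need it to be non-colourable. Suppose $\phi'$ were a $\Gamma^+_c$-colouring of $Q^+-v$, and extend it by $\phi(v)=c$. By the colour deletion lemma, $\phi$ is a $\Gamma^+$-colouring of $Q^+$. Its restriction to $Q=Q^+\ba e$ is then a $\Gamma$-colouring of $Q$ with $\phi(v)=c$, because removing an edge only removes a constraint. Since $(v,c)$ forces $(u,d)$ in $(Q,\Gamma)$, we get $\phi(u)=d$. But then $R^+(e,u,v)=(\phi(u),\phi(v))$, which violates \ref{Gamma 2} for the polarised edge $e$ in $Q^+$. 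This is a contradiction.

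Hence $v$ is $\Gamma^+$-restricted on $c$ in $Q^+$, so $Q^+$ is $k$-restricted. Since $G(Q^+)=G$, this contradicts the assumption that $G$ is $k$-polar-unrestricted. The only care needed is a bookkeeping point: the polarisation $R^+$ must be well defined on both orderings, so we also set $R^+(e,v,u)=(c,d)$. Apart from that the argument is direct, and I do not expect a real obstacle.
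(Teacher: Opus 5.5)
Your proposal is correct and follows essentially the same argument as the paper: re-insert $e$ as a polarised edge with $R^+(e,u,v)=(d,c)$, observe that no $\Gamma^+$-colouring can have $v$ coloured $c$, and conclude that $v$ is $\Gamma^+$-restricted on $c$, contradicting that $G$ is $k$-polar-unrestricted. Your explicit passage through the colour-deletion lemma, and your note on the symmetric value $R^+(e,v,u)=(c,d)$, just make explicit what the paper leaves implicit.
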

\begin{proof}
    Towards a contradiction, assume there exists $c \in L(v)$ such that $(v,c)$ forces $(u,d)$ in $(Q,\Gamma)$, for some $d \in \mathbb{N}$.
    Consider the polar graph $Q' = (G,F(Q) \cup \{e\})$.
    Let $\Gamma'=(L',R')$ be a $k$-polar assignment for $Q'$ where $\Gamma'|Q=\Gamma$ and $R'(e,u,v)=(d,c)$.
    Note that since $(v,c)$ forces $(u,d)$ in $(Q,\Gamma)$, no $\Gamma'$-colouring $\phi$ of $Q'$ has $\phi(v)=c$, as in any such colouring we have $\phi(u)=d$, violating \ref{Gamma 2} on $e$.
    Hence $v$ is $\Gamma'$-restricted on $c$ in $Q'$, contradicting the fact that $G$ is $k$-polar-unrestricted.
\end{proof}

\begin{corollary}
\label{unrestricted joinable corollary}
If $G$ is a $k$-polar-unrestricted graph, then $G$ is $k$-joinable.
\end{corollary}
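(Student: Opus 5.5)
The plan is to check the two conditions of \cref{joinability def} directly for an arbitrary polar graph $Q$ with $G(Q)=G$ and an arbitrary edge $e=uv\in E(Q)$. Both conditions should follow quickly from the $k$-polar-unrestrictedness of $G$, using \cref{unrestricted vacuous} for the second one.

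For the first condition, we must show that $Q\ba e$ is $k$-unrestricted. Suppose instead that some $k$-polar assignment $\Gamma=(L,R)$ of $Q\ba e$ makes some vertex $w$ $\Gamma$-restricted on a colour $c$. We then restore $e$ as a polarised edge: let $Q'=(G,F(Q)\cup\{e\})$, and extend $R$ by setting $R'(e,u,v)=(a,a')$ with $a\notin L(u)$ and $a'\notin L(v)$. This is the same device used in the proof of \cref{rhc}. The added edge imposes no constraint, since no admissible colouring can take the forbidden pair. Hence $\Gamma'$-colourings of $Q'$ are exactly the $\Gamma$-colourings of $Q\ba e$. The same holds after the $(w,c)$-colour deletion, because $e$ removes no colour from any list. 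So $w$ is $\Gamma'$-restricted on $c$ in $Q'$, and $G(Q')=G$. This contradicts the assumption that $G$ is $k$-polar-unrestricted.

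For the second condition, we must show that $u$ is a $k$-mimic of $v$ in $Q\ba e$. By \cref{unrestricted vacuous}, applied to the polar graph $Q\ba e$ with $G(Q\ba e)=G\ba e$, no $k$-polar assignment $\Gamma$ of $Q\ba e$ has a colour $c\in L(v)$ such that $(v,c)$ forces $u$. In particular, $v$ never $2$-forces $u$, so the set $\mathcal{L}$ in the definition of $k$-mimic is empty, and $u$ is vacuously a $k$-mimic of $v$. Since $e$ was an arbitrary edge and the roles of $u$ and $v$ are symmetric, this holds whichever end is labelled $u$.

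The only mildly delicate step is the first: we must confirm that polarising $e$ with colours absent from the lists neither constrains colourings nor changes colour deletions. With that check done, every polar graph on $G$ is $k$-joinable, and so $G$ is $k$-joinable.
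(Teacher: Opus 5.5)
Your proof is correct and follows the paper's argument: like the paper, you check the two conditions of \cref{joinability def} directly, and you obtain the mimic condition vacuously from \cref{unrestricted vacuous}. The one difference is in the first condition. The paper just notes that $Q\ba e$ inherits $k$-unrestrictedness from $Q$ (deleting an edge only removes constraints, so a restriction in $Q\ba e$ persists in $Q$ under any extension of $\Gamma$), whereas your neutralised polarised edge proves the same thing with slightly more work than needed.
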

\begin{proof}
    Let $Q$ be a polar graph with underlying graph $G$,
    and let $e=uv \in E(Q)$.
    Since $Q$ is $k$-unrestricted, $Q \ba e$ is also $k$-unrestricted.
    Moreover, by \cref{unrestricted vacuous}, $u$ is vacuously a $k$-mimic of $v$.
    This shows that any polar graph $Q$ with underlying graph $G$ is $k$-joinable, as required.
\end{proof}

Note that it follows from the definition of $k$-joinable and \cref{unrestricted joinable corollary} that if $G$ is $k$-joinable, then $G \ba e$ is $k$-joinable.
So $k$-joinable graphs are closed under subgraphs.

We now prove the main result of this subsection.

\begin{theorem}
    \label{tkminustwo}
    Let $G$ be a graph in $\mathcal{T}_k^-$, for $k \geq 3$.  Then $G$ is $k$-joinable.
\end{theorem}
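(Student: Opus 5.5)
Fix a polar graph $Q$ with $G(Q)=G$ and an edge $e=uv$, and put $Q'=Q\ba e$. Two things must be shown: $Q'$ is $k$-unrestricted, and $u$ is a $k$-mimic of $v$ in $Q'$.

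The first is immediate from \cref{tkminusone}\ref{tk d}, since $Q'$ is a proper polar subgraph of $Q$ and so every vertex of $Q'$ is $k$-unrestricted. The real work is the mimic property.

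\textbf{Reduction.} Let $\Gamma=(L,R)$ be a $k$-polar assignment for $Q'$ under which $v$ $2$-forces $u$. Let $c\in L(v)$ be such that $(v,c)$ forces $(u,d)$, and suppose for contradiction that $d\neq c$. Form $Q''$ from $Q$ by declaring $e$ polarised, extend $\Gamma$ to $\Gamma''$ with $R''(e,u,v)=(d,c)$, and argue as in the proof of \cref{unrestricted vacuous}: $v$ is then $\Gamma''$-restricted on $c$ in $Q''$.

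\textbf{Extracting a second restricted colour.} By hypothesis there is a second colour $c'\neq c$ in $L(v)$ with $(v,c')$ forcing $(u,d')$ for some $d'$. If $d'\neq c'$, the same polarisation trick would also make $v$ restricted on $c'$. However, a single polarised edge $e$ can carry only one pair, so I will not try to force both colours through $e$. Instead I use the forcing to get a contradiction directly from the structure theorem.

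\textbf{Structural case analysis.} Since $v$ is $k$-restricted in $Q''$, \cref{tkminusone}\ref{tk a} says $v$ is central in $Q''$, where $e$ is now polarised and incident with $v$.
\begin{enumerate}
\item If $Q''$ is not a polar odd wheel, centrality means every polarised edge is incident with $v$ and each parallel class has at most one non-polarised edge.
\item If $Q''$ is an odd wheel, centrality means $e\in\Ev(v)$, which holds automatically since $e$ is incident with $v$.
\end{enumerate}
Next I take the $(v,c)$-colour deletion from $(Q'',\Gamma'')$. Tracing the proof of \cref{tkminusone} in the case where $v$ is the hub, this deletion is a bad degree-polar assignment on $Q''-v$ (a polar Gallai tree). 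When $v$ is not the hub, \cref{tkminusone} shows $v$ is not restricted in non-wheel graphs, leaving only the odd-wheel case, which is handled by \cref{odd wheels in G2} ($h$-bad with $k=2$).

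In the bad assignment, the edge $e$ at $u$ removes the colour $d$ only because $R''(e,u,v)=(d,c)$. Badness forces $|L_c(u)|=d_{Q''-v}(u)$, so the pair $(d,c)$ must delete a colour from $L(u)$; that is, $d\in L(u)$. The bad assignment then pins down the list of $u$ in its block.

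Now compare with the colour $c'$. In $Q'$, colouring $v$ with $c'$ deletes the colours determined by the other $vu$-edges and by $R$. The block structure of the bad assignment (uniform lists, or light/normal alternation for wheels) lets me exhibit a $\Gamma$-colouring with $\phi(v)=c'$ in which $u$ takes two different values. Concretely, $Q'-v$ with the $(v,c')$-deletion has some vertex whose list is strictly larger than its degree, so by \cref{dcpolarthm} and the flexibility of a non-bad Gallai-tree assignment, $u$ is not forced. This contradicts $(v,c')$ forcing $u$.

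\textbf{Main obstacle.} The hard step is the last one: showing rigorously that $(v,c')$ cannot also force $u$ to a colour, which requires careful handling of the parallel classes between $u$ and $v$ and of the hub versus non-hub cases. As a fallback, I would show that $v$ must be restricted on two colours in a suitably modified $Q''$; then \cref{tkminusone}\ref{tk b} gives $F=\emptyset$, contradicting the fact that $e$ is polarised.
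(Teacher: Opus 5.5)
\textbf{There is a genuine gap in the final step.} Your setup matches the paper's. Unrestrictedness of $Q\ba e$ comes from \cref{tkminusone}\ref{tk d}, and polarising $e$ as $(d,c)$ makes $v$ restricted and hence central. The trouble is the decisive step. The principle you invoke is false: a colourable (non-bad) degree-polar assignment with a surplus list can still force a vertex.

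Take $G=K_{k+1}$ with a uniform $k$-list assignment. In $Q\ba e$, the $(v,c')$-deletion leaves $u$ with $k>d_{Q-v}(u)$ colours. Yet $(v,c')$ forces $(u,c')$ for every $c'\in L(v)$, because the other $k-1$ vertices must use all colours except $c'$. So no argument about $c'$ on its own can produce a contradiction. What must be refuted is a second forced colour combined with $d\neq c$, and your proposal never uses $d\neq c$ at that point.

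Your fallback also fails. Since $e$ carries a single pair, polarising it as $(d,c)$ leaves the $(v,c')$-deletion identical to the one in $Q\ba e$, which is colourable. Leaving $e$ unpolarised does not make $v$ restricted on $c$. So there is no polar graph with underlying graph $G$ in which $v$ is restricted on both colours, and \cref{tkminusone}\ref{tk b} cannot be applied as a black box.

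\textbf{The missing idea} is to stay on $Q-v$ and compare two assignments simultaneously. For $i\in\{0,1\}$, let $\Gamma_i'$ be the $(v,c_i)$-colour deletion from $(Q\ba e,\Gamma)$ with $d_i$ additionally removed from the list of $u$. Each $\Gamma_i'$ is uncolourable exactly because $(v,c_i)$ forces $(u,d_i)$. When $v$ is a hub, each is therefore a bad degree-polar assignment by \cref{dcpolarthm}. The cases then go as follows.
\begin{itemize}
\item If $G$ is neither an odd wheel nor $k$-symmetric, centrality forces $v$ to be the hub. An internal vertex $w\neq u$ of $Q-v$ has $e_Q(w,v)\ge 2$ by \cref{no parallel edges lemma}. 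Then \cref{cdlemma2} shows that one of $\Gamma_0',\Gamma_1'$ leaves $w$ a surplus colour, contradicting badness.
\item For $I_k$, apply \cref{cdlemma2} at $u$.
\item For $K_{k+1}$, or an odd wheel with $v$ its hub, \cref{cdlemma1} rules out polarised edges at $v$, and both $\Gamma_i'$ are uniform. Together these give $c_0\in L(u)$ and then $d_0=c_0$.
\item For an odd wheel with $v$ not the hub, \cref{odd wheels in G2} gives $L_i'(w)=L_i'(u)$ for the hub or a common neighbour $w$, again forcing $c_0=d_0$.
\end{itemize}
This re-runs the reasoning behind \cref{tkminusone}\ref{tk b} on two different polarisations at once, which is what your single-polarisation approach cannot capture.
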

\begin{proof}
    Towards a contradiction, assume there exists a polar graph $Q$ with $G(Q)=G$ that is not $k$-joinable.
    By \cref{tkminusone}\ref{tk d}, $Q\ba z$ is $k$-unrestricted for any $z \in E(Q)$.
    Thus there exists an edge $e=ur$ such that $u$ is not a $k$-mimic of $r$ in $Q \ba e$.
    Therefore, there exists a $k$-polar assignment $\Gamma=(L,R)$ for $Q \ba e$ with distinct $c_0,c_1 \in L(r)$ such that $(r,c_0)$ forces $(u,d_0)$ and $(r,c_1)$ forces $(u,d_1)$ in $(Q\ba e,\Gamma)$, for $d_0,d_1 \in L(u)$, with $c_0 \neq d_0$.
    Let $(Q-r,\Gamma_i)$ be the $(r,c_i)$-colour deletion from $(Q\ba e,\Gamma)$ for $i \in \{0,1\}$, and let $\Gamma_i=(L_i,R_i)$.
    Then, let $\Gamma_{i}'=(L_i',R_i)$ be the polar assignment for $Q-r$ where $L_i'$ is obtained from $L_i$ by removing $d_i$ from $L_i(u)$; that is, $L_i'(u) = L_i(u) \setminus \{d_i\}$ and $L_i'(v) = L_i(v)$ for each $v \neq u$.
    Then $Q-r$ is not $\Gamma_{i}'$-colourable for $i \in \{0,1\}$, for otherwise there is a $\Gamma_i$-colouring of $Q-r$ where $u$ is not coloured $d_i$, contradicting that $(r,c_i)$ forces $(u,d_i)$.

    We can also obtain $(Q-r,\Gamma_i')$ as a colour deletion of a polar graph $Q'$ whose underlying graph is $G(Q)$.
    To see this, let $Q'$ be the polar graph with underlying graph $G(Q)$ and polarised edges $F(Q) \cup \{e\}$; that is, $Q$ is obtained from $Q$ by making $e$ a polarised edge (where if $e \in F(Q)$, then $Q'=Q$).
    Let $\Gamma'=(L',R')$ be the polar assignment for $Q'$ such that $R'(e,u,r)=(d_0,c_0)$ and $R'|Q = R$.
    Then $(Q-r,\Gamma_{0}')$ is the $(r,c_0)$-colour deletion from $(Q',\Gamma')$, so $r$ is $k$-restricted in $Q'$.
    Therefore, by \cref{tkminusone}\ref{tk a}, $r$ is central in $Q'$.

    Let $h$ be a hub of $Q$.
    If $G(Q)$ is not an odd wheel and is not $k$-symmetric, then $h$ is the only central vertex in $Q'$.
    So first we assume that $G(Q)$ is not an odd wheel and is not $k$-symmetric, and $r=h$.
    Now, for $i \in \{0,1\}$, we have that $\Gamma_i'$ is a degree-polar assignment for $Q-r$, and $Q-r$ is not $\Gamma_i'$-colourable, so, by \cref{dcpolarthm}, $\Gamma_i'$ is bad.
    Moreover, 
    $|L_{i}'(v)|=d_Q(v)-e_Q(v,r)$ for all $v \in V(Q-r)$.
    There are at least two internal vertices in $Q-r$, and so there is some internal vertex $v$ of $Q-r$ that is distinct from $u$.
    Then $e_Q(v,r) \geq 2$ by \cref{no parallel edges lemma}(ii).
    By \cref{cdlemma2}, either $|L_{0}'(v)|>d_Q(v)-e(v,r)$ or $|L_{0}'(v)|>d_Q(v)-e(v,r)$, so $Q-r$ is either $\Gamma_{0}'$- or $\Gamma_{0}'$-colourable, a contradiction. 

We may now assume that $Q$ is $k$-symmetric or an odd wheel.
If $Q \cong I_k$, then, by \cref{cdlemma2}, for some $i \in \{0,1\}$, we have $|L_i'(u)| > 0$, so $Q-r$ is $\Gamma_i'$-colourable, a contradiction.
Therefore we may assume that either $Q \cong K_{k+1}$ or $Q$ is an odd wheel.
If $r$ is a hub, then $\Gamma_i'$ is a degree-polar assignment for $Q-r$, so, by \cref{dcpolarthm}, $\Gamma_i'$ is bad.
As $Q-r$ consists of a single block, which is not a $K_2$-block, $\Gamma_i'$ is uniform.
By \cref{cdlemma1}, 
$r$ is not incident to a polarised edge.
Now, since $\Gamma_i'$ is uniform for $i \in \{0,1\}$, it follows that $c_0 = d_0$, a contradiction.

So we may assume that $Q$ is an odd wheel and $r$ is not the hub.
If $u$ is the hub, then let $w \in N_Q(r) \cap N_Q(u)$; otherwise, let $w$ be the hub.
In either case, $L_{i}'(w)=L(w)\setminus \{c_i\}$ and, by \cref{odd wheels in G2}, 
$L_{i}'(w) =L_{i}'(u)$. It then follows that $c_i=d_i$, a contradiction.
\end{proof}

\section{Haj\'os joins and the class \texorpdfstring{$\mathcal{T}_k$}{Tk}}

Let $\mathcal{T}_k$ be the smallest class of graphs that contains $\mathcal{T}_k^-$ and is closed under taking Haj\'{o}s joins, for $k \ge 3$.
    Note that since $\mathcal{T}_k^-$ contains odd wheels in the case when $k=3$, and complete graphs on $k+1$ vertices in the case when $k > 3$, we have that $\mathcal{H}_k \subseteq \mathcal{T}_k$.

    In this section, we begin with some structural properties of Haj\'os joins, and then consider some properties relating to polar list colouring.
    In particular, \cref{hajosextend} describes how polar list colourings can be used to simulate the colourability of the two graphs involved in a Haj\'os join.
    We then use these results to prove properties of graphs in $\mathcal{T}_k$.
    The main result in the section, \cref{tkes}, shows that the polar graphs with underlying graphs in $\mathcal{T}_k$ that are not $k$-choosable are those in $\mathcal{H}_k$ having no polarised edges.

\subsection{Haj\'{o}s joins and structural properties}

The first two lemmas are assumedly well known, but we were unable to find proofs in the literature; we omit a straightforward proof of the first, but provide a proof of the second for completeness.

\begin{lemma}
    \label{hajosconn0}
    Let $G$ be a Haj\'{o}s join of graphs $G_0$ and $G_1$.
    The graph $G$ is $2$-connected if and only if both $G_0$ and $G_1$ are $2$-connected.
\end{lemma}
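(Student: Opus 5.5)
Throughout, let $G = (G_0,v,u) \hajos (G_1,v,w)$. Write $G_0' = G_0 \ba uv$ and $G_1' = G_1 \ba vw$. Then $G$ is obtained from $G_0' \cup G_1'$ by identifying the two copies of $v$ and adding the new edge $uw$. The vertices $u$ and $w$ are distinct from $v$ and from each other, since $V(G_0) \cap V(G_1) = \{v\}$ and $u \in V(G_0)$, $w \in V(G_1)$. I will prove the two directions separately, using vertex-cut arguments in the style of \cref{2fanlemma} and Menger's theorem.

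\textbf{($\Leftarrow$).} Assume $G_0$ and $G_1$ are $2$-connected. First, $G$ is connected. Indeed, $G_0' $ is connected, because deleting a single edge from a $2$-connected graph leaves a connected graph. Likewise $G_1'$ is connected, and the two share $v$. Now let $x \in V(G)$; I show $G-x$ is connected.

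If $x = v$, note that $G_0 - v$ is connected and does not contain the edge $uv$, so $G_0 - v = G_0' - v$ is connected. Similarly $G_1' - v$ is connected. These two pieces are joined in $G - v$ by the edge $uw$.

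If $x \in V(G_0) \setminus \{v\}$, then $G_1' $ is still intact, so it suffices to show every vertex $y$ of $G_0' - x$ reaches $v$ or $u$ there; $u$ in turn reaches $v$ through $uw$ and $G_1'$ (when $x \neq u$). Since $G_0 - x$ is connected, there is a $(y,v)$-path $P$ in $G_0 - x$.
\begin{itemize}
\item If $P$ avoids the edge $uv$, it lies in $G_0' - x$ and we are done.
\item If $P$ uses $uv$, then its initial segment is a $(y,u)$-path in $G_0' - x$, and we are done.
\end{itemize}
When $x = u$, the edge $uv$ is not present in $G_0 - x$ at all, so $P$ already lies in $G_0' - u$ and reaches $v$.

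The case $x \in V(G_1) \setminus \{v\}$ is symmetric. Finally, $|V(G)| \ge 3$ holds trivially. So $G$ is $2$-connected.

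\textbf{($\Rightarrow$).} Assume $G$ is $2$-connected, and suppose for contradiction that $G_0$ is not (the case of $G_1$ is symmetric). I take a vertex $x$ such that $G_0 - x$ is disconnected; connectivity of $G_0$ itself follows because $G$ is connected and any $(y,v)$-path in $G$ can be rerouted through $uv$. The aim is to show that $x$ is also a cut vertex of $G$.
\begin{itemize}
\item If $x = v$: $G - v$ consists of $G_0 - v$ and $G_1 - v$ joined only by $uw$. Every component of $G_0 - v$ other than the one containing $u$ is therefore a component of $G - v$, so $v$ cuts $G$.
\item If $x \neq v$: consider the component $C$ of $G_0 - x$ containing $v$, together with another component $D$. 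If $u \notin V(D)$, then $D$ is attached to the rest of $G - x$ only through edges of $G_0$ at $x$, so $D$ is a component of $G - x$. Otherwise $u \in V(D)$; since the edge $uv$ joins $u$ and $v$ in $G_0 - x$ (as $x \ne u,v$), $D = C$, a contradiction.
\end{itemize}
The only remaining degenerate possibility is that $G_0$ has at most $2$ vertices. That would mean $G_0$ consists of parallel $uv$ edges, and then $G_0$ is not considered $2$-connected under the paper's definition, since it has fewer than $3$ vertices.

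\textbf{Main obstacle.} I expect the main difficulty to be this bookkeeping with parallel edges and small graphs, in particular the interaction with the definition of $2$-connected that requires at least $3$ vertices. The cleanest route is probably to interpret the statement with $G_0$ and $G_1$ having at least three vertices. Failing that, I would note that when $G_0 \cong I_k$ the join amounts to subdividing the edge, which would still need to be checked against the definition.
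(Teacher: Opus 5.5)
Your argument for ($\Leftarrow$) is correct, and so is the part of ($\Rightarrow$) showing that a disconnection or a cut vertex of $G_0$ gives one of $G$.

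The gap is the last case of ($\Rightarrow$). When $|V(G_0)|\le 2$, you observe that $G_0$ is not $2$-connected. That is the assumption you are trying to refute, not a contradiction; you would still need to show that $G$ is not $2$-connected.

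No such argument exists, because this case is a genuine counterexample to the statement as written. The paper omits its proof, so there is nothing there to fall back on. Take $G_0\cong I_2$ on $\{u,v\}$ and $G_1\cong K_3$ on $\{v,w,x\}$. The join $(G_0,v,u)\hajos(G_1,v,w)$ has edge set $\{uv,vx,xw,wu\}$, so it is $C_4$, which is $2$-connected. Yet $G_0$ has only two vertices, so it is not $2$-connected under the paper's definition.

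More generally, if $G_0\cong I_m$ with $m\ge 2$, the join is $G_1$ with $vw$ subdivided by $u$, plus $m-2$ extra copies of $uv$. This graph is $2$-connected whenever $G_1$ is. The first Haj\'os join in the paper's own construction of $T_k$ (of $I_k$ with $K_{k+1}$) is such an instance. So the subdivision check you defer in your last paragraph does not rescue the statement; carrying it out refutes it.

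The fix is to make the missing hypothesis explicit rather than treat it as bookkeeping. If you assume $|V(G_0)|,|V(G_1)|\ge 3$, the degenerate case never arises and your proposal is a complete proof.

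If you want a version with no size hypothesis, the true statement is: $G$ is $2$-connected if and only if each $G_i$ is either $2$-connected or isomorphic to $I_m$ for some $m\ge 2$. Proving this needs two extra observations.
\begin{enumerate}
\item If $G_0\cong K_2$, then $u$ has degree $1$ in $G$, so $w$ is a cut vertex of $G$. This case does give the contradiction you wanted.
\item If $G_0\cong I_m$ with $m\ge 2$, then $u$ is adjacent in $G$ to both $v$ and $w$. From this, $2$-connectivity of $G$ follows easily when $G_1$ is $2$-connected or is also some $I_{m'}$ with $m'\ge 2$.
\end{enumerate}

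As far as I can see, the paper's one use of the ``only if'' direction, in the proof of Theorem~\ref{final classify theorem}, is unaffected. There the only part that could have two vertices is a copy of $I_k$, which lies in $\mathcal{T}_k$ anyway.
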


\begin{lemma}
    \label{hajosconn1}
    Let $G$ be a Haj\'{o}s join of graphs $G_0$ and $G_1$, and let $k \ge 2$.
    The graph $G$ is $k$-edge-connected if and only if both $G_0$ and $G_1$ are $k$-edge-connected.
\end{lemma}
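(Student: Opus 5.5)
We use Menger's theorem in its edge form: a graph $H$ with at least two vertices is $k$-edge-connected if and only if every edge cut of $H$ has at least $k$ edges. Let $G=(G_0,v,u)\hajos(G_1,v,w)$. Then $V(G)=V(G_0)\cup V(G_1)$ with $V(G_0)\cap V(G_1)=\{v\}$, and
\[
E(G)=(E(G_0)\setminus\{vu\})\cup(E(G_1)\setminus\{vw\})\cup\{uw\}.
\]
The plan is to transfer small edge cuts between $G$ and the $G_i$, in both directions. In each direction we start from a putative cut $\delta(X)$ of size less than $k$ and build a cut of size at most $|\delta(X)|$ on the other side. Here $\delta(X)$ denotes the set of edges with exactly one end in $X$.

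\emph{Forward direction.} Suppose $G_0$ and $G_1$ are $k$-edge-connected, and suppose some $\emptyset\ne X\subsetneq V(G)$ has $|\delta_G(X)|<k$. By complementing $X$ if necessary, we may assume $v\notin X$. Set $X_i=X\cap V(G_i)$.

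\begin{itemize}
\item Suppose both $X_0$ and $X_1$ are nonempty. Then $X_i$ is a proper nonempty subset of $V(G_i)$ (it misses $v$), so $|\delta_{G_i}(X_i)|\ge k$ for each $i$. The cut $\delta_G(X)$ contains $\delta_{G_0}(X_0)\setminus\{vu\}$ and $\delta_{G_1}(X_1)\setminus\{vw\}$, and these are disjoint. Hence $|\delta_G(X)|\ge 2k-2\ge k$, since $k\ge 2$. This is a contradiction.
\item Otherwise, say $X\subseteq V(G_0)\setminus\{v\}$. Then $\delta_G(X)$ and $\delta_{G_0}(X)$ agree except in how they treat $vu$ and $uw$. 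If $u\in X$, then $uw\in\delta_G(X)$ replaces $vu\in\delta_{G_0}(X)$, so the two cuts have equal size. If $u\notin X$, then neither edge lies in either cut. In both cases $|\delta_G(X)|=|\delta_{G_0}(X)|\ge k$, a contradiction.
\end{itemize}

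\emph{Backward direction.} Suppose $G$ is $k$-edge-connected, and take $\emptyset\ne Y\subsetneq V(G_0)$ with $v\notin Y$ (again by complementing).

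\begin{itemize}
\item If $u\notin Y$, then $\delta_{G_0}(Y)=\delta_G(Y)$, because the edges $vu$ and $uw$ do not cross $Y$ and all other edges at vertices of $Y$ are the same in both graphs.
\item If $u\in Y$, let $Y'=Y\cup(V(G_1)\setminus\{v\})$, which is a proper nonempty subset of $V(G)$ since $v\notin Y'$. Now $uw$ lies inside $Y'$ and so does not cross. The edges of $G_1$ crossing $Y'$ are exactly the edges of $G_1\ba vw$ at $v$, and there are $d_{G_1}(v)-1$ of them. The edges of $G_0$ crossing $Y'$ are exactly $\delta_{G_0}(Y)\setminus\{vu\}$. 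Hence
\[
|\delta_G(Y')|=|\delta_{G_0}(Y)|-1+d_{G_1}(v)-1,
\]
so $|\delta_{G_0}(Y)|=|\delta_G(Y')|+2-d_{G_1}(v)$. This does not immediately give $|\delta_{G_0}(Y)|\ge k$, so this choice of $Y'$ needs repair.
\end{itemize}

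The cleaner choice in the second case is $Y''=Y\cup\{w\}$. It fails when $w$ has other neighbours, so I would instead use a path-based argument. Take $k$ edge-disjoint $(a,b)$-paths in $G$ for any $a,b\in V(G_0)$. At most one of them uses $uw$, since it is a single edge. Any path that enters $G_1-v$ must do so through $v$ or through $uw$. If it passes through $v$, then since $v$ is a cut vertex of $G\ba uw$, the detour through $G_1$ can be shortcut at $v$. If it uses $uw$, then the path segment from $w$ back into $G_0$ must exit through $v$, so we replace that segment by the edge $uv$. After these replacements we have $k$ edge-disjoint paths in $G_0$. This shows $G_0$ is $k$-edge-connected, and $G_1$ follows symmetrically.

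The main obstacle is this backward direction: checking that the rerouted paths stay edge-disjoint, and that at most one path needs the edge $uv$. The latter holds because only the one path using $uw$ is rerouted.
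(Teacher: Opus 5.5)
Your argument is correct, but your forward direction takes a different route from the paper.

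The paper works with paths in both directions. For the forward direction, it takes a cut of size less than $k$ and a vertex $x\in V(G_0)$ that the cut separates from $v$. It then takes $k$ edge-disjoint $(x,v)$-paths in $G_0$. At most one of them uses $vu$, and that one is rerouted through $uw$ followed by a $(w,v)$-path in $G_1\ba vw$, which exists because $k\ge 2$. The paper's backward direction is exactly the rerouting you end with: the segment through $uw$ between $u$ and $v$ is replaced by the edge $uv$. Your cut count avoids the auxiliary path, and $k\ge 2$ enters only through $2k-2\ge k$.

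The cut approach in fact gives the backward direction for free, so your abandoned attempt with $Y'$ was unnecessary. Your second forward case shows that $|\delta_G(X)|=|\delta_{G_0}(X)|$ for every nonempty $X\subseteq V(G_0)\setminus\{v\}$, with $uw$ replacing $vu$ when $u\in X$. This applies verbatim to $X=Y$, so $|\delta_{G_0}(Y)|=|\delta_G(Y)|\ge k$ whether or not $u\in Y$. Both directions then follow uniformly from the same identity.

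If you keep the path version instead, tidy one point. A path avoiding $uw$ never enters $G_1-v$ at all, since it could only leave again through $v$, which it has already visited. So there is no detour to shortcut. Edge-disjointness after rerouting is immediate, because the one rerouted path only loses edges and gains $uv$, and $uv\notin E(G)$.
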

\begin{proof}
    Let $G = (G_0,v,u_0) \hajos (G_1,v,u_1)$ such that, for $i \in \{0,1\}$, we have $v,u_i \in V(G_i)$.
    Let $e_i=u_iv$, for $i \in \{0,1\}$, and $e=u_0u_1$. 

    Suppose that $G_i$ is $k$-edge-connected for $i \in \{0,1\}$.
    As $k \ge 2$, there exist two edge-disjoint $(u_i,v)$-paths in $G_i$, at most one of which contains $e_i$, so $G_i \ba e_i$ has at least one $(u_i,v)$-path for $i \in \{0,1\}$.
    Towards a contradiction, assume $G$ is not $k$-edge-connected.
    Then there is an edge cut $S$ of $G$ with $|S|<k$.
    Since $G\ba S$ contains at least two components, there is a vertex $x$ that is in a different component than $v$ in $G \ba S$.
    Therefore $S$ separates $x$ and $v$ in $G$, so there are strictly fewer than $k$ edge-disjoint $(x,v)$-paths in $G$.
    Without loss of generality, assume that $x \in V(G_0)$.
    Since $G_0$ is $k$-edge-connected, there are $k$ edge-disjoint $(x,v)$-paths in $G_0$, at most one of which contains $e_0$.
    If no such path contains $e_0$, then there are $k$ edge-disjoint $(x,v)$-paths in $G$, a contradiction.
    So there are $k-1$ $(x,v)$-paths in $G_0 \ba e_0$ and an $(x,v)$-path $P_0$ that contains $e_0$, where these $k$ paths are pairwise edge-disjoint.
    By concatenating the $(x,u_0)$-subpath of $P_0$, the $(u_0,u_1)$-path consisting of the single edge $e$, and a $(u_1,v)$-path in $G_1 \ba e_1$, we find an $(x,v)$-path in $G$ that is edge-disjoint from the other $k-1$ $(x,v)$-paths.
    So again there are $k$ edge-disjoint $(x,v)$-paths in $G$, a contradiction.
    We deduce that $G$ is $k$-edge-connected.

    Now assume that $G$ is $k$-edge-connected but, towards a contradiction, $G_0$ is not $k$-edge-connected.
    Then, for some $x,y \in V(G_0)$, there are fewer than $k$ edge-disjoint $(x,y)$-paths in $G_0$.
    Since $G$ is $k$-edge-connected, there are $k$ edge-disjoint $(x,y)$-paths in $G$, at most one of which contains $e$.
    Any $(x,y)$-path that does not contain $e$ is contained in $G_0$, so we may assume that one of the paths, $P_0$ say, contains $e$.
    Then $P_0$ contains both $e$ and $v_0$; without loss of generality, suppose $e$ appears before $v_0$.
    By concatenating the $(x,u_0)$-subpath of $P_0$, the $(u_0,v)$-path consisting of the edge $e_0$, and the $(v,y)$-subpath of $P_0$, we obtain an $(x,y)$-path in $G_0$ that is edge-disjoint from the other $k-1$ $(x,y)$-paths in $G_0$, a contradiction.
    We deduce that $G_0$ and, by symmetry, $G_1$ are $k$-edge-connected.
\end{proof}

\begin{lemma}
    \label{hajosconn2}
    Let $G$ be the Haj\'os join $(G_0,v,u_0) \hajos (G_1,v,u_1)$ of graphs $G_0$ and $G_1$ where, for $i \in \{0,1\}$, the graph $G_i$ has maximum local edge-connectivity~$k_i$, and $u_iv \in E(G_i)$ is not a bridge.
    Then the maximum local edge-connectivity of $G$ is $\max\{k_0,k_1\}$.
\end{lemma}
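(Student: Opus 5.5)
We prove two inequalities: the maximum local edge-connectivity $\lambda(G)$ of $G$ is at most $k=\max\{k_0,k_1\}$, and it is at least $k$. Throughout, $e_i=u_iv$ and $e=u_0u_1$, and we use that $V(G_0)\cap V(G_1)=\{v\}$ in $G$.

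\textbf{Lower bound.} Take $x,y\in V(G_0)$ with $k_0$ edge-disjoint $(x,y)$-paths in $G_0$. Since edge-disjoint paths use $e_0$ at most once, at most one of these paths uses $e_0$. If one does, we reroute it exactly as in the proof of \cref{hajosconn1}: replace $e_0$ by $e$ followed by a $(u_1,v)$-path in $G_1\ba e_1$. Such a path exists because $e_1$ is not a bridge of $G_1$. The new path uses only edges of $G_1\ba e_1$ together with $e$, so it stays edge-disjoint from the others, and it can be shortened to a path. Hence $\lambda_G(x,y)\ge k_0$, and symmetrically $\lambda(G)\ge k_1$.

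\textbf{Upper bound.} Suppose distinct $x,y\in V(G)$ have $m$ edge-disjoint paths in $G$; we want $m\le k$. Observe that $\{v\}$ together with the edge $e$ separates the $G_0$-side from the $G_1$-side: every edge leaving $V(G_0)\setminus\{v\}$ toward $V(G_1)\setminus\{v\}$ either passes through $v$ or is $e$.

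\emph{Case 1: $x,y\in V(G_0)$.} At most one of the paths uses $e$. Any path that enters $G_1$ must use $e$, and then it must leave $G_1$ through $v$. We replace its subpath from $u_0$ through $G_1$ to $v$ by the edge $e_0$. The other paths lie in $G_0\ba e_0$, so the resulting paths are edge-disjoint in $G_0$, giving $m\le k_0$. The case $x,y\in V(G_1)$ is symmetric.

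\emph{Case 2: $x\in V(G_0)\setminus\{v\}$ and $y\in V(G_1)\setminus\{v\}$.} Every $(x,y)$-path uses either $v$ or $e$. The paths through $v$ are edge-disjoint, so each uses distinct edges at $v$. We map the problem into $G_0$: truncate each path at its first arrival at $v$ or at $u_0$-then-$e$. In the latter case, replace $e$ by $e_0$, so that path also ends at $v$. This yields $m$ edge-disjoint $(x,v)$-paths in $G_0$, since only one path can use $e$ and hence only one uses $e_0$. Therefore $m\le\lambda_{G_0}(x,v)\le k_0$.

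The main obstacle is the edge-disjointness bookkeeping when a path uses $e$. A path may pass through $v$ before or after using $e$, so in each case we take the first visit into the other side as the truncation point. We also need the fact that only one path can use $e$, to guarantee that the single substituted edge $e_0$ (or $e_1$) is used once. The remaining checks are routine once that is fixed.
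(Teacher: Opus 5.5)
Your proof is correct and follows essentially the same route as the paper. For the lower bound, you reroute the unique path through $e_0$ via $e$ and a $(u_1,v)$-path in $G_1\setminus e_1$, which exists by the non-bridge hypothesis. For same-side pairs, you replace the $G_1$-excursion of the unique path using $e$ by $e_0$. For cross pairs, you use that every path meets $v$ or $e$ and truncate to edge-disjoint $(x,v)$-paths in $G_0$. Your direct truncation in Case~2 and your uniform same-side upper bound are, if anything, a tidier write-up of the paper's argument, which instead splits on whether a maximum family of paths in $G_0$ uses $e_0$.
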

\begin{proof}
    Let $e=u_0u_1$, and let $e_i=u_iv$ for $i \in \{0,1\}$. 
    For brevity, let $\lambda(G)$ denote the maximum local edge-connectivity of $G$, and, for $x,y \in V(G)$, let $\lambda_G(x,y)$ denote the maximum number of edge-disjoint $(x,y)$-paths in $G$.

    First, consider $v_i \in V(G_i) \setminus \{v\}$ for $i \in \{0,1\}$.
    Clearly every $(v_0,v_1)$-path contains either $e$ or $v$.
    There are at most $k_i$ edge-disjoint $(v_i,v)$-paths in $G_i \ba e_i$, since $\lambda(G_i)=k_i$.
    Moreover, for any such $k_i$ edge-disjoint $(v_i,v)$-paths, no $(v_i,u_i)$-path is edge-disjoint with each of these paths, for otherwise $\lambda_{G_i}(v_i,v)=k_i+1$, contradicting $\lambda(G_i)=k_i$.
    It now follows that $\lambda_G(v_0,v_1) \le \max\{k_0,k_1\}$.

    Now consider distinct $x,y \in V(G_0)$.
    Observe that any $(x,y)$-path in $G$ that is not also in $G_0$ must contain $e$, so $\lambda_G(x,y) \le \lambda_{G_0}(x,y)+1$.
    Let $\lambda_{G_0}(x,y) = k$, and suppose one of the $k$ edge-disjoint $(x,y)$-paths in $G_0$ contains $e_0$.
    Then, without loss of generality, this path has an $(x,u_0)$-subpath and a $(v,y)$-subpath, in which case, by concatenating these with $u_0,e,u_1$ and a $(u_1,v)$-path in $G_1 \ba e_1$ (which exists since $e_1$ is not a bridge), we obtain one further $(x,y)$-path in $G$ that is edge-disjoint to the other paths.
    Thus $\lambda_G(x,y) = k$ in this case.
    On the other hand, suppose there are $k$ edge-disjoint $(x,y)$-paths in $G_0$ none of which contain $e_0$.
    Then there is no $(x,y)$-path in $G$ that is edge-disjoint to each of these paths, for such a path would contain $e$ and a $(u_1,v)$-subpath in $G_1$, which we could replace with $e_0$ in order to obtain an $(x,y)$-path in $G_0$ containing $e_0$ that is edge-disjoint to each of the $k$ paths.
    Thus $\lambda_G(x,y)=k = \lambda_{G_0}(x,y)$ in any case.
    By symmetry, we deduce that $\lambda_{G_i}(x,y) = \lambda_G(x,y)$, for any $i \in \{0,1\}$ and any $x,y \in V(G_i)$.

    It now follows that $\lambda(G) \le \max\{k_0,k_1\}$.
    Moreover, there exists $i \in \{0,1\}$ such that $G_i$ has distinct vertices $x,y$ with $\lambda_{G_i}(x,y) = \max\{k_0,k_1\}$, in which case $\lambda_{G}(x,y) = \max\{k_0,k_1\}$.
    So $\lambda(G) = \max\{k_0,k_1\}$, as required.
\end{proof}

\subsection{Haj\'{o}s joins and polar list colouring}
\label{joinablesimulation}

We start with a technical, but important, lemma for polar graphs whose underlying graph can be obtained by a Haj\'os join.  Essentially, it reduces a polar list colouring problem on such a polar graph to a polar list colouring problem on one of the parts.

\begin{lemma}
\label{joinabilitysimple}
    Let $Q$ be a polar graph where $G(Q)$ is the Haj\'{o}s join $(G_0,v,u_0) \hajos (G_1,v,u_1)$ of graphs $G_0$ and $G_1$ with $u_i,v \in V(G_i)$ for $i \in \{0,1\}$.
    Suppose $G_1$ is $k$-joinable. 
    Let $\Gamma=(L,R)$ be a $k$-polar assignment for $Q$,
    let $Q_i = Q[V(G_i)]$ for $i \in \{0,1\}$,
    and let $C = \{c \in L(v) : (v,c) \textrm{ forces $u_1$ in }(Q_1,\Gamma)\}$.
    \begin{enumerate}
        \item If $C=\emptyset$, then any $\Gamma$-colouring of $Q_0$ extends to $Q$.\label{jbl c}
        \item Otherwise, there exists a polar graph~$Q'_0$ with $G(Q'_0) = G_0$ and $Q'_0 \ba vu_0 = Q_0$, and a $k$-polar assignment~$\Gamma_0$ for $Q'_0$ such that $\Gamma_0|Q_0=\Gamma|Q$ and every $\Gamma_0$-colouring of $Q'_0$ extends to a $\Gamma$-colouring of $Q$.\label{jbl ab}
    \end{enumerate}
\end{lemma}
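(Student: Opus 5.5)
The plan is to colour $Q_0$ first and then extend into $Q_1$, using the $k$-joinability of $G_1$ to control what a colouring of $Q_1$ can do at $u_1$. Let $e=u_0u_1$ and $e_1=vu_1$. Then $G(Q_1)=G_1\ba e_1$, and $Q$ is $Q_0\cup Q_1$ together with the edge $e$. Choose any polar graph $P_1$ with $G(P_1)=G_1$ and $P_1\ba e_1=Q_1$, for instance with $e_1$ unpolarised. Since $G_1$ is $k$-joinable, $P_1$ is $k$-joinable. So $Q_1$ is $k$-unrestricted, and $u_1$ is a $k$-mimic of $v$ in $Q_1$. Because $Q_1$ is $k$-unrestricted, for every $c\in L(v)$ the $(v,c)$-colour deletion from $(Q_1,\Gamma)$ is colourable. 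Thus any colour at $v$ extends to some $\Gamma$-colouring of $Q_1$, and the only remaining issue is the constraint imposed by $e$ at $u_1$.

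For (i), let $\phi_0$ be a $\Gamma$-colouring of $Q_0$, and set $c=\phi_0(v)$. The edge $e$ forbids at most one colour $d$ at $u_1$, given that $u_0$ has colour $\phi_0(u_0)$. If $e$ is unpolarised, $d=\phi_0(u_0)$. If $e$ is polarised with $R(e,u_0,u_1)=(a,b)$, then $d=b$ when $\phi_0(u_0)=a$, and nothing is forbidden otherwise. Since $c\notin C$, the pair $(v,c)$ does not force $u_1$. Extensions of $c$ to $Q_1$ exist, so there are two such extensions giving $u_1$ different colours, and at least one of them avoids $d$. Gluing that extension to $\phi_0$ along $v$ gives a $\Gamma$-colouring of $Q$; edges inside $Q_0$ and inside $Q_1$ are satisfied, and $e$ is satisfied by the choice just made.

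For (ii), I would keep all lists and the polarisation of $Q_0$ unchanged and only add the edge $vu_0$, chosen so that it forbids exactly the bad combinations. In every case, a $\Gamma_0$-colouring with $\phi_0(v)\notin C$ extends as in (i). For $c\in C$, the pair $(v,c)$ forces $(u_1,d_c)$ for some colour $d_c$, and we need $vu_0$ to forbid the colours at $u_0$ that conflict with $u_1=d_c$ via $e$. Write $\beta(d)$ for the unique colour of $u_0$ that conflicts with $u_1=d$, if there is one: $\beta(d)=d$ if $e$ is unpolarised, and $\beta(d)=a$ if $R(e,u_0,u_1)=(a,d)$.

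If $|C|=1$, say $C=\{c\}$, make $vu_0$ polarised with $R_0(vu_0,v,u_0)=(c,\beta(d_c))$. If $\beta(d_c)$ is undefined, use a pair of new colours instead, which makes the constraint vacuous.

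If $|C|\ge 2$, then $v$ $2$-forces $u_1$ in $(Q_1,\Gamma)$. Since $u_1$ is a $k$-mimic of $v$, we get $d_c=c$ for every $c\in C$. If $e$ is unpolarised, make $vu_0$ unpolarised; a colouring with $\phi_0(v)=c\in C$ then has $\phi_0(u_0)\ne c=\phi(u_1)$, as needed. If $e$ is polarised with $R(e,u_0,u_1)=(a,b)$, only $c=b$ can create a conflict. In that case polarise $vu_0$ with $R_0(vu_0,v,u_0)=(b,a)$ when $b\in C$, and with new colours otherwise.

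In each case $\Gamma_0$ is a $k$-polar assignment agreeing with $\Gamma$ on $Q_0$. Gluing any $\Gamma_0$-colouring of $Q'_0$ with the forced extension into $Q_1$ then satisfies $e$.

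The step I expect to be the main obstacle is the case $|C|\ge2$. There a single polarised edge cannot encode several forced pairs, and the argument depends on the mimic property to make the constraint a plain unpolarised edge. I also need to be careful that applying $k$-joinability to $Q_1$ is legitimate, which requires $Q_1$ to arise as $P_1\ba e_1$ for some polar $P_1$ on $G_1$.
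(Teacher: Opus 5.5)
Your proposal is correct and follows essentially the same route as the paper. Part (i) uses that $(v,\phi_0(v))$ does not force $u_1$ in the $k$-unrestricted $Q_1$, and part (ii) adds $vu_0$ as a polarised edge when $|C|=1$ or $e$ is polarised, and as an ordinary edge when $|C|\ge 2$ and $e$ is not polarised, with the $k$-mimic property giving $d_c=c$. Your $\beta$ bookkeeping for $|C|=1$ is slightly more careful than the paper's write-up. In the subcase where $e$ is polarised with $R(e,u_0,u_1)=(c_0,c_1)$, the paper takes the $v$-colour of the new polarised edge to be $c_1$ if $c_1\in C$ and arbitrary otherwise. That misses the case where the unique element of $C$ forces $(u_1,c_1)$ but differs from $c_1$, which your choice $(c,\beta(d_c))$ handles. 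Your explicit choice of $P_1$ with $P_1\ba e_1=Q_1$ also properly justifies applying $k$-joinability to $Q_1$, which the paper leaves implicit.
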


\begin{proof}
    Let $e=u_0u_1$, and let $\phi$ be a $\Gamma$-colouring of $Q_0$.
    Note that, in order to find an extension of $\phi$ to $Q$, it suffices to find a $\Gamma$-colouring $\phi'$ of $Q_1$ such that $\phi'(v)=\phi(v)$ and $\phi(u_0)$ and $\phi'(u_1)$ do not cause $e$ to violate \ref{Gamma 2} (see \cref{hajoscolourfig}).
    If $e$ is not polarised, we require that $\phi'(u_1) \neq \phi(u_0)$.
    If $e$ is polarised, with $R(e,u_0,u_1)=(\phi(u_0),c_1)$, then we require that $\phi'(u_1) = c_1$.
    Therefore, in either case, it suffices to find a $\Gamma$-colouring~$\phi'$ of $Q_1$ such that $\phi'(v)=\phi(v)$ and $\phi'(u_1) \neq c_1$ for some $c_1$.
    Since $G_1$ is $k$-joinable, $Q_1$ is $k$-unrestricted and, in particular, $k$-choosable.
    If $C = \emptyset$, then $(v,\phi(v))$ does not force $u_1$ in $Q_1$, so there exists the desired $\Gamma$-colouring $\phi'$.
    This proves \ref{jbl c}.

    Now suppose $C \neq \emptyset$.
    Let $Q_0^N$ and $Q_0^P$ be the graphs obtained from $Q_0$ by adding a non-polarised or polarised edge $u_0v$, respectively.
    Towards \ref{jbl ab}, we prove the following claim.
    \begin{claim}
        \label{jblbclaim}
        \ 
        \begin{enumerate}[label=\rm(\Roman*)]
            \item If $|C|=1$, or $|C| > 1$ and $u_0u_1$ is polarised, then there is a $k$-polar assignment~$\Gamma_P$ of $Q_0^P$ such that $\Gamma_P|Q_0=\Gamma|Q_0$ and any $\Gamma_P$-colouring of $Q_0^P$ extends to $Q$.\label{jbl b}
            \item If $|C| > 1$ and $u_0u_1$ is not polarised, then any $\Gamma$-colouring of $Q_0^N$ extends to $Q$.\label{jbl a}
        \end{enumerate}
    \end{claim}
    \begin{subproof}
        To prove \ref{jbl a}, assume that $|C| > 1$ and $e$ is not polarised.
        Since $G_1$ is $k$-joinable and $G(Q_1)=G_1\ba u_1v$, we have that $u_1$ is a $k$-mimic of $v$ in $Q_1$.
        Therefore, since $|C|>1$, for all $c \in C$ we have that $(v,c)$ forces $(u_1,c)$ in $(Q_1,\Gamma)$.
        Now 
        $\phi$ will not extend to $Q$ if and only if $\phi(v) \in C$ and $\phi(u_0)=\phi(v)$.
        It follows that
        any $\Gamma$-colouring of $Q_0^N$ extends to $Q$, as required.

        To prove \ref{jbl b}, first assume that $e$ is not polarised and $|C|=1$.
        Let $C=\{c\}$ and $d \in \mathbb{N}$ such that
        $(v,c)$ forces $(u_1,d)$ in $(Q_1,\Gamma)$ 
        (note that, even though $G_1$ is $k$-joinable, we could have $d \neq c$ since $|C|=1$).
        Since $e$ is not polarised, 
        $\phi$ does not extend to $Q$ if and only if $\phi(v)=c$ and $\phi(u_0)=d$.
        Let $\Gamma_P=(L|Q_0,R_P)$ be the $k$-polar assignment of $Q_0^P$ with $R_P|Q_0=R|Q_0$ and $R_P(e_P,u_0,v)=(d,c)$, where $e_P$ is the polarised edge in $Q_0^P$ between $u_0$ and $v$.
        Then any $\Gamma_P$-colouring of $Q_0^P$ extends to $Q$.

        Finally, assume that $|C| \geq 1$ and $e$ is polarised.
        Let $R(e,u_0,u_1)=(c_0,c_1)$ and note that 
        $\phi$ does not extend to $Q$ if and only if $\phi(v) \in C$ and $\phi(u_0)=c_0$ and $(v,\phi(v))$ forces $(u_1,c_1)$ in $(Q_1,\Gamma)$.
        If $|C| \ge 2$, then, as $G_1$ is $k$-joinable, for each $c \in C$ we have that $(v,c)$ forces $(u_1,c)$ in $(Q_1,\Gamma)$.
        Thus there is at most one $c\in C$ such that $(v,c)$ forces $(u_1,c)$ in $(Q_1,\Gamma)$, namely $c=c_1$ (if $c_1 \in C$).
        Let $c=c_1$ if $c_1 \in C$, otherwise choose $c \in \mathbb{N}$ arbitrarily.
        This time we let $R_P(e_P,u_0,v)=(c_0,c)$ and once again we have that any $\Gamma_P$-colouring of $Q_0^P$ extends to $Q$.
    \end{subproof}

    Now \ref{jbl ab} follows from \cref{jblbclaim} by setting $Q'_0=Q_0^P$ and $\Gamma_0 = \Gamma_P$ if \ref{jbl b} holds, and setting $Q'_0=Q_0^N$ and $\Gamma_0 = \Gamma$ otherwise.
\end{proof}

The next lemma is essentially a simpler form of \cref{joinabilitysimple}, which is often sufficient for our needs.

\begin{lemma}
    \label{hajosextend}
    Let $Q$ be a polar graph such that $G(Q)$ is the Haj\'{o}s join of graphs $G_0$ and $G_1$, where $G_1$ is $k$-joinable.
    Let $\Gamma$ be a $k$-polar assignment for $Q$. 
    There exists a polar graph~$Q'_0$ with $G(Q'_0) = G_0$ and a $k$-polar assignment~$\Gamma_0$ for $Q'_0$ such that $\Gamma_0|Q[V(G_0)]
    =\Gamma|Q$ and every $\Gamma_0$-colouring of $Q'_0$ extends to a $\Gamma$-colouring of $Q$.
\end{lemma}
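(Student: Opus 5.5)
This is a direct consequence of \cref{joinabilitysimple}; the only work is to handle the case where its first outcome holds. Write $G(Q) = (G_0,v,u_0) \hajos (G_1,v,u_1)$ and let $Q_0 = Q[V(G_0)]$. Note that $G(Q_0) = G_0 \ba vu_0$. Define $C$ as in \cref{joinabilitysimple}, and split into two cases according to whether $C$ is empty.

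If $C \neq \emptyset$, then \cref{joinabilitysimple}\ref{jbl ab} directly supplies a polar graph $Q'_0$ with $G(Q'_0)=G_0$ and $Q'_0 \ba vu_0 = Q_0$. It also supplies a $k$-polar assignment $\Gamma_0$ with $\Gamma_0|Q_0 = \Gamma|Q_0$ such that every $\Gamma_0$-colouring of $Q'_0$ extends to $Q$. That is exactly the statement, so nothing more is needed.

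If $C = \emptyset$, then \cref{joinabilitysimple}\ref{jbl c} says that every $\Gamma$-colouring of $Q_0$ extends to $Q$. The statement, however, asks for a polar graph whose underlying graph is $G_0$ itself, so I must add an edge $vu_0$ back without cutting out any colourings that matter.
\begin{itemize}
\item Let $Q'_0$ be obtained from $Q_0$ by adding $vu_0$ as a \emph{non-polarised} edge.
\item Take $\Gamma_0 = \Gamma|Q_0$, which is also a polar assignment for $Q'_0$, since the new edge carries no polarisation and the vertex set is unchanged.
\end{itemize}
Then any $\Gamma_0$-colouring of $Q'_0$ is in particular a $\Gamma$-colouring of $Q_0$, because $Q'_0$ only imposes extra constraints. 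By \ref{jbl c} it therefore extends to $Q$.

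Adding a polarised edge would work equally well. In general, adding any edge only restricts the set of colourings, so extendability is preserved. The only step needing care is checking this bookkeeping: the restriction $\Gamma_0|Q[V(G_0)]$ coincides with $\Gamma|Q_0$, and $\Gamma_0$ is still a $k$-polar assignment because the lists are unchanged. I do not expect any genuine obstacle.
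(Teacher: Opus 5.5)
Your proof is correct and follows the paper's: split on whether $C$ is empty, invoke \cref{joinabilitysimple}\ref{jbl ab} when it is not, and otherwise use \ref{jbl c} after adding the edge $vu_0$ back. The only difference is in how that edge is added. The paper adds $vu_0$ as a polarised edge whose polarisation uses colours new for $\Gamma$, so it imposes no constraint and $Q'_0$ is $\Gamma_0$-colourable exactly when $Q$ is $\Gamma$-colourable. Your non-polarised edge gives only the one-way extension property, but that is all the statement, and its later uses in the paper, require.
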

\begin{proof}
    Let $G(Q) = (G_0,v_0,u_0) \hajos (G_1,v_1,u_1)$ with $u_i,v_i \in V(G_i)$ for $i \in \{0,1\}$.
    Let $C = \{c \in L(v_1) : (v_1,c) \textrm{ forces $u_1$ in }(Q[V(G_1)],\Gamma)\}$.
    If $C \neq \emptyset$, then the lemma holds by \cref{joinabilitysimple}\ref{jbl ab}.
    So suppose $C = \emptyset$.
    By \cref{joinabilitysimple}\ref{jbl c}, any $\Gamma$-colouring of $Q[V(G_0)]$ extends to $Q$.
    Let $\Gamma = (L,R)$.
    We add a polarised edge $e_0=u_0v_0$ to $Q[V(G_0)]$ to obtain a polar graph $Q_0'$ with $G(Q_0') = G_0$, and let $\Gamma_0=(L,R')$ where $R'$ is obtained from $R$ but with $R'(e_0,u_0,v_0) = (c,c')$ where $c$ and $c'$ are new for $\Gamma$.
    Then $Q$ is $\Gamma$-colourable
    if and only if $Q_0'$ is $\Gamma_0$-colourable, and the result follows.
\end{proof}

\begin{figure}
    \centering
    \includegraphics[scale=0.6]{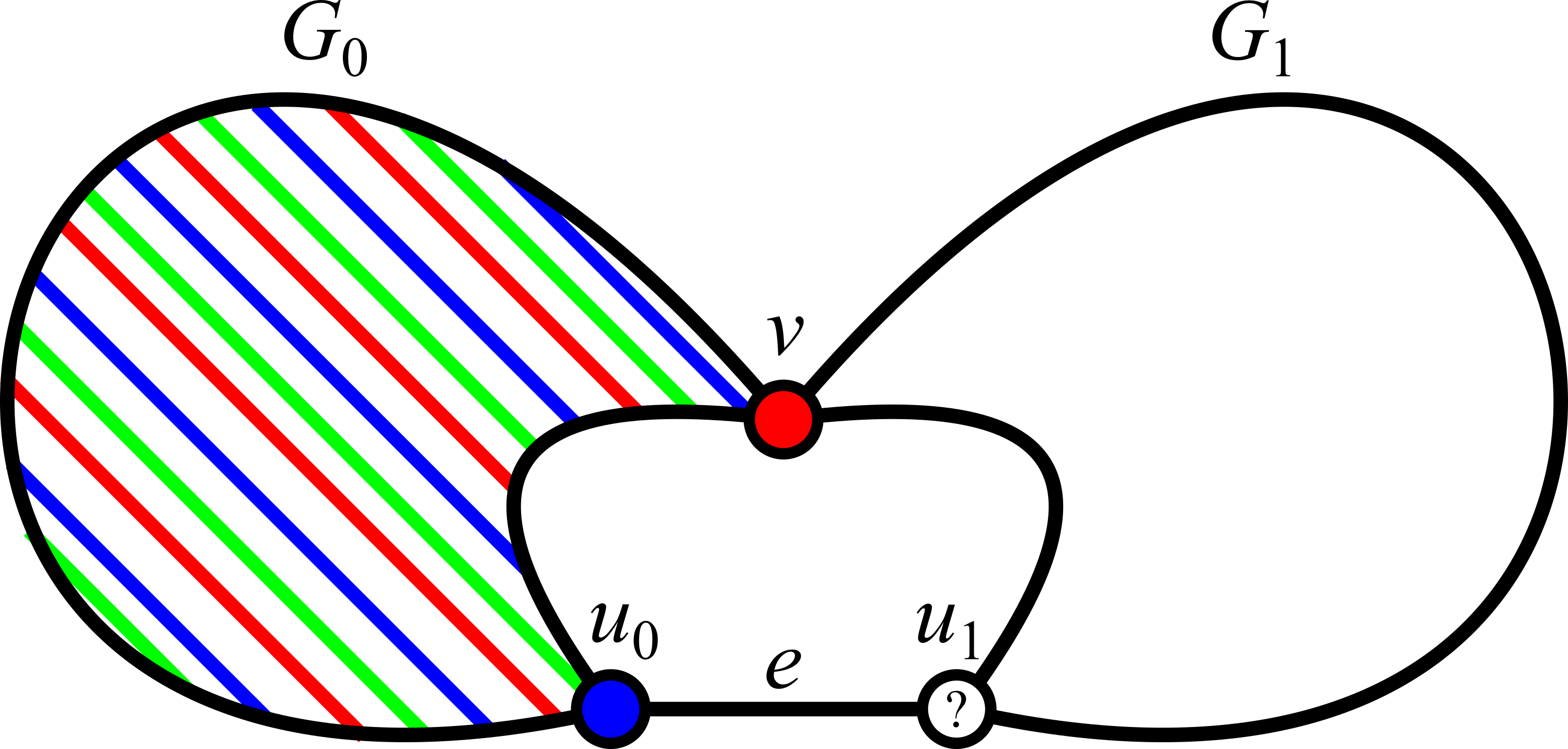}
    \caption{A Haj\'{o}s join of graphs $G_0$ and $G_1$. 
        Here $\phi$ is a colouring of $G_0$, and, to extend $\phi$ to a colouring of the entire graph, 
it suffices to find a colouring of $G_1$ where $v$ is coloured $\phi(v)$ and the colour chosen for $u_1$ does not cause $e$ to conflict with \ref{Gamma 2}.}
    \label{hajoscolourfig}
\end{figure}

We use \cref{joinabilitysimple,hajosextend} to prove the following:

\begin{theorem}
    \label{joinability plus hajos}
    If $G$ is the Haj\'{o}s join of two $k$-joinable graphs, 
    then $G$ is $k$-joinable.
\end{theorem}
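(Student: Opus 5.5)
We fix a polar graph $Q$ with $G(Q)=G=(G_0,v,u_0)\hajos(G_1,v,u_1)$ and an edge $z=xy\in E(Q)$. We must show two things: that $Q\ba z$ is $k$-unrestricted, and that $x$ is a $k$-mimic of $y$ in $Q\ba z$. The basic tool is \cref{hajosextend} and \cref{joinabilitysimple}, which let us replace one side of the join by a polarised edge. The case split is on where $z$ lies: in $G_0\ba u_0v$, in $G_1\ba u_1v$, or $z=u_0u_1$. By symmetry the first two cases are the same, so it suffices to treat $z\in E(G_0)\setminus\{u_0v\}$ and $z=u_0u_1$.

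\textbf{Unrestrictedness.} Take a $k$-polar assignment $\Gamma$ for $Q\ba z$, a vertex $r$, and a colour $c\in L(r)$. Suppose first that $r\in V(G_0)$ and $z\in E(G_0)$. We apply \cref{hajosextend} (with $G_1$ $k$-joinable) to $Q\ba z$, whose underlying graph is the Haj\'os join of $G_0\ba z$ and $G_1$. This gives a polar graph $Q_0'$ with $G(Q_0')=G_0\ba z$ and an assignment $\Gamma_0$ agreeing with $\Gamma$ on $Q[V(G_0)]$. Since $G_0$ is $k$-joinable, $Q_0'$ is $k$-unrestricted, so some $\Gamma_0$-colouring gives $r$ the colour $c$, and that colouring extends to $Q\ba z$.

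Now suppose $r\in V(G_1)\setminus\{v\}$. Here we reverse the roles, applying \cref{hajosextend} with $G_0\ba z$ as the side to be simulated. This requires $G_0\ba z$ to be $k$-joinable, which holds by the closure remark after \cref{unrestricted joinable corollary}. The result is $Q_1'$ with $G(Q_1')=G_1$, and we need $r$ not to be restricted in $Q_1'$. This is where the difficulty lies: $Q_1'$ has the full underlying graph $G_1$, not $G_1$ minus an edge. The fix is that $G_0\ba z$ is $k$-polar-unrestricted. Indeed, $G_0$ is $k$-joinable, so every polar graph on $G_0\ba z$ is $k$-unrestricted. Then \cref{unrestricted vacuous} gives $C=\emptyset$ in \cref{joinabilitysimple}, so by \cref{joinabilitysimple}\ref{jbl c} any colouring of $Q_1\ba u_1v$ extends, and $Q_1\ba u_1v$ is $k$-unrestricted since $G_1$ is $k$-joinable.

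The case $z=u_0u_1$ is easier. Here $Q\ba z$ is a $1$-join at $v$ of $Q_0\ba u_0v$ and $Q_1\ba u_1v$, both of which are $k$-unrestricted, so \cref{1-joins unrestricted} applies.

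\textbf{Mimic property.} This is the main obstacle. Suppose $y$ $2$-forces $x$ in $(Q\ba z,\Gamma)$ via colours $a\neq b$ with $(y,a)\mapsto(x,a')$ and $(y,b)\mapsto(x,b')$, and suppose $a\neq a'$. We pass to the side containing $z$. If $z=u_0u_1$, the $1$-join structure at $v$ means a forcing from $u_1$ to $u_0$ must pass through $v$ by composition, so we use the mimic property of $G_0$ and of $G_1$ in turn. If $z\in E(G_0)$, then by the argument above the $G_1$ side imposes either nothing ($C=\emptyset$ case) or, via \cref{joinabilitysimple}\ref{jbl ab}, a single extra edge $u_0v$ giving $Q_0'$ on $G_0\ba z$. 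The delicate point is that this replacement must preserve the forcing. One direction is clear: every $\Gamma_0$-colouring extends. For the other direction we need every $\Gamma$-colouring of $Q\ba z$ to restrict to a $\Gamma_0$-colouring. To secure this we should strengthen \cref{joinabilitysimple} so that the simulation is exact, meaning the restriction is iff. Checking the proof of \cref{jblbclaim} suggests this is indeed exact, because each case characterises precisely when $\phi$ fails to extend. With exactness, $(y,a)$ forces $(x,a')$ in $(Q_0',\Gamma_0)$ with $a\neq a'$, and similarly for $b$, contradicting that $x$ is a $k$-mimic of $y$ in $Q_0'=(\,\cdot\,)\ba z$, since $G_0$ is $k$-joinable. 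If $z$ lies in $G_1$, the argument is symmetric.
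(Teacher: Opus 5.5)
Your case split, your unrestrictedness argument, and your use of \cref{hajosextend} to simulate the far side for the mimic property when $z$ lies in $G_0$ all coincide with the paper's proof. The step you flag as delicate is handled wrongly, though: the exactness you want to add to \cref{joinabilitysimple} is neither needed nor true.

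Forcing transfers using only the direction you already have. A $\Gamma_0$-colouring of $Q_0'$ with $y\mapsto a$ extends to a $\Gamma$-colouring of $Q\ba z$ with $y\mapsto a$, which colours $x$ with $a'$. The non-vacuity needed to contradict the mimic property comes from $Q_0'$ itself being $k$-unrestricted, since $G(Q_0')=G_0\ba z$ and $G_0$ is $k$-joinable. So some $\Gamma_0$-colouring has $y\mapsto a$, and it cannot colour $x$ both $a'$ and $a$. This is how the paper argues.

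The converse direction fails in case~\ref{jbl a} of \cref{jblbclaim}. There $Q_0'=Q_0^N$, and a colouring of $Q_0$ with $\phi(u_0)=\phi(v)\notin C$ extends to $Q$ but violates the added non-polarised edge $u_0v$. For example, take $k=3$, $G_1=K_4$ with no polarised edges, $L(v)=\{1,2,4\}$, lists $\{1,2,3\}$ on the other three vertices of $G_1$, and $u_0u_1$ non-polarised. Then $C=\{1,2\}$, and any colouring of $Q_0$ giving $u_0$ and $v$ colour $4$ extends to $Q$ but is not a colouring of $Q_0^N$. So simply drop that step.

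Second, for $z=u_0u_1$, the $1$-join structure alone does not make the forcing pass through $v$. A priori $(u_0,c)$ could allow two colours $c_1\neq c_2$ at $v$, each forcing the same colour $d$ at $u_1$. Excluding this is the real content of this case, and it already uses $G_1$:
\begin{enumerate}
\item Colourings of $Q_0$ and $Q_1$ that agree at $v$ combine, so $(v,c_1)$ and $(v,c_2)$ would both force $(u_1,d)$ in $Q_1$.
\item Since $Q_1$ is $k$-unrestricted and $u_1$ is a $k$-mimic of $v$ in $Q_1$, this gives $c_1=d=c_2$, a contradiction.
\item Hence $(u_0,c)$ forces some $(v,c_v)$, and then $(v,c_v)$ forces $(u_1,d)$ in $Q_1$.
\end{enumerate}
Only now does your composition go through. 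Apply the above to $c$ and to another colour $c'\neq c$ at $u_0$ that forces $u_1$. Then $u_0$ $2$-forces $v$ in $Q_0$, and since $v$ is a $k$-mimic of $u_0$ there (and $Q_0$ is $k$-unrestricted), the forced colours at $v$ are $c$ and $c'$. Thus $v$ $2$-forces $u_1$ in $Q_1$, and the mimic property gives $d=c$. With these two repairs your proof is the paper's.
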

\begin{proof}
    Let $G = (G_0,v,u_0) \hajos (G_1,v,u_1)$, and $e=u_0u_1$ is the new edge in $G$.
    Let $e_0=u_0v$ and $e_1=u_1v$ be the edges of $G_0$ and $G_1$ respectively that do not exist in $G$.
    Let $Q$ be a polar graph with $G(Q)=G$, and let $Q_i = Q[V(G_i)]$ for $i \in \{0,1\}$.
    
    \begin{claim}
        \label{jph1}
        For any $e' \in E(Q)$, we have that $Q\ba e'$ is $k$-unrestricted.
    \end{claim}
    \begin{subproof}
    Let $e' \in E(Q)$.
    We show that for any $k$-polar assignment $\Gamma=(L,R)$ for $Q \ba e'$, and any $x \in V(Q)$ and any $c_x \in L(x)$, there exists a $\Gamma$-colouring $\phi$ of $Q \ba e'$ with $\phi(x) = c_x$.
    If $e' = e$, then $Q\ba e'$ is a $1$-join of two $k$-unrestricted polar graphs, and therefore is $k$-unrestricted by \cref{1-joins unrestricted}. 
    So we may assume that $e' \in E(Q) \setminus \{e\}$, so $e' \in E(Q_i)$ for some $i \in \{0,1\}$.
    By symmetry, $i=1$, without loss of generality.
    Then $G\ba e' = (G_0,v,u_0) \hajos (G_1\ba e',v,u_1)$, where $e=u_0u_1$ is the added edge.
    We also fix an arbitrary $k$-polar assignment $\Gamma=(L,R)$ for $Q \ba e'$.

    Let $x \in V(G)$ and let $c_x \in L(x)$.
    First suppose that $x \in V(G_0)$.
    Since $G_0$ is $k$-joinable and $G(Q_0)=G_0\ba e_0$, we have that $Q_0$ is $k$-unrestricted, so there exists a $\Gamma$-colouring $\phi$ of $Q_0$ such that $\phi(x)=c_x$.
    By \cref{joinabilitysimple}(i), $\phi$ extends to $Q\ba e'$ unless $(v,\phi(v))$ forces $u_1$ in $(Q_1\ba e',\Gamma)$.
    Note that $G_1\ba e'$ is $k$-polar-unrestricted since $G_1$ is $k$-joinable, and $G(Q_1\ba e')=G_1\ba \{e',e_1\}$.
    Therefore, by \cref{unrestricted vacuous}, there is no $c \in L(v)$ such that $(v,c)$ forces $u_1$ in $(Q_1\ba e',\Gamma)$.
    In particular, $(v,\phi(v))$ does not force $u_1$ in $(Q_1\ba e',\Gamma)$, so $\phi$ extends to $Q\ba e'$.

    Next suppose that $x \in V(G_1)$.
    Since $G_1$ is $k$-joinable, we have that $Q_1 \ba \{e_1,e'\}$ is $k$-unrestricted, so there is a $\Gamma$-colouring $\phi$ for $Q_1 \ba \{e_1,e'\}$ with $\phi(x) =c_x$.
    If there is no $c \in L(v)$ such that $(v,c)$ forces $u_0$ in $(Q_0,\Gamma)$, then $\phi$ extends to $Q \ba e'$ by \cref{joinabilitysimple}(i).
    Otherwise, by \cref{joinabilitysimple}(ii), since $G_0$ is $k$-joinable, there exists a polar graph $Q_1'$ with $G(Q_1')=G_1\ba e'$, and a $k$-polar assignment $\Gamma'$ for $Q_1'$ such that $\Gamma'|Q_1'=\Gamma|Q_1$ and any $\Gamma'$-colouring of $Q_1'$ extends to a $\Gamma$-colouring of $Q \ba e'$.
    Moreover, since $G_1$ is $k$-joinable, $Q_1'$ is $\Gamma'$-unrestricted, so there exists a $\Gamma'$-colouring $\phi$ of $Q_1'$ that extends to a $\Gamma$-colouring of $Q \ba e'$ with $\phi(x)=c_x$.
\end{subproof}

    By \cref{jph1}, it remains to show, for each edge $e'=xy$ of $Q$, that $y$ is a $k$-mimic of $x$ in $Q \ba e'$.
    Let $e'=xy$ be an edge of $Q$ and, towards a contradiction, assume that $y$ is not a $k$-mimic of $x$.
    Let $\Gamma = (L,R)$ be a $k$-polar assignment for $Q \ba e'$ such that $x$ $2$-forces $y$ in $(Q\ba e',\Gamma)$, but there is some $c \in L(x)$ such that $(x,c)$ forces $(y,d)$ in $(Q\ba e', \Gamma)$, with $c \neq d$.
    Assume that $e' \in E(G_1)$, and therefore $G$ is the Haj\'{o}s join of $G_0$ and $G_1\ba e'$ with identified vertex $v$ and added edge $e=u_0u_1$.
    By \cref{hajosextend}, there exists a polar graph $Q'_1$ with $G(Q'_1)=G_1\ba e'$ and a $k$-polar assignment $\Gamma'=(L',R')$ for $Q'_1$ such that $\Gamma'|Q_1=\Gamma|Q_1$ and any $\Gamma'$-colouring of $Q'_1$ extends to $Q$.
    Since $G(Q'_1)=G_1\ba e'$ and $G_1$ is $k$-joinable, for every $c' \in L'(x)$ there exists a $\Gamma'$-colouring $\phi$ of $Q'_1$ such that $\phi(x)=c'$.
    If there is some $c' \in L(x)$ such that $(x,c')$ forces $(y,d')$ in $Q\ba e'$, then $c' \in L'(x)$ by definition, and so there exists a $\Gamma'$-colouring $\phi$ of $Q'_1$ such that $\phi(x)=c'$.
    Since $\phi$ extends to $Q$, we have that $\phi(y)=d'$.
    Therefore $(x,c')$ forces $(y,d')$ in $(Q'_1,\Gamma')$.
    This implies that $x$ $2$-forces $y$ in $(Q'_1,\Gamma')$, and $(x,c)$ forces $(y,d)$ in $(Q'_1,\Gamma')$, with $c \neq d$. So
    $y$ is not a $k$-mimic of $x$ in $Q'_1$, contradicting the fact that $G_1$ is $k$-joinable.

    By symmetry, it remains only to examine the case when $e'=e$ and $(x,y) = (u_0,u_1)$.
    Now $\Gamma = (L,R)$ is a $k$-polar assignment for $Q \ba e$ with $c \in L(u_0)$ such that $(u_0,c)$ forces $(u_1,d)$ in $(Q\ba e,\Gamma)$.
    We first claim that $(u_0,c)$ forces $v$ in $(Q\ba e, \Gamma)$.
    Suppose not.
    Let $\phi$ and $\phi'$ be $\Gamma$-colourings of $Q_0$ such that $\phi(u_0)=\phi'(u_0)=c$ but $\phi(v) \neq \phi'(v)$.
Since $Q_1$ is $k$-unrestricted, both $\phi$ and $\phi'$ extend to $Q\ba e$, and so let $\Phi_E$ be the set of all such extensions of $\phi$ and let $\Phi_E'$ be the set of all such extensions of $\phi'$. Since $(u_0,c)$ forces $(u_1,d)$ in $(Q\ba e,\Gamma)$, we have that $\phi_E(u_1)=d$ for all $\phi_E \in \Phi_E \cup \Phi_E'$. However, this implies that both $(v,\phi(v))$ and $(v,\phi'(v))$ force $(u_1,d)$ in $Q_1$, which contradicts the fact that $u_1$ is a $k$-mimic of $v$ in $Q_1$, since $\phi(v) \neq \phi'(v)$.
    Thus $(u_0,c)$ forces $(v,c_v)$, say, in $(Q\ba e, \Gamma)$.

    Next we claim that $(v,c_v)$ forces $(u_1,d)$ in $(Q\ba e, \Gamma)$.
    Suppose not.
    Let $\phi$ be a $\Gamma$-colouring of $Q_0$ where $\phi(u_0)=c$, so $\phi(v)=c_v$.
    Since $(v,c_v)$ does not force $(u_1,d)$ in $(Q_1,\Gamma)$, there exists some extension $\phi_E$ of $\phi$ to $Q$ such that $\phi_E(u_1) \neq d$.
    This contradicts the fact that $(u_0,c)$ forces $(u_1,d)$ in $(Q\ba e, \Gamma)$.

We have now shown that
$(u_0,c)$ forces $(v,c_v)$ in $(Q_0,\Gamma)$ and $(v,c_v)$ forces $(u_1,d)$ in $(Q_1,\Gamma)$.
Since $v$ is a $k$-mimic of $u_0$ in $Q_0$ and $u_1$ is a $k$-mimic of $v$ in $Q_1$, we have that $u_1$ is a $k$-mimic of $u_0$ in $Q\ba e$, as required.
\end{proof}

We also obtain the next two lemmas using \cref{joinabilitysimple,hajosextend}.

\begin{lemma}
    \label{hajos joining unrestricted}
    If $G$ is the Haj\'{o}s join of two $k$-polar-unrestricted graphs, then $G$ is $k$-polar-unrestricted.
\end{lemma}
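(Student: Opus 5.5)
Let $G=(G_0,v,u_0)\hajos(G_1,v,u_1)$ with $G_0,G_1$ both $k$-polar-unrestricted, and write $e=u_0u_1$ and $e_i=u_iv$. Fix a polar graph $Q$ with $G(Q)=G$, a $k$-polar assignment $\Gamma=(L,R)$, a vertex $x\in V(Q)$ and a colour $c_x\in L(x)$. The goal is a $\Gamma$-colouring $\phi$ of $Q$ with $\phi(x)=c_x$. By \cref{unrestricted joinable corollary}, each $G_i$ is $k$-joinable, so \cref{joinabilitysimple} and \cref{hajosextend} apply with either side playing the role of $G_1$. By symmetry I assume $x\in V(G_0)$; the case $x=v$ falls under this case too.

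The plan is to push the constraint from the $G_1$ side onto the $G_0$ side and then use unrestrictedness of $G_0$. Let $Q_i=Q[V(G_i)]$, so $G(Q_1)=G_1\ba e_1$. The key observation is \cref{unrestricted vacuous}, applied to the $k$-polar-unrestricted graph $G_1$ with the polar graph $Q_1$. It gives that no $c\in L(v)$ forces $u_1$ in $(Q_1,\Gamma)$, so the set $C$ of \cref{joinabilitysimple} is empty. Then \cref{joinabilitysimple}\ref{jbl c} shows that every $\Gamma$-colouring of $Q_0$ extends to a $\Gamma$-colouring of $Q$.

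It therefore suffices to find a $\Gamma$-colouring $\phi$ of $Q_0$ with $\phi(x)=c_x$. Here $G(Q_0)=G_0\ba e_0$. Since $G_0$ is $k$-polar-unrestricted, it is $k$-joinable by \cref{unrestricted joinable corollary}, and by \cref{joinability def} the polar graph $Q_0$ is $k$-unrestricted. Alternatively, one can see this directly: add $e_0$ as a polarised edge whose polarisation uses colours that are new for $\Gamma$. This gives a polar graph with underlying graph $G_0$ whose colourings coincide with those of $Q_0$, as in the proof of \cref{hajosextend}, and that polar graph is $k$-unrestricted by hypothesis. In particular $x$ is not $\Gamma$-restricted on $c_x$ in $Q_0$. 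Being unrestricted also gives colourability, because $L(x)\neq\emptyset$, so the $(x,c_x)$-colour deletion is colourable and yields the desired $\phi$. Extending $\phi$ to $Q$ completes the argument.

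I expect no serious obstacle; the step most likely to need care is the case $x\in V(G_1)\setminus\{v\}$. This case is handled by the symmetric argument with the roles of the two sides swapped, so it does not use the asymmetric form of \cref{joinabilitysimple}. I would also check that \cref{unrestricted vacuous} really applies to $Q_1$, since its underlying graph is $G_1\ba e_1$ with the deleted edge being exactly $u_1v$, which matches the hypothesis of that lemma.
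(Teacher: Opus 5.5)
Your proposal is correct and follows the paper's proof: assume without loss of generality that the target vertex lies in $G_0$, apply \cref{unrestricted vacuous} to $Q_1$ so that \cref{joinabilitysimple}\ref{jbl c} applies, and then colour $Q_0$ using the unrestrictedness coming from $G_0$. Your explicit argument that $Q_0$ is $k$-unrestricted even though $G(Q_0)=G_0\ba e_0$ (via $k$-joinability, or by adding $e_0$ as a polarised edge with new colours) fills in a step the paper states without justification.
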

\begin{proof}
    Let $G_0$ and $G_1$ be graphs with $u,v_i \in V(G_i)$ for $i \in \{0,1\}$, let $G = (G_0,v,u_0) \hajos (G_1,v,u_1)$, and let $Q$ be a polar graph with $G(Q)=G$.
    Let $\Gamma = (L,R)$ be a $k$-polar assignment, and let $r \in V(Q)$ and $c \in L(r)$.
    Without loss of generality, assume that $r \in V(G_0)$.
    Since $G_0$ is $k$-polar-unrestricted, $Q[V(G_0)]$ is $k$-unrestricted, so there exists a $\Gamma$-colouring $\phi$ of $Q[V(G_0)]$ such that $\phi(r)=c$.
    By \cref{unrestricted joinable corollary}, $G_1$ is $k$-joinable.
    Moreover, there is no $c \in L(v)$ such that $(v,c)$ forces $u_1$ in $(Q[V(G_1)],\Gamma)$, by \cref{unrestricted vacuous}.
    Thus, by \cref{joinabilitysimple}\ref{jbl c}, $\phi$ extends to $Q$. Hence $r$ is not $\Gamma$-restricted on $c$.
\end{proof}

\begin{lemma}
    \label{joinable plus unrestricted}
    Let $G$ be a Haj\'{o}s join of graphs $G_0$ and $G_1$. 
    Suppose that $G_0$ is $k$-joinable, and $G_1$ is $k$-polar-unrestricted.
    Then $G$ is $k$-polar-unrestricted.
\end{lemma}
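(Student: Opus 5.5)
We model the proof on \cref{hajos joining unrestricted}, but now only $G_1$ can be relied on to be $k$-polar-unrestricted, so the root vertex must be handled asymmetrically. Write $G = (G_0,v,u_0) \hajos (G_1,v,u_1)$. Let $Q$ be a polar graph with $G(Q)=G$, let $\Gamma=(L,R)$ be a $k$-polar assignment for $Q$, and let $r \in V(Q)$ and $c \in L(r)$. We set $Q_i = Q[V(G_i)]$. The task is to produce a $\Gamma$-colouring $\phi$ of $Q$ with $\phi(r)=c$. We split into two cases according to whether $r$ lies in $V(G_0)$ or in $V(G_1)\setminus\{v\}$.

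\emph{Case $r \in V(G_0)$.} Since $G_0$ is $k$-joinable, $Q_0$ (whose underlying graph is $G_0 \ba u_0v$) is $k$-unrestricted. Hence there is a $\Gamma$-colouring $\phi$ of $Q_0$ with $\phi(r)=c$. By \cref{unrestricted joinable corollary}, $G_1$ is $k$-joinable. By \cref{unrestricted vacuous} applied to $G_1$ and $Q_1$ (whose underlying graph is $G_1 \ba u_1v$), no colour $c' \in L(v)$ is such that $(v,c')$ forces $u_1$ in $(Q_1,\Gamma)$. So the set $C$ of \cref{joinabilitysimple} is empty, and \cref{joinabilitysimple}\ref{jbl c} (with the roles as stated, $G_1$ being $k$-joinable) shows that $\phi$ extends to $Q$. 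This case is essentially identical to \cref{hajos joining unrestricted}.

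\emph{Case $r \in V(G_1) \setminus \{v\}$.} We now apply \cref{hajosextend} with the roles of the two sides swapped, using that $G_0$ is $k$-joinable. This gives a polar graph $Q_1'$ with $G(Q_1')=G_1$ and a $k$-polar assignment $\Gamma_1$ for $Q_1'$ such that $\Gamma_1|Q_1 = \Gamma|Q_1$ and every $\Gamma_1$-colouring of $Q_1'$ extends to a $\Gamma$-colouring of $Q$. Since $G_1$ is $k$-polar-unrestricted, $Q_1'$ is $\Gamma_1$-unrestricted. The lists are unchanged on $V(G_1)$, so $c \in L_{\Gamma_1}(r)$, and therefore there is a $\Gamma_1$-colouring $\phi$ of $Q_1'$ with $\phi(r)=c$. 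This $\phi$ extends to a $\Gamma$-colouring of $Q$, as required.

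The main point to check is that \cref{hajosextend} and \cref{joinabilitysimple} really are symmetric in the two sides. They only require the side being "simulated" to be $k$-joinable, and the Haj\'os join is symmetric, so we may relabel $G_0$ and $G_1$. We also need that the restriction $\Gamma_1|Q_1=\Gamma|Q_1$ preserves the lists at every vertex of $G_1$, including $v$ and $r$; this is what guarantees that the colour $c$ is still available at $r$. Finally, the case $r = v$ is already covered by the first case, since $v \in V(G_0)$.
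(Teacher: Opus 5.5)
Your proposal is correct and follows the paper's proof essentially step for step. For $r \in V(G_0)$ you colour $Q_0$ using $k$-joinability of $G_0$ and extend via \cref{unrestricted joinable corollary}, \cref{unrestricted vacuous} and \cref{joinabilitysimple}\ref{jbl c}; for the remaining vertices you apply \cref{hajosextend} with $G_0$ as the $k$-joinable side and then use $k$-polar-unrestrictedness of $G_1$. The only cosmetic difference is that you assign $v$ to the first case, whereas the paper lets the two cases overlap at $v$; this changes nothing.
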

\begin{proof}
    Let $G = (G_0,v,u_0) \hajos (G_1,v,u_1)$, and let $e_0=u_0v$ and $e_1=u_1v$ be the edges of $G_0$ and $G_1$, respectively, not present in $G$.
    Let $Q$ be a polar graph with underlying graph $G$,
    let $\Gamma=(L,R)$ be a $k$-polar assignment for $Q$, and let $r \in V(Q)$ with $c \in L(r)$.
    Let $Q_i = Q[V(G_i)]$ for $i \in \{0,1\}$.

    First assume that $r \in V(G_0)$.
    Since $G_0$ is $k$-joinable and $G(Q_0) = G_0 \ba e_0$, we have that $Q_0$ is $\Gamma$-unrestricted.
    Thus, there exists a $\Gamma$-colouring $\phi$ of $Q_0$ such that $\phi(r)=c$.
    Since $G_1$ is $k$-joinable by \cref{unrestricted joinable corollary}, and there is no $c \in L(v)$ such that $(v,c)$ forces $u_1$ in $(Q_1,\Gamma)$ by \cref{unrestricted vacuous}, we have that $\phi$ extends to $Q$ by \cref{joinabilitysimple}\ref{jbl c}.
    Hence $r$ is not $\Gamma$-restricted on $c$ in $Q$.

Next assume that $r \in V(G_1)$. 
By \cref{hajosextend}, there exists a polar graph $Q'_1$ with $G(Q'_1)=G_1$, and a $k$-polar assignment $\Gamma'$ for $Q'_1$ such that any $\Gamma'$-colouring of $Q'_1$ extends to a $\Gamma$-colouring of $Q$.
Since $G_1$ is $k$-polar-unrestricted, there exists a $\Gamma'$-colouring $\phi'$ of $Q'_1$ where $\phi'(r)=c$.
Thus $\phi'$ extends to a $\Gamma$-colouring of $Q$, and therefore $r$ is not $\Gamma$-restricted on $c$ in $Q$.
It now follows that $Q$ is $k$-polar-unrestricted, as required.
\end{proof}

\subsection{The class $\mathcal{T}_k$}

We now prove that if $G$ is the almost $k$-regular extension of a Gallai tree (even one with $K_2$-blocks), then $G$ is in $\mathcal{T}_k$.

\begin{lemma}
\label{Hajos joins give k2}
Let $G$ be a graph with a vertex $h \in V(G)$ such that $G-h$ is a Gallai tree, and $d_G(v)=k$ for all $v \in V(G) \setminus \{h\}$. Then $G \in \mathcal{T}_k$.
\end{lemma}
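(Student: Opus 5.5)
We argue by induction on the number of $K_2$-blocks of the Gallai tree $T=G-h$, with the number of vertices as a secondary parameter. Two implicit conditions are needed. First, $G$ is the almost $k$-regular extension of $T$: every vertex of $T$ has degree $k$ in $G$, and all other edges go to $h$. Second, $T$ has minimum degree less than $k$; otherwise $h$ is isolated. We assume $h$ has at least one neighbour, as implicitly required.

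If $T$ has no $K_2$-blocks, then $G\in\mathcal{T}_k^-\subseteq\mathcal{T}_k$ by definition. So suppose $T$ has a $K_2$-block with edge $xy$. Deleting $xy$ splits $T$ into two Gallai trees $T_x\ni x$ and $T_y\ni y$. We aim to exhibit $G$ as a Haj\'os join of two smaller graphs of the same type.

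Let $G_x$ be obtained from $G[V(T_x)\cup\{h\}]$ by adding one extra edge $xh$. Then $G_x-h=T_x$ is a Gallai tree with fewer $K_2$-blocks, and every vertex of $T_x$ has degree $k$ in $G_x$: the vertex $x$ lost the edge $xy$ and gained $xh$. Define $G_y$ symmetrically, and relabel its copy of $h$ as $h'$, disjoint from $G_x$.

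Consider the Haj\'os join $(G_x,x,h)\hajos(G_y,y,h')$ on the edges $xh$ and $yh'$. This deletes $xh$ and $yh'$, identifies $x$ with $y$, and adds an edge $hh'$. That is the wrong shape: the join identifies a vertex, whereas $G$ has two distinct vertices $x,y$ joined by an edge.

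Use the join the other way around instead. Take $(G_x,h,x)\hajos(G_y,h,y)$: identify the two hub copies into a single $h$, delete one edge $hx$ from $G_x$ and one edge $hy$ from $G_y$, and add the edge $xy$. The result has vertex set $V(T_x)\cup V(T_y)\cup\{h\}$. The edges at $h$ are exactly those of $G$, since we removed the extra $hx$ and $hy$, and the edge $xy$ is restored. So the result is exactly $G$.

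Both $G_x$ and $G_y$ satisfy the hypotheses of the lemma with fewer $K_2$-blocks, so by induction they lie in $\mathcal{T}_k$. Closure of $\mathcal{T}_k$ under Haj\'os joins then gives $G\in\mathcal{T}_k$.

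\textbf{Main obstacle: degenerate cases.} We must check that $G_x$ stays inside the class, in particular that $T_x$ still has minimum degree less than $k$. This holds because $d_{T_x}(x)=d_T(x)-1\le k-1$. If $T_x$ is the single vertex $x$, then $G_x\cong I_k$, which lies in $\mathcal{T}_k^-$ as the almost $k$-regular extension of $K_1$, and that case is fine. Parallel edges are allowed in Haj\'os joins, per the footnote, so adding the extra edge $xh$ parallel to existing ones causes no problem.
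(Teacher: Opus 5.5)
Your proof is correct and is essentially the paper's argument: split $G-h$ at a $K_2$-block $xy$, form $G_x$ and $G_y$ by adding one extra edge to $h$, and recover $G$ as the Haj\'os join that identifies the two copies of $h$, deletes the added edges and adds $xy$. The paper phrases this as a minimal counterexample on $|V(G)|$ rather than induction on the number of $K_2$-blocks; your explicit remark that $G-h$ must have minimum degree less than $k$ (so that $h$ is not isolated) spells out an assumption the paper leaves implicit.
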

\begin{proof}
    Observe that $G$ is the almost $k$-regular extension of $G-h$.
    Thus, if $G-h$ has no $K_2$-blocks, then $G$ is clearly in $\mathcal{T}_k^-$, and so is also in $\mathcal{T}_k$. Therefore, the lemma holds when $G-h$ has no $K_2$-blocks.

    Towards a contradiction, suppose that there exists such a graph $G$ with $G \notin \mathcal{T}_k$.
    We may assume $G$ has the minimum number of vertices amongst all such graphs.
    By the foregoing, $G-h$ has a $K_2$-block $B$.
    Let $V(B) = \{u_0,u_1\}$ and $E(B) = \{e\}$.
    Since $B$ is a block of $G-h$, the edge $e$ is a bridge of $G-h$.
    Let $H_0$ and $H_1$ be the two components of $(G-h)\ba e$, where $u_i \in H_i$ for each $i \in \{0,1\}$.
    Note that $H_0$ and $H_1$ are both Gallai trees.
    For $i \in \{0,1\}$, let $G_i$ for be the graph obtained from $G[V(H_i) \cup \{h\}]$ by adding an edge $e_i=hu_i$.
    We have $G_i-h=H_i$, where $H_i$ is a Gallai tree.
    Moreover, for any $v \in V(H_i)$, we have $d_{G_i}(v)=d_G(v)=k$ (this is clear if $v \neq u_i$, whereas if $v=u_i$, then, in $G_i$, we lose the edge $e$, but add the edge $e_i$).
    Therefore, by the minimality of $G$, both $G_0$ and $G_1$ are in $\mathcal{T}_k$.
    However, $G$ is the Haj\'{o}s join of $G_0$ and $G_1$, so $G \in \mathcal{T}_k$, a contradiction.
\end{proof}

In \cref{the hajos decomp lemma}, we will show that there is a natural way to decompose a graph $G$ into two components using an edge cut of size at most $k$ such that when the components are in $\mathcal{T}_k$, then $G$ is in $\mathcal{T}_k$.
We now describe the graphs of this decomposition.

Suppose that a graph $G$ has a edge cut $S$ that separates $X$ from $Y$, where $(X,Y)$ is a partition of $V(G)$. We use the following notation to describe two particular graphs obtained from $G$ and $(X,Y)$.
Let $X_S$ (or $Y_S$) be the subset of $X$ (or $Y$, respectively) consisting of vertices incident to an edge in $S$.
For $e \in S$, let $x_e$ (or $y_e$) be the vertex in $X_S$ (or $Y_S$, respectively) incident to $e$ in $G$.
Let $G'_X$ be the graph obtained from $G[X]$ by adding a new vertex $w$ and, for each $e \in S$, adding a single edge $e$ between $w$ and $x_e$.
Note that there may be parallel edges incident with $w$ in $G'_X$.
Let $G_Y$ be the graph obtained from $G[Y]$ by adding a set $U=\{u_e : e \in S\}$ of $|S|$ new vertices such that $U$ is a clique in $G_Y$, and adding an edge $e=u_ey_e$ for each edge $e \in S$.
Note that in $Q'_X$, the set of edges incident with $w$ is $S$; whereas $S$ is an edge cut in $G_Y$.
For an illustration, see \cref{K Edge Cut Deconstruction image}.

\begin{figure}
    \centering
    \includegraphics[scale=0.5]{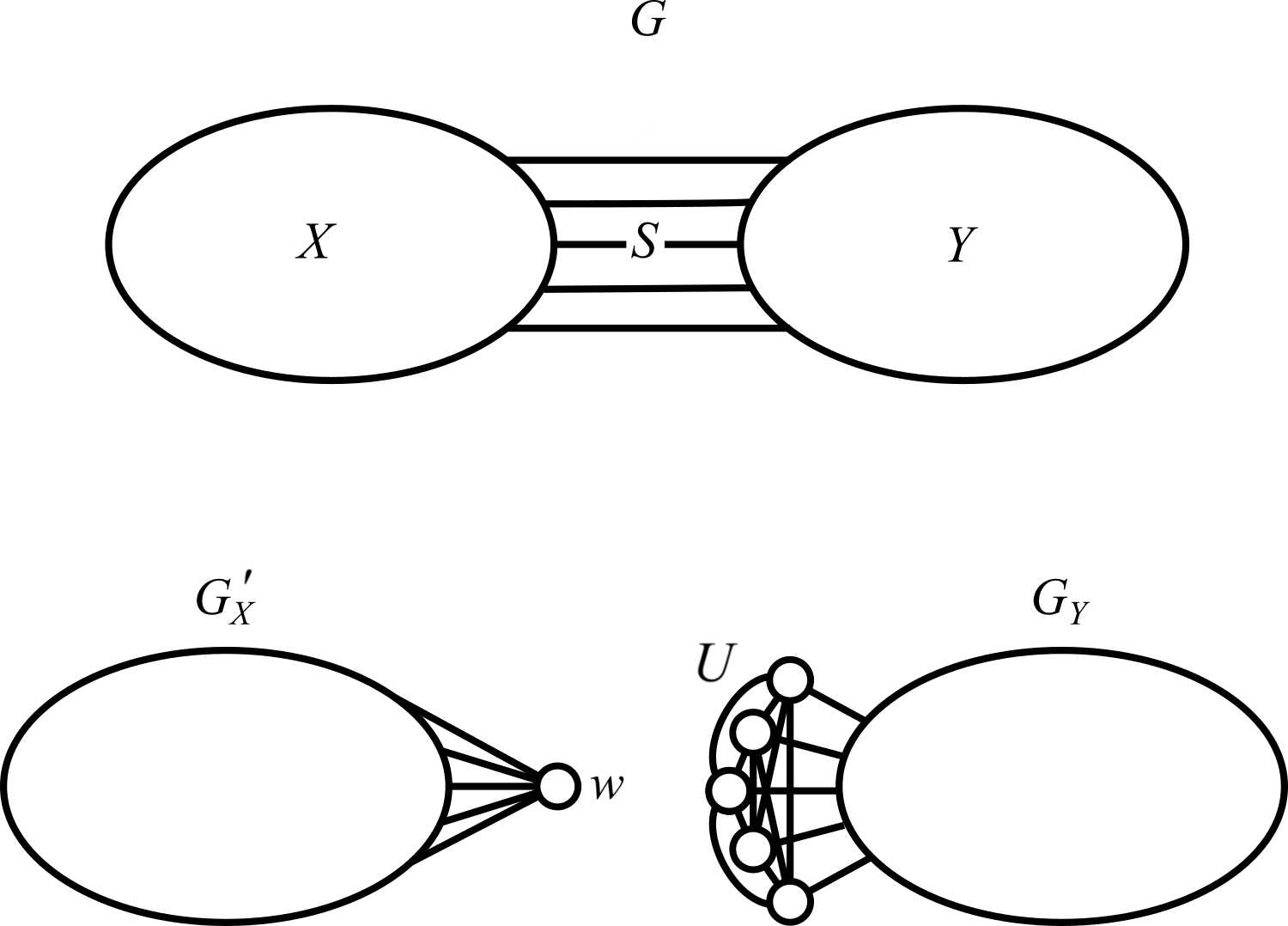}
    \caption{The graphs $G'_X$ and $G_Y$}
    \label{K Edge Cut Deconstruction image}
\end{figure}

\begin{lemma}
    \label{doi-new}
    Let $k \ge 3$, and let $G$ be the Haj\'os join of graphs $H_0$ and $H_1$, with $H_0,H_1 \in \mathcal{T}_k$ and $V(H_0) \cap V(H_1)=\{z\}$.
    Let $H \in \{H_0,H_1\}$ and $v \in V(H)$.  Then
    \begin{enumerate}
        \item $d_G(v) \ge d_H(v) \ge k$, and
        \item $d_G(v) > d_H(v)$ if and only if $v=z$.
    \end{enumerate}
\end{lemma}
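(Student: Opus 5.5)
The plan is to first show that every graph in $\mathcal{T}_k$ has minimum degree at least $k$, and then read off both parts by comparing degrees in $G$ and in $H$ directly.

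For the minimum-degree bound we argue by induction on the number of Haj\'os joins used to build a member of $\mathcal{T}_k$. In the base case $H \in \mathcal{T}_k^-$, so $H \cong T^+_k$ for a Gallai tree $T$. Every vertex of $T$ has degree exactly $k$ in $T^+_k$ by construction. The hub $h$ has degree $\sum_{v \in V(T)} (k - d_T(v))$, which is positive because $T$ has minimum degree less than $k$. To get $d(h) \ge k$ we use \cref{k edge con lemma}: $H$ is $k$-edge-connected and has at least two vertices, so every vertex has degree at least $k$. In the inductive step, take $H' = (A,z',a) \hajos (B,z',b)$ with $A,B$ of minimum degree at least $k$. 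Then $a$ loses $az'$ but gains $ab$, and likewise $b$ loses $bz'$ but gains $ab$. The vertex $z'$ has degree $d_A(z') + d_B(z') - 2 \ge 2k - 2 \ge k$, since $k \ge 2$. Every other vertex keeps its degree. Hence the minimum degree stays at least $k$.

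Now fix $H = H_0$ by symmetry, and write $G = (H_0,z,u_0) \hajos (H_1,z,u_1)$. For $v \in V(H_0)$, compute $d_G(v)$ against $d_{H_0}(v)$ in three cases.
\begin{enumerate}
\item If $v \notin \{z,u_0\}$, the incident edges are unchanged, so $d_G(v) = d_{H_0}(v)$.
\item If $v = u_0$, the edge $u_0z$ is replaced by $u_0u_1$, so again $d_G(v) = d_{H_0}(v)$. Note $u_0 \ne z$, since graphs are loopless.
\item If $v = z$, we have $d_G(z) = d_{H_0}(z) - 1 + d_{H_1}(z) - 1$. Since $d_{H_1}(z) \ge k \ge 3$, this gives $d_G(z) \ge d_{H_0}(z) + 1 > d_{H_0}(z)$.
\end{enumerate}
Combined with the minimum-degree bound $d_{H_0}(v) \ge k$, these three cases give (i), and they show that strict inequality occurs exactly when $v = z$, which is (ii).

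The only step needing care is the hub's degree in the base case, and \cref{k edge con lemma} settles it (including the $k$-symmetric cases $K_{k+1}$ and $I_k$). Parallel edges cause no trouble, because degrees count edges: in the $v = u_0$ case, if $u_0$ and $z$ are joined by several parallel edges, only one of them is removed.
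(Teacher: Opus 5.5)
Your proof is correct and follows essentially the same route as the paper: show that members of $\mathcal{T}_k$ have minimum degree at least $k$ (base case $\mathcal{T}_k^-$, then closure under Haj\'os joins), and compare degrees vertex by vertex across a single Haj\'os join, where only $z$ gains degree. You also make explicit two points the paper leaves implicit — that the hub of a graph in $\mathcal{T}_k^-$ has degree at least $k$ (via \cref{k edge con lemma}), and the ``routine induction'' itself — which tightens the argument.
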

\begin{proof}
    First, assume that $H_0,H_1 \in \mathcal{T}_k^-$.
    Since $H \in \mathcal{T}_k^-$, we have $d_H(v) \ge k \ge 3$ for all $v \in V(H)$.
    Moreover, if $u \in V(H) \setminus \{z\}$, then clearly $d_{H}(u) = d_{G}(u)$.
    On the other hand, $d_G(z) = d_{H_0}(z) + d_{H_1}(z) - 2 \ge d_{H_0}(z) + 1$, so, by symmetry, $d_G(z) > d_H(z)$.
    The lemma follows by a routine induction argument.
\end{proof}

\begin{lemma}
    \label{the hajos decomp lemma}
    Let $k \ge 3$, and let $G$ be a graph with a minimal edge cut $S$ that separates $X$ from $Y$, where $(X,Y)$ is a partition of $V(G)$, and $|S| \le k$.
    If $G'_X$ and $G_Y$ are both in $\mathcal{T}_k$, then $G \in \mathcal{T}_k$.
\end{lemma}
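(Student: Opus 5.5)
Throughout, $s=|S|$. The plan is to realise $G$ as an iterated Haj\'os join of $G'_X$ with copies of graphs in $\mathcal{T}_k$ derived from $G_Y$, inducting on $s$.

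\emph{Base cases.} If $s=1$, then $G'_X$ has a vertex $w$ of degree~$1$. This contradicts \cref{doi-new}(i), which (by a routine induction over Haj\'os joins, using that graphs in $\mathcal{T}_k^-$ have minimum degree at least $k$) gives $\delta \ge k$ for every graph in $\mathcal{T}_k$. Hence $s \ge k$, so in fact $s=k$ and $w$ has degree exactly~$k$. Then $U$ is a $k$-clique in $G_Y$. The edge set obtained by adding the single edge between $u_e$ and $y_e$ to $E(G_Y[U])$ forms a structure we will cut along.

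\emph{Inductive step.} Each vertex $u_e\in U$ has degree exactly $k$ in $G_Y$: it has $k-1$ clique neighbours plus $y_e$. We would like to peel off one edge of $S$ at a time via a Haj\'os join. Pick $e\in S$ and consider $G''$, obtained from $G$ by contracting the $X$-side differently. Concretely, we will show that $G$ is the Haj\'os join of $G'_X$ with the graph obtained from $G_Y$ by identifying the whole clique $U$ into a single vertex. That identification is not a Haj\'os operation, however, so instead we proceed as follows.
\begin{enumerate}
\item Show that $G_Y$ decomposes as a Haj\'os join $(K,\,u,\,\cdot)\hajos(G_Y^*,\,u,\,\cdot)$, where $K\cong K_{k+1}$ or is the relevant member of $\mathcal{T}_k^-$ containing $U$. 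This uses \cref{tkcliques}: a $k$-clique inside a graph of $\mathcal{T}_k^-$ forces $K_{k+1}$.
\item Understand how a Haj\'os join of $G'_X$ along $w$ interacts with this decomposition.
\end{enumerate}

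A cleaner route is to build $G$ directly from $G'_X$ and $G_Y$ by $k-1$ Haj\'os joins with copies of $K_{k+1}$, plus one final join, tracking the edges of $S$. Each Haj\'os join at the identified vertex $z$ merges two edges into one and glues at $z$. Starting from $G'_X$, with $w$ of degree $k$, and from $G_Y$, with $U$ a clique of $k$ vertices each of degree $k$, perform the Haj\'os join of $G'_X$ and $G_Y$ identifying $w$ with $u_{e_1}$, deleting $wx_{e_1}$ and $u_{e_1}y_{e_1}$, and adding $x_{e_1}y_{e_1}$. This restores edge $e_1$. The other edges at $w=u_{e_1}$ now go to $x_{e_2},\dots,x_{e_k}$ and to the clique vertices $u_{e_2},\dots,u_{e_k}$. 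Next we want to "absorb" $u_{e_1}$ together with the remaining clique.

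The key identity to verify is that repeatedly applying Haj\'os joins of a $K_{k+1}$ (or deleting a clique) can remove the gadget. Since $\mathcal{T}_k$ is closed only under Haj\'os joins, I would instead try induction on $|S|$ using \cref{hajosconn1}-style decompositions. Applying minimality of the cut $S$, show that either $|V(G_Y)\setminus U|$ is small, or $G_Y$ itself arises as a Haj\'os join whose split is compatible with $Y$, and then recurse on the smaller part.

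\emph{Main obstacle.} The hard step is identifying how $G_Y\in\mathcal{T}_k$ decomposes near the clique $U$. Writing $G_Y$ as a Haj\'os join $H_0\hajos H_1$, \cref{doi-new}(ii) says degrees increase only at the identified vertex $z$. The clique vertices $u_e$ have degree exactly $k$, so none of them is $z$. Also, the $k$-clique $U$ lies within a single part, since the new Haj\'os edge together with vertex $z$ separates. I would then show by induction on the number of Haj\'os joins that the part containing $U$, after replacing the other part appropriately, has the form $G'_{Y'}$-compatible. The base case in $\mathcal{T}_k^-$ comes from \cref{tkcliques}: a graph in $\mathcal{T}_k^-$ containing a $k$-clique is $K_{k+1}$, in which case $Y$ is a single vertex and $G=G'_X$ up to isomorphism. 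Carrying the Haj\'os decomposition of $G_Y$ (or of $G'_X$, at $w$) over to $G$ then finishes the proof.
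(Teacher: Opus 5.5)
Your preliminary observations are correct and match the paper. Since $d_{G'_X}(w)=|S|$ and every graph in $\mathcal{T}_k$ has minimum degree at least $k$, you get $|S|=k$. If $G_Y\in\mathcal{T}_k^-$, then \cref{tkcliques} gives $G_Y\cong K_{k+1}$, so $|Y|=1$ and $G\cong G'_X$. If $G_Y=(G_0,z,v_0)\hajos(G_1,z,v_1)$, then $z\notin U$ by \cref{doi-new}, and since $k\ge 3$ the clique $U$ lies on one side, say $U\subseteq V(G_0)\setminus\{z\}$.

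But the proposal stops exactly where the real work begins. Phrases like ``after replacing the other part appropriately, has the form $G'_{Y'}$-compatible'' and ``carrying the Haj\'os decomposition of $G_Y$ over to $G$'' do not say which smaller graph the induction hypothesis is applied to, nor how $G$ is reassembled from it. Your announced induction on $s$ is vacuous, since $s=k$ is forced. The routes you try along the way do not fill the hole either:
\begin{itemize}
\item Joining $G'_X$ to $G_Y$ at $w=u_{e_1}$ leaves the other $k-1$ vertices of $U$ in the graph, and no Haj\'os join can remove them.
\item Item~1 (splitting off, at the top level, a $K_{k+1}$ containing $U$) need not be possible. For example, let $G_0=(K,a,u)\hajos(G_0^*,a,v)$ with $K\cong K_{k+1}$ and $U=V(K)\setminus\{a\}$, and let the outer join have $z=a$ and $v_0\in U$. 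Then in $G_Y$ every vertex outside $U$ is adjacent to at most $k-2$ vertices of $U$, so no such split exists.
\end{itemize}

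The missing idea is an explicit reduction to a smaller instance, followed by a single Haj\'os join with $G_1$. Build $G'$ from $G[X]$ and $G_0-U$ by reinserting each $e\in S$:
\begin{itemize}
\item as $x_ey_e$ if $y_e\in V(G_0)$;
\item as $x_ez$ if $y_e=v_1$, which happens exactly when $u_e=v_0$.
\end{itemize}
You must then check that $S$ is still a minimal edge cut of $G'$ separating $X$ from $Y'=V(G_0)\setminus U$. This is where minimality of $S$ in $G$ is used: $G[Y]$ is connected, and any path of $G[Y]$ through $V(G_1)\setminus\{z\}$ can be shortcut by the edge $zv_0$, so $G_0-U$ is connected. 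You must also check that the graphs associated with this cut are again $G'_X$ on the $X$-side and exactly $G_0$ on the $Y'$-side.

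Since $|V(G')|<|V(G)|$, induction on the number of vertices gives $G'\in\mathcal{T}_k$. One then verifies that $G\cong(G',z,v_0)\hajos(G_1,z,v_1)$ if $v_0\notin U$, and $G\cong(G',z,x_e)\hajos(G_1,z,v_1)$ if $v_0=u_e\in U$. In the latter case the redirected edge $x_ez$ is precisely the edge this join deletes, and the join adds back $x_ev_1=x_ey_e$. Without this construction and the final identification of $G$ as a Haj\'os join of $G'$ and $G_1$, the proposal does not prove the lemma.
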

\begin{proof}
    Assume that $G'_X$ and $G_Y$ are in $\mathcal{T}_k$ but, towards a contradiction, $G \notin \mathcal{T}_k$.
    We also assume that $G$ has the minimum number of vertices amongst all such counterexamples.

    \begin{claim}
        \label{the s=k claim}
        $|S|=k$.
    \end{claim}
    \begin{subproof}
        Note that $d_{G'_X}(w)=|S|$.
        If $G'_X \in \mathcal{T}_k^-$, then $G'_X$ has minimum degree~$k$.
        If $G'_X \notin \mathcal{T}_k^-$, then, by \cref{doi-new}(i), $G'_X$ again has minimum degree~$k$.
        So, in either case, $|S| = d_{G'_X}(w) \geq k$.
        Since $|S| \le k$, we have $|S|=k$, as required.
    \end{subproof}

    Suppose that $G_Y \in \mathcal{T}_k^-$.
    By \cref{the s=k claim}, $G_Y[U] \cong K_k$, and so, by \cref{tkcliques}, $G_Y \cong K_{k+1}$.
    Then $Y=V(G_Y) \setminus U$ consists of a single vertex, and it follows that $G'_X \cong G$.
    Therefore, since $G'_X \in \mathcal{T}_k$, we have that $G \in \mathcal{T}_k$, as required.

    Therefore, we may assume that $G_Y \notin \mathcal{T}_k^-$, so there are graphs $G_0, G_1 \in \mathcal{T}_k$ such that $G_Y = (G_0,z,v_0) \hajos (G_1,z,v_1)$, with $z,v_i \in V(G_i)$ for $i \in \{0,1\}$.
    Let $e_0=v_0z$ and $e_1=v_1z$ be the edges of $G_0$ and $G_1$, respectively, not present in $G_Y$.
    We first prove the following claim.

    \begin{claim}
        \label{the Gi claim}
        For some $i \in \{0,1\}$, we have $U \subseteq V(G_i) \setminus \{z\}$, and $U$ is a clique in $G_i$.
    \end{claim}
    \begin{subproof}
        Towards a contradiction, assume that there exist $u_0 \in U \cap V(G_0)$ and $u_1 \in U \cap V(G_1)$.
        If $u_0=z$, then $d_{G_Y}(u_0) > d_{G_0}(u_0) \ge k$ by \cref{doi-new}, which contradicts that every vertex in $U$ has degree $k$ in $G$, by the construction of $G_Y$ and \cref{the s=k claim}.
        So $u_0 \in V(G_0)\setminus \{z\}$ and, similarly, $u_1 \in V(G_1)\setminus \{z\}$.
        Since $u_0$ and $u_1$ are adjacent in $G_Y$, we have $u_0=v_0$ and $u_1=v_1$.
        However, since $|U| = k$, by \cref{the s=k claim}, and $k \geq 3$, there is some $u' \in U \setminus \{u_0,u_1\}$ adjacent to both $v_0$ and $v_1$ in $G_Y$. This vertex can only be $z$, but then $d_{G_Y}(z) >k$ and so $z \notin U$, a contradiction.
        From this contradiction, we deduce that $U \subseteq V(G_i) \setminus \{z\}$, for some $i \in \{0,1\}$.
        Now $G_Y[U] = G_i[U]$.  Therefore, as $U$ is a clique in $G_Y$, it is also a clique in $G_i$.
    \end{subproof}

    Without loss of generality assume that $i=0$ in \cref{the Gi claim}, so $U \subseteq V(G_0) \setminus \{z\}$ and $G_0[U]$ is a complete graph, while $U \cap V(G_1)=\emptyset$.
    Note that $|U|=k$ and, for each $u \in U$, there is a unique edge in $G_Y$ between $u$ and a vertex not in $U$.

    \begin{claim}
    \label{G0-UY is connected}
        $G_0-U$ is connected.
    \end{claim}
    \begin{subproof}
        Since the edge cut $S$ is minimal, $G[Y]=G_Y-U$ is connected.
        Towards a contradiction, assume that $G_0-U$ is not connected.
        Then there exist distinct vertices $v,v' \in V(G_0)-U$ such that there is no $(v,v')$-path in $G_0-U$.
        Observe that $v,v' \in V(G_Y-U)$.
        Since $G_Y-U=G[Y]$ is connected, there exists a $(v,v')$-path in $G_Y-U$.
        Since this path does not exist in $G_0-U$, it contains a vertex in $V(G_1) \setminus \{z\}$, and therefore it contains both $z$ and $v_0$.
        However, we can replace a $(z,v_0)$-subpath with the edge $e_0=zv_0$ in $G_0$ to obtain a $(v,v')$-path in $G_0-U$, a contradiction.
    \end{subproof}

    We now construct a graph $G'$ by taking the union of $G_0-U$ and $G[X]$, and then adding an edge for each $e \in S$ as follows.
    For each $e \in S$, we have $u_e \in U \subseteq V(G_0) \setminus \{z\}$ so, as $u_ey_e$ is an edge of $G_Y$, either $y_e \in V(G_0)$ or $y_e = v_1$; if $y_e \in V(G_0)$ then we add the edge $e=x_ey_e$ in $G'$, otherwise, when $y_e=v_1$, we add the edge $e=x_ez$ in $G'$.
    Now $S$ is a set of edges in $G'$ (as well as a set of edges in $G$).
    Let $Y' = V(G_0-U)$.
    Notice that each edge of $G'$ in $S$ is incident to a vertex in $X$, and a vertex in $Y'$, so $G'\ba S$ consists of two connected components, $G[X]$ and $G_0-U$, where the latter is connected by \cref{G0-UY is connected}.
    Now $(X,Y')$ is a partition of $V(G')$ where $S$ is a minimal edge cut of $G'$ that separates $X$ from $Y'$ with $|S| \le k$.
    Both $G'_X$ and $G_{Y'} = G_0$ are in $\mathcal{T}_k$, and $|V(G')| < |V(G)|$, so, by the minimality of $G$, we have that $G'$ is in $\mathcal{T}_k$.

    We now define a graph $G''$ obtained from a Haj\'os join on $G'$ and $G_1$.
    Suppose $v_0 \notin U$. Then $e_0=zv_0 \in E(G_0-U)$, so $e_0 \in E(G')$.
    In this case, we let $G'' = (G',z,v_0) \hajos (G_1,z,v_1)$.
    On the other hand, suppose $v_0 \in U$.
    Then $v_0 = u_e$ for some $e \in S$, in which case $y_e = v_1$, so $x_ez$ is an edge of $G'$ by construction.
    In this case, we let $G'' = (G',z,x_e) \hajos (G_1,z,v_1)$.
    In either case, we let $e'$ be the edge added in $G''$ that is not present in $G'$ or $G_1$.
    A rough diagram of these graphs is given in \cref{schemfig}, focussing on the case that $v_0 \notin U$.
    Since $G''$ is the Haj\'os join of two graphs in $\mathcal{T}_k$, we have $G'' \in \mathcal{T}_k$.

    \begin{figure}
        \centering
        \includegraphics[scale=0.5]{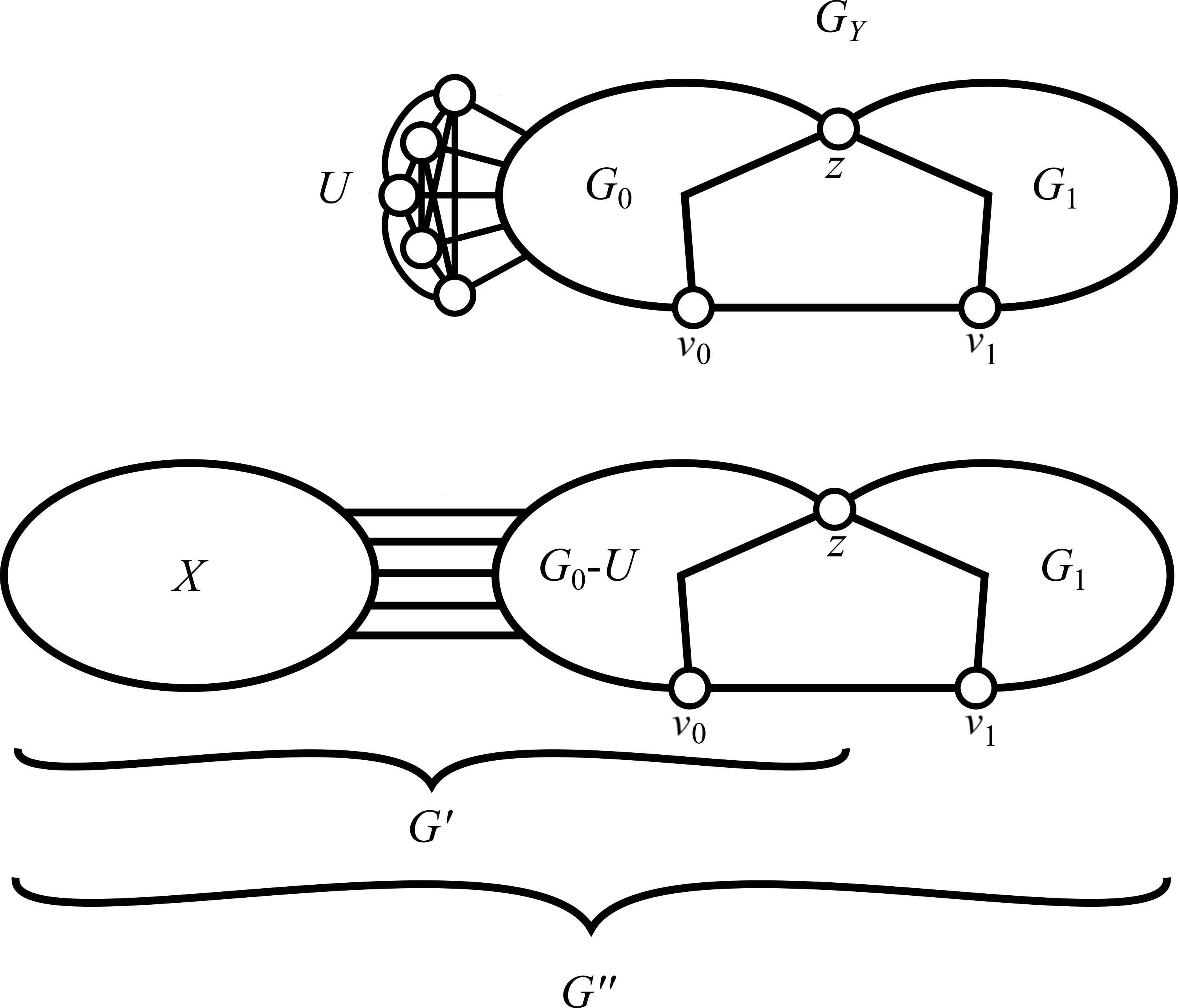}
        \caption{A schematic diagram of $G_Y$, $U$, $G_0$, $G_1$, $G'$ and $G''$.}
        \label{schemfig}
    \end{figure}

    We claim that $G''\cong G$.
    It is not too hard to see that, by construction, $V(G'')=V(G)$ and $E(G'')=E(G)$.
    Moreover, incidences between a vertices $v,v'$ are certainly the same in $G$ and $G''$ when $\{v,v'\} \subseteq X$, or $\{v,v'\} \subseteq V(G_0-U)$, or $\{v,v'\} \subseteq V(G_1)$.
    Consider the case where $v_0 \notin U$.
    By construction, the edges in $S$ have the same ends in $G$ as in $G''$.
    Moreover, the edge $v_0v_1$ in $G_Y$ that is added as part of the Haj\'{o}s join between $G_0$ and $G_1$ corresponds to the edge $e'=v_0v_1$ in $G''$ added as part of the Haj\'{o}s join between $G'$ and $G_1$.
    Now it is clear that $G=G''$ in the case that $v_0 \notin U$.

    It remains to consider the case where $v_0 \in U$.
    Let $e\in S$ such that $u_e = v_0$, in which case $y_e=v_1$.
    It is clear that the edges in $S \setminus \{e\}$ have the same ends in $G$ as in $G''$.
    For $e$, observe that $x_ez$ is the edge added in $G_Y$ as part of the Haj\'{o}s join between $G_0$ and $G_1$.
    In the construction of $G''$ by the Haj\'{o}s join between $G'$ and $G_1$, the edge $x_ez$ is removed and the edge $x_ev_1=x_ey_e$ is added.
    So incidences are preserved for edges in $S$.
    It follows that $G \cong G''$.  Thus $G \in \mathcal{T}_k$, a contradiction.
\end{proof}

We now consider consequences of results in the previous two subsections for the class $\mathcal{T}_k$.


\begin{lemma}
    \label{connectivity properties corollary}
    Let $G$ be a graph in $\mathcal{T}_k$, for $k \ge 3$.
    Then $G$ is $k$-edge-connected and has maximum local edge-connectivity~$k$. 
\end{lemma}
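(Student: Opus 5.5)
The natural approach is induction on the number of Haj\'os joins used to build $G$ from graphs in $\mathcal{T}_k^-$, since $\mathcal{T}_k$ is by definition the closure of $\mathcal{T}_k^-$ under Haj\'os joins.

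For the base case, let $G \in \mathcal{T}_k^-$. By \cref{k edge con lemma}, $G$ is $k$-edge-connected, so the maximum local edge-connectivity of $G$ is at least $k$. For the upper bound, let $h$ be a hub of $G$. Every vertex other than $h$ has degree exactly $k$, so for any pair $x,y$ with $x \neq h$ we get $\lambda(x,y) \le d(x) = k$. The only other pairs have both ends equal to $h$, and these do not occur since the vertices are distinct. Hence the maximum local edge-connectivity equals $k$.

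For the inductive step, let $G = (G_0,v,u_0) \hajos (G_1,v,u_1)$ with $G_0,G_1 \in \mathcal{T}_k$. By induction, each $G_i$ is $k$-edge-connected and has maximum local edge-connectivity $k$. Then \cref{hajosconn1} gives that $G$ is $k$-edge-connected. Since $k \ge 3 \ge 2$, the edge $u_iv$ is not a bridge of $G_i$, so \cref{hajosconn2} applies and gives maximum local edge-connectivity $\max\{k,k\}=k$.

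The only point needing care is the base-case upper bound: one must check that every pair of distinct vertices includes a vertex of degree $k$. This holds because at most one vertex, the hub, can have degree exceeding $k$. In the $k$-symmetric case every vertex is a hub, but then all degrees equal $k$ anyway, so the argument is unaffected.
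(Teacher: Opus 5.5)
Your proposal is correct and follows the paper's own proof: the base case combines \cref{k edge con lemma} with the observation that only the hub can have degree exceeding $k$, and the inductive step applies \cref{hajosconn1,hajosconn2}. You also make explicit the non-bridge hypothesis of \cref{hajosconn2} (a $k$-edge-connected graph with $k \ge 2$ has no bridges), which the paper leaves implicit.
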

\begin{proof}
    Let $H \in \mathcal{T}_k^-$.  Then $H$ is the almost $k$-regular extension of a Gallai tree~$T$, with hub~$h$.
    Since $H$ has at most one vertex with degree more than $k$, namely $h$, we have that $H$ has maximum local edge-connectivity at most $k$.
    By \cref{k edge con lemma}, $H$ is $k$-edge-connected; it follows that the maximum local edge-connectivity of $H$ is precisely $k$.
    Now, by \cref{hajosconn1,hajosconn2}, it follows that $G \in \mathcal{T}_k$ is $k$-edge-connected and has maximum local edge-connectivity~$k$.
\end{proof}

\begin{corollary}
\label{can't have both corollary}
Let $G$ be a graph with a proper subgraph that is in $\mathcal{T}_k$, for $k \ge 3$.
Then, either $G$ is not $2$-connected, or the maximum local edge-connectivity of $G$ is more than $k$.
\end{corollary}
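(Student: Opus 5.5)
Let $H$ be a proper subgraph of $G$ with $H \in \mathcal{T}_k$, and suppose that $G$ is $2$-connected; we show that $G$ has two vertices joined by at least $k+1$ edge-disjoint paths. By \cref{connectivity properties corollary}, $H$ is $k$-edge-connected. So for any two distinct vertices $x,y \in V(H)$ there are already $k$ edge-disjoint $(x,y)$-paths inside $H$. The plan is to choose $x,y$ so that one further $(x,y)$-path exists in $G$ using only edges of $E(G)\setminus E(H)$.

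There are two cases. First, suppose $V(H)=V(G)$. Since $H$ is proper, there is an edge $f=xy \in E(G)\setminus E(H)$. The single-edge path $x,f,y$ is edge-disjoint from the $k$ paths in $H$, so $\lambda_G(x,y) \ge k+1$.

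Second, suppose some vertex $w \in V(G)\setminus V(H)$ exists. We have $|V(H)| \ge 2$, since $H$ is $k$-edge-connected by definition. Because $G$ is $2$-connected, \cref{2fanlemma} gives internally disjoint paths from $w$ to two distinct vertices $x,y \in V(H)$. Truncate each path at its first vertex in $V(H)$; the two endpoints remain distinct because the paths are internally disjoint and share only $w$. Concatenating the truncated paths gives an $(x,y)$-path $P$ whose internal vertices all lie outside $V(H)$, so $E(P)\cap E(H)=\emptyset$. Together with the $k$ edge-disjoint $(x,y)$-paths in $H$, this gives $k+1$ edge-disjoint paths.

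In either case the maximum local edge-connectivity of $G$ exceeds $k$, which proves the corollary. The only step needing care is the truncation in the second case, where we must check that the endpoints stay distinct and that $P$ avoids the edges of $H$; both follow directly from the internal disjointness in \cref{2fanlemma}. Everything else is immediate from \cref{connectivity properties corollary}.
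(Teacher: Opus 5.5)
Your proof is correct and is essentially the paper's argument: the same two cases ($V(H)=V(G)$, where an extra edge gives the $(k+1)$st path, and a vertex outside $V(H)$, where \cref{2fanlemma} plus truncation at the first vertex of $V(H)$ gives it), both using the $k$-edge-connectivity from \cref{connectivity properties corollary}. One small fix: you reuse the names $x,y$ for the truncation endpoints, which in general differ from the endpoints given by \cref{2fanlemma}, so rename them.
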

\begin{proof}
    Assume that $G$ is $2$-connected.
    We will show that the maximum local edge-connectivity of $G$ is more than $k$.
    Let $G'$ be a proper subgraph of $G$ that is in $\mathcal{T}_k$.
    Then $G'$ is $k$-edge-connected, by \cref{connectivity properties corollary}.
    First, suppose there exists $u \in V(G) \setminus V(G')$, and let $v_0$ and $v_1$ be distinct vertices in $V(G')$.
    Since $G$ is $2$-connected, \cref{2fanlemma} implies that there is a $(u,v_0)$-path $P_0$ and a $(u,v_1)$-path $P_1$ that are internally disjoint.
    Let $w_0$ be the first vertex of $P_0$ in $V(G')$ and let $w_1$ be the first vertex of $P_1$ in $V(G_1)$.
    Since $G'$ is $k$-edge-connected, there are $k$ edge-disjoint $(w_0,w_1)$-paths in $G'$.
    By adjoining the $(u,w_0)$-subpath of $P_0$ and the $(u,w_1)$-subpath of $P_1$, we see that there are at least $k+1$ edge-disjoint $(w_0,w_1)$-paths in $G$.
    Now we may assume that $V(G')=V(G)$, so there exists $e \in E(G) \setminus E(G')$.
    Let $e=uv$.
    Since $G'$ is $k$-edge-connected, there are $k$ edge-disjoint $(u,v)$-paths in $G'$.
    Together with $e$, we see that there are at least $k+1$ edge-disjoint $(u,v)$-paths in $G$.
\end{proof}

\begin{lemma}
    \label{the joinability corollary}
    If $G \in \mathcal{T}_k$, for $k\ge 3$, then $G$ is $k$-joinable.
\end{lemma}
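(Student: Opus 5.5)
The plan is to argue by structural induction on the way $G$ is built in $\mathcal{T}_k$, which is the smallest class containing $\mathcal{T}_k^-$ and closed under Haj\'os joins. Concretely, I will induct on the number of vertices (or, equivalently, on the number of Haj\'os joins used to build $G$ from members of $\mathcal{T}_k^-$). The statement is that every polar graph $Q$ with $G(Q)=G$ is $k$-joinable, and both the base case and the inductive step are already available as earlier results. The whole argument is therefore short.

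\textbf{Base case.} Suppose $G \in \mathcal{T}_k^-$. Then $G$ is $k$-joinable directly by \cref{tkminustwo}.

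\textbf{Inductive step.} Suppose $G \notin \mathcal{T}_k^-$. Since $\mathcal{T}_k$ is the closure of $\mathcal{T}_k^-$ under Haj\'os joins (and isomorphism), $G$ is isomorphic to a Haj\'os join $(G_0,v,u_0) \hajos (G_1,v,u_1)$ with $G_0,G_1 \in \mathcal{T}_k$. Each $G_i$ has strictly fewer vertices than $G$: the join identifies only the vertex $v$, and each $G_i$ has at least two vertices, as it contains the edge $u_iv$. By induction, both $G_0$ and $G_1$ are $k$-joinable. Then \cref{joinability plus hajos} gives that $G$ is $k$-joinable.

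The only point needing care is that ``the smallest class closed under Haj\'os joins'' really does mean every member not in $\mathcal{T}_k^-$ arises as a single join of two smaller members. This holds because the set of graphs obtainable by finitely many joins from $\mathcal{T}_k^-$ contains $\mathcal{T}_k^-$ and is closed under Haj\'os joins, so it is the smallest such class. Also, $k$-joinability is invariant under isomorphism, since the definition quantifies over all polar graphs on the underlying graph. With these two points in hand I expect no real obstacle; all the substantive work is in \cref{tkminustwo} and \cref{joinability plus hajos}.
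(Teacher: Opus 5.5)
Your proposal is correct and matches the paper's proof: the paper also argues by a minimum counterexample on the number of vertices, using \cref{tkminustwo} for graphs in $\mathcal{T}_k^-$ and \cref{joinability plus hajos} for a Haj\'os join of two smaller members of $\mathcal{T}_k$. Your side remarks, that each part of the join has strictly fewer vertices and that every member of $\mathcal{T}_k \setminus \mathcal{T}_k^-$ arises as a single join of two members of $\mathcal{T}_k$, fill in details the paper leaves implicit.
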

\begin{proof}
    Towards a contradiction, suppose there exists a graph in $\mathcal{T}_k$ that is not $k$-joinable.
    Amongst all counterexamples, let $G$ be such a graph with the smallest number of vertices.
    By \cref{tkminustwo}, $G$ is not in $\mathcal{T}_k^-$, and therefore $G$ can be obtained as the Haj\'{o}s join of two graphs, $G_1$ and $G_2$, in $\mathcal{T}_k$.
    Both $G_1$ and $G_2$ have fewer vertices than $G$, so they are $k$-joinable.  But then $G$ is $k$-joinable by \cref{joinability plus hajos}, a contradiction.
\end{proof}

\begin{theorem}
    \label{tkes}
    Let $Q$ be a polar graph where $G(Q) \in \mathcal{T}_k$, for $k \ge 3$.
    Let $\Gamma$ be a $k$-polar assignment for $Q$.
    Then $Q$ is not $\Gamma$-colourable if and only if
    $G(Q) \in \mathcal{H}_k$,
    $F(Q)=\emptyset$,
    and $\Gamma$ is uniform.
\end{theorem}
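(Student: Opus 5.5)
We argue by induction on $|V(G(Q))|$. The base case $G(Q) \in \mathcal{T}_k^-$ is exactly \cref{tk c}. For $k=3$ the $\mathcal{T}_3^-$ graphs that are odd wheels lie in $\mathcal{H}_3$. For $k>3$ the graph $K_{k+1}$ lies in $\mathcal{H}_k$. Conversely, a member of $\mathcal{T}_k^-$ that lies in $\mathcal{H}_k$ must be an odd wheel or $K_{k+1}$. This holds because, for $k\ge4$, $K_{k+1}$ has no parallel edges and all its vertices have degree $k$, while a proper Haj\'os join has a vertex of degree greater than $k$ (\cref{doi-new}) together with $k$-regularity elsewhere. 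So conditions (I)--(III) of \cref{tk c} match the three conditions in the statement.

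For the \emph{if} direction, suppose $G(Q)\in\mathcal{H}_k$, $F(Q)=\emptyset$ and $\Gamma$ is uniform. Then $Q$ is not $\Gamma$-colourable, because $G(Q)$ is not $k$-colourable by \cref{stthm}. Here we use \cref{connectivity properties corollary} for maximum local edge-connectivity $k$, and \cref{hajosconn0} for $2$-connectivity, so that $G(Q)$ is its own block.

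For the \emph{only if} direction, write $G(Q)=(G_0,v,u_0)\hajos(G_1,v,u_1)$ with $G_0,G_1\in\mathcal{T}_k$, and suppose $Q$ is not $\Gamma$-colourable. By \cref{the joinability corollary} both $G_i$ are $k$-joinable, so \cref{joinabilitysimple} applies with either side playing the role of $G_1$. Let $C_1$ be the set of $c\in L(v)$ such that $(v,c)$ forces $u_1$ in $(Q_1,\Gamma)$.

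If $C_1=\emptyset$, then \cref{joinabilitysimple}\ref{jbl c} says every $\Gamma$-colouring of $Q_0$ extends to $Q$. But $Q_0$ is $k$-unrestricted, and in particular $k$-choosable, since $G(Q_0)=G_0\ba u_0v$ and $G_0$ is $k$-joinable. So $Q$ would be colourable, a contradiction; hence $C_1\neq\emptyset$, and symmetrically $C_0\neq\emptyset$.

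\Cref{jblbclaim} inside the proof of \cref{joinabilitysimple} now gives a polar graph $Q_0'$ with $G(Q_0')=G_0$ and a $k$-polar assignment $\Gamma_0$ extending $\Gamma|Q_0$ such that every $\Gamma_0$-colouring extends to $Q$. Therefore $Q_0'$ is not $\Gamma_0$-colourable, and by induction $G_0\in\mathcal{H}_k$, $F(Q_0')=\emptyset$ and $\Gamma_0$ is uniform. The case analysis of \cref{jblbclaim} shows that $Q_0'$ has no polarised edges only in case \ref{jbl a}, where $|C_1|\ge 2$ and $u_0u_1$ is not polarised. In that case $Q_0'=Q_0^N$ and $\Gamma_0$ is just $\Gamma$ restricted to $Q_0$. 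So $\Gamma|V(G_0)$ is uniform, $F(Q_0)=\emptyset$, and $e=u_0u_1\notin F(Q)$. Symmetrically, $\Gamma|V(G_1)$ is uniform, $F(Q_1)=\emptyset$, and $G_1\in\mathcal{H}_k$. The two uniform lists agree at $v$, so $\Gamma$ is uniform, $F(Q)=\emptyset$, and $G(Q)\in\mathcal{H}_k$ as a Haj\'os join of two members of $\mathcal{H}_k$.

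The main obstacle is making this extraction rigorous. The lemma as stated only asserts that such a $Q'_0$ exists, so we must reuse the explicit construction from its proof, namely \cref{jblbclaim}: case \ref{jbl b} always produces a polarised edge $u_0v$, which contradicts $F(Q_0')=\emptyset$ from the induction hypothesis. We also have to check that in case \ref{jbl a} we have $\Gamma_0=\Gamma|Q_0$ exactly, with no modified colours. If these details fail, a fallback is to apply \cref{tk c}-style reasoning directly to restricted vertices via \cref{tkminusone}\ref{tk b}.
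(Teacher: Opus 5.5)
Your proof is correct, and it takes a shorter route than the paper's. After the $\mathcal{T}_k^-$ base case, the paper proves three auxiliary claims about restricted vertices across the Haj\'os join:
\begin{itemize}
\item a characterisation of restricted vertices when $u_0u_1$ is polarised;
\item a necessary condition when it is not;
\item a claim, proved by its own minimal-counterexample argument, that if some vertex is $\Gamma$-restricted on two colours then $F(Q)=\emptyset$ and $G(Q)\in\mathcal{H}_k$.
\end{itemize}
The paper takes $F(Q)=\emptyset$ and $G(Q)\in\mathcal{H}_k$ from the third claim. Only then does it run a separate minimal-counterexample argument for uniformity. That argument is essentially your step: apply \cref{joinabilitysimple}\ref{jbl ab}, exclude $Q_0^P$ using the two-colour claim, and deduce that $Q_0^N$ is not colourable.

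You instead induct on the full three-part conclusion at once, which has two effects. First, the induction hypothesis itself excludes $Q_0^P$, because $Q_0^P$ has a polarised edge. Second, \cref{jblbclaim} outputs $Q_0^N$ only in case \ref{jbl a}, which gives $u_0u_1\notin F(Q)$ for free. Combined with the symmetric application, this yields all three conclusions, and the auxiliary claims become unnecessary for this theorem.

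The details you flagged as an obstacle are already settled. The last sentence of the proof of \cref{joinabilitysimple} sets $Q_0'=Q_0^N$ and $\Gamma_0=\Gamma$ in case \ref{jbl a}. Even from the lemma's statement alone, $F(Q_0')=\emptyset$ forces $Q_0'=Q_0^N$ with the lists of $\Gamma$; only $u_0u_1\notin F(Q)$ needs the case structure of the proof.

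What the paper's longer route buys is the stronger intermediate statement that a vertex restricted on at least two colours forces $F(Q)=\emptyset$ and $G(Q)\in\mathcal{H}_k$. This is the $\mathcal{T}_k$-analogue of \cref{tkminusone}\ref{tk b}. It also buys a description of restricted vertices across a polarised join. Both carry information about restriction indices beyond mere non-colourability. Your route proves exactly the theorem with much less work.

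One small correction in your base case. You do not need the converse that a member of $\mathcal{T}_k^-$ lying in $\mathcal{H}_k$ is an odd wheel or $K_{k+1}$, since your ``if'' direction is handled for all $G(Q)$ via \cref{stthm}. Moreover, the degree argument you give for it does not work as stated. Members of $\mathcal{T}_k^-$ also have a hub that may have degree greater than $k$, and you do not treat $k=3$. A correct reason would be that graphs in $\mathcal{H}_k$ are simple, combined with \cref{no parallel edges lemma}. It is simplest to drop the remark.
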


\begin{proof}
    If $G(Q) \in \mathcal{T}_k^-$, then the result holds by \cref{tk c}.
    So $G(Q)$ is a Haj\'{o}s join of graphs $G_0$ and $G_1$, with $G_0,G_1 \in \mathcal{T}_k$.
    Let $G(Q) = (G_0,v,u_0) \hajos (G_1,v,u_1)$ with $v,u_i \in V(G_i)$ for $i \in \{0,1\}$, and let $e$ be the edge $u_0u_1$ in $G$. 
    Let $Q_i = Q[V(G_i)]$ for $i \in \{0,1\}$.
    By \cref{the joinability corollary}, $G_i$ is $k$-joinable, so $Q_i$ is $k$-unrestricted.

    Let $\Gamma = (L,R)$.
    For $i \in \{0,1\}$, let $Q_i^P$ (and $Q_i^N$) be the polar graph obtained by adding a single polarised (or non-polarised, respectively) edge $e_i=vu_i$ to $Q_i$, so that $G(Q_i^P) = G_i$.
    For $c,d \in \mathbb{N}$, let $\Gamma_i^{(c,d)} = (L_i,R_i)$ be the $k$-polar assignment for $Q_i^P$ with $\Gamma_i^{(c,d)}|Q_i=\Gamma|Q_i$ and $R_i(e_i,u_i,v)=(c,d)$.

    We first prove the following claim, for the case where $e$ is polarised.

    \begin{claim}
        Suppose that $e$ is polarised, and $R(e,u_0,u_1)=(c_0,c_1)$.
        Let $x \in V(G_0)$.
        The vertex $x$ is $\Gamma$-restricted on $a \in L(x)$ in $Q$ if and only if there exists some $c\in\mathbb{N}$ such that $x$ is $\Gamma_0^{(c_0,c)}$-restricted on $a$ in $Q_0^P$, and $v$ is $\Gamma_1^{(c_1,c)}$-restricted on $c$ in $Q_1^P$.\label{+ a}
    \end{claim}

    \begin{subproof}
        Suppose that $x$ is $\Gamma$-restricted on $a \in L(x)$ in $Q$.
        Let $\phi_0$ be an arbitrary $\Gamma$-colouring of $Q_0$ with $\phi_0(x)=a$.
        Then $\phi_0$ does not extend to $Q$.
        Since $Q_1$ is $k$-unrestricted, there exists a $\Gamma$-colouring $\phi_1$ of $Q_1$ such that $\phi_1(v)=\phi_0(v)$.
        Thus, if $\phi_0(u_0) \neq c_0$ or $\phi_1(u_1) \neq c_1$, then $\phi_0$ extends to $Q$, a contradiction.
        So $\phi_0(u_0) = c_0$ and $(v,\phi_0(v))$ forces $(u_1,c_1)$ in $(Q_1,\Gamma)$ in any such $\Gamma$-colouring $\phi_1$.
        Since $G(Q_1)=G_1\ba u_1v$ and $G_1$ is $k$-joinable, $u_1$ is a $k$-mimic of $v$ in $Q_1$.
        Thus, there is a unique $c \in \mathbb{N}$ such that $(v,c)$ forces $(u_1,c_1)$ in $(Q_1,\Gamma)$ (since if $v$ $2$-forces $u_1$ in $(Q_1,\Gamma)$ and $(v,c)$ forces $(u_1,c_1)$ for some colour $c$, then $c=c_1$).
        Now $\phi_0(v)=c$ and $\phi_0(u_0)=c_0$ for any such $\phi_0$.
        Hence $x$ is $\Gamma_0^{(c_0,c)}$-restricted on $a$ in $Q_0^P$.
        Similarly, for any $\Gamma$-colouring $\phi_1$ of $Q_1$ such that $\phi_1(v)=c$, we have that $\phi_1(u_1) \neq c_1$.
        Thus $v$ is $\Gamma_1^{(c_1,c)}$-restricted on $c$.
        This proves one direction.

        For the other direction, note that $x$ is $\Gamma_0^{(c_0,c)}$-restricted on $a$ in $Q_0^P$ but not in $Q_0=Q_0^P\ba e_0$.
        Therefore, for any $\Gamma_0^{(c_0,c)}$-colouring $\phi_0$ of $Q_0$ such that $\phi_0(x)=a$, we have that $\phi_0(v)=c$ and $\phi_0(u_0)=c_0$.
        On the other hand, $v$ is $\Gamma_1^{(c_1,c)}$-restricted on $c$ in $Q_1^P$ but not in in $Q_1^P\ba e_1$, so, for any $\Gamma_1^{(c_1,c)}$-colouring $\phi_1$ of $Q_1$, where $\phi_1(v)=c$, we have that $\phi_1(u_1)=c_1$.
        Then $\phi_0$ does not extend to $Q$, and therefore $x$ is $\Gamma$-restricted on $a$ in $Q$, as required.
    \end{subproof}

    The next claim is for the case where $e$ is not polarised.
    \begin{claim}
        Suppose that $e$ is not polarised, and $x \in V(G_0)$.
        If $x$ is $\Gamma$-restricted on $a \in L(x)$ in $Q$, then either
        \begin{enumerate}
            \item $x$ is $\Gamma$-restricted on $a$ in $Q_0^N$ and $v$ is $\Gamma$-restricted on more than one colour in $Q_1^N$, or
            \item there exist $c,d \in \mathbb{N}$ such that
                $x$ is $\Gamma_0^{(d,c)}$-restricted on $a$ in $Q_0^P$, and
                $v$ is $\Gamma_1^{(d,c)}$-restricted on $c$ in $Q_1^P$.
        \end{enumerate}\label{+ b}
    \end{claim}
    \begin{subproof}
        Let $x$ be $\Gamma$-restricted on $a$ in $Q$.
        A $\Gamma$-colouring $\phi_0$ of $Q_0$ extends to $Q$ if and only if there is a $\Gamma$-colouring $\phi_1$ of $Q_1$ such that $\phi_1(v)=\phi_0(v)$ and $\phi_1(u_1) \neq \phi_0(u_0)$.
        For every $\Gamma$-colouring $\phi_0$ of $Q_0$ with $\phi_0(x)=a$, we have that $\phi_0$ does not extend to $Q$, so $(v,\phi_0(v))$ forces $(u_1,\phi_0(u_0))$ in $(Q_1,\Gamma)$.
        Suppose that $(x,a)$ forces $(v,c)$ in $(Q_0,\Gamma)$, for some $c \in \mathbb{N}$.
        Then there exists $d \in \mathbb{N}$ such that for any $\Gamma$-colouring $\phi_0$ of $Q_0$ with $\phi_0(x)=a$, we have $\phi_0(v)=c$, and, since $\phi_0$ does not extend to $Q$, we also have that $\phi_0(u_0)=d$, and $(v,c)$ forces $(u_1,d)$ in $(Q_1,\Gamma)$.
        It follows that $x$ is $\Gamma_0^{(d,c)}$-restricted on $a$ in $Q_0^P$ and $v$ is $\Gamma_1^{(d,c)}$-restricted on $c$ in $Q_1^P$, so (ii) holds.

        Now suppose that $(x,a)$ does not force $v$ in $(Q_0,\Gamma)$.
        Then there are $\Gamma$-colourings $\phi_0$ and $\phi_0'$ for $Q_0$ with $\phi_0(x)=\phi_0'(x)=a$ but $\phi_0(v) \neq \phi_0'(v)$, where neither $\phi_0$ nor $\phi_0'$ extends to $Q$.
        Therefore, $v$ $2$-forces $u_1$ in $(Q_1,\Gamma)$.
        Since $G_1$ is $k$-joinable and $G(Q_1)=G_1\ba e_1$, we have that $u_1$ is a $k$-mimic of $v$ in $Q_1$, so $(v,\phi_0(v))$ forces $(u_1,\phi_0(v))$ for all such $\phi_0$.
        Thus, for any such $\phi_0$, we have that $\phi_0(u_0)=\phi_0(v)$, so $x$ is $\Gamma$-restricted on $a$ in $Q_0^N$.
        Similarly, for any such $\phi_0$, we have that $v$ is $\Gamma$-restricted on $\phi_0(v)$ in $Q_1^N$.
        Since there is no $c \in \mathbb{N}$ such that $\phi_0(v)=c$ for every such $\phi_0$, we have that $v$ is $\Gamma$-restricted on more than one colour in $Q_1^N$, so (ii) holds.
    \end{subproof}

    We require one more claim.

    \begin{claim}
        If there exists $x \in V(Q)$ that is $\Gamma$-restricted on more than one colour in $Q$, then $F(Q)=\emptyset$ and $G \in \mathcal{H}_k$.\label{+ d}
    \end{claim}

    \begin{subproof}
        Suppose that $Q$ is a minimum-sized counterexample to the claim, so  $x \in V(Q)$ is $\Gamma$-restricted on $a$ and $a'$ in $Q$, for distinct $a,a' \in \mathbb{N}$, and either $F(Q) \neq \emptyset$ or $G(Q) \notin \mathcal{H}_k$.
        Note that $G(Q_0^P) = G_0$. 
        Thus, if we can show that there is a vertex that is $\Gamma_0$-restricted on more than one colour in $Q_0^P$, for some $k$-polar assignment $\Gamma_0$ of $Q_0^P$, then, as $Q_0^P$ contains at least one polarised edge, this would contradict that $Q$ is minimum sized.  We frequently use this strategy to obtain a contradiction in what follows.

        First, assume that $e$ is polarised, and suppose $R(e,u_0,u_1)=(c_0,c_1)$.
        By applying \cref{+ a} using $a$, there exists $c \in \mathbb{N}$ such that $x$ is $\Gamma_0^{(c_0,c)}$-restricted on $a$ in $Q_0^P$ and $v$ is $\Gamma_1^{(c_1,c)}$-restricted on $c$ in $Q_1^P$.
        By a second application of \cref{+ a} using $a'$, there exists $c' \in \mathbb{N}$ such that $x$ is $\Gamma_0^{(c_0,c')}$-restricted on $a'$ in $Q_0^P$ and $v$ is $\Gamma_1^{(c_1,c')}$-restricted on $c'$ in $Q_1^P$.
        Since $Q_1$ is $k$-unrestricted, we have that $(v,c)$ and $(v,c')$ force $(u_1,c_1)$ in $(Q_1^P\ba e_1,\Gamma)$.
        If $c \neq c'$, then $v$ $2$-forces $u_1$ in $(Q_1,\Gamma)$, and it follows that $u_1$ is not a $k$-mimic of $v$ in $Q_1$, so $G_1$ is not $k$-joinable, a contradiction.
        Hence $c=c'$, so $\Gamma_0^{(c_0,c)}=\Gamma_0^{(c_0,c')}$.
        Now, for any $\Gamma$-colouring $\phi$ of $Q_0^P\ba e_0$, if $\phi(x) \in \{a,a'\}$, then $\phi(v)=c$ and $\phi(u_0)=c_0$.
        Therefore $x$ is $\Gamma_0^{(c_0,c)}$-restricted on more than one colour in $Q_0^P$.
        Since $Q_0^P$ contains at least one polarised edge, we contradict that $Q$ is minimum sized.
        From this contradiction, we deduce that $e$ is not polarised.

        Now $e$ is not polarised.
        Assume that $x$ is not $\Gamma$-restricted on $a$ in $Q_0^N$.
        Then, by \ref{+ b}, there exist $c,d \in \mathbb{N}$ such that $x$ is $\Gamma_0^{(d,c)}$-restricted on $a$ in $Q_0^P$ and $v$ is $\Gamma_1^{(d,c)}$-restricted on $c$ in $Q_1^P$.
        Therefore, for every $\Gamma$-colouring $\phi$ of $Q_0$ with $\phi(x)=a$, we have that $\phi(v)=c$ and $\phi(u_0)=d$.
        If $x$ is also $\Gamma_0^{(d,c)}$-restricted on $a'$ in $Q_0^P$, then, as $Q_0^P$ contains a polarised edge, this contradicts that $Q$ is minimum sized.
        So $x$ is not $\Gamma_0^{(d,c)}$-restricted on $a'$ in $Q_0^P$.
        Therefore, there is a $\Gamma$-colouring $\phi'$ of $Q_0$ such that $\phi'(x)=a'$ but either $\phi(v) \neq c$ or $\phi(u_0) \neq d$.
        Suppose $\phi'(v)=c$. Then, since $\phi'$ does not extend to $Q$ and $e$ is not polarised, $(v,c)$ forces $(u_1,\phi'(u_0))$ in $(Q_1,\Gamma)$.
        However, since $v$ is $\Gamma_1^{(d,c)}$-restricted on $c$ in $Q_1^P$, we have that $(v,c)$ forces $(u_1,d)$ in $(Q_1,\Gamma)$.
        So $\phi'(u_0)=d$, a contradiction.
        Therefore $\phi'(v) \neq c$.
        We have that $(v,c)$ forces $(u_1,d)$ in $(Q_1,\Gamma)$, and, since $x$ is $\Gamma$-restricted on $a'$, we have that $(v,\phi'(v))$ forces $(u_1,\phi'(u_0))$ in $(Q_1,\Gamma)$, with $\phi'(v) \neq c$.
        So $v$ $2$-forces $u_1$ in $Q_1$.
        Since $G_1$ is $k$-joinable, we deduce that $c=d$.
        But then $x$ is $\Gamma_0^{(c,c)}$-restricted on $a$ in $Q_0^P$, so is also $\Gamma$-restricted on $a$ in $Q_0^N$, a contradiction.

        We deduce that $x$ is $\Gamma$-restricted on both $a$ and $a'$ in $Q_0^N$.
        Since $Q$ is a minimum-sized counterexample, $G_0 \in \mathcal{H}_k$ and $F(Q_0)=\emptyset$.
        If there exist $c,d \in \mathbb{N}$ such that, for every $\Gamma$-colouring $\phi$ of $Q_0$ with $\phi(x) \in \{a,a'\}$, we have $\phi(v)=c$ and $\phi(u_0)=d$, then $x$ is $\Gamma_0^{(d,c)}$-restricted on both $a$ and $a'$ in $Q_0^P$.
        Since $Q_0^P$ contains a polarised edge, this contradicts that $Q$ is minimum sized.
        Therefore, there exist $\Gamma$-colourings $\phi$ and $\phi'$ of $Q_0$ with $\{\phi(x),\phi'(x)\}\subseteq \{a,a'\}$ such that either $\phi(v) \neq \phi'(v)$ or $\phi(u_0) \neq \phi'(u_0)$.
        Since neither $\phi$ nor $\phi'$ is a $\Gamma$-colouring of $Q_0^N$, we have that $\phi(v)=\phi(u_0)$ and $\phi'(v)=\phi'(u_0)$, so $\phi(v) \neq \phi'(v)$.
        Since neither $\phi$ nor $\phi'$ extends to $Q$, we have that $(v,\phi(v))$ forces $(u_1,\phi(v))$ and $(v,\phi'(v))$ forces $(u_1,\phi'(v))$ in $(Q_1,\Gamma)$.
        Therefore $v$ is $\Gamma$-restricted on distinct colours $\phi(v)$ and $\phi'(v)$ in $Q_1^N$, and so, since $Q$ is minimum sized, we have that $G_1 \in \mathcal{H}_k$ and $F(Q_1)=\emptyset$.
        Now $F(Q)=\emptyset$ and, since $G$ is the Haj\'{o}s join of $G_0$ and $G_1$, with $G_0,G_1 \in \mathcal{H}_k$, we have that $G \in \mathcal{H}_k$, a contradiction.
        This completes the proof of the claim.
    \end{subproof}

    We now prove the theorem in earnest.
    For one direction, suppose that $G(Q) \in \mathcal{H}_k$, $F(Q)=\emptyset$, and $\Gamma$ is uniform.
    Then, by \cref{stthm}, $Q$ is not $\Gamma$-colourable.
    For the other direction, suppose $Q$ is not $\Gamma$-colourable.
    Then, for all $x \in V(Q)$, we have that $x$ is $\Gamma$-restricted on $c$ for each $c \in L(x)$.
    By \cref{+ d}, we have $F(Q)=\emptyset$ and $G(Q) \in \mathcal{H}_k$.
    It remains only to show that if $Q$ is not $\Gamma$-colourable, then $\Gamma$ is uniform.

    Towards a contradiction, suppose that $Q$ is not $\Gamma$-colourable, but $\Gamma$ is not uniform, where $Q$ is minimum sized.
    Since $Q_0$ is $k$-unrestricted, for each $c \in L(v)$ there exists a $\Gamma$-colouring $\phi_c$ of $Q_0$ such that $\phi_c(v)=c$, but $\phi_c$ does not extend to $Q$.
    Since $G_1$ is $k$-joinable, \cref{joinabilitysimple}(ii) holds; that is, for $Q_0' \in \{Q_0^N,Q_0^P\}$, there is a $k$-polar assignment $\Gamma_0$ for $Q_0'$ such that $\Gamma_0|Q_0=\Gamma|Q_0$ and every $\Gamma_0$-colouring of $Q_0'$ extends to a $\Gamma$-colouring of $Q$.
    Since $Q$ is not $\Gamma$-colourable, $Q_0'$ is not $\Gamma_0$-colourable.
    As $Q_0^P$ contains a polarised edge, \cref{+ d} implies that no vertex of $Q_0^P$ is $\Gamma_0$-restricted on more than one colour, and therefore $Q_0^P$ is $\Gamma_0$-colourable.
    Thus $Q'_0=Q_0^N$, and $Q_0^N$ is not $\Gamma$-colourable.
    Since $Q$ is a minimum-sized counterexample, $\Gamma|V(Q_0)$ is uniform.
    Moreover, for each $c \in L(v)$, the $\Gamma$-colouring $\phi_c$ of $Q_0$ does not extend to $Q_0^N$, so $\phi_c(u_0)=\phi_c(v)$.
    Therefore $(v,c)$ forces $(u_1,c)$ in $(Q_1,\Gamma)$ for every $c \in L(v)$.
    It follows that $Q_1^N$ is not $\Gamma$-colourable, and therefore, since $Q$ is a minimum-sized counterexample, $\Gamma|V(G_1)$ is also uniform.
    As $\Gamma|V(G_0)$ and $\Gamma|V(G_1)$ are both uniform and $v \in V(G_0) \cap V(G_1)$, we have that $\Gamma$ is uniform, a contradiction.  We deduce that if $Q$ is not $\Gamma$-colourable, then $\Gamma$ is uniform.
\end{proof}

\section{Proofs of \texorpdfstring{\cref{mainthm,restrictedthm}}{Theorems 1.3 and 1.5}} 

We are now ready to prove the Brooks-type theorem for graphs with maximum local edge-connectivity~$k$ (\cref{mainthm}).  We first prove \cref{restrictedthm}.

\begin{lemma}
\label{zero or one higher degree}
Let $Q$ be a $2$-connected polar graph with at most one vertex of degree more than $k$, for $k \geq 3$. If $Q$ is $k$-restricted, then $G(Q) \in \mathcal{T}_k$.
\end{lemma}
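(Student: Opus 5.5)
The plan is to split according to whether $Q$ has a vertex of degree more than $k$. In each case we aim to find a vertex $w$ such that $G(Q)-w$ is a Gallai tree and every vertex other than $w$ has degree exactly $k$. Then \cref{Hajos joins give k2}, applied with $h=w$, gives $G(Q)\in\mathcal{T}_k$ directly.

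\emph{Case 1: there is a vertex $h$ with $d(h)>k$.} By hypothesis $h$ is unique and $d(u)\le k$ for all $u\ne h$. This is exactly the setting of \cref{krtcorollary}, since $Q$ is $2$-connected and $k$-restricted. It yields that $G(Q-h)$ is a Gallai tree and that $d_Q(v)=k$ for all $v\in V(Q)\setminus\{h\}$. These are precisely the hypotheses of \cref{Hajos joins give k2} for $G(Q)$ with distinguished vertex $h$, so $G(Q)\in\mathcal{T}_k$.

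\emph{Case 2: $d(u)\le k$ for all $u$.}
\begin{enumerate}
\item Since $Q$ is $k$-restricted, choose a $k$-polar assignment $\Gamma=(L,R)$, a vertex $r$ and a colour $c\in L(r)$ such that $r$ is $\Gamma$-restricted on $c$. Let $(Q-r,\Gamma_c)$ be the $(r,c)$-colour deletion.
\item Each edge between $u$ and $r$ removes at most one colour from $L(u)$. Hence $|L_c(u)|\ge k-e(u,r)\ge d_Q(u)-e(u,r)=d_{Q-r}(u)$, so $\Gamma_c$ is a degree-polar assignment for $Q-r$.
\item Since $Q$ is $2$-connected, $Q-r$ is connected. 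As $Q-r$ is not $\Gamma_c$-colourable, \cref{dcpolarthm} shows that $Q-r$ is a polar Gallai tree and $\Gamma_c$ is bad. In particular $G(Q)-r$ is a Gallai tree.
\item Badness forces $|L_c(u)|=d_{Q-r}(u)$ for every $u\ne r$. Comparing with the chain of inequalities in step 2, every inequality must be tight, so $d_Q(u)=k$ for all $u\neq r$.
\end{enumerate}
Thus $G(Q)$ again satisfies the hypotheses of \cref{Hajos joins give k2}, now with $h=r$, and so $G(Q)\in\mathcal{T}_k$.

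The only point needing care is whether the degenerate situation in which $G(Q)-w$ has minimum degree $k$ can arise, since $\mathcal{T}_k^-$ is defined via almost $k$-regular extensions with minimum degree less than $k$. I expect this to be the main (if mild) obstacle. It is handled inside the proof of \cref{Hajos joins give k2}, which only uses the almost-$k$-regular structure. In any case it cannot occur here: such a tree would be $k$-regular, so $w$ would have no neighbours, contradicting that $Q$ is $2$-connected.
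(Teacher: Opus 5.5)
Your proof is correct and matches the paper's argument almost exactly. With no high-degree vertex, the paper also uses the $(r,c)$-colour deletion, \cref{dcpolarthm}, and tightness of $|L_c(u)|\ge k-e(u,r)\ge d_{Q-r}(u)$; with a high-degree vertex it uses \cref{krtcorollary}; and both cases conclude via \cref{Hajos joins give k2}. Your explicit remarks — that $Q-r$ is connected (needed for \cref{dcpolarthm}) and that $2$-connectivity rules out $G(Q)-w$ having minimum degree $k$ — fill in details the paper leaves implicit.
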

\begin{proof}
    Assume that $Q$ is $k$-restricted. 
    Let $r$ be a vertex of $Q$ and let $\Gamma=(L,R)$ be a $k$-polar assignment for $Q$ such that $r$ is $\Gamma$-restricted on $c$ in $Q$ for some $c \in L(r)$.
    First assume that $Q$ has no vertices with degree more than $k$.
    Then $k=|L(u)| \geq d_Q(u)$ for all $u \in V(Q)$; so $\Gamma$ is a degree-polar assignment.
    Let $(Q-r,\Gamma_c)$ be the $(r,c)$-colour deletion from $(Q,\Gamma)$.
    Then $\Gamma_c$ is also a degree-polar assignment and so, by \cref{dcpolarthm}, $Q-r$ is a polar Gallai tree and $\Gamma_c$ is bad.
    Since $\Gamma_c$ is bad, $|L_c(u)|=d_{Q-r}(u)$ for all $u \in V(Q-r)$.
    However, $|L_c(u)| \geq k-e_Q(u,r) \geq d_Q(u)-e_Q(u,r) = d_{Q-r}(u)$, so equality holds throughout.
    Thus $d_Q(u)=k$ for all $u \in V(Q-r)$.
    Therefore, by \cref{Hajos joins give k2}, $G(Q)$ is in $\mathcal{T}_k$.

    Now we may assume that $Q$ contains some vertex $h$ with $d_Q(h) > k$.
    Then, by \cref{krtcorollary}, $G(Q-h)$ is a Gallai tree, and $d_Q(u)=k$ for all $u \in V(Q-h)$.
    By \cref{Hajos joins give k2}, $G(Q) \in \mathcal{T}_k$, as required.
\end{proof}

We now prove the first main result of this section, which implies \cref{restrictedthm}.

\begin{theorem}
\label{final classify theorem}
Let $Q$ be a $2$-connected polar graph with maximum local edge-connectivity at most $k$, for $k \geq 3$. If $Q$ is $k$-restricted, then $G(Q) \in \mathcal{T}_k$.
\end{theorem}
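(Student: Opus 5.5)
We argue by induction on $|V(Q)|$, taking a minimum counterexample $Q$: $2$-connected, maximum local edge-connectivity at most $k$, $k$-restricted, yet $G(Q)\notin\mathcal{T}_k$. If $Q$ has at most one vertex of degree more than $k$, then \cref{zero or one higher degree} gives $G(Q)\in\mathcal{T}_k$, a contradiction. So we may assume there are distinct vertices $h_1,h_2$ with $d(h_1),d(h_2)>k$. Since $\lambda(h_1,h_2)\le k$, Menger gives a minimal edge cut $S$ with $|S|\le k$ separating $h_1\in X$ from $h_2\in Y$. We choose such a cut with one side as small as possible, so that the structure near that side is controlled.

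The key step is to decompose $Q$ along $S$ using the graphs $G'_X$ and $G_Y$ of \cref{the hajos decomp lemma}, and to transfer restrictedness to both pieces. First, $G_Y$ is obtained by replacing $X$ by a $k$-clique $U$ whose vertices have degree at most $k$. It keeps $2$-connectivity, keeps maximum local edge-connectivity at most $k$ (every path through $U$ can be rerouted through $X$ and vice versa, since $|S|\le k$), and has fewer vertices than $Q$ when $|X|\ge 2$. Second, $G'_X$ is obtained by contracting $Y$ to a single vertex $w$. I would show that a $k$-restricted vertex $r$ of $Q$, with witnessing $k$-polar assignment $\Gamma$, yields $k$-restricted polar structures on both pieces. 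Say $r\in X$. The $Y$-side together with $S$ constrains the colours of $X_S$ through $S$. I want to simulate this constraint in $Q'_X$ by giving $w$ a list and polarising edges of $S$, in the same way that \cref{joinabilitysimple} simulates one side of a Haj\'os join. Then $Q'_X$ is $k$-restricted (possibly with $w$ of degree $\le k$), and by minimality $G'_X\in\mathcal{T}_k$. For the $Y$ side I would argue that if $Q_Y$ (with underlying graph $G_Y$) were $k$-unrestricted for all polarisations, every colouring of $X$ would extend. In particular $r$ would not be restricted, so $Q_Y$ is $k$-restricted and $G_Y\in\mathcal{T}_k$ by minimality. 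Then \cref{the hajos decomp lemma} yields $G(Q)\in\mathcal{T}_k$.

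The main obstacle is the transfer step: encoding the interaction across a $k$-edge cut, rather than across a single $2$-separation, by a bounded polar gadget. I would first try to show that $|S|=k$ is forced: $Q'_X$ and $Q_Y$ must have all vertices of degree at least $k$, or else the restriction dies by a degree-choosability argument via \cref{dcpolarthm}. I would also show that the clique $U$ in $G_Y$ behaves like an arbitrary $k$-polar assignment on $S$, using that $K_k$ with appropriate lists can forbid any prescribed colour pattern on its neighbours. Degenerate cases, such as $|X|=1$ or $|Y\setminus\{h_2\}|$ small (where $G_Y$ or $G'_X$ fails to shrink), would be handled separately. In those cases, choosing $S$ to be a cut adjacent to a high-degree vertex should reduce to the single-high-degree situation of \cref{krtcorollary}.
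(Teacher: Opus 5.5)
Your skeleton matches the paper's: a minimal counterexample, two vertices of degree $>k$, a minimum edge cut $S$ with $|S|\le k$, the graphs $G'_X$ and $G_Y$, and \cref{the hajos decomp lemma} at the end. The gap is the transfer of restrictedness to the two pieces, which you leave as ``the main obstacle''. That step is essentially the whole proof, and with the roles you assign it does not go through.

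\emph{The $X$ side.} You put $r\in X$, contract the \emph{other} side $Y$ to $w$, and hope $w$ simulates the constraint $Y$ imposes on $X_S$. For each colour $d\in L(w)$, each edge $e\in S$ can forbid at most one colour at $x_e$, and a polarised edge does so for only one value of $d$. For $r$ to stay restricted in $Q'_X$, every $d\in L(w)$ would have to block every colouring of $Q[X]$ with $r$ coloured $c$. That amounts to strong forcing of $X_S$ by $(r,c)$, which nothing guarantees. \cref{joinabilitysimple} works only because of the $k$-mimic property across a Haj\'os $2$-separation, and that has no analogue across a $k$-edge cut.

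\emph{The $Y$ side.} Your contrapositive does not follow. $k$-unrestrictedness of polar graphs on $G_Y$ is a statement about one vertex at a time. It gives no way to force the clique $U$ to reproduce a prescribed boundary colouring, so it does not imply that colourings of $X$ extend.

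The paper assigns the gadgets the other way round.
\begin{enumerate}
\item By minimality, \cref{1joinscu} and \cref{can't have both corollary}, both $Q[X]$ and $Q[Y]$ are $k$-unrestricted. So one may take $r\in Y$ and fix a single $\Gamma$-colouring $\phi$ of $Q[Y]$ with $\phi(r)=c$.
\item This one colouring is encoded by a single colour $d$ at $w$ with $R(e,x_e,w)=(c_e,d)$. Then $w$, not $r$, is restricted in $Q'_X$.
\item By minimality $G(Q'_X)\in\mathcal{T}_k$, and $|S|=k$ by \cref{doi-new}.
\item In fact $G(Q'_X)\in\mathcal{T}_k^-$. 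Otherwise $G(Q)$ is itself a Haj\'os join, and \cref{hajos joining unrestricted}, \cref{joinable plus unrestricted} and minimality put $G(Q)$ in $\mathcal{T}_k$.
\item Since $w\neq h_x$ is restricted, it is central by \cref{tkminusone}(i). This forces $Q'_X$ to be a polar odd wheel and $k=3$.
\item Only now is the clique used, on the side \emph{without} $r$. After colour-deleting $Y$, a $K_k$ on $U$ with lists of size at least $k-1$ is non-colourable only when all reduced lists are the same $(k-1)$-set (\cref{dcpolarthm}). So $U$ can block only a single boundary pattern on $Y_S$, or the constant patterns from one $k$-set.
\item \cref{odd wheels in G2} shows that $\{\phi|_{Y_S}:\phi\in\Phi\}$ has one of these two forms. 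This makes $r$ restricted in $Q_Y$, and minimality plus \cref{the hajos decomp lemma} finish.
\end{enumerate}
None of steps 1--7 appears in your plan. Also, $G_Y$ is smaller than $Q$ only when $|X|>|S|$, not when $|X|\ge 2$; the paper gets $|X|>|S|$ from $Q'_X$ being an odd wheel on more than four vertices.
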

\begin{proof}
    Towards a contradiction, suppose that $Q$ is $k$-restricted but $G(Q) \notin \mathcal{T}_k$.  We further assume that, amongst all such graphs, $Q$ has the minimum number of vertices.
    By \cref{zero or one higher degree}, $Q$ contains distinct vertices $h_x$ and $h_y$ with degree more than $k$.
    Let $S$ be an edge cut of minimum size such that $S$ separates $X$ from $Y$, where $(X,Y)$ is a partition of $V(G)$ with $h_x \in X$ and $h_y \in Y$.
    For $e \in S$, we let $x_e$ (and $y_e$) denote the vertex incident to $e$ in $X$ (and $Y$, respectively).
    Since $Q$ has maximum local edge-connectivity~$k$, Menger's Theorem implies that $|S| \leq k$.
    Note also that this implies that $|Y| > 1$, since $h_y \in Y$ and $d(h_y) > k$.

    \begin{claim}
        \label{claimone}
        $Q[Y]$ is $k$-unrestricted.
    \end{claim}
    \begin{subproof}
        Towards a contradiction, suppose that $Q[Y]$ is $k$-restricted.
        Then there exists $r \in Y$ and a $k$-polar assignment $\Gamma=(L,R)$ of $Q[Y]$ such that $r$ is $\Gamma$-restricted on $c$ for some $c \in L(r)$.
        By \cref{1joinscu}, there is a $2$-connected $k$-restricted polar subgraph $Q_Y$ of $Q[Y]$.
        Note that $Q_Y$ has maximum local edge-connectivity at most $k$, since $G(Q_Y)$ is a subgraph of $G(Q)$.
        Therefore, by the minimality of $Q$, we have $G(Q_Y) \in \mathcal{T}_k$, but this contradicts \cref{can't have both corollary}.
        We deduce that $Q[Y]$ is $k$-unrestricted, as required.
    \end{subproof}

    Since $Q$ is $k$-restricted there exists $r \in V(Q)$ and a $k$-polar assignment $\Gamma=(L,R)$ for $Q$ such that $r$ is $\Gamma$-restricted on $c$ for some $c \in L(r)$.
    Without loss of generality assume that $r \in Y$.

    Let $Q'_X$ be the polar graph obtained from $Q[X]$ by adding a vertex $w$, and, for each edge $e \in S$, 
    we add a polarised edge $e$ in $Q'_X$ between $x_e$ and $w$.  In this way, $S$ is an edge cut in $Q$, and also the set of edges incident with $w$ in $Q'_X$.
    Note that, letting $G=G(Q)$, we have that $G(Q'_X)=G'_X$ as described just prior to \cref{the hajos decomp lemma}.

    \begin{claim}
        \label{QXcolour}
        The vertex $w$ is $k$-restricted in $Q'_X$.
        Moreover, for any $\Gamma$-colouring $\phi$ of $Q[Y]$ such that $\phi(r)=c$, there exists a $k$-polar assignment $\Gamma'_X$ for $Q'_X$ such that the $(w,d)$-colour deletion from $(Q'_X,\Gamma'_X)$ equals the $(Y,\phi)$-colour deletion from $(Q,\Gamma)$.
    \end{claim}
    \begin{subproof}
        Let $\phi$ be a $\Gamma$-colouring of $Q[Y]$ such that $\phi(r)=c$; such a colouring exists by \cref{claimone}.
        Let $(Q[X],\Gamma_X)$ be the $(Y,\phi)$-colour deletion from $(Q,\Gamma)$.
        Since $r$ is $\Gamma$-restricted on $c$ in $Q$, we have that $Q_X$ is not $\Gamma_X$-colourable.
        From this, we will construct a $k$-polar assignment $\Gamma'_X$ for $Q'_X$ such that $w$ is $\Gamma'_X$-restricted in $Q'_X$.
        For each $e \in S$, 
        if $e$ is not polarised in $Q$, then let $c_e=\phi(y_e)$; if $e$ is polarised in $Q$ with $R(e,x_e,y_e)=(c_x,\phi(y_e))$ for some $c_x$, then let $c_e=c_x$; otherwise let $c_e$ be any colour not in $L(x_e)$.
        Now let $\Gamma'_X=(L'_X,R'_X)$ be a $k$-polar assignment for $Q'_X$ such that $\Gamma'_X|X=\Gamma|X$ and $L'_X(w)$ consists of $k$ colours one of which is $d$, and, for each polarised edge $e=x_ew$, let $R'_X(e,x_e,w)=(c_e,d)$.
        Notice that, by construction, the $(w,d)$-colour deletion from $(Q'_X,\Gamma'_X)$ is $(Q[X],\Gamma_X)$.
        Since $Q_X$ is not $\Gamma_X$-colourable, $w$ is $\Gamma'_X$-restricted on $d$ in $Q'_X$, as required.
    \end{subproof}

    \begin{claim}
        \label{QX plays nice claim}
        The polar graph $Q'_X$ is $2$-connected and has maximum local edge-connectivity at most $k$.
    \end{claim}
    \begin{subproof}
        Since $S$ is a minimum-sized edge cut in $Q$, the graph $Q[X]$ is connected and, therefore, $Q'_X$ is connected.
        Since $|S| \leq k$ and $d_{Q'_X}(h_x) > k$, we have $|V(Q'_X)| \ge 3$. 
        To show that $Q'_X$ is $2$-connected, it now suffices to show that $Q'_X$ does not contain a cut vertex.
        Certainly $w$ is not a cut vertex, since $Q'_X-w=Q[X]$.
        Moreover, if $u \in V(Q'_X) \setminus \{w\}$ is a cut vertex, say, then $\{u\}$ separates $w$ from some vertex in $x \in V(Q'_X) \setminus \{u,w\}$, in which case it follows that $\{u\}$ separates $x$ from $h_y$ in $Q$, contradicting that $Q$ is $2$-connected.
        So $Q'_X$ is $2$-connected.

        To see that $Q'_X$ has maximum local edge-connectivity at most~$k$, first observe that there are at most $k$ edge-disjoint $(w,v)$-paths in $Q'_X$ for any $v \in V(Q'_X) \setminus \{w\}$, since $d(w)=|S| \le k$.
        Then, observe that there are $|S|$ edge-disjoint $(X_S,h_y)$-paths in $Q$, and these are in bijection with the $|S|$ edges incident with $w$ in $Q'_X$.
        For any $u,v \in X$, a $(u,v)$-path in $Q'_X$ has a corresponding path in $Q$ that we obtain by replacing any edge of the form $u_{e}w$ in $Q'_X$ with the corresponding $(x_e,h_y)$-path in $Q$.
        Since $Q$ has maximum local edge-connectivity at most $k$, it follows that there are at most $k$ $(u,v)$-paths in $Q'_X$, so indeed $Q'_X$ has maximum local edge-connectivity at most~$k$.
    \end{subproof}

    Now $|V(Q'_X)| < |V(Q)|$, since $|Y| \ge 2$, and so, by \cref{QXcolour,QX plays nice claim} and the minimality of $Q$, we have that $G(Q'_X) \in \mathcal{T}_k$.
    From \cref{doi-new}(i), we have $d_{Q'_X}(u) \geq k$ for all $u \in V(Q'_X)$.
    In particular, $|S| = d_{Q'_X}(w) \geq k$, but since $|S| \leq k$, we have $|S|=d_{Q'_X}(w)=k$.

    \begin{claim}
        \label{intkminus}
        $G(Q'_X) \in \mathcal{T}_k^-$.
    \end{claim}
    \begin{subproof}
        Towards a contradiction, assume that $G(Q'_X) \notin \mathcal{T}_k^-$.
        Then $G(Q'_X)$ is a Haj\'{o}s join of two graphs $G_0,G_1 \in \mathcal{T}_k$.
        Let $G(Q'_X) = (G_0,z,u_0) \hajos (G_1,z,u_1)$ such that, for $i \in \{0,1\}$, we have $z,u_i \in V(G_i)$, and let $e=u_0u_1 \in E(Q'_X)$.
        If $z=w$, then as $d_{Q'_X}(w) = k$, this contradicts \cref{doi-new}(ii).
        So $z \neq w$.
        Without loss of generality, $w \in V(G_0) \setminus \{z\}$.

        Let $Z_0 = V(Q) \setminus V(G_1)$ and $Z_1 = V(G_1)\setminus \{z\}$, and observe that $(Z_0,\{z\},Z_1)$ is a partition of $V(Q)$ with $Y \subseteq Z_0$ and $u_1 \in Z_1$.
        Let $v_0 \in Z_0$ and $v_1 \in Z_1$.
        We claim that every $(v_0,v_1)$-path in $Q$ contains either $z$ or $e$.
        Let $P$ be a $(v_1,v_0)$-path in $Q$.
        First suppose $P$ contains no edges from $S$.
        Then, since $v_1 \in V(G_1) \subseteq X$, we have that $P$ is also a path in $Q'_X$.
        Since $v_0 \notin V(G_1)$ and $v_1 \in V(G_1) \setminus \{z\}$, this implies that $P$ contains either $e$ or $z$, as required.
        Now suppose $P$ contains at least one edge in $S$.
        Let $y$ be the first vertex in $P$ that is not in $X$, and let $P'$ be the $(v_1,y)$-subpath of $P$.
        Then every vertex of $P'$ except the final vertex $y$ is in $X$, and so by replacing $y$ with $w$, we see that $P'$ corresponds to a $(v_1,w)$-path in $Q'_X$.  Now $w \in V(G_0) \setminus \{z\}$ and $v_1 \in V(G_1) \setminus \{z\}$, so $P'$, and hence $P$, contains either $e$ or $z$, as required.

        Now, by the previous paragraph and since $Q$ is $2$-connected, it follows that $G(Q)$ can be obtained as the Haj\'os join of two graphs.
        We have $e=u_0u_1$ with $u_0 \in Z_0$.
        Let $G_0'$ be the graph obtained from $G(Q[Z_0 \cup \{z\}])$ by adding an edge between $u_0$ and $z$.
        Then $G(Q)$ is the Haj\'os join $(G_0',z,u_0) \hajos (G_1,z,u_1)$.
        As $Q$ is $2$-connected and has maximum local edge-connectivity at most $k$, we have, by \cref{hajosconn0,hajosconn2}, that both $G_1$ and $G_0'$ are $2$-connected and have maximum local edge-connectivity at most $k$.
        As $G(Q)$ is not $k$-polar-unrestricted, at least one of $G_0'$ and $G_1$ is not $k$-polar-unrestricted.
        As $|V(G_0')|,|V(G_1)| < |V(Q)|$, there exists a $2$-connected polar graph $Q'$ with maximum local edge-connectivity at most $k$ such that $|V(Q')| < |V(Q)|$, and therefore, by the minimality of $Q$, we have $G(Q') \in \mathcal{T}_k$.
        By \cref{the joinability corollary}, $G(Q')$ is $k$-joinable.
        Now, by \cref{joinable plus unrestricted}, we deduce both $G_0'$ and $G_1$ are not $k$-polar-unrestricted.
        It follows, by the minimality of $Q$, that both $G_0'$ and $G_1$ are in $\mathcal{T}_k$.
        Therefore $G(Q) \in \mathcal{T}_k$, a contradiction.
        We deduce that $G(Q'_X) \in \mathcal{T}_k^-$, as required.
    \end{subproof}

    Observe that, by the construction of $Q'_X$, we have $d_{Q'_X}(u)=d_Q(u)$ for all $u \in X$.
    In particular, $d_{Q'_X}(h_x)=d_Q(h_x) >k$.
    Now $Q'_X$ is a polar graph such that $G(Q'_X) \in \mathcal{T}_k^-$, by \cref{intkminus}, and $Q'_X$ contains a vertex $h_x$ of degree greater than $k$, and a vertex $w$ that is $k$-restricted, where each of the $k$ edges incident with $w$ is polarised. 
    By \cref{tkminusone}\ref{tk a}, $w$ is central.
    If $Q'_X$ is not a polar odd wheel, then, by \cref{no parallel edges lemma}(ii), $w$ is incident to a pair of parallel edges, both of which are polarised, implying that $w$ is not central.
    So $Q'_X$ is a polar odd wheel and $k=3$.
    Since $d_{Q'_X}(h_x) > 3$, we have $|V(Q'_X)| > 4$ and $h_x$ is the hub of $Q'_X$.
    Hence $|S|=3$ and $|X| \ge 4$.
    Let $X_S$ (or $Y_S$) be the set of vertices of $X$ (or $Y$, respectively) that are incident with edges of $S$ in $Q$.
    Then $|X_S|=3$ and, as $h_x$ is adjacent to $w$ in $Q'_X$, we have $h_x \in X_S$.

    Now we know the structure of $Q'_X$, we consider colourings of $Q[Y]$ that do not extend to $Q$.
    Let $\Phi$ be the set of $\Gamma$-colourings $\phi$ of $Q[Y]$ such that $\phi(r)=c$.
    By \cref{claimone}, $\Phi$ is non-empty.
    Since $r$ is $\Gamma$-restricted on $c$ in $Q$, we have that $\phi$ does not extend to $Q$ for each $\phi \in \Phi$.

    \begin{claim}
        \label{claimextra}
        Let $\phi \in \Phi$ and $e \in S$.
        \begin{itemize}
            \item If $e$ is polarised in $Q$, then $R(e,x_e,y_e)=(c,\phi(y_e))$ for some $c \in L(x_e)$.
            \item If $e$ is not polarised in $Q$, then $\phi(y_e) \in L(x_e)$.
        \end{itemize}
    \end{claim}
    \begin{subproof}
        Let $(Q[X],\Gamma_X)$ be the $(Y,\phi)$-colour deletion from $(Q,\Gamma)$, with $\Gamma_X=(L_X,R_X)$.
        Since $r$ is $\Gamma$-restricted on $c$ in $Q$, we have that $Q[X]$ is not $\Gamma_X$-colourable.
        By \cref{QXcolour}, there exists a $3$-polar assignment $\Gamma'_X=(L_X,R_X)$ for $Q'_X$ such that the $(w,d)$-colour deletion from $(Q'_X,\Gamma'_X)$ is $(Q[X],\Gamma_X)$.
        Then, by \cref{odd wheels in G2}, $|L_X(x_e)| = 2$ for each $e \in S$.
        Since $|L(x_e)|=3$ for each $e \in S$, the claim follows.
    \end{subproof}

    \begin{claim}
        \label{Y claim}
        Either
        \begin{enumerate}
            \item for any $\phi_0,\phi_1 \in \Phi$, we have $\phi_0|_{Y_S}=\phi_1|_{Y_S}$; or
            \item for any $\phi \in \Phi$, there is some $d_\phi \in \mathbb{N}$ such that $\phi(y)=d_\phi$ for all $y \in Y_S$.
        \end{enumerate}
%
%
    \end{claim}
    \begin{subproof}
        Suppose that there exist $\phi_0,\phi_1 \in \Phi$ such that $\phi_0|_{Y_S} \neq \phi_1|_{Y_S}$.
        Let $(Q[X],\Gamma_i)$ be the $(Y,\phi_i)$-colour deletion from $(Q,\Gamma)$, for $i \in \{0,1\}$.
        Then $Q[X]$ is not $\Gamma_0$-colourable, nor $\Gamma_1$-colourable.
        By \cref{QXcolour}, for each $i \in \{0,1\}$ there exists a $3$-polar assignment $\Gamma'_i$ for $Q'_X$ such that the $(w,d)$-colour deletion from $(Q'_X,\Gamma'_i)$ is $(Q[X],\Gamma_i)$.
        Let $\Gamma_i = (L_i,R_i)$ for $i \in \{0,1\}$.
        For $x \in X_S$, we have $|L(x)| = 3$ since $\Gamma$ is a $3$-polar assignment.
        For $e \in S$, we have $|L_0(x_e)| = |L_1(x_e)| = 2$, by \cref{claimextra}. 
        Thus, for $e \in S$, we have $L_0(x_e) = L_1(x_e)$ if and only if $\phi_0(y_e) = \phi_1(y_e)$.
        Since $\phi_0|_{Y_S} \neq \phi_1|_{Y_S}$, there exists some $e' \in S$ such that $\phi_0(y_{e'}) \neq \phi_1(y_{e'})$ and $L_0(x_{e'}) \neq L_1(x_{e'})$.
        Now $e'$ is not polarised in $Q$, as otherwise $R(e',x_{e'},y_{e'})=(c_x,c_y)$ where $c_y \neq \phi_i(y_{e'})$ for some $i \in \{0,1\}$, violating \cref{claimextra}.
        By \cref{odd wheels in G2}, $L_i|_{X_S}$ is uniform for $i \in \{0,1\}$.
        In particular, for each $e \in S$, we have $\phi_0(y_{e}) \neq \phi_1(y_{e})$ and $L_0(x_{e}) \neq L_1(x_{e})$.
        Now, similarly to before, each $e \in S$ is not polarised in $Q$.
        It then follows that $\phi_i(y) = \phi_i(y_{e'})$ for all $y \in Y_S$ and $i \in \{0,1\}$.
        This shows that if there exist $\phi_0,\phi_1 \in \Phi$ such that $\phi_0|_{Y_S} \neq \phi_1|_{Y_S}$, then there exist $d_0,d_1\in \mathbb{N}$ such that $\phi_i(y)=d_i$ for all $y \in Y_S$ and $i \in \{0,1\}$.

        Assume now that (i) does not hold.  Then there exist $\phi_0,\phi_1 \in \Phi$ such that $\phi_0|_{Y_S} \neq \phi_1|_{Y_S}$, and, by the previous paragraph, there exist $d_0,d_1\in \mathbb{N}$ such that $\phi_i(y)=d_i$ for all $y \in Y_S$.
        If $|\Phi|=2$, then (ii) holds.
        Otherwise, for any $\phi_2 \in \Phi \setminus \{\phi_0,\phi_1\}$, we have either $\phi_2|_{Y_S}=\phi_0|_{Y_S}$, in which case $\phi_2(y)=d_0$ for all $y \in Y_S$, or $\phi_2|_{Y_S} \neq \phi_0|_{Y_S}$, in which case there exists $d_2\in \mathbb{N}$ such that $\phi_i(y)=d_2$ for all $y \in Y_S$, by the previous paragraph.
    \end{subproof}

    Let $Q_Y$ be the polar graph obtained from $Q[Y]$ as follows:
    for each $e \in S$, we add a new vertex $u_e$, and a new non-polarised edge between $u_e$ and $y_e$, and then add edges so that $\{u_e : e \in S\}$ is a clique where all edges are not polarised.
    Since $Q$ is $2$-connected with maximum local edge-connectivity at most $k$,
    it is easily seen that $Q_Y$ is $2$-connected and has maximum local edge-connectivity at most $k$.
    Note that, letting $G=G(Q)$, we have that $G(Q_Y)=G_Y$ as described just prior to \cref{the hajos decomp lemma}.
    Let $U = \{u_e : e \in S\}$.
    Recall that $|S|=k$, so $|U|=k$ (where $k=3$).

    First assume that \cref{Y claim}(i) holds. 
    Let $\phi \in \Phi$ and let $C \subseteq \mathbb{N}$ such that $|C|=2$ and $C$ is disjoint from $\{\phi(y_e) : e \in S\}$.
    Let $\Gamma_Y=(L_Y,R_Y)$ be the $3$-polar assignment for $Q_Y$ such that $\Gamma_Y|Y=\Gamma|Y$ and $L_Y(u_e)=C \cup \{\phi(y_e)\}$ where $e \in S$.
    Now let $(Q_Y[U],\Gamma_U)$ be the $(Y,\phi)$-colour deletion from $(Q_Y,\Gamma_Y)$, and let $\Gamma_U=(L_U,R_U)$.
    Then $G(Q_Y[U]) \cong K_3$ and $Q_Y[U]$ has no polarised edges, where $L_U(u)=C$ for all $u \in U$ by \cref{claimextra}, so $Q_Y[U]$ is not $\Gamma_U$-colourable.
    Therefore, each colouring in $\Phi$ does not extend to $Q_Y$, so $r$ is $\Gamma_Y$-restricted on $c$ in $Q_Y$.
    Recall that $|X| > |S|$, so, by the minimality of $Q$, we have that $G(Q_Y) \in \mathcal{T}_k$.

    Now we may assume that \cref{Y claim}(ii) holds, so for any $\phi \in \Phi$, there is some $d_\phi \in \mathbb{N}$ such that $\phi(y)=d_\phi$ for all $y \in Y_S$.
    Let $D = \{d_\phi : \phi \in \Phi\}$, and observe that $|D| \leq 3$, since $|L(y)|=3$ for any $y \in Y_S$.
    Let $C$ be a set of $3$ colours that contains $D$.
    Let $\Gamma_Y=(L_Y,R_Y)$ be the $3$-polar assignment for $Q_Y$ such that $\Gamma_Y|Y=\Gamma|Y$ and $L_Y(u_e)=C$ for each $e \in S$.
    Now let $(Q_Y[U],\Gamma_U)$ be the $(Y,\phi)$-colour deletion from $(Q_Y,\Gamma_Y)$ for some $\phi \in \Phi$, with $\Gamma_U=(L_U,R_U)$.
    Then $G(Q_Y[U]) \cong K_3$ and $Q_Y[U]$ has no polarised edges, and $L_U(u)=C \setminus \{d_\phi\}$ for all $u \in U$, by \cref{claimextra}.
    That is, $\Gamma_U$ is a uniform $2$-polar assignment, therefore $Q_Y[U]$ is not $\Gamma_U$-colourable.
    Since this holds for any $\phi$ in $\Phi$, we once again see that $r$ is $\Gamma_Y$-restricted on $c$ in $Q_Y$, and, again by the minimality of $Q$, we have that $G(Q_Y)$ is in $\mathcal{T}_k$.

    Therefore both $G(Q'_X)$ and $G(Q_Y)$ are in $\mathcal{T}_k$ and so, by \cref{the hajos decomp lemma}, $G(Q)$ is also in $\mathcal{T}_k$, a contradiction. This completes the proof.
\end{proof}

Building on \cref{final classify theorem}, one can obtain a polynomial-time algorithm for determining the $k$-restriction index of a vertex in a $2$-connected graph with maximum local edge-connectivity $k$, see \cite{Bastida2025}.

Finally, we prove the following, that clearly implies \cref{mainthm}.

\begin{theorem}
    \label{mainthmstrong}
    Let $G$ be a $2$-connected graph with maximum local edge-connectivity~$k$, for $k\ge 3$, and let $L$ be a $k$-list assignment for $G$. Then $G$ is not $L$-colourable if and only if $G \in \mathcal{H}_k$ and $L$ is uniform.
\end{theorem}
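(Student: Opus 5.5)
The plan is to reduce everything to results already established for polar graphs, treating $G$ as the polar graph $Q=(G,\emptyset)$ with $k$-polar assignment $\Gamma=(L,\emptyset)$. Since $F(Q)=\emptyset$, $Q$ is $\Gamma$-colourable if and only if $G$ is $L$-colourable.

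For the ``if'' direction, suppose $G\in\mathcal{H}_k$ and $L$ is uniform. Then \cref{stthm} shows that $G$ is not $k$-colourable, so $G$ is not $L$-colourable. Alternatively, this follows from the corresponding direction of \cref{tkes}, since $\mathcal{H}_k\subseteq\mathcal{T}_k$.

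For the ``only if'' direction, suppose $G$ is not $L$-colourable.
\begin{enumerate}
    \item The first step is to show that $Q$ is $k$-restricted. Take any vertex $v$ and any $c\in L(v)$. Every $\Gamma$-colouring of $Q$ with $v$ coloured $c$ is in particular a $\Gamma$-colouring of $Q$, and none exists. So the $(v,c)$-colour deletion is not colourable, and $v$ is $\Gamma$-restricted on $c$.
    \item Since $Q$ is $2$-connected with maximum local edge-connectivity $k$, \cref{final classify theorem} now gives $G=G(Q)\in\mathcal{T}_k$.
    \item Apply \cref{tkes} to $Q$ and $\Gamma$. Because $Q$ is not $\Gamma$-colourable, it yields $G\in\mathcal{H}_k$, $F(Q)=\emptyset$ (which holds trivially), and $\Gamma$ uniform, which is exactly the statement that $L$ is uniform.
\end{enumerate}

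The step where care is needed is the application of \cref{final classify theorem}: its hypotheses must hold exactly. It needs a $2$-connected polar graph with maximum local edge-connectivity at most $k$, which our $Q$ satisfies directly. \cref{tkes} needs only $G(Q)\in\mathcal{T}_k$ and a $k$-polar assignment, both of which we have. Beyond this, all the difficulty has been absorbed into \cref{final classify theorem} and \cref{tkes}, so the proof is a short assembly of these results.
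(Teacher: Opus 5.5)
Your proposal is correct and matches the paper's proof: you observe that non-$L$-colourability makes $G$ $k$-restricted, invoke \cref{final classify theorem} to get $G\in\mathcal{T}_k$, and then \cref{tkes} gives $G\in\mathcal{H}_k$ with $L$ uniform. The converse direction follows from \cref{stthm}.
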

\begin{proof}
    Suppose that $G$ is not $L$-colourable.  Then $G$ is not $k$-choosable, so it is $k$-restricted, and therefore, by \cref{final classify theorem}, $G \in \mathcal{T}_k$.
    Then, by \cref{tkes}, $G \in \mathcal{H}_k$ and $L$ is uniform, as required.
    This proves one direction.
    The other direction follows from \cref{stthm}.
\end{proof}

\section{Proof of \texorpdfstring{\cref{hardnessthm}}{Theorem 1.4}}

In this section we prove \cref{hardnessthm}.
%
Let $k$ be a fixed positive integer.
We define the following problem that generalises \textsc{$k$-choosability}.

\noindent
\fbox{\begin{minipage}{.98\linewidth}
\textsc{$f$-choosability bounded by $k$}\\\nopagebreak
\textsc{Instance}: A graph $G$ and a function $f:V(G) \rightarrow \mathbb{N}$ where $0\leq f(v)\leq k$ for all $v \in V(G)$.\\
\textsc{Question}: Is $G$ $f$-choosable?
\end{minipage}} \\

Erd\H os, Rubin, and Taylor~\cite{ERT1979} showed that \textsc{$f$-choosability bounded by $3$} is $\Pi_2$-complete when restricted to bipartite graphs.
They achieve this by describing a reduction from the problem \textsc{$\forall \exists$3-Sat} to the problem \textsc{$f$-choosability bounded by $3$}.
On close inspection, it is easily seen that the graphs obtained in this reduction are not only bipartite, but also have maximum local edge-connectivity at most~$3$.
Thus, we have the following:

\begin{proposition}
    \label{ertcorollary}
    For each integer $k \geq 3$, \textsc{$f$-choosability bounded by $k$}, restricted to graphs with maximum local edge-connectivity at most $3$, is $\Pi_2$-complete.
\end{proposition}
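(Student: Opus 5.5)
The plan is to take the Erd\H{o}s--Rubin--Taylor reduction as given and verify the extra structural property, then lift the bound from $3$ to arbitrary $k \ge 3$. The $\Pi_2$ membership is immediate for every $k$: a no-certificate is an $f$-list assignment (lists of size at most $k$ with colours drawn from a polynomial-size palette, since only $\sum_v f(v)$ colours matter up to renaming), and checking that no $L$-colouring exists is a co-NP question, so the problem is in $\Pi_2^p$. For hardness, note first that \textsc{$f$-choosability bounded by $3$} is a special case of \textsc{$f$-choosability bounded by $k$} for every $k\ge 3$, because any instance with $f \le 3$ is also an instance with $f\le k$. So it suffices to show that the ERT reduction from \textsc{$\forall\exists$3-Sat} produces graphs of maximum local edge-connectivity at most $3$.

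For this I will recall the ERT gadget structure: for each variable and each clause they build small bipartite pieces (even cycles and paths of length two or so, with chosen list sizes $f \in\{1,2,3\}$... in fact sizes $2$ and $3$), glued together at single vertices or joined through long paths. The key step is to check that every vertex of degree greater than $3$ in the constructed graph is separated from every other such vertex by an edge cut of size at most $3$, or more directly that the blocks of the graph are cycles, or more generally graphs in which any two vertices are joined by at most $3$ edge-disjoint paths. I would proceed block by block: since local edge-connectivity between vertices in different blocks is at most the minimum over the connecting cut vertices' block-local connectivities (any edge-disjoint paths between them must pass through each intervening cut vertex, and within each block they use disjoint edge sets), it suffices to bound the maximum local edge-connectivity of each block by $3$. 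Then for each block type in the construction I verify directly, e.g.\ by exhibiting for each pair $u,v$ a cut of size at most $3$, typically the edges at a vertex of degree $\le 3$ on the relevant side, or a $2$- or $3$-edge cut separating gadgets.

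The main obstacle is that the ERT construction is described in terms convenient for choosability, not connectivity, and some vertices (those shared between a variable gadget and several clause gadgets) may have high degree; I must ensure that two high-degree vertices are never joined by four edge-disjoint paths. My first attempt would be to use Mader-style reasoning: identify, for each pair of high-degree vertices, a small edge cut formed by the edges incident to degree-$2$ or degree-$3$ path vertices in the connecting gadget. If some gadget in the original reduction violates this, I would subdivide offending edges (subdivision vertices getting list size $2$ with a suitable adjustment, as subdividing an edge twice preserves $f$-choosability behaviour in bipartite reductions) to force such cuts, checking that the reduction remains correct.
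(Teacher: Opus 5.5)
Your route matches the paper's. It has three parts: membership in $\Pi_2$ (your palette-size argument is fine), the trivial observation that an instance with $f\le 3$ is also an instance with $f\le k$, and the claim that the Erd\H{o}s--Rubin--Taylor reduction from \textsc{$\forall\exists$3-Sat} already produces graphs of maximum local edge-connectivity at most $3$. The paper supports that last claim only by ``close inspection'' of the ERT construction. Reducing the check to single blocks, and then to pairs of vertices of degree at least $4$, is a sound way to organise that inspection. But you do not carry it out. Your description of the gadgets is explicitly hedged, and the one real worry you raise is left unresolved: vertices shared by several gadgets could be joined by four edge-disjoint paths, for instance if two high-degree vertices share four gadgets. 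This is the only non-trivial step, and the proposal as written gives no argument for it.

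The fallback you offer, for when a gadget violates the bound, fails for two independent reasons.

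First, subdividing an edge never changes the local edge-connectivity between two original vertices. The subdivided edge becomes a path through degree-$2$ vertices, and at most one path in an edge-disjoint family can use it. So edge-disjoint families correspond bijectively before and after subdivision, and no new small cut between high-degree vertices is created.

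Second, inserting subdivision vertices with $2$-element lists changes the colouring constraint. On a path $u,a,b,v$ with $L(a)=\{x,p\}$ and $L(b)=\{y,p\}$, the adversary forbids exactly the pair $(\phi(u),\phi(v))=(x,y)$, possibly with $x\neq y$. The path therefore behaves like a polarised edge rather than an ordinary edge. Concretely, $K_{2,3}=\theta_{2,2,2}$ is $2$-choosable, but doubly subdividing one edge on each of two of its paths gives $\theta_{2,4,4}$, which is not.

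So your argument stands or falls with checking directly that the original ERT graphs satisfy the bound, which is exactly what the paper asserts.
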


We will prove \cref{hardnessthm} by reducing from \textsc{$f$-choosability bounded by $k$} to \textsc{$k$-choosability}.
The reduction makes use of top-ups (see \cref{topupdef}).  We first describe a graph, $T_k$, with a $k$-restricted vertex, that we will use in the top-up.


For each integer $k \ge 3$, we define the graph $T_k$ as follows.  (Note that we have already seen $T_3$, as illustrated in \cref{figassignunion}; its construction is illustrated in \cref{constructT3}.)
Starting with a copy of $I_k$ (the graph consisting of two vertices with $k$ edges between them) on vertex set $\{u,v\}$, 
we perform $k-1$ Haj\'{o}s joins, in turn, with a copy of $K_{k+1}$, on $(u,v)$ and an arbitrary pair of vertices in the copy of $K_{k+1}$.
Note that $T_k \in \mathcal{T}_k$.
It is not too difficult to show that $m_{T_k,k}(u)=1$. 

\begin{figure}[h]
    \centering
    \includegraphics[scale=0.5]{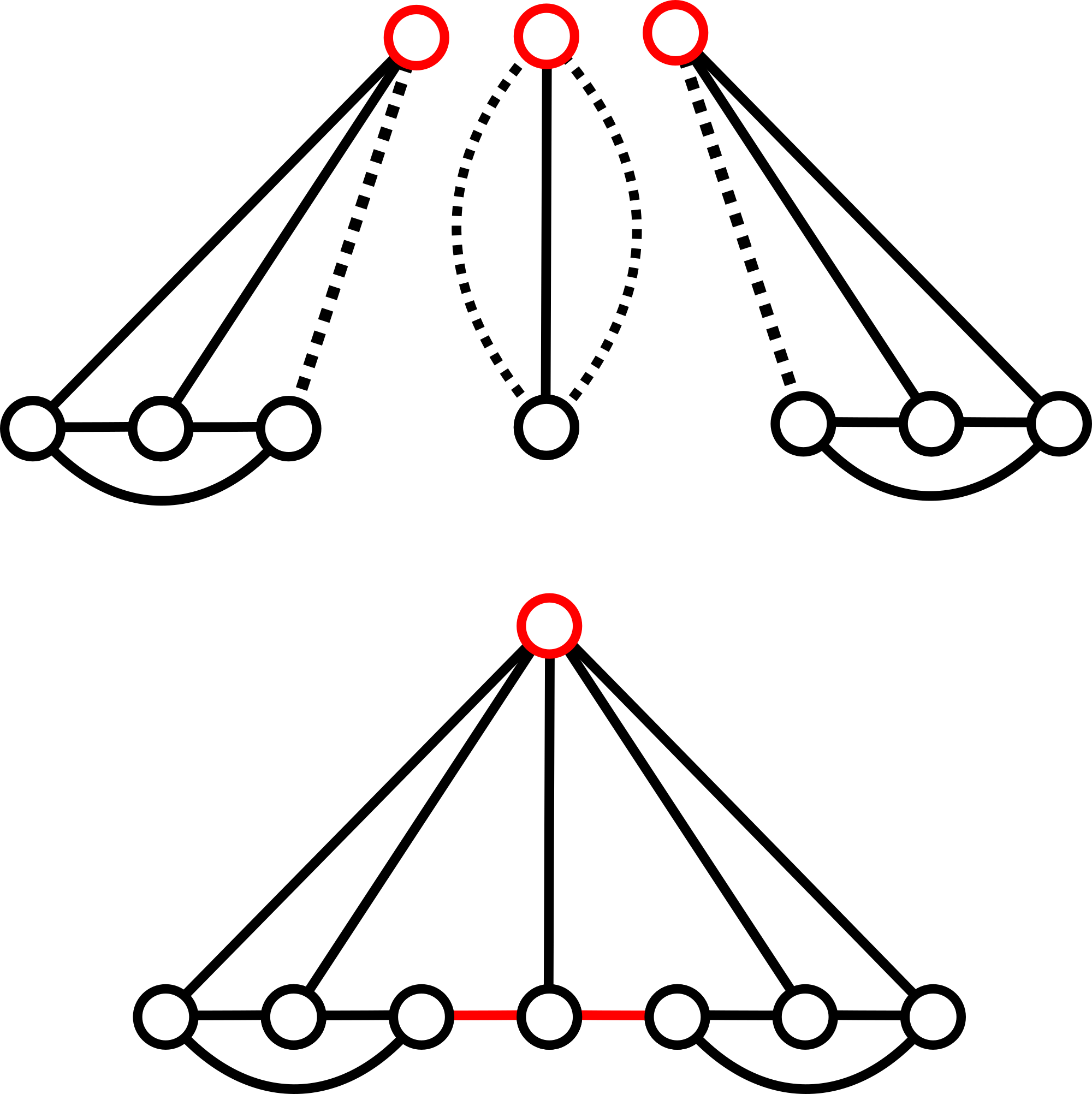}
    \caption{The construction of $T_3$ from three graphs in $\mathcal{T}_3^-$, using two Haj\'{o}s joins.} 
    \label{constructT3}
\end{figure}

We can now prove \cref{hardnessthm}, which we restate below.

\begin{reptheorem}{hardnessthm}
    For each integer $k \ge 3$, the \textsc{$k$-choosability} problem, when restricted to graphs with maximum local edge-connectivity~$k$, is $\Pi_2$-complete.
\end{reptheorem}
\begin{proof}
    We reduce an instance $(G,f)$ of the problem \textsc{$f$-choosability bounded by $k$}, for $G \in \mathcal{G}$, to an instance $(G_+)$ of the problem \textsc{$k$-choosability}, where
    $G_+$ is the $(G,f,T_k,u,k)$-top-up.
    Note that we can compute $G_+$ in polynomial time.
    Since $G$ and $T_k$ have maximum local edge-connectivity at most $k$, the graph $G_+$ also has maximum local edge-connectivity at most $k$. In fact, by \cref{connectivity properties corollary}, $T_k$ is $k$-edge-connected, and there is at least one vertex $v$ of $G$ with $f(v) < k$, so the graph $G_+$ has maximum local edge-connectivity $k$.
    Now, by \cref{topuplemma}, $G$ is $f$-choosable if and only if $G_+$ is $k$-choosable, as required.
    The result then follows, using \cref{ertcorollary}.
\end{proof}

\bibliographystyle{abbrv}
\bibliography{lib}

\appendix

\section{NP-completeness of $k$-colourability for graphs with maximum local connectivity~$k$}
\label{appendix}

For $k=3$, it was shown in \cite{ABHMT2017} that deciding if a graph with maximum local connectivity at most~$3$ is $3$-colourable is NP-complete.
Here we generalise this as follows:
\begin{proposition}
  \label{mlcnpc3}
  Let $k \ge 3$.
  The problem of deciding if a graph with maximum local connectivity at most~$k$ is $k$-colourable is NP-complete.
\end{proposition}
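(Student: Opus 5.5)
Membership in NP is immediate: a $k$-colouring is a certificate that can be checked in polynomial time. For hardness, I would reduce from $3$-colourability of graphs with maximum local connectivity at most~$3$, which is NP-complete by \cite[Proposition~4.2]{ABHMT2017}. I would induct on $k$, reducing $(k-1)$-colourability of graphs with maximum local connectivity at most $k-1$ to $k$-colourability of graphs with maximum local connectivity at most $k$. Given an instance $G$ for $k-1$, I would build $G'$ so that $G'$ is $k$-colourable if and only if $G$ is $(k-1)$-colourable. The natural idea is to force every vertex of $G$ to avoid a common colour. Adding a single universal vertex does this, but it destroys bounded local connectivity.

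The gadget I would use replaces that universal vertex with a ``colour-forcing'' structure in which each vertex of $G$ gets its own private neighbour. Concretely, for each $v\in V(G)$ I would attach a pendant copy of $K_k$ with $v$ joined to exactly one vertex $a_v$ of it, or, more directly, take a copy $D_v$ of $K_{k+1}$ minus an edge $xy$ and identify $v$ with nothing but join $v$ to a designated vertex. Since $K_{k+1}-xy$ forces $x$ and $y$ to receive the same colour, I would chain such gadgets along a spanning tree, or a path, of copies so that all the designated vertices $a_v$ are forced to one common colour $c^*$, while each $v$ is adjacent to its own $a_v$. This makes $v\ne c^*$ for all $v$ without creating a high-degree vertex. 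Each $(K_{k+1}-xy)$ gadget is glued to the next along a single identified vertex, giving a tree-like ``chain of diamonds'' structure.

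Then I would verify two things. First, correctness: in any $k$-colouring, all the $a_v$ share the colour $c^*$, so $G$ receives at most $k-1$ colours. Conversely, a $(k-1)$-colouring of $G$ extends by giving every $a_v$ the colour $k$ and colouring the gadgets appropriately. Second, maximum local connectivity: every pair of vertices inside a gadget has $\kappa\le k$, since the degrees in $K_{k+1}-xy$ are at most $k$ except where gadgets are glued. Gluing at cut vertices does not increase $\kappa$ between vertices of distinct blocks beyond the minimum of the pieces. Each vertex $v$ gains only one pendant-like edge to $a_v$, and the gadget chain hangs off $G$ through those edges.

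\emph{The main obstacle} is this last step. Vertices $u,v\in V(G)$ now gain an extra route $u\,a_u\cdots a_v\,v$ through the gadget chain, which could raise $\kappa(u,v)$ from $k-1$ to $k$. That is acceptable, but I must ensure it never reaches $k+1$, and that vertices on the chain do not acquire $k+1$ internally disjoint paths via $G$. I would handle this by a Menger-type cut argument: any two vertices are separated by at most $k$ vertices, using either a separator in $G$ of size at most $k-1$ together with one chain vertex, or the gluing vertices of the chain. If this bound fails for a given chain layout, my fallback is to make the chain a path of gadgets whose gluing points form $2$-vertex separators, and to bound $\kappa$ case by case.
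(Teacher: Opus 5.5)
Your colour-forcing argument is fine: all ends of the chained copies of $K_{k+1}^-$ receive one colour, so $G$ is squeezed into $k-1$ colours. The gap is the local-connectivity bound. It is false for your construction, and the Menger argument you sketch does not repair it.

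The separator ``$S$ plus one chain vertex'' does not separate $u$ from $v$. Every other vertex $w$ on $u$'s side of $S$ still reaches the chain through its own private neighbour $a_w$. This is fatal, because an instance with maximum local connectivity at most $k-1$ may have vertices of arbitrarily large degree. For example, let $G$ be the tree consisting of an edge $uv$, with leaves $w_1,\dots,w_m$ attached to $u$ and leaves $z_1,\dots,z_m$ attached to $v$. Suppose $a_{w_i}$ and $a_{z_i}$ are the two ends of a common copy $D_i$ in the chain. Then the paths $u\,w_i\,a_{w_i}\,t_i\,a_{z_i}\,z_i\,v$, with $t_i$ an inner vertex of $D_i$, together with the edge $uv$, give $\kappa_{G'}(u,v)\ge m+1$.

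The chain itself also breaks. Take consecutive gluing vertices $a_v,a_w$, with $a_x,a_y$ the neighbouring gluing vertices on either side. They have $k-1$ paths through their common block. A further path goes $a_v\,v\cdots w\,a_w$ through $G$. Another leaves through the outer blocks to $a_x$ and $a_y$ and crosses $G$ from $x$ to $y$. So $\kappa(a_v,a_w)\ge k+1$ as soon as $G$ has disjoint edges $vw$ and $xy$, for instance $G$ a $4$-cycle with chain order $a_x,a_v,a_w,a_y$. Your fallback of bounding $\kappa$ ``case by case'' for some layout does not say which layout works for a general $G$.

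The missing idea is that you must control the \emph{degrees} of the vertices outside the gadget, not their local connectivity. If every non-gadget vertex has degree at most $k$, only pairs of high-degree gadget vertices need checking. Those can be handled by making the gadget tree-like, with attachments only at leaves.

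This is what the paper does, with no induction on $k$. It reduces from unrestricted $k$-colourability and replaces every vertex of degree at least $k+1$ by a hub gadget. A hub gadget is a cubic tree with its internal edges subdivided, each tree edge replaced by a copy of $K_{k+1}^-$, and its leaves (outlets) joined to the former neighbours. After this, original vertices, outlets and inner vertices of copies all have degree at most $k$. A non-outlet tree vertex is separated from everything else by the far ends of its at most three incident copies. For two ends of the same copy, the subdivision ensures one of them lies in exactly one further copy, so $\kappa\le (k-1)+1$.

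Your universal-vertex idea can be rescued only by applying this same replacement to every high-degree vertex of $G$ plus a universal vertex. At that point the induction from $k-1$ is superfluous.
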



We prove this by generalising the approach used in \cite[Proposition~4.2]{ABHMT2017}.
We reduce from the unrestricted version of \textsc{$k$-colourability}:
given an instance of this problem,
we replace each vertex of degree at least $k+1$ with a gadget that ensures that the resulting graph has maximum local connectivity at most~$k$.
Shortly, we describe this gadget; first, we require some definitions.

For an integer $n \ge 4$, let $K_n^-$ be the graph obtained from $K_n$ be removing a single edge.  The graph $K_n^-$ has $n-2$ vertices of degree~$n-1$ and two vertices with degree~$n-2$; we call the two degree-$(n-2)$ vertices of $K_n^-$ the \emph{ends}.
A tree is \emph{cubic} if all vertices have either degree one or degree three.  A degree-$1$ vertex is a \emph{leaf}; and an edge that is incident to a leaf is a \emph{pendant} edge, whereas an edge that is incident to two degree-$3$ vertices is an \emph{internal} edge.

For $\ell \geq 4$, let $T$ be a cubic tree with $\ell$ leaves. 
We first subdivide each internal edge of $T$ once, obtaining the tree~$T'$.
Now, for each edge $xy$ of $T'$, we remove $xy$, take a copy of $K_{k+1}^-$ and identify, firstly, the vertex $x$ with one end of $K_{k+1}^-$, and, secondly, $y$ with the other end of $K_{k+1}^-$.
A degree-$(k-1)$ vertex in the resulting graph $T''$ corresponds to a leaf of $T$; we call such a vertex an \emph{outlet}.
We also call $T''$ a \emph{hub gadget} with $\ell$ outlets. 
Observe that for any integer $\ell \ge k+1$, there exists a hub gadget with exactly $\ell$ outlets. 
An example of a hub gadget with five outlets, when $k=4$, is shown in \cref{outlets-fig}.

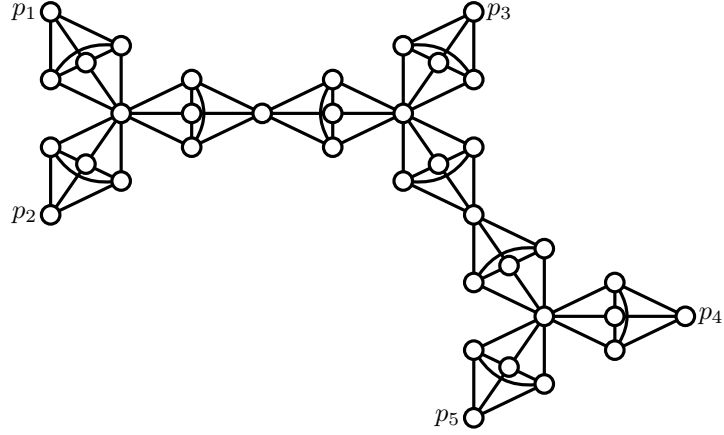
\begin{figure}
  \centering
  \begin{tikzpicture}[scale=1.5]
    \tikzset{VertexStyle/.append style = {draw,minimum height=7,minimum width=7}}
    \tikzset{EdgeStyle/.append style = {very thick}}
    \Vertex[x=-1.875,y=0.9,LabelOut=true,L=$p_1$,Lpos=180]{a1}
    \Vertex[x=-1.875,y=-0.9,LabelOut=true,L=$p_2$,Lpos=180]{a2}
    \Vertex[x=1.875,y=0.9,LabelOut=true,L=$p_3$]{b3}
    \Vertex[x=3.75,y=-1.8,LabelOut=true,L=$p_4$]{b10}
    \Vertex[x=1.875,y=-2.7,LabelOut=true,L=$p_5$,Lpos=180]{b11}
    \SetVertexNoLabel
    \Vertex[x=1.875,y=-0.9,LabelOut=true,L=$p_4$]{b4}
    \Vertex[x=-1.25,y=0]{a}
    \Vertex[x=-1.25,y=0.6]{aa1}
    \Vertex[x=-1.5625,y=0.45]{aa15}
    \Vertex[x=-1.875,y=0.3]{aa2}

    \Vertex[x=-1.25,y=-0.6]{aa5}
    \Vertex[x=-1.5625,y=-0.45]{aa55}
    \Vertex[x=-1.875,y=-0.3]{aa6}

    \Vertex[x=1.25,y=0.6]{bb1}
    \Vertex[x=1.5625,y=0.45]{bb15}
    \Vertex[x=1.875,y=0.3]{bb2}

    \Vertex[x=1.25,y=-0.6]{bb5}
    \Vertex[x=1.5625,y=-0.45]{bb55}
    \Vertex[x=1.875,y=-0.3]{bb6}

    \Vertex[x=-0.625,y=0.3]{dab1}
    \Vertex[x=-0.625,y=0]{dab15}
    \Vertex[x=-0.625,y=-0.3]{dab2}
    \Vertex[x=0.625,y=0.3]{dab3}
    \Vertex[x=0.625,y=0]{dab35}
    \Vertex[x=0.625,y=-0.3]{dab4}
    \Vertex[x=0,y=0]{ab}
    \Vertex[x=1.25,y=0]{b}

    \Vertex[x=1.875,y=-1.5]{bb7}
    \Vertex[x=2.1875,y=-1.35]{bb75}
    \Vertex[x=2.5,y=-1.2]{bb8}
    \Vertex[x=2.5,y=-1.8]{b9}

    \Vertex[x=3.125,y=-1.5]{dab5}
    \Vertex[x=3.125,y=-1.8]{dab55}
    \Vertex[x=3.125,y=-2.1]{dab6}

    \Vertex[x=2.5,y=-2.4]{aa7}
    \Vertex[x=2.1875,y=-2.25]{aa75}
    \Vertex[x=1.875,y=-2.1]{aa8}

    \Edge(a)(aa1)
    \Edge(a)(aa15)
    \Edge(a)(aa2)
    \Edge(aa1)(aa15)
    \Edge(aa15)(aa2)
    \Edge(a1)(aa2)
    \Edge(a1)(aa15)
    \Edge(a1)(aa1)

    \Edge(a)(aa5)
    \Edge(a)(aa55)
    \Edge(a)(aa6)
    \Edge(aa5)(aa55)
    \Edge(aa55)(aa6)
    \Edge(a2)(aa6)
    \Edge(a2)(aa55)
    \Edge(a2)(aa5)

    \Edge(a)(dab1)
    \Edge(a)(dab15)
    \Edge(a)(dab2)
    \Edge(dab1)(dab15)
    \Edge(dab15)(dab2)
    \Edge(ab)(dab1)
    \Edge(ab)(dab15)
    \Edge(ab)(dab2)
    \Edge(ab)(dab3)
    \Edge(ab)(dab35)
    \Edge(ab)(dab4)
    \Edge(dab3)(dab35)
    \Edge(dab35)(dab4)
    \Edge(b)(dab3)
    \Edge(b)(dab35)
    \Edge(b)(dab4)

    \Edge(b)(bb1)
    \Edge(b)(bb15)
    \Edge(b)(bb2)
    \Edge(bb1)(bb15)
    \Edge(bb15)(bb2)
    \Edge(b3)(bb2)
    \Edge(b3)(bb15)
    \Edge(b3)(bb1)

    \Edge(b)(bb5)
    \Edge(b)(bb55)
    \Edge(b)(bb6)
    \Edge(bb5)(bb55)
    \Edge(bb55)(bb6)
    \Edge(b4)(bb6)
    \Edge(b4)(bb55)
    \Edge(b4)(bb5)

    \Edge(b4)(bb7)
    \Edge(b4)(bb75)
    \Edge(b4)(bb8)
    \Edge(bb7)(bb75)
    \Edge(bb75)(bb8)
    \Edge(b9)(bb7)
    \Edge(b9)(bb75)
    \Edge(b9)(bb8)

    \Edge(dab5)(dab55)
    \Edge(dab55)(dab6)
    \Edge(b9)(dab5)
    \Edge(b9)(dab55)
    \Edge(b9)(dab6)
    \Edge(b10)(dab5)
    \Edge(b10)(dab55)
    \Edge(b10)(dab6)

    \Edge(b9)(aa7)
    \Edge(b9)(aa75)
    \Edge(b9)(aa8)
    \Edge(aa7)(aa75)
    \Edge(aa75)(aa8)
    \Edge(b11)(aa7)
    \Edge(b11)(aa75)
    \Edge(b11)(aa8)

    \tikzset{EdgeStyle/.append style = {bend left}}
    \Edge(aa5)(aa6)
    \Edge(bb1)(bb2)
    \Edge(dab1)(dab2)
    \Edge(bb7)(bb8)
    \Edge(dab5)(dab6)
    \Edge(aa7)(aa8)
    \tikzset{EdgeStyle/.append style = {bend right}}
    \Edge(aa1)(aa2)
    \Edge(dab3)(dab4)
    \Edge(bb5)(bb6)
  \end{tikzpicture}
  \caption{A hub gadget for $k=4$ and $\ell=5$.}
  \label{outlets-fig}
\end{figure}

\begin{proof}[Proof of \cref{mlcnpc3}]
  Let $G$ be an instance of \textsc{$k$-colourability}.
  We may assume that $G$ is $2$-connected.
  For each $v \in V(G)$ such that $d(v) \geq k+1$, we delete $v$, introduce a (disjoint) hub gadget with outlets $p_1,p_2,\dotsc,p_{d(v)}$, and then, for each neighbour $n_i$ of $v$ in $G$, for $i \in \{1,2,\dotsc,d(v)\}$, add a single edge between $p_i$ and $n_i$.
  Thus each outlet has degree~$k$ in the resulting graph $G'$.

  It is clear that $G'$ is $2$-connected.
  Now we show that $G'$ has maximum local connectivity at most~$k$.
  Recall that $\kappa(u,v)$ denotes the maximum number of internally disjoint $(u,v)$-paths.
  Clearly $\kappa(x,y) \le k$ if $d(x) \leq k$ or $d(y) \leq k$.
  Suppose $d(x),d(y) \geq k+1$.
  Then $x$ and $y$ belong to a hub gadget and are not outlets.
  So $x$ belongs to either two or three $K_{k+1}^-$s, each with an end distinct from $x$. Let $P$ be the set of these end vertices. When $y \notin P$, an $xy$-path must pass through some $p \in P$, so $\kappa(x,y) \le 3 \le k$, as required.  Otherwise, $x$ and $y$ are ends of a copy of $K_{k+1}^-$, which we label $J_1$, and neither end of $J_1$ is an outlet.  There are $k-1$ internally vertex disjoint $xy$-paths in $G'[V(J_1)]$.  But one of the ends of $J_1$ is in precisely one other copy of $K_{k+1}^-$, $J_2$ say, and all other $xy$-paths in $G'$ must pass through the end of $J_2$ distinct from $x$ and $y$.
  So $\kappa(x,y) \le k$, as required.

  Suppose $G$ is $k$-colourable and let $\phi$ be a $k$-colouring of $G$.  
  We show that $G'$ is $k$-colourable.  
  Start by colouring each vertex $v$ in $V(G)\cap V(G')$ the colour $\phi(v)$. 
  For each hub gadget $H$ of 
  $h$ in $G$,
  colour every end of a $K_{k+1}^-$ in $H$ the colour $\phi(h)$.  Clearly, each outlet is given a different colour to its neighbours in $V(G)$ since $\phi$ is a $k$-colouring of $G$.
  The remaining $k-1$ vertices of each $K_{k+1}^-$ contained in $H$ have two neighbours the same colour $\phi(h)$, so can be coloured using the remaining $k-1$ colours.  Thus $G'$ is $k$-colourable.

  Now suppose that $G'$ is $k$-colourable. Each end of a $K_{k+1}^-$ must have the same colour in a $k$-colouring of $G'$, so all outlets of a hub gadget have the same colour.  Let $H$ be the hub gadget of $h$ in $G$, where $h \in V(G)$.  We colour $h$ with the colour of all the outlets of $H$ in the $k$-colouring of $G'$.  For each vertex $v \in V(G) \cap V(G')$, we colour $v$ with the same colour as in the $k$-colouring of $G'$, thus obtaining a $k$-colouring of $G$.
\end{proof}

\end{document}